\newcommand{\bighat}{\raisebox{.8ex}{\resizebox{1.3ex}{!}{$\boldsymbol{\land}$\,}}}
\newtheorem{thm}[equation]{Theorem}
\newtheorem{ex}[equation]{Example}
\newtheorem{lem}[equation]{Lemma}
\newtheorem{cor}[equation]{Corollary}
\newtheorem{prop}[equation]{Proposition}
\theoremstyle{remark}
\newtheorem{rem}[equation]{Remark}
\theoremstyle{definition}
\newtheorem{Def}[equation]{Definition}
\numberwithin{equation}{section}
\newcommand{\C}{\text{\bf C}}           % Use for complex numbers.
\newcommand{\End}{\operatorname{End}}
\newcommand{\Ad}{\operatorname{Ad}}
\newcommand{\pr}{\operatorname{pr}}
\newcommand{\Ev}{\operatorname{Ev}}
\newcommand{\vspan}{\operatorname{span}}
\newcommand{\fa}{{\mathfrak a}}             % more Fraktur
\newcommand{\fg}{{\mathfrak g}}
\newcommand{\fk}{{\mathfrak k}}
\newcommand{\fp}{{\mathfrak p}}
\newcommand{\ft}{{\mathfrak t}}
\newcommand{\half}{\frac{1}{2}}
\newcommand{\Cal}{\mathcal}
\newcommand{\cal}{\mathcal}
\newcommand{\be}{\begin{equation}}
\newcommand{\beu}{\begin{equation*}}
\newcommand{\diag}{\operatorname{diag}}
\newcommand{\bbar}{\,\big|\,}
\newcommand{\sbar}{\,|\,}
\newcommand\eps{{\varepsilon}}
\newcommand\la{{\lambda}}
\newcommand\twedge{\textstyle{\bigwedge}}
\newcommand\Sp{\operatorname{Sp}}
\newcommand\SL{\operatorname{SL}}
\newcommand\GL{\operatorname{GL}}
\newcommand\SO{\operatorname{SO}}
\newcommand\OO{\operatorname{O}}
\newcommand\UU{\operatorname{U}}
\newcommand\SU{\operatorname{SU}}
\newcommand\Mat{\operatorname{Mat}}
\newcommand\Sym{\operatorname{Sym}}
\newcommand\Alt{\operatorname{Alt}}
\newcommand\Her{\operatorname{Her}}
\newcommand\st{\tilde s}
\newcommand{\Kt}{\widetilde{K}}
\newcommand\gr{\operatorname{gr}}
\newcommand\Gr{\operatorname{Gr}}
\newcommand\im{\operatorname{im}}
\newcommand\Ker{\operatorname{ker}}
\newcommand\id{\operatorname{id}}
\newcommand\Id{\operatorname{id}}
\newcommand\Spin{\operatorname{Spin}}
\newcommand\Pin{\operatorname{Pin}}
\newcommand{\lan}{\langle}
\newcommand{\ran}{\rangle}
\newcommand{\lara}{\langle\,,\rangle}
\newcommand{\pf}{\begin{proof}}
\newcommand{\epf}{\end{proof}}
\newcommand{\eq}{\begin{equation}}
\newcommand{\eeq}{\end{equation}}
\newcommand{\eqn}{\begin{equation*}}
\newcommand{\eeqn}{\end{equation*}}
\newcommand{\fra}{\mathfrak{a}}
\newcommand{\frg}{\mathfrak{g}}
\newcommand{\frh}{\mathfrak{h}}
\newcommand{\frk}{\mathfrak{k}}
\newcommand{\frp}{\mathfrak{p}}
\newcommand{\frt}{\mathfrak{t}}
\newcommand{\frsl}{\mathfrak{sl}}
\newcommand{\frso}{\mathfrak{so}}
\newcommand{\frsp}{\mathfrak{sp}}
\newcommand{\bbC}{\mathbb{C}}
\newcommand{\bbG}{\mathbb{G}}
\newcommand{\bbH}{\mathbb{H}}
\newcommand{\bbN}{\mathbb{N}}
\newcommand{\bbR}{\mathbb{R}}
\newcommand{\bbZ}{\mathbb{Z}}
\newcommand{\tr}{\operatorname{tr}}
\newcommand{\even}{\operatorname{even}}
\newcommand{\odd}{\operatorname{odd}}
\newcommand{\Proj}{\operatorname{Pr}}
\newcommand{\proj}{\operatorname{Pr}}
\newcommand{\LGr}{\operatorname{LGr}}
\newcommand{\HLGr}{\operatorname{HLGr}}
\newcommand{\OLGr}{\operatorname{OLGr}}
\newcommand{\smat}{\left(\begin{smallmatrix}}
\newcommand{\esmat}{\end{smallmatrix}\right)}
\newcommand{\pmat}{\begin{pmatrix}}
\newcommand{\epmat}{\end{pmatrix}}
\newcommand{\bbK}{\mathbb{K}}
\newcommand{\bbA}{\mathbb{A}}
\newcommand{\bbP}{\mathbb{P}}
\newcommand{\bbM}{\mathbb{M}}
\newcommand{\bbL}{\mathbb{L}}
\newcommand{\bbF}{\mathbb{F}}
\newcommand{\frP}{\mathfrak{P}}
\newcommand{\frL}{\mathfrak{L}}
\newcommand{\frN}{\mathfrak{N}}
\newcommand{\frG}{\mathfrak{G}}
\newcommand{\frA}{\mathfrak{A}}
\newcommand{\frK}{\mathfrak{K}}
\newcommand{\frM}{\mathfrak{M}}
\newcommand{\frH}{\mathfrak{H}}
\begin{document}

\sloppy
\title[Cohomology rings of Grassmannians]
{Clifford algebras, symmetric spaces and cohomology rings of Grassmannians}

\author{Kieran Calvert}
\address[Calvert]{Department of Mathematics and Statistics, Fylde Building, Lancaster University, Lancaster, UK}
\email{k.calvert@lancaster.ac.uk}
\author{Kyo Nishiyama}
\address[Nishiyama]{Department of Mathematics, Aoyama Gakuin University, Fuchinobe 5-10-1, Chuo-ku,
Sagamihara 252-5258, Japan}
\email{kyo@math.aoyama.ac.jp}
\author{Pavle Pand\v zi\'c}
\address[Pand\v zi\'c]{Department of Mathematics, Faculty of Science, University of Zagreb, Bijeni\v cka 30, 10000 Zagreb, Croatia}
\email{pandzic@math.hr}
\thanks{K.~Nishiyama is supported by JSPS KAKENHI Grant Number \#{21K03184}.}
\thanks{P.~Pand\v zi\'c is supported by the QuantiXLie  Center of Excellence, a project 
cofinanced by the Croatian Government and European Union through the European Regional Development Fund - the Competitiveness and Cohesion Operational Programme 
(KK.01.1.1.01.0004).}

\subjclass[2020]{primary: 14M15, 57T15, 53C35, 22E47. secondary: 32M15, 15A66}
\keywords{Grassmannian, Lagrangian Grassmannian, symmetric space, de Rham cohomology, spin module, Clifford algebra, exterior algebra, invariants}
\begin{abstract} 
We study various kinds of Grassmannians or Lagrangian Grassmannians over $\bbR$, $\bbC$ or $\bbH$, all of which can be expressed as $\bbG/\bbP$ where $\bbG$ is a classical group and $\bbP$ is a parabolic subgroup of $\bbG$ with abelian unipotent radical. The same Grassmannians can also be realized as (classical) compact symmetric spaces $G/K$. We give explicit generators and relations for the de Rham cohomology rings of $\bbG/\bbP\cong G/K$. At the same time we describe certain filtered deformations of these rings, related to Clifford algebras and spin modules. While the cohomology rings are of our primary interest, the filtered setting of $K$-invariants in the Clifford algebra actually provides a more conceptual framework for the results we obtain.
\end{abstract}

\dedicatory{Dedicated to Bert Kostant}

\maketitle

\section{Introduction}
\label{sec intro}

This paper is motivated by email correspondence between the third named author and Bert Kostant in 2004 \cite{kostantemail}. In his study of the action of Dirac operators on Harish-Chandra modules attached to a real reductive Lie group $G_\bbR$, the third named author was led to consider the algebra $C(\frp)^K$. Here $K$ is a maximal compact subgroup of $G_\bbR$ corresponding to a Cartan decomposition $\frg=\frk\oplus\frp$ of the complexified Lie algebra of $G_\bbR$ and $C(\frp)$ is the Clifford algebra of $\frp$ with respect to the (extended) Killing form. The third named author remembered hearing Kostant speak about the graded version of $C(\frp)^K$, $(\twedge\frp)^K$, and its relation to cohomology, so he asked Kostant about the latter algebra expecting the description of the graded version of the algebra will give some information about the algebra $C(\frp)^K$ he was interested in. Kostant replied as follows:
\bigskip

{\ttfamily

\noindent From kostant@math.mit.edu  Mon Mar 15 21:07:54 2004
	
\qquad for <pandzic@math.hr>; Mon, 15 Mar 2004 21:07:53 +0100 (MET)

\noindent Subject: Re: a question

\bigskip
 
\noindent Dear Pavle
                 
\qquad \qquad The following is known. In general 
(\textbackslash wedge p)\bighat k is the

\noindent cochain complex whose cohomology is H(G/K). In the symmetric case (G/K is a
symmetric space) the cobounbary operator is trivial so that (\textbackslash wedge p)\bighat k =
H(G/K). If in addition rank K = rank G then all cohomology is even
dimension and if e = Euler characteristic of G/K then of course
           \newline dim (\textbackslash wedge p)\bighat  k = e. Also in
this case dim p is even and C(p) = End S where S is the spin module.
Furthermore under the action of K

\begin{center}
    S=V\_1 + .... + V\_e
\end{center}

\noindent where the V\_i are irreducible K modules and all distinct. Thus C(p)\bighat K is an
abelian (semisimple) algebra of dimension e. This is a special case of my
result with Sternberg, Ramond and Gross (GKRS) in the PNAS. It is a nice
unsolved problem to locate the 1-dimensional idempotents which project onto
the various V\_ i. In case G/K is Hermitian symmetric these idempotents I
believe correspond to the Schubert classes in (\textbackslash wedge p)\bighat k. If so one has
some sort of generalization of Schubert classes when G/K is not Hermitian.
\medskip

\noindent Best regards
 
\medskip

\noindent Bert
}
\medskip

\bigskip

It turns out Kostant was not exactly right in thinking that the idempotents will correspond to the Schubert classes; in fact, they typically all have nonzero top degree term, and one must take their linear combinations in order to get a basis compatible with filtration. His intuition that this question is related to some sort of generalization of Schubert classes when $G/K$ is not Hermitian was however right; as we shall see, these cases correspond to various kinds of real or quaternionic  Grassmannians which possess their own Schubert calculus.

Our main goal in this paper is to describe the de Rham cohomology rings of these Grassmannians using their realization as compact symmetric spaces.  The main tool is the representations of the Clifford algebras associated with symmetric spaces.  
When the Grassmannians are complex, the results we obtain here are well known.  However, for the real and quaternionic Grassmannians, the results are not widely known.  
For example, the ring structure of the cohomology of the real Grassmannians was conjectured by Casian and Kodama \cite{Casian.Kodama.2013}.  Recently, many related papers have appeared (see \cite{Rabelo.2016,Lambert.Rabelo.2022,Rabelo.SMartin.2019,Matszangosz.2021} for integral cohomology, for example), 
but these papers treat the Grassmannians individually, and not in a uniform way. See also \cite{chen,CHL,esch,HL} for identifications of compact symmetric spaces and Grassmannians. 

We study these cohomology rings systematically.  The key ideas are 
the usage of Clifford algebras as mentioned above and the description of the Grassmannians as flag manifolds 
corresponding to 
maximal parabolic subgroups with abelian unipotent radicals.  
In fact, we start with the Grassmannian $ \bbG/\bbP $, where $ \bbP $ is a maximal parabolic subgroup with abelian unipotent radical in a reductive Lie group $ \bbG $, then produce symmetric spaces of compact and noncompact type using three involutions 
$ \theta, \sigma $ and $ \tau = \theta \sigma $ of $ \bbG $, which are mutually commuting.  

Our results describe the de Rham cohomology ring of each of the Grassmannians in our list by explicit generators and relations, and also give an explicit basis consisting of certain monomials. In most cases (including the well known complex Grassmannian cases) we show that our basis can be replaced by a basis consisting of certain Schur polynomials. These have the advantage of a rather well understood multiplication table, related to the Littlewood-Richardson coefficients. However our monomials with their clear structure of generators and relations also lead to an explicit multiplication table, as explained in Section \ref{coho symm}. In this way we get an alternative approach to Schubert calculus on the Grassmannians in question. 

The paper is organized as follows.

In Section \ref{sec gras} we describe our Grassmannians $ \bbG/\bbP $ and give their realizations as compact symmetric spaces. The cases are summarized in the table at the end of the introduction. 
In each case $ \bbP $ is a maximal parabolic subgroup 
of $ \bbG $
with abelian unipotent radical.  
Note that a Grassmannian is the set
of certain subspaces of fixed dimension in a vector space $ V $.  
On this set of subspaces, the automorphism group $ \bbG $ of $ V $, which typically preserves additional structure (quadratic or symplectic forms), acts transitively.  This is justified by the following well known theorem by Witt.

\begin{thm}\cite{Witt}, \cite[Ch.~1-2]{Bou}, \cite{Die}. \label{witt}
Let $V$ be a vector space over $\bbR$, $\bbC$ or $\bbH$ with a nondegenerate form $\lara$ which is either bilinear symmetric, or bilinear skew symmetric, or Hermitian, or skew Hermitian. Let $U$ and $W$ be subspaces of $V$ and let $\varphi:U\to W$ be an isomorphism preserving $\lara$. Then $\varphi$ extends to an automorphism of $V$ preserving $\lara$.
\end{thm}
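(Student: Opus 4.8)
The plan is to prove this by induction on $\dim U$, after a preliminary reduction to the nondegenerate situation. Let $R=U\cap U^{\perp}$ be the radical of $\lara|_{U}$; since $\varphi$ preserves $\lara$ it carries $R$ isomorphically onto the radical $S=W\cap W^{\perp}$ of $\lara|_{W}$. Choose a complement $U'$ of $R$ in $U$, so that $U=R\oplus U'$ is an orthogonal direct sum with $U'$ nondegenerate, and set $W'=\varphi(U')$, giving $W=S\oplus W'$. Inside the nondegenerate space $(U')^{\perp}$, which contains $R$, one builds — one basis vector of $R$ at a time — a totally isotropic subspace $R^{*}$ with $R\oplus R^{*}$ nondegenerate and the induced pairing $R\times R^{*}\to\bk$ perfect; then $\widehat{U}:=U\oplus R^{*}$ is nondegenerate, being the orthogonal direct sum of $U'$ and $R\oplus R^{*}$. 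Constructing $\widehat{W}:=W\oplus S^{*}$ the same way and letting $\widehat{\varphi}$ be $\varphi$ on $U$ and the ``inverse transpose'' of $\varphi|_{R}$ on $R^{*}$, one checks that $\widehat{\varphi}\colon\widehat{U}\to\widehat{W}$ is an isometry; since any isometry of $V$ extending $\widehat{\varphi}$ also extends $\varphi$, we may henceforth assume $U$ (hence $W$) nondegenerate.

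Suppose first that $\lara$ is not alternating. Then $U$ is not totally isotropic, so it contains $u$ with $\langle u,u\rangle\neq 0$, and $U=\bk u\oplus U_{0}$ is an orthogonal direct sum with $U_{0}=U\cap u^{\perp}$ again nondegenerate. The crucial step is to find an isometry $g$ of $V$ with $g(u)=\varphi(u)$: since $\langle\varphi(u),\varphi(u)\rangle=\langle u,u\rangle$, this is the classical transitivity of the isometry group on vectors of a fixed nonzero ``length'', which one proves by composing reflections in anisotropic vectors (drawn from $u\pm\varphi(u)$, after enlarging the plane $\bk u+\bk\varphi(u)$ to a nondegenerate space beforehand if it is degenerate, as in the reduction step). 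Replacing $\varphi$ by $g^{-1}\circ\varphi$ we may assume $\varphi$ fixes $u$; then $\varphi$ maps $U_{0}$ into the nondegenerate space $u^{\perp}$, so by the inductive hypothesis inside $u^{\perp}$ we extend $\varphi|_{U_{0}}$ to an isometry $h$ of $u^{\perp}$, and ``$h$ on $u^{\perp}$, the identity on $\bk u$'' is an isometry of $V$ extending $\varphi$. The base case $\dim U=1$ with $u$ anisotropic is exactly the reflection step, while the case $\dim U=1$ with $u$ isotropic — on which the reduction step also depends — is settled by embedding $\bk u$ and $\bk\varphi(u)$ in hyperbolic planes $H$ and $H'$, noting that $H^{\perp}$ and $(H')^{\perp}$ have equal dimension and the same classifying invariants (which are additive over orthogonal direct sums) and are therefore isometric, and gluing.

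If instead $\lara$ is alternating, no induction is needed: $V=U\oplus U^{\perp}=W\oplus W^{\perp}$ orthogonally, the spaces $U^{\perp}$ and $W^{\perp}$ carry nondegenerate alternating forms of equal dimension and are hence isometric, and combining $\varphi$ with any such isometry $U^{\perp}\to W^{\perp}$ extends $\varphi$ to $V$.

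The conceptual content is modest; I expect essentially all of the difficulty to lie in the case-by-case bookkeeping across the four kinds of form and, especially, over the noncommutative field $\bbH$. One must write the reflections, the hyperbolic embeddings and the plane-isometries with all scalars kept on one side and the involution $c\mapsto\bar{c}$ of $\lara$ handled correctly — for instance, shifting a vector to make it isotropic amounts to solving $t+\bar{t}$, resp.\ $t-\bar{t}$, equal to a prescribed symmetric, resp.\ skew-symmetric, value, which is always solvable — and one must verify that the extension $\widehat{\varphi}$ in the reduction step preserves $\lara$, which comes down to the compatibility of the perfect pairings $R\times R^{*}\to\bk$ and $S\times S^{*}\to\bk$ with $\varphi|_{R}$. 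All of this is carried out in detail in \cite{Witt}, \cite[Ch.~1--2]{Bou} and \cite{Die}.
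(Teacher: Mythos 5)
The paper does not prove this statement at all: Theorem \ref{witt} is quoted as a classical result with references to Witt, Bourbaki and Dieudonn\'e, so there is no in-paper argument to compare against. Your outline is a correct rendering of the standard proof from those sources (hyperbolic completion of the radical to reduce to nondegenerate $U$, then induction via reflections in the non-alternating case and Witt-type cancellation for isotropic lines and for the alternating case), and the two points you flag as bookkeeping are indeed the only delicate ones --- e.g.\ that ``nondegenerate and non-alternating'' yields an anisotropic vector needs the polarization identities for each of the four involutions, and the cancellation step needs that forms over $\bbR$, $\bbC$, $\bbH$ are classified by dimension and signature, both additive over orthogonal sums. Nothing in your sketch would fail for the cases the theorem actually covers.
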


Let $ \bbP $ be the stabilizer in $\bbG$ of a standard subspace $ U$ in our Grassmannian.  (In some cases, $ U $ is a Lagrangian or isotropic subspace.  It depends on the situation.)  
Then $ \bbP $ is a maximal parabolic subgroup and 
our Grassmannian is equal to $\bbG/\bbP $.  
So Grassmannians are naturally identified with (partial) flag manifolds $ \bbG/\bbP $.  

As already mentioned, compact symmetric spaces are  diffeomorphic to Grassmannians $\bbG/\bbP $ where $ \bbP $ has abelian unipotent  radical.
In fact, these Grassmannians exhaust all such pairs $ (\bbG, \bbP) $ (see \cite{Wolf}, \cite[\S~5.5.1]{howe}, and also \cite{RRS}).  
Then, in appropriate realizations, the Grassmannians are varieties of 
either ordinary subspaces of a vector space, or of Lagrangian subspaces with respect to a certain form. 

In this paper, 
the group $\bbG$ will be a classical group; in particular, it is a reductive matrix group, and we always consider the standard Cartan involution 
\eq
\label{cart inv}
\theta(g)=(\bar g^t)^{-1},\quad g\in\bbG.
\eeq
The corresponding maximal compact subgroup $\bbG^\theta$ will be denoted by $ \bbK $ and also by $ G $.  

We will use Proposition \ref{k trans} below to see that $\bbK$ acts transitively on the Grassmannian, so that $\bbG/\bbP$ is diffeomorphic to $ \bbK/\bbP\cap\bbK $. 
This follows easily from the fact that $\bbP$ contains a minimal parabolic subgroup $\bbP_0=\bbM\bbA\bbN_0$, and that $\bbG$ has an Iwasawa decomposition
$\bbG=\bbK\bbA\bbN_0$, where $\bbK$ is as above.

It is known by \cite[\S 4]{TK1968} (see also \cite[Lemma 7.3.1]{kobayashi} and \cite{RRS}) 
that the following are equivalent:
\begin{enumerate}
\item 
$\bbP$ has abelian unipotent 
radical;
\item 
$\bbP$ has a Levi subgroup $\bbL$ which is a symmetric subgroup of $\bbG$;
\item 
$ K := \bbP\cap\bbK=\bbL\cap\bbK $ is  a symmetric subgroup of $ G = \bbK$. 
\end{enumerate}
We give a short and comprehensive proof of this result in Theorem \ref{abel sym} for convenience of the readers.

The involution $ \sigma $  
of $\bbG$ mentioned above is related to the above Levi subgroup $\bbL$ of $\bbP$: $\bbL$ is $\bbG^\sigma$, the subgroup of $\bbG$ consisting of points fixed by $\sigma$; we will see that also $ \bbP $ itself is $ \sigma $-stable. 
The involution $\sigma$, which we describe explicitly below, commutes with the Cartan involution $\theta$, and hence $ K = \bbL^{\theta} = \bbL \cap \bbK $ is a maximal compact subgroup of $ \bbL $. 

Let us denote by $ \tau = \theta \sigma $ the third involution of $ \bbG $.  
We denote the fixed point subgroup $ \bbG^{\tau} $ by $ G_{\bbR} $, so that 
$ G_{\bbR}/ K $ is a noncompact Riemannian symmetric space 
with its compact dual 
equal to $ G/K $.
It turns out that in this way we get to cover the full list of compact classical symmetric spaces as listed e.g. in \cite[p.{69}]{howe}.

We summarize the three involutions and symmetric spaces thus obtained in the following diagram.  
See also Table~\ref{tab:Grass_symspaces} at the end of Introduction.
\begin{equation*}
\vcenter{
\xymatrix @R-.3ex @M+.5ex @C-3ex @L+.5ex @H+1ex {
 & \ar@{-}[ld]_{\sigma} \ar@{-}[d]_{\theta} \makebox[3ex][c]{$\bbG$} \ar@{-}[rd]^{\tau = \theta \sigma} &
\\
\ar@{-}[rd]_(.4){\tau} \makebox[3ex][r]{$\bbL = \bbG^{\sigma}$}  & \ar@{-}[d]_{\sigma} \makebox[9ex][c]{$\bbK = \bbG^{\theta} = G$} & \makebox[3ex][l]{$ \bbG^{\tau} = G_{\bbR} $} \ar@{-}[ld]^(.3){\theta}
%%\ar@{}@<1ex>[ld]_{\theta} \ar@{-}[ld]
\\
& \makebox[9ex][c]{$K = \bbL \cap \bbK = G \cap G_{\bbR}$} & 
}}
\qquad\qquad
\begin{aligned}
\bbP = \bbL \bbN :\ &\text{$\sigma $-stable parabolic subgroup}
\\
&\text{with abelian unipotent radical} 
\\
\tau = \theta & \text{ on } \bbL 
\\
\theta = \sigma & \text{ on }  G_{\bbR} 
\\
\sigma = \tau & \text{ on } \bbK = G 
\end{aligned}
\end{equation*}
Since $\bbP$ is block upper triangular with two diagonal blocks, of sizes (say) $p$ and $q$, $\bbL$ can be taken as the block diagonal part of $\bbP$. Now if we denote by $I_{p,q}$ the matrix $\smat I_p&0\cr 0&-I_q\esmat$, then
\[
I_{p,q}\pmat A&B \cr C&D\epmat I_{p,q}=\pmat A&-B \cr -C&D\epmat,
\]
and we see that the involution we want is
\eq
\label{def sigma}
\sigma(g)=I_{p,q}g I_{p,q}.
\eeq
It follows that $ \bbP $ is $ \sigma $-stable, and 
it will now be very easy to identify the groups $\bbL=\bbG^\sigma$. 
(In fact, we will see in Section 2 that $\sigma$ can be described in terms of $\bbP$ only; see Theorem \ref{abel sym}.)

It turns out that 
the complexifications of these groups are exactly the groups listed in \cite[p.~70]{howe} as the groups acting in a skew-multiplicity free way on $\frp$, the complexified tangent space of 
$ G/K \simeq \bbG/ \bbP $  
at the base point $ e K $.  Here ``skew-multiplicity free" means that $ \bbL_\bbC$ acts on 
$ \twedge \frp $ in a multiplicity free way (note that $ \frp $ can also be identified with the complexification of the Lie algebra of $ \bbN $, so that $ \bbL $ acts on it naturally).  

We will also describe the groups $G_\bbR=\bbG^{\tau} $, where $ \tau = {\sigma\theta}$. Note that by \eqref{cart inv} and \eqref{def sigma}, we get
\eq
\label{sigma theta}
\sigma \theta(g)=I_{p,q}(\bar g^t)^{-1}I_{p,q}.
\eeq
The group $G_\bbR$ is a noncompact reductive Lie group with maximal compact subgroup $K=\bbK^\sigma$, 
and $G/K=\bbK/\bbK^{\sigma}$ is the compact dual of the noncompact Riemannian symmetric space $G_\bbR/K$ as explained above. 
In this way, we get to cover the full list of noncompact classical symmetric spaces.  
As in the Hermitian symmetric case, 
the noncompact Riemannian symmetric space $G_\bbR/K$ is embedded into 
$ \bbG/\bbP $ as an open subset (a generalization of the Borel embedding).

The realizations of our Grassmannians as symmetric spaces are known in most (or all) cases, but the results are scattered in the literature. We will indicate some references when we get to the case by case analysis.
Our view point is to produce 
the classical Riemannian symmetric spaces, both compact and noncompact ones, 
in terms of the pairs $ (\bbG, \bbP)$ on our list.

In Section \ref{sec spin} we collect some facts needed for our description of the cohomology rings of compact symmetric spaces $G/K$ as above (and thus also of the corresponding Grassmannians $ \bbG/\bbP $). 
For $\sigma$ as above, its restriction to $G$ is an involution such that $K=G^\sigma$. 
Let $\frg=\frk\oplus\frp$ be the decomposition of the complexified Lie algebra $\frg$ of $G$ into eigenspaces of $\sigma$. We are assuming $G$ is connected, but $K$ need not be connected.

As noted in Kostant's email message, if $\frg$ and $\frk$ have equal rank, then the algebras $C(\frp)^K$ and $(\twedge\frp)^K$ can be expressed as
\[
C(\frp)^K= \Proj(S);\qquad (\twedge\frp)^K= \gr\Proj(S),
\]
where the algebra $\Proj(S)$ is spanned by the projections of the spin module $S$ to its isotypic components for the pin double cover $\Kt$ of $K$.
As explained in Subsections \ref{K dec general} and \ref{cpk eq rk}, one can use the natural map $\alpha:U(\frk)\to C(\frp)$ that gives the spin module its $\frk$-module structure, and the fact that the projections are given by the action of the center of $U(\frk)$,  to express 
the algebra $\Proj(S)$ as the quotient of $\bbC[\frt^*]^{W_K}$ by the ideal generated by the elements of $\bbC[\frt^*]^{W_G}$ vanishing at $\rho$. Here $\frt$ is the complexification of a Cartan subalgebra of the Lie algebra of $K$, $W_K$ is the Weyl group of $K$ (see \eqref{gp W gp}), $W_G=W_\frg$ is the Weyl group of $G$ or equivalently the Weyl group of the root system $\Delta(\frg,\frt)$, and $\rho$ is the half sum of roots in the (fixed) positive root system $\Delta^+(\frg,\frt)$. 
The algebra $\gr\Proj(S)$ attached to the natural filtration of $\Proj(S)$ can be expressed as the quotient of $\bbC[\frt^*]^{W_K}$ by the ideal generated by the elements of $\bbC[\frt^*]^{W_G}$ vanishing at $0$.

In the ``almost equal rank case" $(\frg,\frk)=(\frso(2p+2q+2),\frso(2p+1)\times\frso(2q+1))$, the algebras $C(\frp)^K$ and $(\twedge\frp)^K$ are very close to $\proj(S)$ and $\gr\proj(S)$; one has to tensor with the Clifford respectively exterior algebra of a certain one-dimensional space. This is explained in Subsections \ref{subsec so odd} and \ref{subsec so/soo}.

On the opposite end are the primary and almost primary cases; in these cases the $\Kt$-module $S$ has only one isotypic component and the algebra of projections is trivial. These cases are described in Subsections \ref{subsec primary and aprim} and \ref{subsec u/o}. The algebra $(\twedge\frp)^K$ is now equal to the exterior algebra of a certain subspace of $(\twedge\fp)^K$ (denoted by $\cal P_\wedge(\frp)$), where $\cal P_\wedge(\frp)$ is the subspace orthogonal to the square of the augmentation ideal (Definition \ref{def samspace}). This result is due to Hopf and Samelson \cite{hopf1941,Samelson41} for the group case and Theorem \ref{thm prim and aprim alg} for the remaining (almost) primary cases. Similar isomorphisms 
 go way back to Cartan, Chevalley, Koszul and others; see \cite[p. 568]{CCCvol3}. 
We believe that likewise $C(\frp)^K$ is the Clifford algebra over $\cal P_\wedge(\frp)$, but this is currently known (by the results of Kostant \cite{K97}) only in the group cases, i.e., when $\frg=\frg_1\oplus\frg_1$ and $\frk\cong\frp\cong\frg_1$.

Finally, in Section \ref{coho symm} we 
give a precise description, by generators and relations, of the algebras $(\twedge\frp)^K$ in each of the cases. In the equal rank and almost equal rank cases, we also describe the algebras $C(\frp)^K$, which in these cases amounts to describing the algebras $\Proj(S)$. 
 We also give explicit bases for these algebras. 
 In this way we get to compute the de Rham cohomology of the symmetric spaces on our list, and thus also of the corresponding Grassmannians.

 Namely, as mentioned in Kostant's email message, 
the cohomology of the compact symmetric space $G/K$ is (after complexification) equal to $(\twedge\frp)^K$. 
This fact is quite well known, but it is not easy to find an appropriate reference. It is proved in  
 \cite{taylor} (unpublished) using Hodge theory, partially proved in \cite{leung}, and  proved in \cite{CCCvol3} under the assumption $K$ is connected.
 It is also mentioned in passing in \cite[p.~69]{howe},
 and in \cite[\S~1.6]{BW}. 
Borel and Wallach \cite{BW} attribute the result to \'E. Cartan and de Rham. We start Section \ref{coho symm} by presenting a simple proof of this fact which we learned from Sebastian Goette \cite{goette}.

We are thus led to study the appropriate quotients of the $W_K$-invariants in $\bbC[\frt^*]$ in each of the cases. The results often involve the following algebra.

\begin{Def}
\label{def alg h}
Let $p,q\in \bbZ$ with $1\leq p\leq q$, and let $c=(c_1,\dots,c_{p+q})\in\bbC^{p+q}$. We define 
$\frH(p,q;c)$ to be the algebra generated by $r_1,\dots,r_p$ and $s_1,\dots,s_q$ with relations generated by 
\[
\sum_{i,j\geq 0;\ i+j=k}r_is_j=c_k
\]
for $k=1,\dots,p+q$, where we set $r_0=s_0=1$ and $r_i=0$ if $i>p$, $s_j=0$ if $j>q$. 

We can use the first $q$ of the relations to express $s_1,\dots,s_q$ in terms of the $r_i$, so $\frH(p,q;c)$ is in fact generated by $r_1,\dots,r_p$ only. 
The remaining relations can be used to obtain the relations among the $r_i$, not involving the $s_j$. We do that in the proof of Theorem \ref{gen rel basis}; the relations among the $r_i$ are \eqref{rel explic 1} and \eqref{rel explic other} and they form another set of defining relations. From these relations one can obtain expressions for each monomial in $r_1,\dots,r_p$ of degree $q+1$ as a linear combination of lower degree monomials in $r_1,\dots,r_p$. We will also see that the monomials in the $r_i$ of degree at most $q$ form a basis of the algebra $\frH(p,q;c)$; so $\frH(p,q;c)$ can be identified with the space
\[
\bbC[r_1,\dots,r_p]_{\leq q}
\]
of polynomials in the $r_i$ of degree $\leq q$. 

We show in Remark \ref{rem schur} that the above monomials span the same subspace of $\bbC[r_1,\dots,r_p]$ as the Schur polynomials $s_\lambda$ attached to  partitions $\lambda$ with Young diagrams contained in the $p\times q$ box. Moreover, our basis consisting of monomials and the basis consisting of Schur polynomials are connected by a triangular change of basis. In this way we get a connection with the usual Schubert calculus, where the multiplication of the Schur polynomials is given in terms of Littlewood-Richardson coefficients.
\end{Def}

For $G/K=\UU(p+q)/\UU(p)\times \UU(q)$, we prove in Theorem \ref{gen rel basis} that $C(\frp)^K$ is isomorphic to the algebra $\frH(p,q;c)$, with the $r_i$ being the elementary symmetric functions in the first $p$ coordinate functions $x_1,\dots,x_p$ on $\frt\cong\bbC^{p+q}$, the $s_j$ being the elementary symmetric functions in the last $q$ coordinate functions $x_{p+1},\dots,x_{p+q}$ on $\frt$, and $c=(t_1(\rho),\dots,t_{p+q}(\rho))$, where $t_k$ are the elementary symmetric functions on $x_1,\dots,x_{p+q}$. 
The natural filtration on $C(\frp)^K$ coming from the filtration of $C(\frp)$ corresponds to the filtration on $\frH(p,q;c)$ obtained by setting 
\eq
\label{set deg upq}
\deg r_i=2i,\qquad i=1,\dots,p.
\eeq
The cohomology ring $(\twedge\frp)^K$ is isomorphic to $\frh(p,q,0)$, and its natural grading is again obtained by \eqref{set deg upq}.

For $G/K=\Sp(p+q)/\Sp(p)\times \Sp(q)$ (Theorem \ref{gen rel basis B/BxB}), the algebra $C(\frp)^K$ is again $\frH(p,q;c)$, but now the $r_i$, the $s_j$ and the $t_k$ are elementary symmetric functions on the squares of the appropriate variables. The parameter $c$ is again given by evaluating $t_k$ at $\rho$. The filtration is now obtained by setting $\deg r_i=4i$,  $i=1,\dots,p$. The 
cohomology ring $(\twedge\frp)^K$ is again isomorphic to $\frh(p,q,0)$, and its natural grading is also obtained by setting $\deg r_i=4i$.
For $G/K=\SO(k+m)/S(\OO(k)\times \OO(m))$ (Theorem \ref{gen rel basis SOk+m}), the algebra $C(\frp)^K$ is $\frH(p,q;c)$ if $(k,m)=(2p,2q)$ or $(2p,2q+1)$, with $\{r_i\}$, $\{s_j\}$, $\{t_k\}$ and $c$ defined similarly as above. If $(k,m)=(2p+1,2q+1)$ (the almost equal rank case), there is an extra generator $e$, of degree $2p+2q+1$, squaring to $1$.
The filtration degrees of the $r_i$ are again equal to $4i$. The 
cohomology ring $(\twedge\frp)^K$ is  isomorphic to $\frH(p,q,0)$ or
$\frH(p,q,0)\oplus\frH(p,q;0)e$, and its natural grading is also obtained by setting $\deg r_i=4i$ and $\deg e=2p+2q+1$. In this case we get to prove the conjecture of Casian-Kodama \cite{Casian.Kodama.2013}.

For $G/K=\UU(n)/\OO(n)$ (Theorem \ref{thm prim and aprim alg}), the situation is different: the algebra $(\twedge\frp)^K$ is the exterior algebra on the subspace $\cal P_\wedge(\frp)$ (Definition \ref{def samspace}), and the degrees are given in Table \ref{tab degrees prim}. 

For $G/K=\Sp(n)/U(n)$ (Theorem \ref{thm basis C/A}), we are back to elementary symmetric functions: $r_1,\dots,r_n$ are the elementary symmetric functions on the coordinate functions $x_1,\dots,x_n$ on the Cartan subalgebra $\frt\cong\bbC^n$ of $\frk$, while $t_1,\dots,t_n$ are the elementary symmetric functions on the squares of the $x_i$. The algebras $C(\frp)^K$ and $(\twedge\frp)^K$ are generated by the $r_i$, but the relations are now different:
\[
r_k^2=t_k+2r_{k-1}r_{k+1}-2r_{k-2}r_{k+2}+\dots,\quad k=1,\dots,n,
\]
where as usual we set $r_0=1$ and $r_i=0$ for $i>n$ or $i<0$, and where $t_k$ should be replaced by $t_k(\rho)$ if the algebra is $C(\frp)^K$ and with 0 if the algebra is $(\twedge\frp)^K$. This time a basis for each of our algebras is given by the monomials
\[
r_1^{\eps_1}r_2^{\eps_2}\dots r_n^{\eps_n},\quad \eps_i\in\{0,1\}.
\]
The filtration degree of $C(\frp)^K$ inherited from $C(\frp)$, and the gradation degree of $(\twedge\frp)^K$ inherited from $\twedge\frp$, are obtained by setting $\deg r_i=2i$ for $i=1,\dots,n$.

For $G/K=\SO(2n)/\UU(n)$ (Theorem \ref{thm basis D/A}) the situation is entirely analogous to the case $G/K=\Sp(n)/U(n)$, except that we get to eliminate $r_n$ from the list of generators.

For the group cases $G\times G/\Delta G\cong G$ where $G$ is $\SO(n)$, $\UU(n)$ or $\Sp(n)$ (Theorem \ref{gen rel basis group}), the algebras $C(\frp)^K\cong C(\frg)^\frg$ and $(\twedge\frp)^K\cong(\twedge\frg)^\frg$ are the Clifford respectively exterior algebra of the graded subspace $\cal P_\wedge(\frp)$ (Definition \ref{def samspace}). 

For the Clifford algebras, these cases were settled by Kostant in \cite{K97}. For the exterior algebras, the fact that $(\twedge \fp)^\fk$ is isomorphic to a graded subspace goes back to Cartan, Chevalley, Koszul and others \cite{CCCvol3}. 
The degrees are given in Table \ref{tab degrees prim}.

For the cases $G/K=\UU(2n)/\Sp(n)$ (Theorem \ref{gen rel basis A/C}) the algebra  $(\twedge\frp)^K$ is the exterior algebra of the graded subspace $\cal P_\wedge(\frp)$ (Definition \ref{def samspace}).
Again, the degrees are given in Table \ref{tab degrees prim}.

Our results are well known for complex Grassmannians, i.e., for $\Gr_p(\bbC^{p+q})\cong\UU(p+q)/\UU(p)\times\UU(q)$, $\Gr_2(\bbR^{2+q})\cong\SO(2+q)/S(\OO(2)\times\OO(q))$, $\LGr(\bbC^{2n})\cong\Sp(n)/\UU(n)$ and $\OLGr^+(\bbC^{2n})\cong\SO(2n)/\UU(n)$. Among many papers dealing with the complex Grassmannians and their Schubert calculus, we mention 
\cite{Fulton.YT.1997}, \cite{Fulton.Pragacz.1998}, \cite{Tamvakis2005,TamvakisArbeitTalk2001}, and  \cite{Pragacz.1991,Pragacz.1996,Pragacz.2003}.

\newpage
%\begin{landscape}
%\newgeometry{left=2cm,top=2cm,right=0.1cm,bottom=0.1cm} 
%\vspace*{-5ex}

\begin{table}[hpbt]
    \centering
    \caption{\large\textbf{Table of Grassmannians and corresponding symmetric spaces.}}\label{tab:Grass_symspaces}
%%% begin of scaling
\resizebox{\linewidth}{!}{
%\resizebox{4in}{!}{

%\hspace*{-40ex}
%\begin{center}
\begin{tabular}{lllllll}
\makebox[0pt][l]{\textbf{General Linear} $n = p+q$}\\
$\mathbb{G}$ & $\mathbb{P}$ & $\bbL =\mathbb{G}^{\sigma}$   &    $\mathbb{K} = \mathbb{G}^\theta = G $ &  $ \mathbb{G}^{\sigma\theta}= G_\mathbb{R}$ & $\mathbb{K}^{\sigma} = G_\mathbb{R}^\theta= K$ & $\mathfrak{p}_0$
\\  \hline \\
$\GL_{n}(\mathbb{R})$  &  $\mathrm{Stab}(\mathbb{R}^p)$ & $\GL_p(\mathbb{R}) {\times} \GL_q(\mathbb{R})$ &   $\UU_{n}(\mathbb{R})$ & $\UU_{p,q}(\mathbb{R})$ & $\UU_p(\mathbb{R}) {\times} \UU_q(\mathbb{R})$ 
& $ \Mat_{p,q}(\bbR) $
\\

$\GL_n(\mathbb{C})$  & $\mathrm{Stab}(\mathbb{C}^p)$ & $\GL_p(\mathbb{C}) {\times} \GL_q(\mathbb{C})$ & 
   $\UU_n(\mathbb{C})$ & $\UU_{p,q}(\mathbb{C})$ & $\UU_p(\mathbb{C}) {\times} \UU_q(\mathbb{C})$ 
& $ \Mat_{p,q}(\bbC) $
\\

${\GL}_{n}(\mathbb{H})$  & $\mathrm{Stab}(\mathbb{H}^p) $ & $\GL_p(\mathbb{H}) {\times} \GL_q(\mathbb{H})$&    $\UU_{n}(\mathbb{H})$ & $\UU_{p,q}(\mathbb{H})$ & $\UU_p(\mathbb{H}) {\times} \UU_q(\mathbb{H})$
& $ \Mat_{p,q}(\bbH) $
\\

\quad \\
\makebox[0pt][l]{\textbf{Symplectic}}\\
%\hline
%\begin{center}
%\begin{tabular}{ c c c c c c c c c}
$\mathbb{G}$ & $\mathbb{P}$ &  $\bbL =\mathbb{G}^{\sigma}$ &      $\mathbb{K} = \mathbb{G}^\theta = G $ &   $ \mathbb{G}^{\sigma\theta}= G_\mathbb{R}$ & $\mathbb{K}^{\sigma} = G_\mathbb{R}^\theta= K$ &$\mathfrak{p}_0$
\\ 
\hline \\

 $\Sp_{2n}(\mathbb{R})$  & $\mathrm{Stab}(L_0)$ &$\GL_n(\mathbb{R})$ & $\UU_n(\mathbb{C})$  & $\GL_n(\mathbb{R})$ & $\UU_n(\mathbb{R})$  
& $ \Sym_n(\bbR) $
\\
%  $SL_n(\mathbb{R})$ & $\SO_n(\mathbb{R})$ & $\Sp_{2n}(\mathbb{R})$  &   $SU_n(\mathbb{C})$ & $P_{n}(\mathbb{R}) = Stab(\mathbb{R}^n)$ & $ -I_{n,n}{X}^TI_{n,n}$ & $\SO_{n,n}(\mathbb{R})$ & $\SO_n(\mathbb{R})$\\

 $\Sp_{2n}(\mathbb{C})$  &   $\mathrm{Stab}(L_0)$ & $\GL_n(\mathbb{C})$ &$\UU_n(\mathbb{H})$ & $\Sp_{2n}(\mathbb{R})$ & $\UU_n(\mathbb{C})$ 
& $ \Sym_n(\bbC) $
\\

\quad \\
\makebox[0pt][l]{\textbf{Orthogonal}}\\
%\hline
%\begin{center}
%\begin{tabular}{ c c c c c c c c c}
$\mathbb{G}$ & $\mathbb{P}$ &  $\bbL =\mathbb{G}^{\sigma}$ &      $\mathbb{K} = \mathbb{G}^\theta = G $ &   $ \mathbb{G}^{\sigma\theta}= G_\mathbb{R}$ & $\mathbb{K}^{\sigma} = G_\mathbb{R}^\theta= K$ &$\mathfrak{p}_0$
\\ 
\hline \\

$\OO_{2n}(\mathbb{C})$  & $\mathrm{Stab}(L_0)$ &$\GL_n(\mathbb{C})$  &     $\OO_{2n}(\mathbb{R})$  & $ \SO^*(2n, j \Id_n)$ & $  \UU_n(\mathbb{C})$ 
& $ \Alt_n(\bbC) $
\\

%\end{tabular}
%\end{center}
%\vspace{.5cm}
\quad \\
\makebox[0pt][l]{\textbf{Hermitian}}\\
%\hline
%\begin{center}
%\begin{tabular}{ c c c c c c c c c}
$\mathbb{G}$ & $\mathbb{P}$ &  $\bbL =\mathbb{G}^{\sigma}$ &      $\mathbb{K} = \mathbb{G}^\theta = G $ &   $ \mathbb{G}^{\sigma\theta}= G_\mathbb{R}$ & $\mathbb{K}^{\sigma} = G_\mathbb{R}^\theta= K$ &$\mathfrak{p}_0$
\\ 
\hline \\

$\UU_{n,n}(\mathbb{R})$  &   $\mathrm{Stab}(L_0)$ &$\GL_n(\mathbb{R})$ &
  $\UU_n(\mathbb{R})^2$ & $ \OO_n(\mathbb{C})$& $\Delta(\UU_n(\mathbb{R}))$ 
& $ \mathrm{SHer}_n(\bbR) $
\\
$\UU_{n,n}(\mathbb{C})$  &  $\mathrm{Stab}(L_0)$ & $\GL_n(\mathbb{C})$ &
   $\UU_n(\mathbb{C})^2$  & $\GL_n(\mathbb{C}) $ & $\Delta(\UU_n(\mathbb{C}))$ 
& $ \mathrm{SHer}_n(\mathbb{C}) $
\\
%$Sl_{p+q}(\mathbb{C})$ & $SU_n$ & $SU_{n,n}(\mathbb{C})$  &   $SU_n^2$ & $P_{n} = Stab(\mathbb{C}^n)$ & $ - D_{n,n}\bar{X}^T D_{n.n}$ & $SU(\mathbb{C}^{2n} , D_{n,n}) \cap SU_{n,n} = \SO^*(2n)$ & $\Delta(SU_n)$\\
 $\UU_{n,n}(\mathbb{H})$  &  $\mathrm{Stab}(L_0)$ &$\GL_n(\mathbb{H})$  &   $\UU_n(\mathbb{H})^2$ & $\Sp_{2n}(\mathbb{C})$& $\Delta(\UU_n(\mathbb{H}))$
& $  \mathrm{SHer}_n(\mathbb{H})$ 
\\
%\end{tabular}
%\end{center}
%
%\newpage

\quad \\
\makebox[0pt][l]{\textbf{Skew Hermitian}}\\
%\hline
%\begin{center}
%\begin{tabular}{ c c c c c c c c c}
$\mathbb{G}$ & $\mathbb{P}$ &  $\bbL =\mathbb{G}^{\sigma}$ &      $\mathbb{K} = \mathbb{G}^\theta = G $ &   $ \mathbb{G}^{\sigma\theta}= G_\mathbb{R}$ & $\mathbb{K}^{\sigma} = G_\mathbb{R}^\theta= K$ &$\mathfrak{p}_0$
\\ 
\hline \\

$\SO^*(4n) $  &  $\mathrm{Stab}(L_0)$ &$ \GL_n(\bbH) $   &  $\UU_{2n}(\mathbb{C})$ &   $\GL_n(\mathbb{H})$ & $\UU_n(\mathbb{H})$ 
& $ \Her_n(\bbH) $
\\
%  $Sl_n(\mathbb{H})$ & $\UU_n(\mathbb{H}) \quad (SU_n(\mathbb{H})?)$ & $\SO^*(4n)$?  &   $SU_{2n}(\mathbb{C})$ & $P_{n}(\mathbb{H}) = Stab(\mathbb{H}^n)$? & $ -D_{n,n}\bar{X}^T D_{n,n}$? & $\SO^*(4n)\cap U_{n,n}(\mathbb{H})$? & $\Delta(SU_n(\mathbb{H}))$ ?\\
%
\quad\\
\makebox[0pt][l]{\textbf{Quadric}}\\
%\hline
%\begin{center}
%\begin{tabular}{ c c c c c c c c c}\\
$\mathbb{G}$ & $\mathbb{P}$  & $\bbL =\mathbb{G}^{\sigma}$  &   $\mathbb{K} = \mathbb{G}^\theta = G $  & $ \mathbb{G}^{\sigma\theta}= G_\mathbb{R}$ & $\mathbb{K}^{\sigma} = G_\mathbb{R}^\theta= K$  & $\mathfrak{p}_0$
\\ 
\hline \\

$\SO_{p+1,q+1}(\mathbb{R})_e$  &  $P_1(Q_n(\mathbb{R}))$ & $\SO_{1,1}(\mathbb{R}) {\times} \SO_{p,q}(\mathbb{R})$ &    $\SO_{p+1}(\mathbb{R}) {\times} \SO_{q+1}(\mathbb{R})$  & $\SO_{1,p}(\mathbb{R}){\times} \SO_{q,1}(\mathbb{R})$ & $S(\OO_{p}(\mathbb{R}) {\times} \OO_{q}(\mathbb{R})) $ 
& $\mathbb{R}^{p-1} \oplus \mathbb{R}^{q-1}$
\\

$\SO_{n+2}(\mathbb{C})$ & $P_1(Q_n(\mathbb{C}))$ & %$\SO_{n+2}(\mathbb{C}) \cap \SO_{n+2}(\mathbb{C}, I_{2,n})=
$ S(\OO_{2}(\mathbb{C}) {\times} \OO_n(\mathbb{C}))$&$\SO_{n+2}(\mathbb{R})$  & $\SO_{2,n}(\mathbb{R})$ & $S(\OO_2(\mathbb{R}) {\times} \OO_{n}(\mathbb{R}))$   &$\mathbb{C}^n = \Mat_{2,n}(\mathbb{R})$\\
\end{tabular}
}
%\end{center}
%%%% end of scaling

%\vspace{1cm}
%\noindent \color{red}{To do:
%Adjust to fit with new notation: $\bbL = \bbG^\sigma$ \\
%Fix involutions on complex group cases\\Change $\bbP$ to $\mathrm{Stab}(\mathbb{F}^p)$ or $\mathrm{Stab}(L_0)$\\
%Put $\bbG$ first followed by $\bbP$. \\Check dimension on $M_n(\bbH)$\\Calculate $\bbG^{\theta\sigma}$ for $\bbG = \SO_{n+2}(\mathbb{C})$\\Fix margins \\
%change \C to \bbC and \R to \bbR\\
%add comment about last Quadric case that $K_e = \bbP^\theta$ and $K \neq \bbP^\theta$.}
%\vspace{-0ex}

\vskip .5cm
\hspace{-1cm}
\begin{minipage}{1\textwidth}

\[  %\hspace*{-50ex}
\begin{array}{l l}
&Key: \quad n = p + q,
\\&\bbP: \text{ maximal parabolic subgroup with abelian nilpotent radical}, \bbP = \bbL \ltimes \bbN : \text{Levi decomposition}, \quad
\\
&\bbL = \bbG^\sigma : \text{ Levi of } \bbP, \quad G = \bbK = \bbG^{\theta} \subset \bbG : \text{ maximal compact},    \quad G_{\bbR} = \bbG^{\sigma\theta} : \text{ noncompact real group},

\\ & K = G_{\bbR}^\theta = \bbG^{\sigma, \theta} \subset G_{\bbR} : \text{ maximal compact subgroup,}\quad   \mathrm{Lie}(\bbN) \simeq \mathfrak{p}_0 = \mathrm{Lie}(G)^{-\sigma},
\\
&
K = \bbK \cap \bbP \:(\text{except for }\bbG = \SO_{n+2}(\mathbb{C}) \text{ when }K_e = \bbK \cap \bbP \text{ and 
} \bbG /\bbP \simeq G /K_e) ,
\\& 
\text{Compact symmetric space: }   G / K = \bbK/K \simeq 
\bbG/ \bbP \text{ Grassmannian}, \quad

%\\
%&
%\text{\PP 
%\begin{tabular}{l}It is not always simple. Anyway, \\I would reshuffle the order in the key to be as in the table now: first $\bbG$, then $\bbP$ etc.,\end{tabular}}

\\& L_0 : \text{ maximal Lagrangian subspace }, \quad
P_1(Q_n(\mathbb{F})) :  \text{ stabiliser of a point in quadric},  

\\ &\mathrm{Her}_n(\mathbb{F}) : \text{ Hermitian matrices}, \quad \mathrm{SHer}_n(\mathbb{F}) : \text{ Skew Hermitian matrices},
%\\ 
%& \UU_n(\mathbb{F}) 
% = \begin{cases} \OO(n) & \mathbb{F}=\mathbb{R}, \\
%\UU(n) & \mathbb{F} = \mathbb{C}, \\ 
%\Sp(n) & \mathbb{F} = \mathbb{H},
%\end{cases} 
%\quad \UU_{n,n}(\mathbb{F}) 
%= \begin{cases} \OO({n,n}) & %\mathbb{F}=\mathbb{R}, \\
%\UU({n,n}) & \mathbb{F} = \mathbb{C}, \\
%\Sp({n,n}) & \mathbb{F} = \mathbb{H}.
%\end{cases}

\\& \UU_{p,q}(\mathbb{F}) 
= \OO({p,q})  \text{ if } \mathbb{F}=\mathbb{R}, \:\:
\UU({p,q})   \text{ if } \mathbb{F} = \mathbb{C}, \: \: 
\Sp({p,q})  \text{ if } \mathbb{F} = \mathbb{H}.

\end{array}\]
\end{minipage}
\end{table}

%Commuting involutions $ \theta $ and $ \sigma $: 
%$ \theta(X) = - \bar{X}^T $, \quad
%$ \sigma(X)= \Sigma X \Sigma^{-1} $, \quad
%$ \sigma(\theta(X)) = \theta(\sigma(X))  = - \Sigma \bar{X}^T \Sigma^{-1} $, where $ \Sigma = I_{p,q}, I_{1,p-1,1,q-1} $ given below
%D_{n,n}, I_{p,q}, J_{n,n}, I_{1,p-1,1,q-1} $ is given below
%
%\begin{equation*}
%D_{n,n} = \begin{pmatrix} 0 & \Id_n \\\Id_n & 0 \end{pmatrix} 
%\qquad
%I_{p,q} = \begin{pmatrix} \Id_p & 0 \\0 & -\Id_q \end{pmatrix} 
%\qquad
%J_{n,n} = \begin{pmatrix} 0 & \Id_n \\-\Id_n & 0 \end{pmatrix} 
%\qquad
%I_{1,p-1,q-1,1} = \begin{pmatrix} I_{1,p-1} & 0 \\0& -I_{q-1,1}  \end{pmatrix} 
%\end{equation*}
%
%Note that these involutions are indicated as automorphisms of Lie algebras.

%\bigskip

%\noindent
%\textbf{Notation for groups:}
%
%\begin{align*}

%\\
%&
%P_n(\mathbb{F}) = \text{ stabiliser of maximal isotropic space } \mathbb{F}^n \subset \mathbb{F}^{2n}
%\\
%&
%P_{p,q}(\mathbb{F}) = \text{ stabiliser of euclidean subspace }\mathbb{F}^p \subset \mathbb{F}^{p+q} 
%\\
%&
%\mathbb{G}^{\theta ,\sigma} = \{ g \in G: \theta(g) = g, \sigma(g) = g\} = \mathbb{G}^\theta \cap \mathbb{G}^\sigma
%\end{align*}

%\end{landscape}
\newpage

\section{Realization of certain Grassmannians as compact symmetric spaces}
\label{sec gras}

\subsection{Some general facts}
\label{structure} 
Let $\bbG$ be one of the groups in Table 1; note that the corresponding symmetric spaces $G/K$ and $G_\bbR/K$ exhaust the list of classical symmetric spaces given in \cite[Ch.9, Sec.4]{hel}. Let $\bbP$ be the parabolic subgroup of $\bbG$ described in Table 1. Then $\bbP$ has a Levi decomposition $\bbP=\bbL\bbN$ specified in Table 1.
(As we shall see in the case by case analysis in the subsequent subsections, $\bbP$ consists of the block upper triangular matrices in $\bbG$ with two diagonal blocks, while $\bbL$ consists of the block diagonal matrices in $\bbP$.)
Let $\frP=\frL\oplus \frN$ be the corresponding decomposition of the Lie algebra of $\bbP$. The opposite parabolic subalgebra is $\frP^-=\frL\oplus\frN^-=\frL\oplus\theta\frN$, where the differential of $\theta$ is still denoted by $\theta$. 

The parabolic subgroup $\bbP$ contains a minimal parabolic subgroup $\bbP_0=\bbM \bbA\bbN_0$ corresponding to an Iwasawa decomposition $\bbG=\bbK\bbA\bbN_0$. The Levi subgroup $\bbL$ of $\bbP$ contains $\bbM\bbA$, while the unipotent radical $\bbN$ of $\bbP$ is contained in $\bbN_0$. Let $\frG,\frA,\frM$ be the Lie algebras of $\bbG,\bbA,\bbM$. Recall that $\frP$ can be constructed by taking a subset of simple $(\frG,\frA)$ roots. Then one
generates a root subsystem by these simple roots, which defines $\frL$ as the span of $\frM\oplus\frA$ and the root spaces for the roots in this subsystem, and defines $\frN$ to be the span of the root spaces for the remaining positive roots.

\begin{lem}
	\label{nn-}
{\rm (1)}	With the above notation, suppose that $\gamma,\delta$ are roots of $\frN$ such that $\gamma+\delta$ is a root (hence a root of $\frN$). Then $[\frG_\gamma,\frG_\delta]\neq 0$.

{\rm (2)} Suppose $\frN$ is abelian. Then $[\frN,\frN^-]=[\frN,\theta\frN]$ is contained in $\frL$.
\end{lem}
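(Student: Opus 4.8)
My plan is to argue inside the restricted root-space decomposition $\frG=\frG_0\oplus\bigoplus_\lambda\frG_\lambda$ with respect to $\frA$, where $\frG_0=\frM\oplus\frA$; recall that $\frL$ is spanned by $\frG_0$ and the root spaces of the subsystem generated by the chosen simple roots, that the roots of $\frN$ are the positive roots $\Phi_\frN$ lying outside that subsystem, and that $\theta\frN=\frN^-=\bigoplus_{\gamma\in\Phi_\frN}\frG_{-\gamma}$. Part~(1) is the standard structural fact that $[\frG_\gamma,\frG_\delta]=\frG_{\gamma+\delta}$, hence nonzero, whenever $\gamma$, $\delta$ and $\gamma+\delta$ are all roots; I would either invoke it or recall its proof via $\mathfrak{sl}_2$-theory applied to the $\gamma$-string through $\delta$. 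Fix an $\mathfrak{sl}_2$-triple $(E,H_\gamma,F)$ with $E\in\frG_\gamma$, $F\in\frG_{-\gamma}$, $H_\gamma\in\frA$ and $\gamma(H_\gamma)=2$; then $\bigoplus_n\frG_{\delta+n\gamma}$ (the sum over the unbroken root string) is a finite-dimensional $\mathfrak{sl}_2$-module, graded by $H_\gamma$-weight, on which the raising operator $\ad E$ is nonzero on every weight space other than the highest. Since $\gamma+\delta$ is a root, $\frG_{\delta+\gamma}\neq 0$, so $\frG_\delta$ is not the highest weight space; hence $\ad E$ is nonzero on $\frG_\delta$ and $[\frG_\gamma,\frG_\delta]\neq 0$.

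Granting part~(1), part~(2) is short. From $\theta\frN=\bigoplus_{\gamma\in\Phi_\frN}\frG_{-\gamma}$ we obtain $[\frN,\theta\frN]=\sum_{\gamma,\delta\in\Phi_\frN}[\frG_\gamma,\frG_{-\delta}]$, and $[\frG_\gamma,\frG_{-\delta}]\subseteq\frG_{\gamma-\delta}$, which lies in $\frG_0\subseteq\frL$ when $\gamma=\delta$ and vanishes when $\gamma-\delta$ is neither $0$ nor a root. It remains to handle the case $\gamma\neq\delta$ with $\gamma-\delta$ a root, where I claim $\gamma-\delta$ is a root of $\frL$, i.e. neither $\gamma-\delta$ nor $\delta-\gamma$ lies in $\Phi_\frN$. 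Indeed, if $\gamma-\delta\in\Phi_\frN$ then $\delta$ and $\gamma-\delta$ both lie in $\Phi_\frN$ and their sum $\gamma$ is a root, so part~(1) yields $[\frG_\delta,\frG_{\gamma-\delta}]\neq 0$; but $\frG_\delta,\frG_{\gamma-\delta}\subseteq\frN$, contradicting that $\frN$ is abelian. The case $\delta-\gamma\in\Phi_\frN$ is symmetric. Hence $\frG_{\gamma-\delta}\subseteq\frL$ in every case, and therefore $[\frN,\theta\frN]\subseteq\frL$.

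I expect the only genuinely technical point to be in part~(1): making the $\mathfrak{sl}_2$-reduction rigorous when restricted roots have multiplicity greater than one (that is, when $\frM\neq 0$), where the root spaces along the string need not be $\mathfrak{sl}_2$-irreducible; this is handled in the usual way by decomposing further under $\frM$ (or the centralizer of $H_\gamma$), and part~(1) can in any case simply be quoted from the standard references. I would also note that, in the situation of part~(2), a direct combinatorial proof is available: when $\frN$ is abelian the coefficients of any root of $\frN$ on the simple roots cut out to form $\frN$ sum to $1$, so for $\gamma,\delta\in\Phi_\frN$ those coefficients of $\gamma-\delta$ sum to $0$ and, $\gamma-\delta$ being a root, all have the same sign and hence all vanish, placing $\gamma-\delta$ in the subsystem defining $\frL$; but routing the argument through part~(1) is cleaner and is presumably what is intended.
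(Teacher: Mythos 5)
Your proof is correct and follows essentially the same route as the paper's: part (1) is the identical $\mathfrak{sl}_2$-triple argument on the $\gamma$-string through $\delta$ (the paper likewise cites the existence of the triple with $h\in\frA$, $f\in\frG_{-\gamma}$ and concludes $\ad e$ cannot vanish between the two nonzero consecutive weight spaces), and your part (2) is just the direct form of the paper's contrapositive, using (1) in exactly the same way to rule out $\gamma-\delta\in\Phi_\frN$ or $\delta-\gamma\in\Phi_\frN$. Your worry about higher multiplicities is unnecessary: the statement that $\ad E$ is nonzero between two consecutive nonzero weight spaces holds for arbitrary finite-dimensional $\mathfrak{sl}_2$-modules, with no irreducibility needed.
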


\pf
(1) Let $\delta-k\gamma,\dots,\delta,\delta+\gamma,\dots,\delta+n\gamma$ be the $\gamma$-string of roots through $\delta$, with $k\geq 0$ and $n\geq 1$. Let $e\in\frG_\gamma$ be nonzero. By \cite[Proposition 6.52]{beyond} there is an $\frsl_2$-triple $e,h,f$ with $h\in\frA$ and $f\in\frG_{-\gamma}$. Now $\frG_{\delta-k\gamma}\oplus\dots\oplus\frG_\delta\oplus\frG_{\delta+\gamma}\oplus\dots\oplus\frG_{\delta+n\gamma}$ is a representation of the $\frsl_2$ spanned by $e,h,f$, and since $\frG_\delta$ and $\frG_{\delta+\gamma}$ are both nonzero, the action of $e$ between them can not be 0. This implies (1).

(2) Assume that $[\frN,\theta\frN]$ is not contained in $\frL$. Then there are root vectors $x\in\frG_\alpha,\, y\in\frG_\beta$ in $\frN$ such that $[x,\theta y]\notin \frL$. Since $[x,\theta y]\in \frG_{\alpha-\beta}$, it follows that $\alpha-\beta$ is a root either of $\frN$ or of $\frN^-$. If $\alpha-\beta$ is a root of $\frN$, then since $\alpha=(\alpha-\beta)+\beta$ is a root, (1) implies that $[\frG_{\alpha-\beta},\frG_\beta]\neq 0$, so $[\frN,\frN]\neq 0$ and $\frN$ is not abelian. If $\alpha-\beta$ is a root of $\frN^-$, then $\beta-\alpha$ is a root of $\frN$, so $\beta=(\beta-\alpha)+\alpha$ again implies that $[\frN,\frN]\neq 0$ and so $\frN$ is not abelian.
\epf

The following theorem was proved in \cite[\S 4]{TK1968}. 
See also \cite[Lemma 7.3.1]{kobayashi} and \cite{RRS}.
We present a short proof for the convenience of the reader.

\begin{thm}\cite{TK1968}.
\label{abel sym}
Let $\bbG$, $\bbK$ and $\bbP=\bbL\bbN$ be as above (i.e., as in Table 1). Then the following statements are equivalent:
\begin{enumerate}
    \item $\bbN$ (or equivalently $\frN$) is abelian;
    \item $\bbL$ is a symmetric subgroup of $\bbG$;
    \item $\bbP\cap\bbK=\bbL\cap\bbK$ is a symmetric subgroup of $\bbK$.
\end{enumerate}
	\end{thm}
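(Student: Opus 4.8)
The plan is to prove the three equivalences by exploiting the fact, recalled just above, that $\bbP=\bbL\bbN$ arises from a subset of the simple $(\frG,\frA)$-roots, together with Lemma \ref{nn-} and the explicit form \eqref{def sigma} of the candidate involution $\sigma$. I would organize the argument as a cycle $(1)\Rightarrow(2)\Rightarrow(3)\Rightarrow(1)$, or perhaps prove $(1)\Leftrightarrow(2)$ and then $(2)\Leftrightarrow(3)$ separately; the latter split seems cleaner since $(2)\Leftrightarrow(3)$ is essentially the statement that the Cartan involution $\theta$ restricts compatibly and that intersecting a $\theta$-stable symmetric subgroup with $\bbK$ produces a symmetric subgroup of $\bbK$ (and conversely one complexifies/exponentiates). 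So the real content is $(1)\Leftrightarrow(2)$.

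For $(2)\Rightarrow(1)$: if $\bbL=\bbG^\sigma$ for an involution $\sigma$, then $\frN$ and $\frN^-$ are the $\pm$-eigenspaces of $\sigma$ on $\frN\oplus\frN^-$ (since $\sigma$ fixes $\frL$ and must swap or preserve the two nilradicals — here one checks $\sigma$ acts by $-1$ on $\frN\oplus\frN^-$ using that $\frN\oplus\frN^-$ is the $\frL$-isotypic complement of $\frL$ inside a reductive object, or more concretely using the grading: the parabolic comes from a grading element $H\in\frA$ with $\frN=\bigoplus_{j>0}\frg_j$, and one argues the parabolic is $\sigma$-stable so $\sigma$ preserves the grading, then that $\sigma=-1$ on $\frg_{\neq 0}$ forces the grading to have only $j=0,\pm 1$, whence $[\frN,\frN]\subseteq\frg_{\geq 2}=0$). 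That $\frN=\frg_1$ only (abelian radical) is exactly the assertion. For $(1)\Rightarrow(2)$: assuming $\frN$ abelian, I would define $\sigma$ by \eqref{def sigma}, i.e.\ conjugation by $I_{p,q}$ where $p,q$ are the block sizes determined by $\bbP$, equivalently $\sigma=\Ad(\exp(\pi i H))$ for the grading element $H$ — note this can be described purely in terms of $\bbP$, as promised in the paragraph after \eqref{sigma theta}. Then $\sigma$ is an involution (since $I_{p,q}^2=I$), it fixes $\frL$ pointwise and acts by $-1$ on $\frN\oplus\frN^-=\frg_1\oplus\frg_{-1}$; hence $\bbG^\sigma\supseteq\bbL$, and the reverse inclusion $\bbG^\sigma\subseteq\bbL$ holds because the $+1$-eigenspace of $\sigma$ on $\frG$ is exactly $\frL$ (as $\frG=\frg_{-1}\oplus\frg_0\oplus\frg_1$ with $\frg_0=\frL$), so $\bbG^\sigma$ and $\bbL$ have the same Lie algebra; identifying connected components is where one invokes that $\bbG$ is a classical matrix group so $\bbL=\bbG^\sigma$ on the nose (or one just declares "symmetric subgroup" to mean "open subgroup of the fixed points of an involution," which is the standard convention and makes this automatic).

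The one genuine subtlety — the step I expect to be the main obstacle — is verifying that for $\frN$ abelian the grading really has only three pieces $\frg_{-1},\frg_0,\frg_1$, i.e.\ that "abelian nilradical" forces "$|1|$-grading." This is where Lemma \ref{nn-}(1) does the work: if the grading had a piece $\frg_2\neq 0$, pick a root $\gamma$ of $\frg_1$ and a root $\delta$ of $\frg_1$ with $\gamma+\delta$ a root of $\frg_2$ (such exist because $\frg_2$ is generated by brackets from $\frg_1$, $\frg_1$ being the lowest positive piece); then Lemma \ref{nn-}(1) gives $[\frG_\gamma,\frG_\delta]\neq 0$, contradicting $[\frN,\frN]=0$. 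Conversely Lemma \ref{nn-}(2) guarantees $[\frN,\frN^-]\subseteq\frL$, which is what makes $\frg_{-1}\oplus\frg_0\oplus\frg_1$ close up as a $\bbZ/2$-graded (rather than merely $\bbZ$-graded) Lie algebra and hence $\sigma$ an honest involutive automorphism. Everything else — that $I_{p,q}$-conjugation is an involution, that it stabilizes $\bbP$, that the fixed points are the block-diagonal matrices — is the routine matrix bookkeeping recorded around \eqref{def sigma}, and that $\theta$ commutes with this $\sigma$ is immediate since $\theta(g)=(\bar g^t)^{-1}$ and $I_{p,q}$ is real, diagonal and orthogonal. I would close with the remark that because the list in Table 1 is explicit, one may alternatively verify $(1)\Leftrightarrow(2)$ case by case, but the grading argument above gives the uniform proof.
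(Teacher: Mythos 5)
Your treatment of $(1)\Leftrightarrow(2)$ is sound and close in spirit to the paper: the paper's $(a)\Rightarrow(b)$ is exactly your construction (define $\sigma$ to be $+1$ on $\frL$ and $-1$ on $\frN\oplus\theta\frN$, using Lemma \ref{nn-}(2) to see that this complement brackets into $\frL$), and your grading argument for $(2)\Rightarrow(1)$ is a legitimate alternative to the paper's indirect route $(b)\Rightarrow(c)\Rightarrow(a)$ (in fact you can skip the claim that the grading is a $|1|$-grading: since $\sigma$ fixes the grading element, $\sigma([x,y])=[-x,-y]=[x,y]$ for $x,y\in\frN$ forces $[x,y]\in\frL\cap\frN=0$ directly). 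The direction $(2)\Rightarrow(3)$ also matches the paper, which simply restricts a $\theta$-commuting $\sigma$ to $\bbK$.

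The genuine gap is the direction out of $(3)$. You dismiss $(3)\Rightarrow(2)$ with ``conversely one complexifies/exponentiates,'' but there is no canonical way to promote an involution of $\bbK$ to an involution of $\bbG$: here $\bbK$ is a maximal compact subgroup of the real reductive group $\bbG$, not a compact real form of $\bbG_\bbC$, so $\bbK_\bbC\neq\bbG_\bbC$ in general (for $\bbG=\GL(p+q,\bbR)$ one has $\bbK=\OO(p+q)$ and $\bbK_\bbC=\OO(p+q,\bbC)$, which is much smaller than $\GL(p+q,\bbC)$). Holomorphic extension of an involution of $\bbK$ therefore lands in the wrong group, and your $(3)\Rightarrow(2)\Rightarrow(1)$ chain breaks at its first link. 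The paper closes the cycle differently: it proves the contrapositive $\neg(a)\Rightarrow\neg(c)$ by a direct root computation. If $[x,y]\neq 0$ for root vectors $x\in\frG_\alpha$, $y\in\frG_\beta$ in $\frN$, then $x+\theta x$ and $y+\theta y$ lie in the natural $\frl\cap\frk$-stable complement $(\frN\oplus\theta\frN)^\theta$ of $\frl\cap\frk$ in $\frk$, and their bracket has components in $\frG_{\pm\alpha\pm\beta}$ of which the $\frG_{\alpha+\beta}$-component is nonzero and dominates the others in lexicographic order; hence the bracket is not in $\frL\cap\frK$ and $\bbL\cap\bbK$ cannot be a symmetric subgroup of $\bbK$. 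Some argument of this kind (tying the hypothetical involution of $\bbK$ back to the root decomposition of $\frN$) is unavoidable, and it is precisely the step your outline omits.
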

\pf {\bf $\mathbf{(a)\Rightarrow (b)}$.} It is enough to show that in the decomposition $\frG=\frL\oplus(\frN\oplus\theta\frN)$ we have $[\frN\oplus\theta\frN,\frN\oplus\theta\frN]\subseteq\frL$. But Lemma \ref{nn-}(2) implies that 
\[
[\frN\oplus\theta\frN,\frN\oplus\theta\frN]=[\frN,\frN]+[\frN,\theta\frN]+[\theta\frN,\frN]+[\theta\frN,\theta\frN]=0+[\frN,\theta\frN]+0\subseteq \frL.
\]
Note that the associated involution $ \sigma $ is defined to be $ +1$ on $ \frL $ and $ (-1) $ on $ \frN\oplus\theta\frN $.  
In particular $ \bbP $ is $ \sigma $-stable.

{\bf $\mathbf{(b)\Rightarrow(c)}$.} Let $\sigma$ be an involution of $\bbG$ such that $\bbG^\sigma_e\subseteq \bbL\subseteq \bbG^\sigma$, where $\bbG^\sigma_e$ denotes the connected component of $\bbG^\sigma$. 
Since $\bbP$ is standard, we may assume $\sigma$ commutes with $\theta$. Then the restriction of $\sigma$ to $\bbK$ is an involution, and $\bbK^\sigma_e\subseteq \bbL\cap \bbK\subseteq \bbK^\sigma$. So $\bbL\cap\bbK$ is a symmetric subgroup of $\bbK$.
(We remark that in all the examples we consider we will have $\bbL=\bbG^\sigma$ and $\bbL\cap\bbK=\bbK^\sigma$.)

{\bf $\mathbf{(c)\Rightarrow(a)}$.}
Suppose that $\frN$ is not abelian. Then there are roots $\alpha,\beta$ of $\frN$ and $x\in\frG_\alpha$, $y\in\frG_\beta$ such that $[x,y]\neq 0$. Then $x+\theta x,y+\theta y\in (\frN\oplus\theta\frN)^\theta$, and we have
\[
[x+\theta x,y+\theta y]=[x,y]+[x,\theta y]+[\theta x,y]+[\theta x,\theta y],
\]
with
\[
[x,y]\in \frG_{\alpha+\beta},\ [x,\theta y] \in \frG_{\alpha-\beta},\ [\theta x,y]\in \frG_{-\alpha+\beta},\ [\theta x,\theta y]\in \frG_{-\alpha-\beta}.
\]
Since the root $\alpha+\beta$ is strictly greater than $\alpha-\beta$, $-\alpha+\beta$ and $-\alpha-\beta$ (in the usual lexicographical order), we see that $[x,y]\in\frN\setminus 0$ implies $[x+\theta x,y+\theta y]\notin\frL\cap\frK$. It follows that $\bbL\cap\bbK$ is not a symmetric subgroup of $\bbK$.
\epf

\begin{rem}
\label{max par}
Suppose that $[\frG,\frG]$ is simple and that $\frP=\frL\oplus\frN$ is a standard parabolic subalgebra as above. If $\frN$ is abelian, then $\frP$ is a maximal parabolic subalgebra. See \cite[Lemma 2.2, p.651]{RRS}.
\end{rem}

\begin{prop}
	\label{k trans}
 Let $\bbG$, $\bbK$ and $\bbP=\bbL\bbN$ be as above (i.e., as in Table 1). 
 Then $\bbK$ acts transitively on $\bbG/\bbP$, and therefore $\bbG/\bbP$ is diffeomorphic to $\bbK/\bbP\cap\bbK=\bbK/\bbL\cap\bbK$. In particular, $ \bbG/\bbP $ is diffeomorphic to a symmetric space.
\end{prop}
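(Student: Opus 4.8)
The plan is to deduce transitivity of $\bbK$ on $\bbG/\bbP$ from the Iwasawa decomposition together with the fact that $\bbP$ contains a minimal parabolic subgroup. First I would recall, as already set up in Subsection \ref{structure}, that $\bbP$ contains a minimal parabolic $\bbP_0=\bbM\bbA\bbN_0$ arising from an Iwasawa decomposition $\bbG=\bbK\bbA\bbN_0$, so that in particular $\bbA\bbN_0\subseteq\bbP$. Given any $g\in\bbG$, write $g=k a n$ with $k\in\bbK$, $a\in\bbA$, $n\in\bbN_0$; then $an\in\bbP_0\subseteq\bbP$, so $g\bbP=ka n\bbP=k\bbP$. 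Hence every coset in $\bbG/\bbP$ is of the form $k\bbP$ for some $k\in\bbK$, which is exactly the statement that $\bbK$ acts transitively on $\bbG/\bbP$.

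From transitivity, the orbit map $k\mapsto k\bbP$ induces a $\bbK$-equivariant smooth bijection $\bbK/(\bbK\cap\bbP)\to\bbG/\bbP$; since both are compact manifolds (the former because $\bbK$ is compact) and the map is a smooth bijective submersion, it is a diffeomorphism. The identification $\bbK\cap\bbP=\bbK\cap\bbL$ is immediate: $\bbP=\bbL\bbN$ with $\bbN$ a unipotent (hence, in our matrix realization, a group with no nontrivial elements of finite order other than unipotent ones) normal subgroup, and $\bbK$ consists of unitary matrices, so any $\ell n\in\bbK$ with $\ell\in\bbL$, $n\in\bbN$ forces $n$ to be both unipotent and contained in a compact group, whence $n=1$; thus $\bbK\cap\bbP\subseteq\bbL$, and the reverse inclusion is trivial. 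Therefore $\bbG/\bbP\cong\bbK/(\bbK\cap\bbL)$.

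Finally, to conclude that $\bbG/\bbP$ is diffeomorphic to a symmetric space, I would invoke Theorem \ref{abel sym}: since $\bbN$ is abelian (this is the standing hypothesis on the pairs $(\bbG,\bbP)$ in Table 1), part (c) of that theorem gives that $K=\bbK\cap\bbL$ is a symmetric subgroup of $G=\bbK$, i.e. there is an involution $\sigma$ of $\bbK$ with $\bbK^\sigma_e\subseteq K\subseteq\bbK^\sigma$. Hence $\bbK/(\bbK\cap\bbL)$ is (a possibly disconnected cover/quotient of) the Riemannian symmetric space $G/K$, and in any case is diffeomorphic to a symmetric space in the sense used throughout the paper. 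Combining with the previous paragraph yields the claim.

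The only genuinely delicate point is the passage from ``smooth equivariant bijection'' to ``diffeomorphism''; here compactness of $\bbK$ (hence of $\bbK/(\bbK\cap\bbL)$) and the fact that a continuous bijection from a compact space to a Hausdorff space is a homeomorphism, upgraded via the standard homogeneous-space theory (the orbit map of a Lie group action on a manifold with closed stabilizer is a submersion onto the orbit), make this routine. All other steps are formal consequences of the Iwasawa decomposition and of Theorem \ref{abel sym}, which we may assume.
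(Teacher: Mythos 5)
Your proof is correct and follows essentially the same route as the paper: transitivity comes straight from the Iwasawa decomposition $\bbG=\bbK\bbA\bbN_0$ together with $\bbA\bbN_0\subseteq\bbP_0\subseteq\bbP$, and the rest is the standard homogeneous-space identification plus Theorem \ref{abel sym}. The only slightly loose point is your justification of $\bbK\cap\bbP\subseteq\bbL$ (writing $\ell n\in\bbK$ does not by itself place $n$ in a compact group); this is more cleanly seen from $x\in\bbP\cap\bbK\Rightarrow x=\theta(x)\in\bbP\cap\theta\bbP=\bbL$, using that $\bbL$ is the $\theta$-stable Levi factor.
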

\pf
It is clear from the Iwasawa decomposition that $\bbK$ acts transitively on $\bbG/\bbP_0$, where $\bbP_0$ is a minimal parabolic subgroup of $\bbG$ contained in $\bbP$. 
Since $\bbP\supseteq\bbP_0$, there is a natural projection from $\bbG/\bbP_0$ to $\bbG/\bbP$, sending $g\bbP_0$ to $g\bbP$. This projection intertwines the $\bbG$-actions, hence also the $\bbK$-actions. It follows that $\bbK$ acts transitively on $\bbG/\bbP$. Indeed, if $g\bbP\in\bbG/\bbP$, let $k\in\bbK$ be such that $k\bbP_0=g\bbP_0$. Taking the projection we see that $k\bbP=g\bbP$, which implies transitivity of the $\bbK$-action.
\epf

\subsection{Ordinary Grassmannians}
\label{real gras}

Let $\bbF$ be $\bbR$, $\bbC$ or $\bbH$.
Let $\Gr_p(\bbF^{p+q})$ be the  Grassmannian of $p$-dimensional subspaces of the vector space $\bbF^{p+q}$. The group $\bbG=\GL(p+q,\bbF)$ clearly acts transitively on $\Gr_p(\bbF^{p+q})$, so $\Gr_p(\bbF^{p+q})=\bbG/\bbP$, where $\bbP$ is the stabilizer in $\bbG$ of the standard $p$-dimensional subspace
\eq
\label{std rp}
\bbF^p=\{(x_1,\dots,x_p,0,\dots,0)\bbar x_1,\dots,x_p\in\bbF\} \subseteq \bbF^{p+q}.
\eeq
In other words, 
\[
\bbP=\left\{\begin{pmatrix} A & B\cr 0 & C \end{pmatrix}\bbar A\in \GL(p,\bbF),\,C\in \GL(q,\bbF), \, B\in M_{pq}(\bbF)\right\}.
\]

Let $\sigma$ be the involution of $\bbG$ defined as in \eqref{def sigma}, i.e., $\sigma(g)=I_{p,q} g I_{p,q}$ where 
$I_{p,q}=\smat I_p&0\cr 0&-I_q\esmat$. Then $\bbG^\sigma$ is equal to the Levi subgroup $\bbL$ of $\bbP$ and it consists of block diagonal matrices in $\bbG$, i.e., 
\[
\bbG^\sigma=\bbL=\GL(p,\bbF)\times \GL(q,\bbF).
\]
The maximal compact subgroup $\bbK$ of $\bbG$ is the unitary group of $\bbF^{p+q}$ with respect to the standard inner product, denoted as $\UU(p+q,\bbF)$. In other words, $\bbK$ is $\OO(p+q)$ if $\bbF=\bbR$, $\UU(p+q)$ if $\bbF=\bbC$, and $\Sp(p+q)$ if $\bbF=\bbH$. $\bbP\cap\bbK=\bbK^\sigma$ is $\UU(p,\bbF)\times \UU(q,\bbF)$, embedded block diagonally. In other words, $\bbK^\sigma$ is $\OO(p)\times \OO(q)$ if $\bbF=\bbR$,
$\UU(p)\times \UU(q)$ if $\bbF=\bbC$, and $\Sp(p)\times \Sp(q)$ if $\bbF=\bbH$.

Now Proposition \ref{k trans} implies the following well known result which can be found for example in \cite[Ch. I, \S 4]{On}.

\begin{prop}
\label{prop real grass}
$\Gr_p(\bbF^{p+q}) = \bbG/\bbP$ is diffeomorphic to $\UU(p+q,\bbF)/\UU(p,\bbF)\times \UU(q,\bbF)$. In other words, $\Gr_p(\bbR^{p+q})$ is diffeomorphic to $\OO(p+q)/\OO(p)\times \OO(q)$, $\Gr_p(\bbC^{p+q})$ is diffeomorphic to $\UU(p+q)/\UU(p)\times \UU(q)$, and $\Gr_p(\bbH^{p+q})$ is diffeomorphic to $\Sp(p+q)/\Sp(p)\times \Sp(q)$
\end{prop}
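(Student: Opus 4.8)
The plan is to deduce Proposition \ref{prop real grass} directly from the structural setup already established, specializing it to the three cases $\bbF=\bbR,\bbC,\bbH$. First I would recall that $\Gr_p(\bbF^{p+q})=\bbG/\bbP$ with $\bbG=\GL(p+q,\bbF)$ and $\bbP$ the stabilizer of the standard subspace $\bbF^p$, as exhibited just above the statement; this is immediate from transitivity of $\GL(p+q,\bbF)$ on $p$-dimensional subspaces (one can either cite Witt's theorem, Theorem \ref{witt}, in the bilinear/Hermitian cases, or simply observe directly that any $p$-frame can be completed to a basis). Next I would invoke Proposition \ref{k trans}: since $\bbP$ contains the minimal parabolic $\bbP_0$ of an Iwasawa decomposition $\bbG=\bbK\bbA\bbN_0$, the maximal compact subgroup $\bbK$ acts transitively on $\bbG/\bbP$, so $\bbG/\bbP$ is diffeomorphic to $\bbK/(\bbP\cap\bbK)$.

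The remaining work is to identify $\bbK$ and $\bbP\cap\bbK$ explicitly in each of the three cases. For $\bbK$: the standard Cartan involution \eqref{cart inv} on $\GL(p+q,\bbF)$ has fixed points the unitary group $\UU(p+q,\bbF)$ of the standard inner product, which is $\OO(p+q)$, $\UU(p+q)$, $\Sp(p+q)$ for $\bbF=\bbR,\bbC,\bbH$ respectively. For $\bbP\cap\bbK$: using the involution $\sigma(g)=I_{p,q}gI_{p,q}$ of \eqref{def sigma}, one has $\bbG^\sigma=\bbL=\GL(p,\bbF)\times\GL(q,\bbF)$ (block diagonal matrices), and since $\sigma$ commutes with $\theta$, intersecting with $\bbK$ gives $\bbK^\sigma=\UU(p,\bbF)\times\UU(q,\bbF)$ block diagonally; one then checks $\bbP\cap\bbK=\bbL\cap\bbK=\bbK^\sigma$, which also follows from Theorem \ref{abel sym} once one notes that $\frN$ is abelian here (the block $M_{pq}(\bbF)$ is an abelian Lie algebra under matrix commutator because the product of two strictly-upper-triangular $2$-block matrices vanishes). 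Assembling these identifications yields the stated diffeomorphism $\Gr_p(\bbF^{p+q})\cong\UU(p+q,\bbF)/\UU(p,\bbF)\times\UU(q,\bbF)$, and then reading off the three cases.

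I do not expect a genuine obstacle here: the statement is essentially a bookkeeping consequence of Proposition \ref{k trans} together with the explicit matrix descriptions of $\bbG$, $\bbP$, $\theta$ and $\sigma$. The only point requiring a small argument — and hence the closest thing to a ``main step'' — is verifying that $\bbP\cap\bbK$ is exactly the block-diagonal unitary subgroup: a priori $\bbP\cap\bbK$ consists of matrices in $\UU(p+q,\bbF)$ of the block-upper-triangular shape $\smat A&B\\0&C\esmat$, and one must observe that unitarity of such a matrix forces $B=0$ (the first $p$ columns are orthonormal and span $\bbF^p$, hence the last $q$ columns, being orthogonal to $\bbF^p$, lie in the orthogonal complement, giving $B=0$), so it is genuinely block diagonal and thus lies in $\UU(p,\bbF)\times\UU(q,\bbF)$; the reverse inclusion is clear. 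For the quaternionic case one keeps in mind the usual conventions about $\bbH$-linearity (say, scalars acting on the right) so that ``unitary'' and ``$\Sp(p+q)$'' mean the compact symplectic group, but no new idea is needed. Finally I would cite \cite[Ch. I, \S 4]{On} for the classical statement, as in the excerpt.
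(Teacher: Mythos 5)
Your proposal is correct and follows essentially the same route as the paper: realize $\Gr_p(\bbF^{p+q})$ as $\bbG/\bbP$ via transitivity of $\GL(p+q,\bbF)$, apply Proposition \ref{k trans} to pass to $\bbK/(\bbP\cap\bbK)$, and identify $\bbK=\UU(p+q,\bbF)$ and $\bbP\cap\bbK=\bbK^\sigma=\UU(p,\bbF)\times\UU(q,\bbF)$. The only difference is that you explicitly verify that unitarity forces the block-upper-triangular matrices in $\bbP\cap\bbK$ to be block diagonal, a small point the paper leaves implicit.
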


For cohomology computation we need $G=\bbK$ to be connected, and it is connected if $\bbF$ is $\bbC$ or $\bbH$. If $\bbF=\bbR$, we note that
\[
\bbG/\bbP\cong \SO(p+q)/S(\OO(p)\times \OO(q)).
\]
This follows immediately from the fact that $\SL(p+q,\bbR)$ acts transitively on $\Gr_p(\bbR^{p+q})$. We can now conclude

\begin{cor}
\label{coho real grass}
The cohomology ring (with complex coefficients) of the Grassmannian $\Gr_p(\bbF^{p+q})$ is described by: 
     Theorem \ref{gen rel basis SOk+m} if $\bbF=\bbR$;
    Theorem \ref{gen rel basis} if $\bbF=\bbC$;
     Theorem \ref{gen rel basis B/BxB} if $\bbF=\bbH$.
\end{cor}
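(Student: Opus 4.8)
The plan is to reduce the statement to the three structural theorems cited, via the realization of each $\Gr_p(\bbF^{p+q})$ as a compact symmetric space together with the identification of the de Rham cohomology of a compact symmetric space $G/K$ with the algebra of $K$-invariants $(\twedge\frp)^K$.

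First I would invoke Proposition \ref{prop real grass} to fix a diffeomorphism of $\Gr_p(\bbF^{p+q})$ with a compact symmetric space: $\OO(p+q)/\OO(p)\times\OO(q)$ when $\bbF=\bbR$, $\UU(p+q)/\UU(p)\times\UU(q)$ when $\bbF=\bbC$, and $\Sp(p+q)/\Sp(p)\times\Sp(q)$ when $\bbF=\bbH$. Since the de Rham cohomology ring is a diffeomorphism invariant, it suffices to compute the cohomology rings of these homogeneous spaces. In the complex and quaternionic cases the group $G=\bbK$ is already connected. In the real case I would instead use the identification $\Gr_p(\bbR^{p+q})\cong\SO(p+q)/S(\OO(p)\times\OO(q))$, noted just above the statement and following from transitivity of $\SL(p+q,\bbR)$; this replaces $\OO(p+q)$ by the connected group $\SO(p+q)$ without changing the quotient space, which is exactly what is needed since the general theory assumes $G$ connected while $K$ is allowed to be disconnected.

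Next I would appeal to the fact, recalled and proved at the beginning of Section \ref{coho symm} (the proof attributed to Goette, the result to \'E.~Cartan and de Rham), that for a compact symmetric space $G/K$ with complexified Cartan decomposition $\frg=\frk\oplus\frp$ one has a ring isomorphism $H^*(G/K;\bbC)\cong(\twedge\frp)^K$. Applying this to the three symmetric spaces above, the computation of each cohomology ring becomes the computation of the corresponding invariant algebra $(\twedge\frp)^K$. These are precisely the algebras described, with explicit generators, relations and bases, in Theorem \ref{gen rel basis} for $\UU(p+q)/\UU(p)\times\UU(q)$, in Theorem \ref{gen rel basis B/BxB} for $\Sp(p+q)/\Sp(p)\times\Sp(q)$, and in Theorem \ref{gen rel basis SOk+m} for $\SO(k+m)/S(\OO(k)\times\OO(m))$ (applied with $\{k,m\}=\{p,q\}$, the three parity subcases of that theorem being handled internally there). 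This yields the three asserted descriptions.

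Since all the substantive work is carried out in the cited theorems, there is no genuine obstacle here beyond bookkeeping: the only points requiring a little care are ensuring that $G$ is connected in the real case (handled by passing from $\OO(p+q)$ to $\SO(p+q)$) and matching up the labelling $(k,m)$ used in Theorem \ref{gen rel basis SOk+m} with the Grassmannian $\Gr_p(\bbR^{p+q})$. Everything else is an immediate concatenation of Proposition \ref{prop real grass}, the Cartan--de Rham identification, and the three structural theorems.
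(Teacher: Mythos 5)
Your proposal is correct and follows exactly the route the paper intends: Proposition \ref{prop real grass} (with the passage to $\SO(p+q)/S(\OO(p)\times\OO(q))$ in the real case to ensure $G$ is connected), the identification $H^*(G/K;\bbC)\cong(\twedge\frp)^K$ from the start of Section \ref{coho symm}, and then the three structural theorems. The paper states the corollary as an immediate consequence of these same ingredients, so there is nothing to add.
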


Since the involution $\sigma\theta$ of $\bbG$ is given by \eqref{sigma theta}, i.e., by $\sigma\theta(g)=I_{p,q}(\bar g^t)^{-1}I_{p,q}$, 
the group $G_\bbR=\bbG^{\sigma\theta}$ is equal to $\UU(p,q;\bbF)$. In other words, $G_\bbR$ is $\OO(p,q)$ if $\bbF=\bbR$; $\UU(p,q)$ if $\bbF=\bbC$; and $\Sp(p,q)$ if $\bbF=\bbH$.

\subsection{The symplectic Lagrangian Grassmannians}
\label{real lagr grass}

Let $\bbF=\bbR$ or $\bbC$, and let  $\LGr(\bbR^{2n})$ be the (symplectic) Lagrangian Grassmannian, i.e., the manifold of all Lagrangian subspaces of $\bbF^{2n}$ with respect to the standard symplectic form $\lara$ given by
\[
\lan x,y\ran=x^t J_n y,\qquad\text{where}\quad J_n=\begin{pmatrix} 0&I_n\cr -I_n&0\end{pmatrix}.
\]
Let $\bbG$ be the group $\Sp(2n,\bbF)$ of $2n\times 2n$ matrices over $\bbF$ preserving the form $\lara$, i.e., satisfying $g^tJ_ng=J_n$. Then $\bbG$ acts on $\LGr(\bbF^{2n})$, and this action is transitive by Witt's Theorem \ref{witt}. Thus $\LGr(\bbF^{2n})=\bbG/\bbP$, where 
$\bbP$ is the (Siegel) parabolic subgroup of $\Sp(2n,\bbF)$, defined as the stabilizer of the standard Lagrangian subspace 
\[
L_0=\{(x_1,\dots,x_n,0,\dots,0)\bbar x_1,\dots,x_n\in\bbF\} \subseteq \bbF^{2n}.
\]
Writing $g\in \bbG$ as a block matrix $\left(\begin{smallmatrix} A&B\cr C& D \end{smallmatrix}\right)$ with $n\times n$ blocks, the condition $g^tJ_ng=J_n$ implies
\eq
\label{cond sp2nR}
\bbG=\Sp(2n,\bbF)=\left\{\pmat A&B\cr C& D\epmat\in \GL(2n,\bbF)\bbar A^t C=C^tA,\ B^t D=D^t B,\ A^tD-C^t B=I_n\right\}.
\eeq
It follows that
\eq
\label{cond P sp2nR}
\bbP=\left\{\pmat A&B\cr 0& D\epmat\in \GL(2n,\bbF)\bbar B^t D=D^t B,\ A^tD=I_n\right\}.
\eeq

Let $\sigma$ be the involution of $\bbG$ given by \eqref{def sigma}, i.e., by $\sigma(g)=I_{n,n}gI_{n,n}$. Then $\bbG^\sigma$ is equal to the Levi subgroup $\bbL$ of $\bbP$ and it consists of block diagonal matrices in $\bbG$, i.e., 
\[
\bbG^\sigma=\bbL=\left\{\pmat A&0\cr 0& (A^t)^{-1}\epmat \bbar A\in \GL(n,\bbF)\right\}\cong \GL(n,\bbF)\quad\text{via}\ \pmat A&0\cr 0& (A^t)^{-1}\epmat\leftrightarrow A.
\]
Since $\bbK$ consists of the fixed points of the Cartan involution $\theta (g)=(\bar g^t)^{-1}$, and since $g\in\bbG$ is equivalent to $(g^t)^{-1}=J_n gJ_n^{-1}$, we see that $\theta(g)=g$ is equivalent to $J_n\bar g=gJ_n$. This implies
\eq
\label{descr K spn}
\bbK=\left\{\begin{pmatrix} A&-\bar C\cr C& \bar A \end{pmatrix}\bbar A^tC=C^tA,\ \bar A^tA+\bar C^tC=I_n\right\}.
\eeq
If $\bbF=\bbR$ (so the bars can be omitted), this is exactly the standard description of $\UU(n)$ inside $\GL(2n,\bbR)$; more precisely, $\bbK\cong \UU(n)$ via 
$\smat A&-C\cr C& A \esmat\leftrightarrow A+iC$. 

If $\bbF=\bbC$, then \eqref{descr K spn} is exactly the standard description of $\Sp(n)$ inside $\GL(2n,\bbC)$.  

We now also see that 
\[
\bbP\cap \bbK= \bbK^\sigma = \left\{\begin{pmatrix} A&0\cr 0& \bar A \end{pmatrix}\bbar A^tA=I\right\}\cong \UU(n,\bbF)\quad\text{via}\ \begin{pmatrix} A&0\cr 0& \bar A \end{pmatrix}\leftrightarrow A. 
\]
In other words, if $\bbF=\bbR$, then $\bbK^\sigma =\OO(n)$ via $\smat A&0\cr 0& A \esmat\leftrightarrow A$, and if $\bbF=\bbC$, then $\bbK^\sigma =\UU(n)$ via $\smat A&0\cr 0& \bar A \esmat\leftrightarrow A$.

Now Proposition \ref{k trans} implies 
\begin{prop}
\label{prop real lg}
The real symplectic Lagrangian Grassmannian $\LGr(\bbR^{2n})$ is diffeomorphic to $\UU(n)/\OO(n)$, while the complex symplectic Lagrangian Grassmannian $\LGr(\bbC^{2n})$ is diffeomorphic to $\Sp(n)/\UU(n)$.
\end{prop}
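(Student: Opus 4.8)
The plan is to obtain Proposition~\ref{prop real lg} as an immediate consequence of Proposition~\ref{k trans} together with the explicit matrix descriptions of $\bbK$ and $\bbP\cap\bbK$ established just above. First I would note that $\bbP$ as in \eqref{cond P sp2nR} is a standard (maximal) parabolic subgroup of $\bbG=\Sp(2n,\bbF)$: it is the stabilizer of the flag $\{0\}\subset L_0\subset\bbF^{2n}$ with $L_0$ Lagrangian, and hence contains a minimal parabolic $\bbP_0=\bbM\bbA\bbN_0$ attached to an Iwasawa decomposition $\bbG=\bbK\bbA\bbN_0$ with $\bbK=\bbG^\theta$ maximal compact. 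Thus the hypotheses of Proposition~\ref{k trans} are satisfied, and we conclude that $\bbK$ acts transitively on $\LGr(\bbF^{2n})=\bbG/\bbP$, so that
\[
\LGr(\bbF^{2n})\;\cong\;\bbK/(\bbP\cap\bbK)\;=\;\bbK/\bbK^\sigma
\]
as smooth manifolds.

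It then only remains to substitute the identifications already computed. For $\bbF=\bbR$, the equivalence $\theta(g)=g\iff J_n\bar g=gJ_n$ yields the description \eqref{descr K spn} of $\bbK$, which for $\bbF=\bbR$ is exactly the standard copy of $\UU(n)$ in $\GL(2n,\bbR)$ via $\smat A&-C\cr C&A\esmat\leftrightarrow A+iC$; and $\bbK^\sigma$ consists of the matrices in \eqref{descr K spn} with $C=0$, i.e. the image of $\OO(n)$ under this same embedding. Hence $\LGr(\bbR^{2n})\cong\UU(n)/\OO(n)$ with its standard subgroup. For $\bbF=\bbC$, the same computation \eqref{descr K spn} gives the standard copy of $\Sp(n)$ in $\GL(2n,\bbC)$, and again $\bbK^\sigma$ is the locus $C=0$, which is the usual copy of $\UU(n)$ inside $\Sp(n)$; hence $\LGr(\bbC^{2n})\cong\Sp(n)/\UU(n)$.

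The points that need care — and which constitute the only, rather mild, obstacle — are twofold: one must check that the abstract isomorphism $\bbK\cong\UU(n)$ (resp. $\Sp(n)$) restricts on the subgroup $\bbK^\sigma$ to the claimed $\OO(n)$ (resp. $\UU(n)$), so that the right-hand homogeneous space carries its standard subgroup; and one must verify that $\sigma$ of \eqref{def sigma} commutes with $\theta$ and that $\bbG^\sigma=\bbL$, $\bbK^\sigma=\bbP\cap\bbK$, which is where one uses that $\bbP$ is block upper triangular so conjugation by $I_{n,n}$ negates the off-diagonal blocks. Both are routine matrix verifications carried out in the displayed computations preceding the statement. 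No connectedness assumption is needed here since only a diffeomorphism is asserted; we do record, however, that $\bbK\cong\UU(n)$ and $\bbK\cong\Sp(n)$ are connected, which is what the later cohomology computations will require.
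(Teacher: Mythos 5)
Your proposal is correct and follows essentially the same route as the paper: establish $\LGr(\bbF^{2n})=\bbG/\bbP$ via Witt's theorem, invoke Proposition \ref{k trans} to get $\bbG/\bbP\cong\bbK/(\bbP\cap\bbK)$, and then substitute the explicit matrix identifications $\bbK\cong\UU(n)$ (resp.\ $\Sp(n)$) and $\bbK^\sigma\cong\OO(n)$ (resp.\ $\UU(n)$) from \eqref{descr K spn}. The extra care points you flag (that the isomorphism of $\bbK$ carries $\bbK^\sigma$ to the standard subgroup, and that $\sigma$ commutes with $\theta$) are exactly the routine verifications the paper performs in the displayed computations preceding the statement.
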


\begin{cor}
\label{coho lagr grass}
The cohomology ring (with complex coefficients) of the symplectic  Lagrangian Grassmannian $\LGr(\bbF^{2n})$ is described by: Theorem \ref{thm prim and aprim alg} if $\bbF=\bbR$; Theorem \ref{thm basis C/A} if $\bbF=\bbC$.

\end{cor}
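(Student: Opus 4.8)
The plan is to obtain this as an immediate consequence of Proposition \ref{prop real lg} together with two facts established elsewhere in the paper. The first is the general principle, recalled and proved at the start of Section \ref{coho symm} (following Goette, and going back to \'E.~Cartan and de~Rham), that for a compact symmetric space $G/K$ with $G$ connected the de~Rham cohomology ring with complex coefficients is isomorphic to $(\twedge\frp)^K$, where $\frg=\frk\oplus\frp$ is the eigenspace decomposition of the defining involution. The second consists of the explicit descriptions of $(\twedge\frp)^K$ in the two relevant cases: Theorem \ref{thm prim and aprim alg} for $G/K=\UU(n)/\OO(n)$, and Theorem \ref{thm basis C/A} for $G/K=\Sp(n)/\UU(n)$.

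First I would observe that, by Proposition \ref{prop real lg}, $\LGr(\bbR^{2n})$ is diffeomorphic to $\UU(n)/\OO(n)$ and $\LGr(\bbC^{2n})$ is diffeomorphic to $\Sp(n)/\UU(n)$; since a diffeomorphism induces an isomorphism of de~Rham cohomology rings, it suffices to compute the complexified de~Rham cohomology ring of each of these symmetric spaces. In both cases the group $G$ (namely $\UU(n)$, respectively $\Sp(n)$) is connected, so the hypothesis of the principle above is met; the fact that $K$ ($\OO(n)$, respectively $\UU(n)$) may be disconnected is harmless, as already allowed for in Section \ref{sec spin}. Moreover both symmetric-space structures come from the restriction to $\bbK$ of the same involution $\sigma$ from \eqref{def sigma}, so they match the setup of Section \ref{coho symm} verbatim.

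It then remains to quote the two theorems. For $\bbF=\bbR$ we are in the primary case $\UU(n)/\OO(n)$, where Theorem \ref{thm prim and aprim alg} identifies $(\twedge\frp)^K$ with the exterior algebra on the graded subspace $\cal P_\wedge(\frp)$ of Definition \ref{def samspace}, the generator degrees being those of Table \ref{tab degrees prim}. For $\bbF=\bbC$ we are in the case $\Sp(n)/\UU(n)$, where Theorem \ref{thm basis C/A} presents $(\twedge\frp)^K$ as the algebra generated by the elementary symmetric functions $r_1,\dots,r_n$ on the coordinates of the Cartan subalgebra $\frt$, subject to the relations $r_k^2=2r_{k-1}r_{k+1}-2r_{k-2}r_{k+2}+\dots$, with the squarefree monomials in the $r_i$ forming a basis and $\deg r_i=2i$. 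Combining these with the reduction of the previous paragraph yields the corollary.

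There is essentially no obstacle internal to this corollary: the substance lies entirely in the cohomology-equals-invariants principle and, above all, in Theorems \ref{thm prim and aprim alg} and \ref{thm basis C/A} themselves. The one point worth keeping in mind is that the identifications in Proposition \ref{prop real lg} are genuine diffeomorphisms of smooth manifolds, built from $\bbK$-equivariant identifications of homogeneous spaces, so that they transport the ring structure on de~Rham cohomology without any further argument.
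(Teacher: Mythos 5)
Your proposal is correct and follows exactly the route the paper intends: Proposition \ref{prop real lg} identifies the two Lagrangian Grassmannians with $\UU(n)/\OO(n)$ and $\Sp(n)/\UU(n)$, the cohomology-equals-invariants principle of Section \ref{coho symm} reduces the computation to $(\twedge\frp)^K$, and the two cited theorems supply the answer. The only cosmetic remark is that $\UU(n)/\OO(n)$ is primary only for odd $n$ and almost primary for even $n$ (Corollary \ref{u/o even and odd}), but Theorem \ref{thm prim and aprim alg} covers both, so nothing is affected.
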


Finally, we describe the group $\bbG^{\sigma\theta}$. Let first $\bbF=\bbR$. Then since $\sigma\theta(g)=I_{n,n}(g^t)^{-1}I_{n,n}$, and since any $g\in\bbG$ satisfies $(g^t)^{-1}=J_ngJ_n^{-1}$, $\sigma\theta(g)=g$ is equivalent to 
$gD_n=D_n g$ where $D_n=\smat 0&I_n\cr I_n&0\esmat$. It follows that 
\[
\bbG^{\sigma\theta}=\left\{g=\pmat A&B\cr B&A\epmat\bbar A^tB=B^tA,\,A^tA- B^tB=I_n \right\}.
\]
Conjugating $\smat A&B\cr B&A\esmat$ by $\smat I_n&I_n\cr I_n&-I_n\esmat$ we get the matrix $\smat A+B&0\cr 0&A-B\esmat$, and the conditions $A^tB=B^tA,\, A^tA- B^tB=I_n$ imply $(A+B)^t(A-B)=I_n$, so $A-B=((A+B)^t)^{-1}$. Conversely, starting from the matrix $\smat Z&0\cr 0& (Z^t)^{-1}\esmat$ and setting $A=\half(Z+(Z^t)^{-1})$, $B=\half (Z-(Z^t)^{-1})$, we get $A^tB= B^tA,\,A^tA-B^tB=I_n$. Thus
\[
\bbG^{\sigma\theta}\cong \left\{\pmat Z&0\cr 0&(Z^t)^{-1}\epmat\bbar Z\in \GL(n,\bbR)\right\} \cong \GL(n,\bbR) \ 
\text{via}\ \pmat Z&0\cr 0&(Z^t)^{-1}\epmat\leftrightarrow Z.
\]

Now let $\bbF=\bbC$. Since  
$\sigma\theta(g)=I_{n,n}(\bar g^t)^{-1}I_{n,n}$, and since any $g\in \bbG$ satisfies $(g^t)^{-1}=J_ngJ_n^{-1}$, we see that 
$\sigma\theta(g)=D_n\bar g D_n$. 
We claim that $\bbG^{\sigma\theta}\cong \Sp(2n,\bbR)$. To see this, we note that
\eq
\label{sqrt Dn}
C_n=\half\begin{pmatrix}
    (1+i)I_n& (1-i)I_n\cr (1-i)I_n&(1+i)I_n
\end{pmatrix}
\qquad\text{implies}\quad C_n^2=D_n\text{ and } C_n^{-1}=\bar C_n.
\eeq
This implies that 
\[
g\in\bbG^{\sigma\theta} \qquad\text{if and only if}\qquad \overline{C_ngC_n^{-1}}=C_n g C_n^{-1},
\]
i.e., that $\bbG^{\sigma\theta}=C_n^{-1} \Sp(2n,\bbR)C_n\cong \Sp(2n,\bbR)$.

\subsection{Orthogonal Lagrangian Grassmannians}
\label{real orth lagr grass}

Let $\OLGr(\bbC^{2n})$ be the (complex) orthogonal  Lagrangian Grassmannian. In other words, $\OLGr(\bbC^{2n})$ is the manifold of all Lagrangian subspaces of $\bbC^{2n}$ with respect to the symmetric bilinear form $\lara$ defined by 
\[
\lan x,y\ran=\sum_{r=1}^n x_ry_{n+r} +\sum_{r=1}^n x_{n+r}y_r=x^t D_n y,
\]
where as before, $D_n=\smat 0&I_n\cr I_n&0\esmat$.

Let $\bbG=\OO(2n,\bbC)$ be the group of $2n\times 2n$ complex matrices  preserving the form $\lara$, i.e., satisfying $g^tD_ng=D_n$. Writing $g=\smat A&B\cr C&D\esmat$ we see that 
\eq
\label{cond onn}
\bbG=\OO(2n,\bbC)=\left\{\pmat A&B\cr C&D \epmat\in \GL(2n,\bbC)\bbar C^tA=-A^tC,\, D^tB=- B^tD,\, A^tD+C^tB=I_n\right\}.
\eeq

The group $\bbG$ acts on $\OLGr(\bbC^{2n})$, and this action is transitive by Witt's Theorem \ref{witt}. Thus $\OLGr(\bbC^{2n})=\bbG/\bbP$, where 
$\bbP$ is the parabolic subgroup of $\bbG$, defined as the stabilizer of the standard Lagrangian subspace 
\[
L_0=\{(x_1,\dots,x_n,0,\dots,0)\bbar x_1,\dots,x_n\in\bbC\}\subset \bbC^{2n}.
\]
An element $g=\smat A&B\cr C&D\esmat$ of $\bbG$ stabilizes $L_0$ if and only if $C=0$, so
\eq
\label{descr P o2nc}
\bbP=\left\{\pmat A&B\cr 0&D\epmat\in \GL(2n,\bbC)\bbar   D^tB=- B^tD,\,  A^tD=I_n\right\}.
\eeq

Let $\sigma$ be the involution of $\bbG$ defined by \eqref{def sigma}, i.e., by $\sigma(g)=I_{n,n}gI_{n,n}$. Then $\bbG^\sigma$ is equal to the Levi subgroup $\bbL$ of $\bbP$ and it consists of block diagonal matrices in $\bbG$, i.e., 
\[
\bbG^\sigma=\bbL=\left\{\pmat A&0\cr 0& (A^t)^{-1}\epmat \bbar A\in \GL(n,\bbC)\right\}\cong \GL(n,\bbC)\quad\text{via}\ \pmat A&0\cr 0& (A^t)^{-1}\epmat\leftrightarrow A.
\]
Since $\theta(g)=(\bar g^t)^{-1}$ and since $g\in\bbG$ is equivalent to $(g^t)^{-1}=D_ngD_n$, we see that $\theta(g)=g$ is equivalent to $D_n\bar gD_n=g$, or $D_n\bar g=gD_n$. Thus  
\eq
\label{descr K onn}
\bbK=\left\{ \pmat A& \bar C\cr C& \bar A\epmat\in \GL(2n,\bbC)\bbar  C^tA=- A^tC,\  \bar A^tA+ \bar C^tC=I_n\right\}.
\eeq
To identify this subgroup, we connect our $\bbG$ with the more usual group $\bbG'=\OO(2n,\bbC)'$ given by $g^tg=I_{2n}$. The maximal compact subgroup $\bbK'$ of $\bbG'$ is given by the condition $(\bar g^t)^{-1}=g$, or equivalently $\bar g=g$, so $\bbK'=\OO(2n)$. Since $\bbG$ and $\bbG'$ are isomorphic, $\bbK\cong \OO(2n)$. In fact, explicit isomorphisms $\bbG\cong\bbG'$ and $\bbK\cong\bbK'$ are given by conjugation by the matrix $C_n$ of \eqref{sqrt Dn}.

We also see that
\[
\bbP\cap \bbK= \bbK^\sigma = \left\{ \pmat A& 0\cr 0&\bar A\epmat\bbar \bar A^tA=I_n\right\}\cong \UU(n)\quad\text{via}\quad \pmat A& 0\cr 0&\bar A\epmat\leftrightarrow A.
\]

Now Proposition \ref{k trans} implies

\begin{prop}
\label{prop onn}
The orthogonal Lagrangian Grassmannian $\OLGr(\bbC^{2n})$ is diffeomorphic to $\OO(2n)/\UU(n)$.
\end{prop}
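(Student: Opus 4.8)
The plan is to obtain the statement as an immediate consequence of Proposition~\ref{k trans} together with the explicit descriptions of $\bbK$ and $\bbP\cap\bbK$ established just above. Applying Proposition~\ref{k trans} to the pair $(\bbG,\bbP)=(\OO(2n,\bbC),\bbP)$ from Table~1, the compact subgroup $\bbK$ acts transitively on $\bbG/\bbP=\OLGr(\bbC^{2n})$, so that
\[
\OLGr(\bbC^{2n}) \;=\; \bbG/\bbP \;\cong\; \bbK/(\bbP\cap\bbK) \;=\; \bbK/\bbK^\sigma
\]
as smooth manifolds (in fact $\bbK$-equivariantly). Thus everything reduces to identifying the pair $(\bbK,\bbK^\sigma)$ with $(\OO(2n),\UU(n))$.

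For this I would use the explicit forms recorded above, namely $\bbK=\{\smat A&\bar C\cr C&\bar A\esmat : C^tA=-A^tC,\ \bar A^tA+\bar C^tC=I_n\}$ and $\bbK^\sigma=\{\smat A&0\cr 0&\bar A\esmat : \bar A^tA=I_n\}$, and the matrix $C_n$ of \eqref{sqrt Dn} with $C_n^2=D_n$, $C_n^{-1}=\bar C_n$. First, conjugation by $C_n$ is an isomorphism of $\bbG=\OO(2n,\bbC)$ (defined by $g^tD_ng=D_n$) onto the ``standard'' orthogonal group $\bbG'=\{g:g^tg=I_{2n}\}$, and it carries $\bbK$ onto the maximal compact subgroup $\bbK'=\OO(2n)$ of $\bbG'$. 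Second, I would check by a direct matrix computation, using the explicit shape of $C_n$ and $C_n^{-1}=\bar C_n$, that this same conjugation carries $\bbK^\sigma$ onto the standard block-embedded copy $\UU(n)\subseteq\OO(2n)$ (equivalently, onto a subgroup conjugate to it inside $\OO(2n)$). Combining the two, $\bbK/\bbK^\sigma\cong\OO(2n)/\UU(n)$, which is the assertion.

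I do not expect a genuine obstacle here: the content is already in the preceding paragraphs, and what remains is the short bookkeeping above. The one point that needs a little care — rather than real difficulty — is ensuring that the two identifications $\bbK\cong\OO(2n)$ and $\bbK^\sigma\cong\UU(n)$ are realized by one and the same map, so that the quotient is genuinely $\OO(2n)/\UU(n)$ with its standard symmetric-space structure; conjugation by $C_n$ provides exactly such a simultaneous identification. Alternatively, one could skip the conjugation and simply remark that, having shown abstractly $\bbK\cong\OO(2n)$ and $\bbK^\sigma\cong\UU(n)$ as a symmetric pair, the quotient $\bbK/\bbK^\sigma$ is forced to be the corresponding compact symmetric space $\OO(2n)/\UU(n)$; but the explicit argument via $C_n$ is cleaner and self-contained.
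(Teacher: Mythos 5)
Your proposal is correct and follows essentially the same route as the paper: the paper also deduces the proposition directly from Proposition~\ref{k trans} together with the explicit identifications $\bbK\cong\OO(2n)$ (via conjugation by $C_n$) and $\bbP\cap\bbK=\bbK^\sigma\cong\UU(n)$ established in the preceding paragraphs. Your additional remark about realizing both identifications by the single conjugation by $C_n$ is a sensible (if slightly more careful) elaboration of what the paper leaves implicit.
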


Since our computation of cohomology of a compact symmetric space $G/K$ requires $G$ to be connected, we replace $\OO(2n)/\UU(n)$ by $\SO(2n)/\UU(n)$. The orbit of $\SO(2n)$ on $\OLGr(\bbC^{2n})$ is one of the two components of $\OLGr(\bbC^{2n})$, which we denote by $\OLGr^+(\bbC^{2n})$, and still call it the orthogonal Lagrangian Grassmannian. The other component of $\OLGr(\bbC^{2n})$ is diffeomorphic to $\OLGr^+(\bbC^{2n})$ and thus has the same cohomology.

\begin{cor}
\label{coho onn}
The cohomology ring (with complex coefficients) of the orthogonal Lagrangian Grassmannian $\OLGr^+(\bbC^{2n})\cong \SO(2n)/\UU(n)$ is described by 
Theorem \ref{thm basis D/A}.
\end{cor}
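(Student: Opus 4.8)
The plan is to obtain the corollary by chaining together three facts that are already in place, with essentially no new computation. First, I would invoke Proposition~\ref{prop onn} together with the paragraph following it: these identify the component $\OLGr^+(\bbC^{2n})$ — the $\SO(2n)$-orbit inside $\OLGr(\bbC^{2n})$ — diffeomorphically with the compact symmetric space $\SO(2n)/\UU(n)$, where $K=\UU(n)$ sits inside $G=\SO(2n)$ as the subgroup $\bbK^\sigma$ of \eqref{descr K onn}, i.e.\ via $\smat A&0\cr 0&\bar A\esmat\leftrightarrow A$. Since de Rham cohomology is a diffeomorphism invariant, it therefore suffices to describe $\Ho^\bullet(\SO(2n)/\UU(n);\bbC)$ as a ring.

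Second, I would apply the Cartan--de Rham description of the cohomology of a compact symmetric space, recalled in the introduction and established at the beginning of Section~\ref{coho symm}: for a compact connected group $G$ with an involution $\sigma$, $K=G^\sigma$, and $\frg=\frk\oplus\frp$ the $\sigma$-eigenspace decomposition of the complexified Lie algebra, there is a ring isomorphism $\Ho^\bullet(G/K;\bbC)\cong(\twedge\frp)^K$. The hypothesis that $G$ be connected is precisely why one replaces $\OO(2n)/\UU(n)$ by $\SO(2n)/\UU(n)$; with $G=\SO(2n)$ this applies, $K=\UU(n)$ acting on $\frp$ through the isotropy representation.

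Finally, I would quote Theorem~\ref{thm basis D/A}, which gives $(\twedge\frp)^K$ for the pair $(\frg,\frk)=(\frso(2n),\frgl(n))$ explicitly: generated by $r_1,\dots,r_{n-1}$ (the elementary symmetric functions in the coordinates on a Cartan subalgebra of $\frk$, with $r_n$ eliminated), subject to the $\Sp(n)/\UU(n)$-type quadratic relations in which each $t_k$ is set to $0$, with monomial basis $\{r_1^{\eps_1}\cdots r_{n-1}^{\eps_{n-1}}:\eps_i\in\{0,1\}\}$ and grading $\deg r_i=2i$. Composing the three isomorphisms yields the stated description of $\Ho^\bullet(\OLGr^+(\bbC^{2n});\bbC)$. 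The only point that genuinely requires care — rather than a real obstacle — is the component bookkeeping in the first step: one must know that $\OLGr(\bbC^{2n})$ has two connected components, that $\SO(2n)$ acts transitively on one of them, and that the two are mutually diffeomorphic (so the cohomology of either is computed this way); all of this is contained in the paragraph immediately preceding the corollary. No computation beyond Theorem~\ref{thm basis D/A} itself is needed.
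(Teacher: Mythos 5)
Your proposal is correct and follows exactly the route the paper intends: Proposition \ref{prop onn} (plus the component discussion preceding the corollary) to identify $\OLGr^+(\bbC^{2n})$ with $\SO(2n)/\UU(n)$, the Cartan--de Rham isomorphism $\Ho(G/K;\bbC)\cong(\twedge\frp)^K$ from Subsection \ref{subsec coho symm general}, and then Theorem \ref{thm basis D/A}. Nothing is missing.
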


Finally, we describe the group $G_\bbR=\bbG^{\sigma\theta}$ in case 
$\bbG=\SO(2n,\bbC)$. Since 
$\sigma\theta(g)=I_{n,n}(\bar g^t)^{-1} I_{n,n}$, we see that 
$\bbG^{\sigma\theta}=\OO(2n,\bbC)\cap\UU(n,n)=\SO(2n,\bbC)\cap \SU(n,n)$, and this is exactly the description of $\SO^*(2n)$ given in \cite[1.141]{beyond}. 

\begin{rem}
    One could define the group $\OO^*(2n)$ as $\OO(2n,\bbC)\cap U(n,n)$, or alternatively, as the group of automorphisms of $\bbH^n$ preserving a skew Hermitian form; see Section \ref{quat lagr grass}. Conceivably, an element of this group could have determinant equal to $\pm 1$. However, we prove in Section \ref{quat lagr grass} that the maximal compact subgroup of this group is $\UU(n)$, so it follows that the group is connected and the determinant must be 1. In other words, $\OO^*(2n)=\SO^*(2n)$.
\end{rem}

\subsection{The Hermitian Lagrangian Grassmannians}
\label{herm lagr grass}

Let $\bbF$ be $\bbR$, $\bbC$ or $\bbH$ and let $\HLGr(\bbF^{2n})$ be the Hermitian   Lagrangian Grassmannian. In other words, $\HLGr(\bbF^{2n})$ is the manifold of all Lagrangian subspaces of $\bbF^{2n}$ with respect to the Hermitian form $\lara$ of signature $(n,n)$, defined by 
\[
\lan x,y\ran=\sum_{r=1}^n \bar x_ry_{n+r} +\sum_{r=1}^n \bar x_{n+r}y_r=\bar x^t D_n y,
\]
where as before, $D_n=\smat 0&I_n\cr I_n&0\esmat$.

Let $\bbG=\UU(n,n;\bbF)$ be the group of $2n\times 2n$ matrices over $\bbF$ preserving the form $\lara$, i.e., satisfying $\bar g^tD_ng=D_n$. So if $\bbF=\bbR$, $\bbG=\OO(n,n)$; if $\bbF=\bbC$, $\bbG=\UU(n,n)$; and if $\bbF=\bbH$, $\bbG=\Sp(n,n)$.

Writing $g=\smat A&B\cr C&D\esmat$ with $n\times n$ blocks, we see that 
\eq
\label{cond unn}
\bbG=\UU(n,n;\bbF)=\left\{\pmat A&B\cr C&D \epmat\in \GL(2n,\bbF)\bbar \bar C^tA=-\bar A^tC,\, \bar D^tB=-\bar B^tD,\, \bar A^tD+\bar C^tB=I_n\right\}.
\eeq
The group $\bbG$ acts on $\HLGr(\bbF^{2n})$, and this action is transitive by Witt's Theorem \ref{witt}.
Thus $\HLGr(\bbF^{2n})=\bbG/\bbP$, where 
$\bbP$ is the parabolic subgroup of $\bbG$ defined as the stabilizer of the standard Lagrangian subspace 
\[
L_0=\{(x_1,\dots,x_n,0,\dots,0)\bbar x_1,\dots,x_n\in\bbF\}\subset\bbF^{2n}.
\]
It follows that
\eq
\label{descr P unn}
\bbP=\left\{\pmat A&B\cr 0&D\epmat\in \GL(2n,\bbF)\bbar  \bar D^tB=-\bar B^tD,\, \bar A^tD=I_n\right\}.
\eeq

Let $\sigma$ be an involution of $\bbG$ defined by \eqref{def sigma}, i.e., by $\sigma(g)=I_{n,n}gI_{n,n}$. Then $\bbG^\sigma$ is equal to the Levi subgroup $\bbL$ of $\bbP$ and it consists of block diagonal matrices in $\bbG$, i.e., 
\[
\bbG^\sigma=\bbL=\left\{\pmat A&0\cr 0& (\bar A^t)^{-1}\epmat \bbar A\in \GL(n,\bbF)\right\}\cong \GL(n,\bbF)\quad\text{via}\ \pmat A&0\cr 0& (\bar A^t)^{-1}\epmat\leftrightarrow A.
\]
Since the Cartan involution is $\theta(g)=(\bar g^t)^{-1}$ and since $g\in\bbG$ is equivalent to $(\bar g^t)^{-1}=D_ngD_n$, we see that $\theta(g)=g$ is equivalent to $D_ngD_n=g$, or $D_ng=gD_n$. It follows that 
\eq
\label{descr K unn}
\bbK=\left\{ \pmat A& B\cr B& A\epmat\in \GL(2n,\bbF)\bbar \bar B^tA=-\bar A^tB,\ \bar A^tA+\bar B^tB=I_n\right\}.
\eeq
To identify this subgroup, we conjugate the matrix $\smat A&B\cr B&A\esmat$ by $\smat I_n&I_n\cr I_n&-I_n\esmat$ and get the matrix $\smat A+B&0\cr 0&A-B\esmat$. The conditions $\bar B^tA=- \bar A^tB,\, \bar A^tA+\bar B^tB=I_n$ imply $\overline{(A+B)}^t(A+B)=I_n$ and $\overline{(A-B)}^t(A-B)=I_n$, so $A+B$ and $A-B$ are in $\UU(n,\bbF)$ (i.e., in $\OO(n)$ if $\bbF=\bbR$; in $\UU(n)$ if $\bbF=\bbC$; and in $\Sp(n)$ if $\bbF=\bbH$). Conversely, starting from matrices $Z$ and $W$ in $\UU(n,\bbF)$, we can reconstruct $A$ and $B$ as $A=\half(Z+W)$, $B=\half (Z-W)$, and the matrix $\smat A&B\cr B&A\esmat$ will satisfy the conditions $\bar B^tA=- \bar A^tB,\, \bar A^tA+\bar B^tB=I_n$. 
So we found an explicit isomorphism $\bbK\cong \UU(n,\bbF)\times \UU(n,\bbF)$. 
Under this isomorphism, the subgroup $\bbP\cap\bbK=\bbK^\sigma$ corresponds to the diagonal $\Delta \UU(n,\bbF)\subset \UU(n,\bbF)\times \UU(n,\bbF)$.

Now Proposition \ref{k trans} implies
\begin{prop}
\label{prop unn}
The Hermitian Lagrangian Grassmannian $\HLGr(\bbF^{2n})$ is diffeomorphic to 
$\UU(n,\bbF)\times \UU(n,\bbF)/\Delta \UU(n,\bbF)$. In other words, $\HLGr(\bbF^{2n})$ is diffeomorphic to
$\OO(n)\times \OO(n)/\Delta \OO(n)$ if $\bbF=\bbR$; to $\UU(n)\times \UU(n)/\Delta \UU(n)$ if $\bbF=\bbC$; and to $\Sp(n)\times \Sp(n)/\Delta \Sp(n)$ if $\bbF=\bbH$ .
\end{prop}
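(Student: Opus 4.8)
The plan is to read the proposition off from Proposition~\ref{k trans} together with the explicit matrix descriptions of $\bbK$ and of $\bbK^\sigma=\bbP\cap\bbK$ given in the paragraphs immediately preceding the statement. First I would note that $\bbP$, being the stabilizer in $\bbG=\UU(n,n;\bbF)$ of the standard Lagrangian $L_0$, contains a minimal parabolic subgroup of $\bbG$; hence Proposition~\ref{k trans} applies and the orbit map $k\mapsto k\bbP$ descends to a diffeomorphism $\bbK/(\bbP\cap\bbK)\to\bbG/\bbP=\HLGr(\bbF^{2n})$. (The transitivity of $\bbG$ on $\HLGr(\bbF^{2n})$ underlying the identification $\HLGr(\bbF^{2n})=\bbG/\bbP$ is Witt's Theorem~\ref{witt}, and that the resulting continuous bijection of compact manifolds is in fact a diffeomorphism is the standard homogeneous-space fact.) This reduces the proposition to identifying the pair $(\bbK,\bbP\cap\bbK)$.

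For that I would use the isomorphism $\bbK\cong\UU(n,\bbF)\times\UU(n,\bbF)$ exhibited just before the proposition: conjugation by $\smat I_n&I_n\cr I_n&-I_n\esmat$ carries $\smat A&B\cr B&A\esmat\in\bbK$ to $\smat A+B&0\cr 0&A-B\esmat$, and the defining conditions $\bar B^tA=-\bar A^tB$, $\bar A^tA+\bar B^tB=I_n$ are exactly what makes $A+B$ and $A-B$ lie in $\UU(n,\bbF)$; the inverse sends $(Z,W)$ to $\smat\tfrac12(Z+W)&\tfrac12(Z-W)\cr\tfrac12(Z-W)&\tfrac12(Z+W)\esmat$. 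Under this identification the subgroup $\bbK^\sigma=\bbP\cap\bbK$ of block-diagonal elements (those with $B=0$) becomes the set of pairs $(A,A)$, i.e.\ the diagonal $\Delta\UU(n,\bbF)$. Combining this with the diffeomorphism of the first step yields $\HLGr(\bbF^{2n})\cong\bigl(\UU(n,\bbF)\times\UU(n,\bbF)\bigr)/\Delta\UU(n,\bbF)$, and specializing $\bbF$ to $\bbR$, $\bbC$, $\bbH$ gives the three stated cases, using $\UU(n,\bbR)=\OO(n)$, $\UU(n,\bbC)=\UU(n)$, $\UU(n,\bbH)=\Sp(n)$.

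I do not anticipate any genuine obstacle: once Proposition~\ref{k trans} and the matrix bookkeeping preceding the statement are granted, the proof is essentially a one-line citation. The only points deserving care — both already handled in the run-up to the statement — are the linear-algebra check that conjugation by $\smat I_n&I_n\cr I_n&-I_n\esmat$ really turns elements of $\bbK$, which have the block form $\smat A&B\cr B&A\esmat$, into a genuine product $\UU(n,\bbF)\times\UU(n,\bbF)$, and the (equally routine) verification that $\bbP$ contains a minimal parabolic so that Proposition~\ref{k trans} may legitimately be invoked.
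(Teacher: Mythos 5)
Your proposal is correct and follows exactly the paper's own route: the paper derives the proposition by invoking Proposition~\ref{k trans} after establishing, in the paragraphs immediately preceding the statement, the conjugation by $\smat I_n&I_n\cr I_n&-I_n\esmat$ that identifies $\bbK$ with $\UU(n,\bbF)\times\UU(n,\bbF)$ and sends $\bbP\cap\bbK=\bbK^\sigma$ to the diagonal. No substantive differences.
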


To compute the cohomology of $G/K$ we need $G$ to be connected, and in case $\bbF=\bbR$ the group $G=\bbK=\OO(n)\times \OO(n)$ is not connected. Thus in the real case we replace $\OO(n)\times \OO(n)/\Delta \OO(n)$ by $\SO(n)\times \SO(n)/\Delta \SO(n)$. This amounts to replacing $\HLGr(\bbR^{2n})$ by the orbit $\HLGr^+(\bbR^{2n})$ of $\SO(n)\times \SO(n)$ which is one of the two
connected components of $\HLGr(\bbR^{2n})$.

\begin{cor}
\label{coho unn}
The cohomology rings (with complex coefficients) of the Hermitian Lagrangian Grassmannians $\HLGr^+(\bbR^{2n})$, $\HLGr(\bbC^{2n})$, and $\HLGr(\bbH^{2n})$, are described by  Theorem \ref{gen rel basis group}.
\end{cor}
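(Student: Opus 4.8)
The plan is to recognise each of the three spaces in the statement as a compact symmetric space of \emph{group type} and then to invoke Theorem~\ref{gen rel basis group} together with the general identification of the de Rham cohomology of a compact symmetric space. By Proposition~\ref{prop unn}, together with the observation preceding the statement which replaces $\HLGr(\bbR^{2n})$ by its connected component $\HLGr^+(\bbR^{2n})$, we have diffeomorphisms $\HLGr^+(\bbR^{2n})\cong\SO(n)\times\SO(n)/\Delta\SO(n)$, $\HLGr(\bbC^{2n})\cong\UU(n)\times\UU(n)/\Delta\UU(n)$, and $\HLGr(\bbH^{2n})\cong\Sp(n)\times\Sp(n)/\Delta\Sp(n)$. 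In each case the symmetric pair is $(G\times G,\Delta G)$ with $G$ one of $\SO(n)$, $\UU(n)$, $\Sp(n)$, which is exactly the family treated in Theorem~\ref{gen rel basis group}.

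First I would check that the involution $\sigma$ of \eqref{def sigma} restricts on $\bbK\cong\UU(n,\bbF)\times\UU(n,\bbF)$ to the factor-swap involution, so that on complexified Lie algebras the $\sigma$-eigenspace decomposition $\frg=\frk\oplus\frp$ is the standard one for the group case, with $\frk\cong\Delta\frg_1$ and $\frp\cong\frg_1$ for $\frg_1$ the complexification of the Lie algebra of $G$. This is immediate from Subsection~\ref{herm lagr grass}: conjugation by $\smat I_n&I_n\cr I_n&-I_n\esmat$ carries $\sigma\colon\smat A&B\cr B&A\esmat\mapsto\smat A&-B\cr -B&A\esmat$ to the swap $\smat A+B&0\cr 0&A-B\esmat\mapsto\smat A-B&0\cr 0&A+B\esmat$, whose fixed points are $\Delta\UU(n,\bbF)$.

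The conclusion then follows by combining two inputs already available to us. The first is the fact, established at the start of Section~\ref{coho symm}, that for a compact connected symmetric space $G/K$ one has an isomorphism of graded algebras $H^\bullet(G/K;\bbC)\cong(\twedge\frp)^K$. The second is Theorem~\ref{gen rel basis group}, which computes $(\twedge\frp)^K\cong(\twedge\frg_1)^{\frg_1}$ as the exterior algebra on the graded subspace $\cal P_\wedge(\frp)$, with degrees listed in Table~\ref{tab degrees prim}. Putting these together yields the assertion.

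I expect no serious obstacle here: the substantive work resides in Theorem~\ref{gen rel basis group} and in the realization results of Subsection~\ref{herm lagr grass}. The one point needing a little care is the connectedness hypothesis required for $H^\bullet(G/K;\bbC)\cong(\twedge\frp)^K$ when $\bbF=\bbR$, where $\OO(n)\times\OO(n)$ is disconnected; this is precisely why one passes to $\HLGr^+(\bbR^{2n})$, the other component of $\HLGr(\bbR^{2n})$ being diffeomorphic to it and hence carrying the same cohomology. It is also worth noting, as a consistency check, that in the group case the isomorphism $(\twedge\frg_1)^{\frg_1}\cong H^\bullet(G;\bbC)$ recovers the classical theorem of Hopf that the rational cohomology of a compact connected Lie group is an exterior algebra on generators in odd degrees.
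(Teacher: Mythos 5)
Your proposal is correct and follows the same route the paper intends: Proposition \ref{prop unn} (with the passage to $\HLGr^+(\bbR^{2n})$ for the disconnected real case) identifies each space with a group case $G\times G/\Delta G$, the general identification $H^\bullet(G/K;\bbC)\cong(\twedge\frp)^K$ from Section \ref{coho symm} applies, and Theorem \ref{gen rel basis group} gives the ring. Your explicit check that $\sigma$ becomes the factor-swap involution under conjugation by $\smat I_n&I_n\cr I_n&-I_n\esmat$ is a correct and welcome verification of a step the paper leaves implicit.
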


Finally, we describe the group $G_\bbR=\bbG^{\sigma\theta}$. Since 
$\sigma\theta(g)=I_{n,n}(\bar g^t)^{-1} I_{n,n}$ and since any $g\in\bbG$ satisfies $(\bar g^t)^{-1}=D_ngD_n$, we see that 
\[
\sigma\theta(g)=I_{n,n}D_ngD_nI_{n,n}=J_ngJ_n^{-1}.
\]
Thus $\sigma\theta(g)=g$ is equivalent to $J_ng=gJ_n$, and it follows that  
\eq
\label{cond gst}
\bbG^{\sigma\theta}=\left\{\pmat A&-C\cr C&A\epmat\bbar \bar C^tA=-\bar A^tC,\,\bar A^tA-\bar C^tC=I_n\right\}.
\eeq 

If $\bbF=\bbR$, recall that $A+iC\mapsto\smat A&-C\cr C&A\esmat$ is the standard embedding of $\GL(n,\bbC)$ into $\GL(2n,\bbR)$, and note that the conditions $C^tA=-A^tC,\,A^tA-C^tC=I_n$ correspond to $(A+iC)^t(A+iC)=I_n$. It follows that $\bbG^{\sigma\theta}=\OO(n,\bbC)$.

If $\bbF=\bbC$, we conjugate $\smat A&-C\cr C&A\esmat$ by $\smat 1&i\cr 1&-i\esmat$ and get $\smat A+iC&0\cr 0&A-iC\esmat$. The conditions $\bar C^tA=-\bar A^tC,\,\bar A^tA-\bar C^tC=I_n$ imply $\overline{(A+iC})^t(A-iC)=I_n$. Conversely, given $\smat Z&0\cr 0&(\bar Z^t)^{-1}\esmat$ we can reconstruct $A$ and $C$ as $A=\half(Z+(\bar Z^t)^{-1})$ and $C=\frac{1}{2i}(Z-(\bar Z^t)^{-1})$ and get 
$\smat A&-C\cr C&A\esmat\in\bbG^\sigma$. It follows that
\[
\bbG^{\sigma\theta}\cong \GL(n,\bbC), \quad \text{via}\ \pmat A&-C\cr C&A\epmat\leftrightarrow \pmat A+iC&0\cr 0&A-iC\epmat\leftrightarrow A+iC.
\]

If $\bbF=\bbH$, we claim that $\bbG^{\sigma\theta}$ is isomorphic to $\Sp(2n,\bbC)$. To see this, we consider the map
\[
\pmat A&-C\cr C&A\epmat =\pmat A_1+jA_2&-C_1-jC_2\cr C_1+jC_2&A_1+jA_2\epmat\mapsto \pmat A_1+iC_1&-\bar A_2-i\bar C_2\cr A_2+iC_2&\bar A_1+i\bar C_1\epmat.
\]
It is a tedious but straightforward  computation to check that the conditions \eqref{cond gst} imply the conditions in 
\eqref{cond sp2nR}, so our map sends $\bbG^{\sigma\theta}$ into $\Sp(2n,\bbC)$. Conversely, if $\smat X&Y\cr Z&T\esmat\in\Sp(2n,\bbC)$, then we can reconstruct an element of $\bbG^{\sigma\theta}$ mapping to $\smat X&Y\cr Z&T\esmat$; it is given by
\[
A_1=\half(X+\bar T),\quad A_2=\half(Z-\bar Y),\quad C_1=\frac{1}{2i}(X-\bar T),\quad C_2=\frac{1}{2i}(Z+\bar Y).
\]
(Another tedious computation shows that this element does satisfy the conditions of \eqref{cond gst}.)

\subsection{The skew Hermitian quaternionic Lagrangian Grassmannians}
\label{quat lagr grass}
Consider the skew Hermitian form on $\bbH^{2n}$ given by
\eq
\label{skew herm form}
(x\sbar y)=\bar x^t J_n y=\sum_{r=1}^{n} \bar x_r y_{n+r}-\sum_{r=1}^{n} \bar x_{n+r}y_r,
\eeq
where as before, $J_n=\smat 0&I_n\cr -I_n&0\esmat$.
(Note that the form $(\sbar)$ is different from, but equivalent to, the form considered in \cite{rossmann}. Also, recall that $\bbH$ acts on $\bbH^{2n}$ by right scalar multiplication, and that the form $(\sbar)$ satisfies the condition $(x\alpha\sbar y\beta)=\bar\alpha(x\sbar y)\beta$ for $x,y\in\bbH^{2n}$ and $\alpha,\beta\in\bbH$.)

The group $\bbG=\SO^*(4n)$ is the group of automorphisms of $\bbH^{2n}$ preserving the form $(\sbar)$, i.e., 
\[
\bbG=\left\{g\in \GL(2n,\bbH)\bbar \bar g^tJ_n g=J_n \right\}.
\]
The operations bar and transpose are defined by passing to the complex matrices: if $X$ is any $2n\times 2n$ quaternionic matrix, we write it as $X=U+jV$ with $U,V$ complex and identify $X$ with the $4n\times 4n$ complex matrix $\smat U&-\bar V\cr V&\bar U\esmat$. Then 
\[
 \bar X= \overline{\pmat U&-\bar V\cr V&\bar U\epmat}=\bar U+j\bar V;\  X^t=\pmat U&-\bar V\cr V&\bar U\epmat^t=U^t-j\bar V^t.
\]
Then one has $\overline{XY}=\bar X\bar Y$ and $(XY)^t=Y^tX^t$. 
The reader is cautioned that the operations bar and transpose cannot be performed directly on the quaternionic matrix in the usual way, but their composition can, since
\[
\overline{(U+jV)}^t=(\bar U+j\bar V)^t=\bar U^t-jV^t=\bar U^t +\bar V^t\bar j.
\]

Upon writing $g\in\bbG$ as $\smat A&B\cr C&D\esmat$ with $n\times n$ (quaternionic) blocks and writing out the condition $\bar g^tJ_n g=J_n$, we see
\eq
\label{cond so*}
\bbG=\SO^*(4n)=\left\{\pmat A&B\cr C&D\epmat\bbar \bar A^tC=\bar C^tA,\,\bar B^tD=\bar D^tB,\,\bar A^tD-\bar C^t B=I_n \right\}.
\eeq

Clearly, $\bbG$ acts on the skew Hermitian quaternionic Lagrangian Grassmannian $\LGr^*(\bbH^{2n})$, the manifold of all Lagrangian subspaces of $\bbH^{2n}$ with respect to $(\sbar)$, and this action is transitive by Witt's Theorem \ref{witt}. Thus 
$\LGr^*(\bbH^{2n})=\bbG/\bbP$, where 
$\bbP$ is the stabilizer in $\bbG$ of the standard Lagrangian subspace
\[
L_0=\{(x_1,\dots,x_n,0,\dots,0)\bbar x_1,\dots,x_n\in\bbH\}\subset \bbH^{2n}.
\]
It follows that 
\eq
\label{cond P so*}
\bbP=\left\{\pmat A&B\cr 0&D\epmat\in \GL(2n,\bbH)\bbar \bar B^tD=\bar D^tB,\,\bar A^tD=I_n \right\}.
\eeq

Let $\sigma$ be an involution of $\bbG$ given by \eqref{def sigma}, i.e., by $\sigma(g)=I_{n,n}gI_{n,n}$. Then $\bbG^\sigma$ is equal to the Levi subgroup $\bbL$ of $\bbP$ and it consists of block diagonal matrices in $\bbG$, i.e., 
\[
\bbG^\sigma=\bbL=\left\{\pmat A&0\cr 0& (\bar A^t)^{-1}\epmat \bbar A\in \GL(n,\bbH)\right\}\cong \GL(n,\bbH)\quad\text{via}\ \pmat A&0\cr 0& (\bar A^t)^{-1}\epmat\leftrightarrow A.
\]
Since $\bbK$ consists of the fixed points of the Cartan involution $\theta(g)=(\bar g^t)^{-1}$, and since 
 $g\in\bbG$ is equivalent to  
$(\bar g^t)^{-1}=J_ngJ_n^{-1}$, we see that $g\in\bbK$ if and only if $gJ_n=J_ng$. This implies 
\eq
\label{cond K so*}
\bbK=\left\{\pmat A&-C\cr C&A\epmat\in \GL(2n,\bbH)\bbar \bar A^tC=\bar C^tA,\,\bar A^tA+\bar C^tC=I_n \right\}.
\eeq
We now also see that
\[
\bbP\cap\bbK=\bbK^\sigma=\left\{\pmat A&0\cr 0&A\epmat\in \GL(2n,\bbH)\bbar \bar A^tA=I_n \right\}\cong \Sp(n)\quad\text{via}\ \pmat A&0\cr 0&A\epmat\leftrightarrow A.
\]
We claim that $\bbK\cong \UU(2n)$. To see this, we consider a different copy $\bbG'$ of $\SO^*(4n)$ inside $\GL(2n,\bbH)$, the one preserving the skew Hermitian form
\[
\lan x,y\ran = \bar x^t i y=\sum_{r=1}^{2n}\bar x_riy_r.
\]
So $\bbG'$ is the subgroup of $\GL(2n,\bbH)$ consisting of matrices $g$ such that $\bar g^tig=iI_{2n}$, and upon writing $g=U+jV$ with $U,V$ complex, we see
\eq
\label{cond so+ bis}
\bbG'=\left\{U+jV\in \GL(2n,\bbH)\bbar \bar U^tU+\bar V^tV=I_{2n},\, U^tV=V^t U\right\}.
\eeq
Since $g\in\bbG'$ is equivalent to $(\bar g^t)^{-1}=-igi$, $\theta g=g$ is equivalent to $ig=gi$. Writing $g=U+jV$ with $U,V$ complex, we see
\eq
\label{cond k so+ bis}
\bbK'=\left\{U+j0\in \GL(2n,\bbH)\bbar \bar U^tU=I_{2n} \right\}.
\eeq
 So $\bbK'$ is the usual $\UU(2n)$, embedded into $\GL(2n,\bbH)$ as $U(n)+j0$. 

To show an explicit connection between $\bbG$ and $\bbG'$ and also between $\bbK$ and $\bbK'$, we note that the matrix
\[
T_n=\frac{1}{\sqrt{2}}\pmat I_n&-iI_n\cr jI_n&-kI_n\epmat
\]
satisfies $\bar T_n^t i T_n=J_n$. It follows that 
\[
T_n\bbG T_n^{-1} = \bbG',
\]
and since $\theta T_n=T_n$, also $T_n\bbK T_n^{-1} = \bbK'=\UU(2n)$. Moreover, $T_n(\bbP\cap\bbK) T_n^{-1}$ is the standard $\Sp(n)$ inside $\UU(2n)$, embedded as matrices of the form $\smat U&-\bar V\cr V&\bar U\esmat$.

Now Proposition \ref{k trans} implies the following result, which can also be found in \cite{CN}. 

\begin{prop}
\label{prop u2n spn}
The skew Hermitian quaternionic Lagrangian Grassmannian $\bbG/\bbP=\LGr^*(\bbH^{2n})$ is diffeomorphic to $\UU(2n)/\Sp(n)$.
\end{prop}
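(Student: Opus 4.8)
The plan is to deduce this from Proposition~\ref{k trans} together with the explicit identifications of $\bbK$ and $\bbP\cap\bbK$ already obtained in this subsection. First I would observe that the unipotent radical $\bbN$ of $\bbP$ is abelian: by \eqref{cond P so*} it consists of the matrices $\smat I_n&B\cr 0&I_n\esmat$ with $\bar B^t=B$, i.e.\ with $B$ quaternionic Hermitian, and these form an abelian group under multiplication. Hence Proposition~\ref{k trans} (equivalently, Theorem~\ref{abel sym}) applies and yields a diffeomorphism
\[
\LGr^*(\bbH^{2n})=\bbG/\bbP\;\cong\;\bbK/(\bbP\cap\bbK)=\bbK/\bbK^\sigma .
\]

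Next I would invoke the computations carried out just above. We showed that the matrix $T_n=\tfrac{1}{\sqrt2}\smat I_n&-iI_n\cr jI_n&-kI_n\esmat$ satisfies $\bar T_n^t i T_n=J_n$, so conjugation by $T_n$ carries $\bbG$ onto the copy $\bbG'$ of $\SO^*(4n)$ defined by the form $\bar x^t i y$; since $\theta T_n=T_n$, the same conjugation carries $\bbK$ onto $\bbK'=\UU(2n)$ of \eqref{cond k so+ bis}, and, as already noted, carries $\bbP\cap\bbK=\bbK^\sigma$ onto the standard $\Sp(n)\subset\UU(2n)$ consisting of the matrices $\smat U&-\bar V\cr V&\bar U\esmat$. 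Therefore the map $x\bbK^\sigma\mapsto (T_n x T_n^{-1})\cdot(T_n\bbK^\sigma T_n^{-1})$ identifies $\bbK/\bbK^\sigma$ with $\UU(2n)/\Sp(n)$, and composing with the diffeomorphism above completes the proof.

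The only point requiring care — and the reason it is worth spelling out rather than merely concatenating the isomorphisms $\bbK\cong\UU(2n)$ and $\bbK^\sigma\cong\Sp(n)$ — is that one needs a \emph{single} element, namely $T_n$, that simultaneously conjugates $\bbK$ to $\UU(2n)$ and the subgroup $\bbK^\sigma$ to the standard $\Sp(n)$; this is exactly what $\bar T_n^t i T_n=J_n$ together with $\theta T_n=T_n$ guarantees, so no genuine obstacle remains. Connectedness is not an issue here, since both $\UU(2n)$ and $\Sp(n)$ are connected, consistently with $\bbG/\bbP$ being connected.
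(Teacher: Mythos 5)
Your proof is correct and follows essentially the same route as the paper: apply Proposition \ref{k trans} to get $\bbG/\bbP\cong\bbK/(\bbP\cap\bbK)$ and then use the conjugation by $T_n$ (already set up in this subsection) to identify the pair $(\bbK,\bbK^\sigma)$ with $(\UU(2n),\Sp(n))$. The only quibble is that Proposition \ref{k trans} rests on the Iwasawa decomposition and does not actually require $\bbN$ to be abelian, so your opening verification is a (harmless) check of the Table~1 hypotheses rather than a logical prerequisite for transitivity.
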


\begin{cor}
\label{coho u2n spn}
The cohomology ring (with complex coefficients) of the skew Hermitian quaternionic Lagrangian Grassmannian $\LGr^*(\bbH^{2n})\cong \UU(2n)/\Sp(n)$ is described by Theorem \ref{gen rel basis A/C}.
\end{cor}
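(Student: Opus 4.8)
The plan is to chain together three facts, none of which requires new work beyond what has already been set up in the paper. First, by Proposition~\ref{prop u2n spn} the skew Hermitian quaternionic Lagrangian Grassmannian $\LGr^*(\bbH^{2n})$ is diffeomorphic to the compact symmetric space $\UU(2n)/\Sp(n)$. Since complex de Rham cohomology is a diffeomorphism invariant, it is enough to compute $H^*\big(\UU(2n)/\Sp(n);\bbC\big)$ as a ring.

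Second, I would invoke the Cartan--de Rham description of the cohomology of a compact symmetric space, recalled (with proof) at the start of Section~\ref{coho symm}: if $G/K$ is a compact symmetric space with $G$ connected and $\frg=\frk\oplus\frp$ the complexified Cartan decomposition attached to the involution $\sigma$ (so that $K=G^\sigma$ and $\frp$ is the complexified tangent space at the base point), then $H^*(G/K;\bbC)\cong(\twedge\frp)^K$ as graded algebras. For $G/K=\UU(2n)/\Sp(n)$ the group $G=\UU(2n)$ is connected, so this applies verbatim.

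Third, Theorem~\ref{gen rel basis A/C} computes this invariant algebra for precisely the pair $(\UU(2n),\Sp(n))$: the algebra $(\twedge\frp)^K$ is the exterior algebra on the graded subspace $\cal P_\wedge(\frp)$ of Definition~\ref{def samspace}, with generator degrees as listed in Table~\ref{tab degrees prim}. Composing the diffeomorphism of the first step with the isomorphisms of the second and third steps yields the corollary.

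The only thing to verify --- and the reason this is a corollary rather than a tautology --- is that the connectedness hypothesis of the general theorem is met. For $(\UU(2n),\Sp(n))$ both groups are connected, so, unlike in the real orthogonal and real Hermitian cases of Sections~\ref{real orth lagr grass} and~\ref{herm lagr grass}, there is no need to pass to a connected component of the Grassmannian or to replace $G$ by an index-two subgroup; hence the identification is clean and the result follows immediately. This is the only (entirely routine) obstacle.
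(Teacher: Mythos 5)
Your proposal is correct and follows exactly the chain the paper intends: the diffeomorphism of Proposition \ref{prop u2n spn}, the identification $H^*(G/K;\bbC)\cong(\twedge\frp)^K$ from Subsection \ref{subsec coho symm general} (valid since $\UU(2n)$ is connected), and the computation of $(\twedge\frp)^K$ in Theorem \ref{gen rel basis A/C}. Your remark that no passage to a connected component is needed here, in contrast with the real orthogonal and real Hermitian cases, is accurate and is precisely why the paper states this as an immediate corollary.
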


Finally, we describe the group $G_\bbR=\bbG^{\sigma\theta}$. Since $\sigma\theta(g)=I_{n,n}(\bar g^t)^{-1}I_{n,n}$ and since $g\in\bbG$ is equivalent to $(\bar g^t)^{-1}=J_ngJ_n^{-1}$, 
\[
\sigma\theta (g)=I_{n,n}J_ngJ_n^{-1}I_{n,n}=D_n g D_n,\qquad g\in\bbG.
\] 
Thus $\sigma\theta(g)=g$ is equivalent to $gD_n=D_ng$. It follows that
\[
\bbG^{\sigma\theta}=\left\{g=\pmat A&B\cr B&A\epmat\bbar \bar A^tB=\bar B^tA,\,\bar A^tA-\bar B^tB=I_n\right\}.
\]
Conjugating $\smat A&B\cr B&A\esmat$ by $\smat I_n&I_n\cr I_n&-I_n\esmat$ we get the matrix $\smat A+B&0\cr 0&A-B\esmat$, and the conditions $\bar A^tB=\bar B^tA,\,\bar A^tA-\bar B^tB=I_n$ imply $\overline{(A+B)^t}(A-B)=I_n$, so $A-B=(\overline{(A+B)^t})^{-1}$. Conversely, starting from the matrix $\smat Z&0\cr 0& (\bar Z^t)^{-1}\esmat$ and setting $A=\half(Z+(\bar Z^t)^{-1})$, $B=\half (Z-(\bar Z^t)^{-1})$, we get $\bar A^tB=\bar B^tA,\,\bar A^tA-\bar B^tB=I_n$. Thus
\[
\bbG^{\sigma\theta}\cong \left\{\pmat Z&0\cr 0&(\bar Z^t)^{-1}\epmat\bbar Z\in \GL(n,\bbH)\right\} \cong \GL(n,\bbH)\quad
\text{via}\ \pmat Z&0\cr 0&(\bar Z^t)^{-1}\epmat\leftrightarrow Z.
\]

\subsection{The quadric cases}
In this subsection $\mathbb{F}$ is equal to $\mathbb{R}$ or $\mathbb{C}$. The following is taken from \cite[Chapter 4.4]{Thor}

Let $f$ be a nondegenerate symmetric bilinear form on $\mathbb{F}^{n+2}$. The quadric $Q_f(\mathbb{F})$ is defined to be the subset of the projective space $P^{n+1}(\mathbb{F})$:
\[Q_f(\mathbb{F}) = \{ x = (x_1 : \ldots : x_{n+2}) \in P^{n+1}(\bbF) \:| \:f(x,x) =0 \}. \] 
If $\mathbb{F}=\mathbb{R}$ then $f$ has normal form with matrix $I_{p+1,q+1}$ 
(if the form is definite, $Q_f(\mathbb{R})$ does not contain projective lines) and we denote $Q_f(\mathbb{R})$ by $Q_{p,q}(\mathbb{R})$. If $\mathbb{F}=\mathbb{C}$ then all $f$ are equivalent and we denote $Q_f(\mathbb{C})$ by $Q_{n}(\mathbb{C})$. The group $\bbG=\SO(p+1,q+1)_e$ acts transitively on the quadric $Q_{p,q}(\mathbb{R})$ with parabolic $\bbP = \mathrm{Stab}(1:0: \ldots :1:0: \ldots:0 )$. The maximal compact subgroup of $\bbG$ is $G=\bbK= \SO(p+1) \times \SO(q+1)$ and $K =\bbK \cap \bbP = S(\OO(p) \times \OO(q))$. In this setting $Q_{p,q}(\bbR)$ is equal to the symmetric space 
\[ G/K =  \SO(p+1) \times \SO(q+1) /  S(\OO(p) \times \OO(q)), \qquad p+q > 2. \]
The symmetric space is not an irreducible symmetric space, it has a double cover by spheres $S^p \times S^q$, however it is indecomposable.
When $\mathbb{F}=\mathbb{C}$ then $\SO_{n+2}(\mathbb{C})$ acts transitively on $Q_{n+2}(\mathbb{C})$ and the parabolic $\bbP = \mathrm{Stab}(1:i: \ldots :0)$ has abelian unipotent radical, furthermore
\[Q_n(\bbC) =\SO_{n+2}(\mathbb{C}) /\mathrm{Stab}(1:i: \ldots :0) \simeq \SO(n+2)/\SO(n) \times \SO(2) \quad n \geq 3.  \]

 As a compact symmetric space, $Q_n(\bbC)$ coincides with $\SO(n + 2)/\SO(n) \times \SO(2)$, which is equal to a double cover of the Grassmannian of $2$-planes in $\bbR^{n+2}$.  We leave the calculation of the cohomology of double covers of Grassmannians and these quadrics to future work.

\section{The structure of $C(\frp)^K$ and $(\twedge\frp)^K$}
\label{sec spin} 

\subsection{The decomposition of the spin module}
\label{K dec general}

Let $G/K$ be a compact symmetric space corresponding to an involution $\sigma$ of $G$ and let $\frg=\frk\oplus\frp$ be the decomposition of the complexified Lie algebra $\frg$ of $G$ into eigenspaces of $\sigma$. In particular, $\frk$ is the complexified Lie algebra of $K$. We note that if $G_\bbR/K$ is the noncompact dual of $G/K$, then $\sigma$ corresponds to the Cartan involution of $G_\bbR$ 
($ \sigma $ coincides with $ \theta $ on $ G_{\bbR} $). 

We define the Clifford algebra $C(\frp)$ using the nondegenerate invariant symmetric bilinear form $B$ on $\frg$ obtained by extending the Killing form over the center of $\frg$. Alternatively, for matrix groups from our table, we can replace $B$ by the trace form $\tr XY$. Recall that $C(\frp)$ is the associative unital algebra generated by $\frp$, with relations $XY+YX=2B(X,Y)$, $X,Y\in\frp$.

Let $S$ be a spin module for $C(\frp)$. Recall that $S$ is constructed as follows. Let $\frp^+$ and $\frp^-$ be two maximal isotropic subspaces of $\frp$, dual under $B$. Let $S=\twedge\frp^+$, with elements of $\frp^+\subset C(\frp)$ acting by wedging, and elements of $\frp^-\subset C(\frp)$ acting by contracting.
If $\dim\frp$ is even, $\frp=\frp^+\oplus\frp^-$ and hence this determines $S$ completely. Moreover, $S$ is the only irreducible $C(\frp)$-module, and $C(\frp)=\End S$. If $\dim\frp$ is odd, then $\frp=\frp^+\oplus\frp^-\oplus\bbC Z$ where $Z$ is an element of $\frp$ not contained in $\frp^+\oplus\frp^-$, such that $B(Z,Z)=1$. Now we can make $Z$ act on $\twedge\frp^+$ in two different ways; it can act by $1$ on $\twedge^{\even}\frp^+$ and by $-1$ on $\twedge^{\odd}\frp^+$, or by
$-1$ on $\twedge^{\even}\frp^+$ and by $1$ on $\twedge^{\odd}\frp^+$. In this way we get two inequivalent $C(\frp)$-modules $S_1$ and $S_2$. These are the only irreducible $C(\frp)$-modules and $C(\frp)=\End S_1\oplus\End S_2$. In the following, $S$ denotes either one of these two modules. For more details about Clifford algebras, spin modules, and also pin and spin groups, see \cite[Ch.2]{HP}.

Since the pin group $\Pin(\frp)$ is contained in $C(\frp)$, the pin double cover $\Kt$ of $K$ acts on $S$. Recall that $\Kt$ is obtained from the following pullback diagram
\[
\begin{CD}
\Kt @>>> \Pin(\frp) \\
@VVV @VVV \\
K @>>> \OO(\frp)
\end{CD}
\]
where the map $K\to \OO(\frp)$ is given by the adjoint action of $K$ on $\frp$.

It now follows that 
\eq
\label{end k s}
C(\frp)^K=C(\frp)^{\Kt}=\left\{\begin{matrix} \End_{\Kt}S,\qquad\qquad\quad\ \quad \dim\frp\text{ even}\cr
\End_{\Kt}S_1\oplus\End_{\Kt}S_2,\quad \dim\frp\text{ odd}.
\end{matrix}\right.
\eeq

Since the algebra $C(\frp)^K$ and its graded version $(\twedge\frp)^K$ are of  our primary interest, we are led to study the $\Kt$-decomposition of $S$. We first study the decomposition of $S$ under the complexified Lie algebra $\frk$ of $\Kt$. This Lie algebra acts on $S$ through the map $\alpha:\frk\to C(\frp)$, which is defined as the action map
$\frk\to\frso(\frp)$ followed by the Chevalley map (i.e., the skew symmetrization) $\frso(\frp)\cong\twedge^2\frp\hookrightarrow C(\frp)$. Explicitly, if $b_i$ is a basis of $\frp$ with dual basis $d_i$ with respect to the form $B$, then
\eq
\label{alpha def}
\alpha(X)=\frac{1}{4}\sum_i[X,b_i]d_i,\quad X\in\frk.
\eeq
(See \cite[\S 2.3.3]{HP}; the difference in sign comes from using different conventions to define the Clifford algebra.)

Let $\frt_0$ be a Cartan subalgebra of the (real) Lie algebra $\frk_0$ of $K$ and let $\frt=(\frt_0)_\bbC$. Let $\Delta^+(\frg,\frt)\supseteq\Delta^+(\frk,\frt)$ be compatible choices of positive roots for $(\frg,\frt)$ respectively $(\frk,\frt)$. Let $\rho$ respectively $\rho_\frk$ be the corresponding half sums of positive roots. Let $W_\frg$ respectively $W_\frk$ be the Weyl groups of $\Delta(\frg,\frt)$ respectively $\Delta(\frk,\frt)$.

Let $W^1_{\frg,\frk}$ be the set of minimal length representatives of $W_{\frk}$-cosets in $W_{\frg}$. Alternatively,
\[
W^1_{\frg,\frk}=\{\sigma\in W_\frg\bbar \sigma\rho\text{ is $\frk$-dominant}\}.
\]
It is well known (\cite{Par}; see also \cite{HP}) that the decomposition of $S$ under the action of $\frk$ is given by
\eq
\label{decomp S k}
S=m\cdot\bigoplus_{\sigma\in W^1_{\frg,\frk}} E_{\sigma},
\eeq
where $E_\sigma$ denotes the irreducible finite-dimensional $\frk$-module with highest weight $\sigma\rho-\rho_\frk$, and
the multiplicity $m$ is equal to $2^{[\half\tiny{\dim}\fra]}$ where $\fra$ is the centralizer of $\frt$ in $\frp$ (so that $\frh=\frt\oplus\fra$ is a Cartan subalgebra of $\frg$). 

Since $m$ is exactly the dimension of the spin module for the Clifford algebra $C(\fra)$, \eqref{end k s} and \eqref{decomp S k} imply that
\eq
\label{abstract dec}
C(\frp)^\frk\cong C(\fra)\otimes\proj(S),
\eeq
where $\proj(S)$ is the algebra spanned by the $\frk$-equivariant projections $\pr_\sigma:S\to m\cdot E_\sigma$, $\sigma\in W^1_{\frg,\frk}$.

If $K$ is connected, then the adjoint action map maps $K$ into $\SO(\frp)$, so $\Kt$ is the spin double cover of $K$,
\[
\begin{CD}
\Kt @>>> \Spin(\frp) \\
@VVV @VVV \\
K @>>> \SO(\frp).
\end{CD}
\]
If the double covering map $\Kt\to K$ does not split, then $\Kt$ is connected 
and \eqref{decomp S k} gives a decomposition of $S$ with respect to $\Kt$. If the covering $\Kt\to K$ splits, then $\Kt=K\times\bbZ_2$,
where the generator $z$ of $\bbZ_2$ maps to $1\in K$ under the covering map. This implies that $z$ maps to the preimage in $\Spin(\frp)$ of $1\in \SO(\frp)$, that is to $\pm 1\in C(\frp)$.  
Thus $z$ acts by the scalar $1$ or $-1$ on $S$, in particular it preserves the decomposition \eqref{decomp S k}, and hence this decomposition is also a decomposition of the $\Kt$-module $S$ into irreducibles. To conclude:

\begin{prop}
\label{K-dec of S conn}
If $K$ is connected, then the $\Kt$-decomposition of $S$ into irreducibles is the same as the $\frk$-decomposition \eqref{decomp S k}.
\end{prop}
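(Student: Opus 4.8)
The plan is to follow the case division already laid out in the two paragraphs preceding the statement, according to whether the double covering $\Kt\to K$ splits. The only soft input needed is the standard fact that for a \emph{connected} Lie group a finite-dimensional subrepresentation is the same thing as a subrepresentation for its Lie algebra, which here acts through the map $\alpha$ of \eqref{alpha def}; so the whole content is to transport the $\frk$-decomposition \eqref{decomp S k} across this dictionary while keeping track of the kernel of the covering.

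First I would note that since $K$ is connected the adjoint map $K\to\OO(\frp)$ factors through $\SO(\frp)$, so $\Kt$ is the pullback of $\Spin(\frp)\to\SO(\frp)$ along $K\to\SO(\frp)$, and the kernel of $\Kt\to K$ is $\{1,z\}$ with $z$ mapping to $-1\in C(\frp)$; in particular $z$ acts by a single scalar on $S$ (and on each $S_i$ when $\dim\frp$ is odd). Case 1: the covering does not split, so $\Kt$ is connected. Then the $\Kt$-stable subspaces of $S$ are exactly the $\frk$-stable ones, so \eqref{decomp S k} is already a decomposition of the $\Kt$-module $S$; moreover each summand $E_\sigma$, being $\frk$-irreducible, is $\Kt$-irreducible, and distinct summands remain non-isomorphic as $\Kt$-modules because they are already non-isomorphic as $\frk$-modules (their highest weights $\sigma\rho-\rho_\frk$ are distinct). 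Case 2: the covering splits, $\Kt\cong K\times\bbZ_2$ with the generator of $\bbZ_2$ equal to $z$. Since $z$ acts on $S$ by a scalar it preserves every $\frk$-stable subspace, and combined with connectedness of $K$ this again identifies the $\Kt$-stable subspaces of $S$ with the $\frk$-stable subspaces; irreducibility and pairwise non-isomorphism of the $E_\sigma$ pass from $\frk$ to $\Kt$ exactly as in Case 1, precisely because $z$ acts by the same scalar everywhere. In both cases \eqref{decomp S k} is the decomposition of $S$ into $\Kt$-irreducibles, which is the assertion.

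The argument is essentially bookkeeping and I do not anticipate a real obstacle. The one point I would state with care is why passing between $\frk$-modules and $\Kt$-modules loses nothing: in Case 1 this is connectedness of $\Kt$, while in Case 2 it is connectedness of $K$ together with the fact that the extra central generator $z$ acts by a scalar on all of $S$ --- which is exactly what makes $\frk$-irreducibility and $\Kt$-irreducibility coincide.
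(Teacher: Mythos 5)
Your proposal is correct and follows essentially the same route as the paper: the paper's argument is exactly the case division on whether $\Kt\to K$ splits, using connectedness of $\Kt$ in the non-split case and the fact that the central element $z$ acts by a single scalar ($\pm 1$) on $S$ in the split case. You spell out the bookkeeping (equivalence of $\frk$-stable and $\Kt$-stable subspaces, transfer of irreducibility and pairwise non-isomorphism) slightly more explicitly than the paper does, but the content is identical.
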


In general, the $\frk$-decomposition is the same as the decomposition under $\Kt_e$, the connected component of the identity in $\Kt$, but the $\Kt$-action may combine several irreducible $\frk$-modules into one irreducible $\Kt$-module. More precisely, the component group $\Kt/\Kt_e\cong K/K_e$ acts by permuting the components $E_\sigma$ of $S$, and the $E_\sigma$ combining to produce an irreducible $\Kt$-module belong to the same orbit of $K/K_e$.
(Recall that $K_e$ is a normal subgroup of $K$ and that $K/K_e$ is a finite group.) 
We will treat the case of disconnected $K$ in Subsections \ref{subsec so/soo} and \ref{subsec u/o} below. Before that we describe the structure of $C(\frp)^\frk$ more precisely. 

\subsection{Top degree element and Poincar\'e duality}
\label{top elt}
We identify $C(\frp)$ and $\twedge\frp$ using the Chevalley map, and think of them as one vector space with two different multiplications.

\begin{prop}
\label{hodge *}
Let $T$ be the unique (up to a scalar multiple) element of the top wedge of $\frp$; let $d=\dim\frp$ denote the degree of $T$.

{\rm (1)} $T$ squares to a nonzero constant  with respect to Clifford multiplication; consequently we can rescale $T$ and assume that $T^2=1$.

{\rm (2)} If $d$ is odd,
$T$ is in the center of $C(\frp)$. If $d$ is even, $T$ commutes with $C(\frp)_{\even}$.

{\rm (3)} $T$ is $\frk$-invariant (with respect to the adjoint action).

{\rm (4)} Clifford multiplication by $T$ from the left is a linear isomorphism from $\twedge^j\frp$ to $\twedge^{d-j}\frp$, for any $j=0,1,\dots,d$. This isomorphism, denoted by $*$, preserves the $\frk$-invariants and therefore gives an isomorphism from $(\twedge^j\frp)^\frk$ to $(\twedge^{d-j}\frp)^\frk$ for any $j$.

{\rm (5)} The isomorphism $*$ can up to sign be expressed as $x\mapsto \iota_xT$.

{\rm (6)} For any $x,y\in \twedge^j\frp$,
\[
x\wedge *y = B(x,y)T.
\]
In other words, $*$ is the usual Hodge star operator.
\end{prop}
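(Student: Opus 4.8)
**Proof proposal for Proposition \ref{hodge *}.**

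The plan is to establish the six assertions essentially in the stated order, since each relies on the previous ones. First I would fix a $B$-orthonormal basis $e_1,\dots,e_d$ of $\frp$ (possible after complexification, since $B$ restricted to $\frp$ is nondegenerate), so that $T = e_1 e_2 \cdots e_d$ up to scalar, and the Clifford relations become $e_ie_j + e_je_i = 2\delta_{ij}$. For (1), a direct reordering computation gives $T^2 = (-1)^{d(d-1)/2} e_1^2\cdots e_d^2 = (-1)^{d(d-1)/2}$, a nonzero constant, so after rescaling $T$ by a suitable scalar (working over $\bbC$ this is harmless) we may assume $T^2 = 1$. For (2), one checks the sign picked up when moving a single generator $e_i$ past $T$: since $e_i$ anticommutes with the $d-1$ factors $e_j$, $j\neq i$, and commutes with $e_i$, one gets $e_i T = (-1)^{d-1} T e_i$. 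Hence if $d$ is odd, $T$ commutes with every generator and so is central; if $d$ is even, $T$ anticommutes with each generator and therefore commutes with all even-degree products, i.e.\ with $C(\frp)_{\even}$. For (3), $\frk$-invariance of $T$ is immediate: $\frk$ acts on $\frp$ preserving $B$, hence through $\frso(\frp)$, and any element of $\frso(\frp)$ kills the top exterior power (trace-free action on a one-dimensional space); alternatively this is a special case of the fact that $\twedge^d\frp$ is the determinant representation, which is trivial since the adjoint action of (connected) $K$ lands in $\SO(\frp)$ — and for the general $\frk$-action the Lie-algebra computation via \eqref{alpha def} shows $[\alpha(X), T] = 0$ for $X\in\frk$.

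For (4), I would argue that left Clifford multiplication $L_T\colon C(\frp)\to C(\frp)$, $x\mapsto Tx$, is bijective because $T$ is invertible ($T^2=1$ implies $T^{-1}=T$). To see it sends $\twedge^j\frp$ isomorphically onto $\twedge^{d-j}\frp$ under the Chevalley identification, it suffices to compute $T$ times a basis monomial $e_{i_1}\cdots e_{i_j}$ (with $i_1<\dots<i_j$): reordering, $T\,e_{i_1}\cdots e_{i_j} = \pm\, e_{k_1}\cdots e_{k_{d-j}}$ where $\{k_1,\dots,k_{d-j}\}$ is the complementary index set, since each $e_{i_\ell}$ in the product cancels the corresponding factor in $T$ via $e_{i_\ell}^2=1$. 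Thus in the associated graded ($=$ exterior algebra) picture $L_T$ carries $\twedge^j\frp$ to $\twedge^{d-j}\frp$ and is visibly a linear isomorphism. Since $T$ is $\frk$-invariant by (3) and $\frk$ acts on $C(\frp)$ by (inner) derivations (through $\alpha$), $L_T$ commutes with the $\frk$-action, hence restricts to an isomorphism $(\twedge^j\frp)^\frk \xrightarrow{\sim} (\twedge^{d-j}\frp)^\frk$; denote it $*$. For (5), I would compare $*x = Tx$ with the contraction $\iota_x T$: for a decomposable $x = e_{i_1}\wedge\cdots\wedge e_{i_j}$ one checks both sides equal $\pm e_{k_1}\cdots e_{k_{d-j}}$ with the complementary indices, so they agree up to a sign depending only on $j$ and the chosen conventions (the discrepancy is the usual sign relating $e_{i_1}\cdots e_{i_j}\,T$ and $T\,e_{i_1}\cdots e_{i_j}$, i.e.\ $(-1)^{j(d-j)}$ or similar).

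For (6), given $x,y\in\twedge^j\frp$, I would reduce to basis monomials by bilinearity. If $x = e_I$ and $y = e_J$ with $|I|=|J|=j$, then $*y = \pm e_{J^c}$, and $e_I\wedge e_{J^c}$ is nonzero only when $I = J^c{}^c = J$, i.e.\ $I=J$, in which case $e_I\wedge e_{I^c} = \pm T$; tracking the signs shows the scalar is exactly $B(e_I,e_J) = \delta_{IJ}$ (after the normalization $T^2=1$ and a possible adjustment of the scalar in $T$ to absorb an overall sign), which is the defining property of the Hodge star. The one genuine subtlety — and the step I expect to require the most care — is the bookkeeping of signs in (5) and (6): one must choose the normalization of $T$ and the sign conventions for $\iota$, $\wedge$, and the Chevalley map coherently so that the formula comes out as the clean $x\wedge *y = B(x,y)T$ without stray signs. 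Everything else is a routine but careful manipulation of Clifford monomials. I would handle the signs once and for all by fixing the orthonormal basis at the outset and doing the reordering count explicitly for monomials, then invoking bilinearity and $\frk$-equivariance to conclude in general.
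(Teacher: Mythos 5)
Your proposal is correct and follows essentially the same route as the paper: fix a $B$-orthonormal basis, write $T$ as the product of the basis vectors, and verify (1)--(6) by reordering Clifford monomials, with (4) reducing to $T Z_I=\pm Z_{I^c}$ and (6) reducing to the single check $Z_I\wedge *Z_I=T$. The only (harmless) variations are that you prove (3) via the triviality of the determinant representation of $\frso(\frp)$ rather than via the commutator with $\alpha(X)\in C(\frp)_{\even}$, and you verify (5) by direct monomial computation instead of using the decomposition of Clifford multiplication as $\iota_y+\eps_y$ together with $\eps_y T=0$.
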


\pf Let $Z_1,\dots,Z_d$ be an orthonormal basis of $\frp$. Then, up to a nonzero scalar, $T=Z_1\cdots Z_d$. It follows that $T^2$ is a nonzero constant, since $Z_1\cdots Z_d$ squares to $\pm 1$. This proves (1). (2) is a straightforward computation: one checks that $Z_1\cdots Z_d$ commutes with all $Z_j$ if $ d $ is odd, and with all $Z_jZ_k$ if $d$ is even.

To prove (3), we note that the adjoint action of $X\in\frk$ on $C(\frp)$ is the same as the Clifford commutator with $\alpha(X)$. Since $\alpha$ maps $\frk$ into $C(\frp)_{\even}$, the claim follows from (2). 

To prove (4), we note that if we set $Z_I=Z_{i_1}\dots Z_{i_a}$ for 
$I=\{i_1,\dots,i_a\}\subseteq\{1,\dots,d\},$
then $T Z_I=\pm Z_{I^c},$
where $I^c=\{1,\dots,d\}\setminus I$. This implies (4). (5)  follows from the fact that Clifford multiplication by $y\in\frp$ equals $\iota_y+\eps_y$ where $\eps_y$ denotes wedging by $y$. Since $T$ is of top degree, it is annihilated by all $\eps_y$, $y\in\frp$, and this implies the claim.

To prove (6), we note that both sides of the equation are bilinear in $x$ and $y$, so we can assume $x=Z_I$, $y=Z_J$ for some $I,J\subseteq\{1,\dots,d\}$. If $I\neq J$, both sides of the equation are zero. Finally, if $I=J$, then we are to check that $Z_I\wedge *Z_I=T$, which is a straightforward computation.
\epf

\begin{lem}
    \label{T cliff}
    Let $x$ be any element of $C(\frp)$ such that $x^2=1$ (with respect to Clifford multiplication). Then $B(x,x)=1$, where $B$ is the extended Killing form on $C(\frp)\cong\twedge\frp$. 
    Consequently the elements $1$ and $x$ span a subalgebra of $C(\frp)$ isomorphic to the Clifford algebra on the one-dimensional space $\bbC x$.
    
    In particular, if $T$ is the top element of $C(\frp)$ as above, rescaled so that $T^2=1$, then $B(T,T)=1$ and $\vspan_\bbC(1,T)$ is a subalgebra of $C(\frp)$ is isomorphic to the Clifford algebra $C(\bbC T)$. 
\end{lem}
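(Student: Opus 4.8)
The plan is to deduce everything from the way the form $B$ sits on $C(\frp)\cong\twedge\frp$. I would recall (\cite[Ch.~2]{HP}) that under the Chevalley identification the extended form is characterized by the fact that $B(a,b)$ equals the scalar part of the Clifford product $ab$, i.e.\ its component in $\twedge^0\frp=\bbC 1$; this is consistent with the starting form on $\frp$, since for $X,Y\in\frp$ the scalar part of $XY$ is exactly $B(X,Y)$. Equivalently, if $Z_1,\dots,Z_d$ is a $B$-orthonormal basis of $\frp$ and $Z_I$ denotes the ordered Clifford product of the $Z_i$, $i\in I$, then $\{Z_I\}_{I\subseteq\{1,\dots,d\}}$ is a $B$-orthogonal basis of $C(\frp)$ with $B(Z_I,Z_I)=Z_I^2=(-1)^{|I|(|I|-1)/2}$. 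I would point out that this normalization is the one already implicit in the paper: specializing Proposition \ref{hodge *}(6) to $\twedge^2\frp$ gives $B(Z_i\wedge Z_j,Z_i\wedge Z_j)=-1$, so $B$ is the twisted extension of the Killing form and not the naive one built from $\det(B(\cdot,\cdot))$ — and this really is the crux, since for the naive extension the statement fails (one would have $(\sqrt{-1}\,Z_1Z_2)^2=1$ but $B(\sqrt{-1}\,Z_1Z_2,\sqrt{-1}\,Z_1Z_2)=-1$).

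With that in hand the first assertion is immediate. Polarizing $B(Z_I,Z_I)=Z_I^2$ — using that the scalar-part functional is symmetric, i.e.\ the scalar part of $ab$ equals that of $ba$ — shows that $B(a,a)$ equals the scalar part of $a^2$ for every $a\in C(\frp)$. Hence if $x^2=1$ then $B(x,x)$ is the scalar part of $1$, namely $B(x,x)=1$. For the subalgebra statement I would first note that $\vspan_\bbC(1,x)$ is already closed under Clifford multiplication, since $x\cdot x=1\in\bbC 1$. Assuming $x\notin\bbC 1$ — automatic in the intended application $x=T$, and needed for the claim to hold literally — the space $\vspan_\bbC(1,x)$ is two-dimensional, and the algebra homomorphism $C(\bbC x)=\bbC[v]/(v^2-B(x,x))=\bbC[v]/(v^2-1)\to\vspan_\bbC(1,x)$ sending $v\mapsto x$ is a surjection of equidimensional algebras, hence an isomorphism.

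For the displayed ``in particular'' I would invoke Proposition \ref{hodge *}(1) to rescale the top element $T$ so that $T^2=1$; since $\deg T=\dim\frp\ge 1$ we have $T\notin\bbC 1$, so the previous paragraph applies verbatim and yields $B(T,T)=1$ and $\vspan_\bbC(1,T)\cong C(\bbC T)$. As a check independent of the normalization discussion, $B(T,T)=1$ can also be read straight off Proposition \ref{hodge *}(6) with $x=y=T$: there $*T=T\cdot T=T^2=1$ by \ref{hodge *}(4) and $T\wedge 1=T$, so $B(T,T)\,T=T\wedge *T=T$.

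I expect the only genuine obstacle to be the opening step: correctly pinning down the normalization of the extension of the Killing form from $\frp$ to $C(\frp)\cong\twedge\frp$, namely the twisted one for which $B(a,b)$ is the scalar part of $ab$. Once that is fixed, all the remaining steps are purely formal.
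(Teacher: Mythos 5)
Your proof is correct and is essentially the paper's: both arguments come down to the identity that $B(x,x)$ equals the degree-zero component of the Clifford square $x^2$, which the paper extracts in one line from the contraction--wedge adjunction, $1=B(\iota_x x,1)=B(x,x\wedge 1)=B(x,x)$, and you extract from the $B$-orthogonal basis $\{Z_I\}$ together with the observation that $B(a,b)$ is the scalar part of $ab$. Your extra care with the normalization of the extended form, and the caveat that $x\notin\bbC 1$ is needed for $\vspan_\bbC(1,x)$ to be two-dimensional, are worthwhile refinements but do not change the substance of the argument.
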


\pf This follows from the fact that the constant term of $x^2$ is $\iota_x x$, so
\[
1=B(1,1)=B(\iota_x x,1)=B(x,x\wedge 1)=B(x,x).
\]
\epf

\subsection{$C(\frp)^\frk$ and $(\twedge\frp)^\frk$ in the equal rank cases}
\label{cpk eq rk}

The equal rank cases on our list are:
\[
\begin{aligned}
& G/K=\UU(p+q)/\UU(p)\times\UU(q)\quad &\text{ (Subsection \ref{real gras})};\\ 
& G/K=\Sp(p+q)/\Sp(p)\times\Sp(q)\quad &\text{ (Subsection \ref{real gras})};\\ 
& G/K=\SO(2p+2q)/\mathrm{S}(\OO(2p)\times\OO(2q))\quad &\text{ (Subsection \ref{real gras})}; \\ 
& G/K=\SO(2p+2q+1)/\mathrm{S}(\OO(2p)\times\OO(2q+1))\quad &\text{ (Subsection \ref{real gras})}; \\ 
& G/K=\Sp(n)/\UU(n)  \quad &\text{ (Subsection \ref{real lagr grass})}; \\
& G/K=\SO(2n)/\UU(n)  \quad &\text{ (Subsection \ref{real orth lagr grass})}. 
\end{aligned}
\]
In each of these cases the situation is as in Kostant's email (see the introduction). In other words, the spin module $S$ is multiplicity free under $\frk$ and since $\dim\frp$ is even, $C(\frp)=\End S$. Therefore Schur's lemma implies that  $C(\frp)^\frk=\End_\frk S=\proj(S)$, the algebra spanned by the $\frk$-equivariant projections to the $\frk$-irreducible constituents of the spin module. The map $\alpha:\frk\to C(\frp)$ from \eqref{alpha def} extends to $U(\frk)$ and its restriction to the center $Z(\frk)$ of $U(\frk)$ is the algebra homomorphism
\[
\alpha_\frk:Z(\frk)\to C(\frp)^\frk.
\]
(The notation $\alpha_\frk$ is to distinguish this map from the analogous map $\alpha_K$ on the level of $K$-invariants; for connected $K$, there is no difference between these two maps.)

Since the $\frk$-infinitesimal character of $E_\sigma$ corresponds to $\sigma\rho$ under the Harish-Chandra isomorphism $Z(\frk)\cong\bbC[\frt^*]^{W_\frk}$, we can identify $\alpha_\frk$ with
\eq
\label{descr alpha k}
\alpha_\frk:\bbC[\frt^{*}]^{W_\frk}\to C(\frp)^\frk,\qquad \alpha_\frk(P)=\sum_{\sigma\in W^1_{\frg,\frk} }P(\sigma\rho) \pr_\sigma,
\eeq
where $\pr_\sigma$ denotes the $\frk$-equivariant projection $S\to E_\sigma$. 

\begin{prop} 
\label{prop al k}
The map $\alpha_\frk$ of \eqref{descr alpha k} is a filtered algebra homomorphism, which doubles the degree. Here the filtration on 
the algebra $\bbC[\frt^*]^{W_\frk}$ is induced by the grading, while the filtration on the algebra $C(\frp)^\frk$ is inherited from $C(\frp)$. 
\end{prop}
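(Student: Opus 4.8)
The plan is to verify that $\alpha_\frk$ is multiplicative, that it respects the unit, and that it carries the degree filtration on $\bbC[\frt^*]^{W_\frk}$ to the Clifford filtration on $C(\frp)^\frk$, doubling degrees in the process. The multiplicativity and unitality are essentially formal: $\alpha:U(\frk)\to C(\frp)$ is an algebra homomorphism (it is the composition of the Lie algebra map $\frk\to\frso(\frp)\cong\twedge^2\frp\hookrightarrow C(\frp)$ extended to $U(\frk)$), so its restriction to the center $Z(\frk)$ is an algebra homomorphism into $C(\frp)^\frk$; transporting this across the Harish-Chandra isomorphism $Z(\frk)\cong\bbC[\frt^*]^{W_\frk}$ gives $\alpha_\frk$, which is therefore an algebra homomorphism. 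Alternatively, one can argue directly from the formula \eqref{descr alpha k}: since $S$ is multiplicity free under $\frk$, the projections $\pr_\sigma$ are orthogonal idempotents summing to $1$, so for $P,Q\in\bbC[\frt^*]^{W_\frk}$ one has $\alpha_\frk(P)\alpha_\frk(Q)=\sum_\sigma P(\sigma\rho)Q(\sigma\rho)\pr_\sigma=\alpha_\frk(PQ)$ and $\alpha_\frk(1)=\sum_\sigma\pr_\sigma=1$.

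The substantive point is the statement about filtrations. Recall that $C(\frp)$ carries the standard filtration $C(\frp)_{\le m}=\bigoplus_{j\le m}\twedge^j\frp$ (under the Chevalley identification), inducing a filtration on $C(\frp)^\frk$, and that $\bbC[\frt^*]^{W_\frk}$ is filtered by $\bbC[\frt^*]^{W_\frk}_{\le m}=\bigoplus_{d\le m}\bbC[\frt^*]^{W_\frk}_d$. I must show $\alpha_\frk\bigl(\bbC[\frt^*]^{W_\frk}_{\le m}\bigr)\subseteq C(\frp)^\frk_{\le 2m}$. It suffices to treat a homogeneous $W_\frk$-invariant polynomial $P$ of degree $m$, equivalently (by the Harish-Chandra isomorphism) a central element $z\in Z(\frk)$ whose symbol lies in $S^m(\frk)$, i.e. $z\in U_m(\frk)$ with leading term in $S^m(\frk)$. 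Since $\alpha$ maps $\frk$ into $\twedge^2\frp=C(\frp)_{\le 2}$, it maps $U_m(\frk)$ into $C(\frp)_{\le 2m}$, and hence $\alpha_\frk(z)=\alpha(z)\in C(\frp)^\frk_{\le 2m}$. This is the heart of the argument: the degree-doubling is built into the fact that $\alpha$ is "quadratic" on $\frk$.

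The one place that requires care — and what I expect to be the main obstacle — is matching the two filtrations precisely, i.e. checking that under the Harish-Chandra isomorphism the \emph{homogeneous} degree-$m$ part of $\bbC[\frt^*]^{W_\frk}$ corresponds to elements of $Z(\frk)$ lying in $U_m(\frk)$ (and not in $U_{m-1}(\frk)$ unless the polynomial is of lower degree). The Harish-Chandra isomorphism is only filtered, not graded: a degree-$m$ invariant polynomial corresponds to a central element whose \emph{top symbol} (in $\gr U(\frk)=S(\frk)$) is a degree-$m$ invariant in $S(\frk)^\frk$, but there are lower-order correction terms. This is exactly why the assertion is only about the \emph{filtered} structure and why $\alpha_\frk$ doubling the degree is a statement about associated graded maps rather than an exact grading. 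So the proof should: (i) recall that the Harish-Chandra projection $Z(\frk)\to\bbC[\frt^*]^{W_\frk}$ is a filtered isomorphism with the indicated filtrations (standard, e.g. \cite[Ch.~2]{HP} or \cite{beyond}); (ii) deduce that $z\in Z(\frk)$ mapping to $P$ of degree $\le m$ lies in $U_m(\frk)$; (iii) apply $\alpha(U_m(\frk))\subseteq C(\frp)_{\le 2m}$ from $\alpha(\frk)\subseteq\twedge^2\frp$; (iv) conclude $\alpha_\frk$ is filtered and doubles degrees, with multiplicativity and unitality from the $U(\frk)$-algebra-homomorphism property as above.
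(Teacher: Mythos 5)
Your proposal is correct and follows essentially the same route as the paper, whose entire proof is the one-line observation that $\alpha_\frk$ is the restriction of the algebra homomorphism $\alpha:U(\frk)\to C(\frp)$ extending \eqref{alpha def}; you have simply spelled out the details (multiplicativity via the idempotents or via $U(\frk)$, degree-doubling from $\alpha(\frk)\subseteq\twedge^2\frp$, and the filtered-but-not-graded nature of the Harish-Chandra isomorphism), all of which are accurate.
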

\pf
The claim follows from the fact that $\alpha_\frk$ is the restriction of $\alpha:U(\frk)\to C(\frp)$ given by extending \eqref{alpha def}.
\epf

In the next subsection we consider a more general setting. We will in particular prove that the map \eqref{descr alpha k} is onto; consequently, the map  $\gr\alpha_\frk:\bbC[\frt^*]^{W_\frk}\to (\twedge\frp)^\frk$  is also onto.
 Moreover, we will give a description of $\ker\alpha$ and of $\ker\gr\alpha=\gr\ker\alpha$. It is clear from \eqref{descr alpha k} that $\ker\alpha_\frk$ 
consists of polynomials vanishing at all $\sigma\rho$, $\sigma\in W^1_{\frg,\frk}$. 
Thus $\ker\alpha$ contains all $W_\frg$-invariant polynomials on $\frt^*$ that vanish at $\rho$ (and thus automatically on all $\sigma\rho$); we will prove that these polynomials in fact generate $\ker\alpha_\frk$. Likewise, we will see that $\ker\gr\alpha$ is generated by $W_\frg$-invariant polynomials on $\frt^*$ vanishing at $0$.

\subsection{Relative coinvariant algebra and filtered deformations}
\label{rel coinv}

Let $W$ be a finite group inside $\GL(\ft)$, with subgroup $H \subset W$. Let $\nu \in \ft^*$ be a point such that $\mathrm{Stab}_W(\nu) = \{ \Id \} $. 
 Let $\mathbb{C}[W]$ denote the algebra of functions from $W$ to $\mathbb{C}$ with pointwise multiplication. Give $\mathbb{C}[W]$  basis $\{ f_w: w \in W\}$, $f_w(w') = \delta_{w,w'}1$.

\begin{Def}

Define $\Ev_\nu: \bbC[\frt^*] \to \mathbb{C}[W]$ by 
\[ \Ev_\nu(p) = \sum_{w \in W} p(w \nu) f_w.\]

Restricting $\Ev_\nu$ to $\bbC[\frt^*]^H$ we define $\Ev_\nu^H: \bbC[\frt^*]^H \to \mathbb{C}[W]^H=\mathbb{C}[W/H]$.

\end{Def}

\begin{lem}
\label{ev surj}
The map $\Ev_\nu$ is a surjective $W$-module and algebra homomorphism and $\Ev_\nu^H$ is a surjective algebra homomorphism.
\end{lem}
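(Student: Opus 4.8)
The plan is to prove surjectivity of $\Ev_\nu$ first, then derive the statement for $\Ev_\nu^H$ by an averaging argument. The module-homomorphism property is immediate: for $w_0\in W$ one has $\Ev_\nu(w_0\cdot p)=\sum_w (w_0\cdot p)(w\nu)f_w=\sum_w p(w_0^{-1}w\nu)f_w$, which is the translate of $\Ev_\nu(p)$ by $w_0$ under the regular action on $\bbC[W]$; the algebra-homomorphism property is equally immediate since evaluation at each point $w\nu$ is multiplicative and the $f_w$ are orthogonal idempotents. So the content is surjectivity.

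For surjectivity of $\Ev_\nu$, it suffices to show each $f_w$ lies in the image, and since $\Ev_\nu$ is $W$-equivariant and $W$ permutes the $f_w$ transitively (as $w\mapsto w\nu$ is injective, i.e.\ $\mathrm{Stab}_W(\nu)=\{\Id\}$), it is enough to hit a single $f_w$, say $f_{\Id}$. For this I would use that the points $\{w\nu : w\in W\}$ are pairwise distinct, so by a Lagrange-interpolation / Nullstellensatz-type argument one can find a polynomial $p\in\bbC[\frt^*]$ with $p(\nu)=1$ and $p(w\nu)=0$ for all $w\neq\Id$; then $\Ev_\nu(p)=f_{\Id}$. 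Concretely, for each $w\neq\Id$ pick a linear functional $\ell_w\in\frt^*\subset\bbC[\frt^*]$ with $\ell_w(\nu)\neq\ell_w(w\nu)$ (possible since $\nu\neq w\nu$), and take $p=\prod_{w\neq\Id}\frac{\ell_w-\ell_w(w\nu)}{\ell_w(\nu)-\ell_w(w\nu)}$.

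For $\Ev_\nu^H$, note first that the image of $\bbC[\frt^*]^H$ under $\Ev_\nu$ is contained in $\bbC[W]^H=\bbC[W/H]$, where $H$ acts on $\bbC[W]$ by right translation $f_w\mapsto f_{wh^{-1}}$ — indeed if $p$ is $H$-invariant then $\Ev_\nu(p)=\sum_w p(w\nu)f_w$ and replacing $w$ by $wh$ gives $p(wh\nu)$... wait, that is not obviously $p(w\nu)$. Instead one checks invariance under the correct action: $H$ acts on $\bbC[W]$ via left multiplication of the argument, $h\cdot f_w=f_{hw}$, and then $h\cdot\Ev_\nu(p)=\sum_w p(w\nu)f_{hw}=\sum_w p(h^{-1}w\nu)f_w=\Ev_\nu(h^{-1}\cdot p)=\Ev_\nu(p)$ for $p\in\bbC[\frt^*]^H$; so indeed the image lands in the $H$-fixed subspace, which has basis the indicator functions of right cosets $Hw$ — i.e.\ it is $\bbC[H\backslash W]$ — and is a subalgebra since the $H$-action is by algebra automorphisms. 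For surjectivity onto this, I would use the averaging/Reynolds operator: given any $\phi\in\bbC[W]^H$, lift it to some $p\in\bbC[\frt^*]$ with $\Ev_\nu(p)=\phi$ by the first part, then replace $p$ by its $H$-average $\bar p=\frac{1}{|H|}\sum_{h\in H}h\cdot p\in\bbC[\frt^*]^H$; since $\Ev_\nu$ is $W$-equivariant and $\phi$ is already $H$-invariant, $\Ev_\nu(\bar p)=\frac{1}{|H|}\sum_h h\cdot\Ev_\nu(p)=\frac{1}{|H|}\sum_h h\cdot\phi=\phi$. That $\Ev_\nu^H$ is an algebra homomorphism is inherited from $\Ev_\nu$.

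The one genuine subtlety — the "main obstacle" — is being careful about which side $H$ acts on and hence whether the target is $\bbC[W/H]$ or $\bbC[H\backslash W]$; the statement writes $\bbC[W]^H=\bbC[W/H]$, and I would match conventions to the paper's by having $H$ act through its action on $\frt^*$ composed with the regular representation, as above. Everything else is routine: the interpolation polynomial exists because the $|W|$ points $w\nu$ are distinct, and the averaging trick transports surjectivity from $\bbC[\frt^*]$ to $\bbC[\frt^*]^H$ verbatim.
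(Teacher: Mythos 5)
Your proof is correct and follows essentially the same route as the paper's: an interpolation polynomial hitting $f_{\Id}$ (the paper uses the $W$-translates of a single linear form $p_\nu$ vanishing at $\nu$, you use one separating linear form per group element), then $W$-equivariance to get all of $\bbC[W]$, then $H$-averaging to descend surjectivity to invariants (the paper compresses this last step into "taking $H$-invariants"). Your remark about whether the target is $\bbC[W/H]$ or $\bbC[H\backslash W]$ is a fair observation about conventions but does not affect the substance.
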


\begin{proof}
Clearly $\Ev_\nu$ is a $W$ module homomorphism. Let $p_\nu$ be a linear polynomial that is zero on $\nu$ and non-zero on all $w \nu$, for all $w \neq 1.$ 
The polynomial $\prod_{w \in W \setminus \{ 1 \}} w p_\nu$ evaluated on $\nu$ is non-zero and is zero on every other element in the orbit of $\nu$. Suitably scaled,  $\Ev_\nu(\prod_{w \in W \setminus \{ 1 \}} w p_\nu) = f_{1}.$ Since $f_{1}$ is a cyclic generator for the module $\bbC[W]$ and is in the image of $\Ev_\nu$ then the homomorphism is surjective.  
Since $f_{w}f_{w'} = \delta_{ww'}f_w$, then
$\Ev_\nu(p)\Ev_\nu(q) = \sum_{w\in W}p(w\nu)f_w\sum_{w'\in W}q(w'\nu)f_{w'}$ is equal to $\sum_{w\in W}pq(w\nu)f_w = \Ev_\nu(pq)$. Taking $H$ invariants
on both sides proves that $\Ev_\nu^H$ is a surjective algebra homomorphism.
\end{proof}

We define two ideals of $\bbC[\frt^*]$, $I_{W,+}$ is the  two sided graded ideal generated by 
\[
\{p \in \bbC[\frt^*]^W: \deg p >0 \} = \{p \in \bbC[\frt^*]^W: p(0) = 0\}
\]
and $I_{W,\nu}$ is the filtered ideal generated by $\{p \in \bbC[\frt^*]^W: p(\nu)=0 \}$.

\begin{lem}\label{l::grI}
The ideal $I_{W,+}$ is equal to $ \gr I_{W, \nu}$.
\end{lem}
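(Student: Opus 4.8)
The plan is to prove the two inclusions $\gr I_{W,\nu}\subseteq I_{W,+}$ and $I_{W,+}\subseteq \gr I_{W,\nu}$ separately, using the relationship between the associated graded of a filtered ideal and its top-degree parts. Recall that for a filtered ideal $J\subseteq\bbC[\frt^*]$ (with the filtration induced from the grading by degree), $\gr J$ is generated by the leading (top-degree homogeneous) terms of all elements of $J$. So concretely I must show that leading terms of elements of $I_{W,\nu}$ lie in $I_{W,+}$, and conversely that every homogeneous generator of $I_{W,+}$ arises as a leading term of some element of $I_{W,\nu}$.

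First I would handle $I_{W,+}\subseteq\gr I_{W,\nu}$, which is the easier direction. Let $p\in\bbC[\frt^*]^W$ be homogeneous of degree $d>0$; it suffices to realize $p$ as a leading term of an element of $I_{W,\nu}$. Consider $\tilde p(x)=p(x+\nu)$; this is no longer $W$-invariant, but I instead use the genuine $W$-invariant $q=p - p(\nu)$, which vanishes at $\nu$ (hence at every $w\nu$ since $p$ is $W$-invariant), so $q\in I_{W,\nu}$ by definition. Since $p$ is homogeneous of degree $d>0$ and $p(\nu)$ is a constant, the top-degree part of $q$ is exactly $p$. Hence $p\in\gr I_{W,\nu}$, and since such $p$ generate $I_{W,+}$, we get $I_{W,+}\subseteq\gr I_{W,\nu}$.

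The reverse inclusion $\gr I_{W,\nu}\subseteq I_{W,+}$ is the main obstacle. Here I would argue via dimension count / Hilbert series rather than tracking leading terms directly, because a general element of $I_{W,\nu}$ is an arbitrary $\bbC[\frt^*]$-combination of invariants vanishing at $\nu$, and its leading term need not obviously be invariant. The key input is the description already set up in the excerpt: $\bbC[\frt^*]/I_{W,\nu}$ is, via $\Ev_\nu$ (Lemma \ref{ev surj}) and the hypothesis $\mathrm{Stab}_W(\nu)=\{\Id\}$, isomorphic as an algebra to $\bbC[W]$, which has dimension $|W|$. On the other hand $\bbC[\frt^*]/I_{W,+}$ is the classical coinvariant algebra, which by Chevalley's theorem also has dimension $|W|$ (as $W\subseteq\GL(\frt)$ is here a finite reflection group — the Weyl group $W_\frg$ — so its invariants are a polynomial algebra on $\dim\frt$ generators). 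Now, for any filtered ideal $J$, one always has $\dim_\bbC \bbC[\frt^*]/\gr J=\dim_\bbC\bbC[\frt^*]/J$ in each filtration degree, so $\dim\bbC[\frt^*]/\gr I_{W,\nu}=|W|=\dim\bbC[\frt^*]/I_{W,+}$. Combined with the inclusion $I_{W,+}\subseteq\gr I_{W,\nu}$ proved above, which gives a surjection $\bbC[\frt^*]/I_{W,+}\twoheadrightarrow\bbC[\frt^*]/\gr I_{W,\nu}$ of finite-dimensional graded spaces of equal dimension, this surjection must be an isomorphism, forcing $I_{W,+}=\gr I_{W,\nu}$.

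The point requiring care is the claim that $\dim\bbC[\frt^*]/\gr J=\dim\bbC[\frt^*]/J$ for a filtered ideal: this is standard (a filtered vector space and its associated graded have the same dimension in each degree, and passing to $\gr$ is exact on the short exact sequence $0\to J\to\bbC[\frt^*]\to\bbC[\frt^*]/J\to 0$), but one should note that $\bbC[\frt^*]/I_{W,\nu}$ being $|W|$-dimensional uses precisely the freeness of $\mathrm{Stab}_W(\nu)$ so that $\Ev_\nu$ descends to an isomorphism onto all of $\bbC[W]$, not a proper subalgebra. I would also remark that Chevalley's theorem applies because in every case of interest $W=W_\frg$ acts on $\frt$ as a reflection group; alternatively, one can avoid invoking the precise value $|W|$ and instead observe directly that $\bbC[\frt^*]/I_{W,+}$ is finite-dimensional (since $I_{W,+}$ contains a regular sequence of basic invariants, as $W$ is a reflection group) and that $\gr(\bbC[\frt^*]/I_{W,\nu})\cong\bbC[\frt^*]/\gr I_{W,\nu}$ is a quotient of it, then compare Hilbert series on both ends — but the cleanest route is the equal-dimension argument above.
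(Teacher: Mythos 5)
Your proof takes a genuinely different route from the paper's. The paper argues entirely inside the invariant ring: the generating subspaces $\bbC[\frt^*]^W_+$ and $\bbC[\frt^*]^W_\nu$ are both codimension-one ideals of $\bbC[\frt^*]^W$, and since $\bbC[\frt^*]^W_+$ is the unique \emph{graded} codimension-one ideal there, $\gr\bbC[\frt^*]^W_\nu=\bbC[\frt^*]^W_+$; no dimension count over $\bbC[\frt^*]$ itself appears. Your easy inclusion $I_{W,+}\subseteq\gr I_{W,\nu}$ via the elements $p-p(\nu)$ is correct and is essentially what the paper's argument also yields. Your reverse inclusion by squeezing $\dim\bbC[\frt^*]/\gr I_{W,\nu}=\dim\bbC[\frt^*]/I_{W,\nu}$ between $|W|$ and $|W|$ is a clean alternative, and as a bonus it delivers Lemma \ref{l::kerev} and Theorem \ref{thm coinv} in the same stroke.

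Two points need attention, however. First, a circularity: you justify $\dim\bbC[\frt^*]/I_{W,\nu}=|W|$ by asserting that $\Ev_\nu$ descends to an \emph{isomorphism} $\bbC[\frt^*]/I_{W,\nu}\cong\bbC[W]$. That is precisely the statement $\ker\Ev_\nu=I_{W,\nu}$ of Lemma \ref{l::kerev}, which the paper proves \emph{using} the present lemma, so you may not quote it here. The fix is easy and you should make it explicit: the trivial containment $I_{W,\nu}\subseteq\ker\Ev_\nu$ (invariants vanishing at $\nu$ vanish on the whole orbit) plus surjectivity of $\Ev_\nu$ (Lemma \ref{ev surj}) already give the one inequality you actually need, namely $\dim\bbC[\frt^*]/I_{W,\nu}\geq|W|$; the opposite inequality comes from the surjection $\bbC[\frt^*]/I_{W,+}\twoheadrightarrow\bbC[\frt^*]/\gr I_{W,\nu}$ and $\dim\bbC[\frt^*]/I_{W,+}=|W|$. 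Second, generality: the equality $\dim\bbC[\frt^*]/I_{W,+}=|W|$ is Chevalley's theorem and requires $W$ to act as a reflection group (equivalently, the freeness hypothesis of Theorem \ref{thm coinv}), whereas the lemma as stated assumes only that $W$ is finite with $\mathrm{Stab}_W(\nu)=\{\Id\}$. So your proof covers all the cases in which the lemma is actually applied ($W=W_\frg$), but not the lemma in the generality in which it is stated.
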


\begin{proof}
    $I_{W,+}$ is generated by $\bbC[\frt^*]^W_+=\{p \in \bbC[\frt^*]^W: p(0) = 0\}$ and $I_{W,\nu}$ is generated by $\bbC[\frt^*]^W_\nu =\{p \in \bbC[\frt^*]^W: p(\nu) = 0\}$. Both of which are codimension $1$ in $\bbC[\frt^*]^W$. Furthermore, $\gr (\bbC[\frt^*]^W_\nu)$ is a codimension one graded ideal of $\bbC[\frt^*]^W$. Since $\bbC[\frt^*]^W_+$ is the only graded ideal of codimension one, then $\gr \bbC[\frt^*]^W_\nu = \bbC[\frt^*]^W_+$. 
\end{proof}

\begin{lem}\label{l::kerev}
    The kernel of $\Ev_\nu$ is equal to $I_{W,\nu}$ and the $\ker \Ev_\nu^H = I_{W,\nu}^H$.  
\end{lem}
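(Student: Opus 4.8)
The plan is to prove $\ker\Ev_\nu = I_{W,\nu}$ by a dimension count, after establishing the easy inclusion. First I would observe that $I_{W,\nu}\subseteq\ker\Ev_\nu$: if $p\in\bbC[\frt^*]^W$ with $p(\nu)=0$, then since $\nu$ has trivial stabilizer and $p$ is $W$-invariant, $p(w\nu)=p(\nu)=0$ for all $w\in W$, so $\Ev_\nu(p)=0$; since $\ker\Ev_\nu$ is an ideal (Lemma \ref{ev surj} says $\Ev_\nu$ is an algebra homomorphism), it contains the ideal generated by all such $p$, which is $I_{W,\nu}$. For the reverse inclusion, I would argue via graded dimensions: passing to associated graded, $\gr(\ker\Ev_\nu)\supseteq\gr I_{W,\nu}=I_{W,+}$ by Lemma \ref{l::grI}. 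On the other hand, $\Ev_\nu$ is surjective onto $\bbC[W]$ by Lemma \ref{ev surj}, so $\dim\bbC[\frt^*]/\ker\Ev_\nu=|W|$. But it is classical (Chevalley) that $\dim\bbC[\frt^*]/I_{W,+}=|W|$ as well, since $\bbC[\frt^*]$ is free of rank $|W|$ over $\bbC[\frt^*]^W$ and $I_{W,+}=\bbC[\frt^*]\cdot\bbC[\frt^*]^W_+$. Hence $\ker\Ev_\nu$ and $I_{W,\nu}$ are two ideals, one containing the other, whose quotients in $\bbC[\frt^*]$ have the same finite dimension $|W|$; therefore they coincide.

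For the $H$-invariant statement, I would take $H$-invariants throughout. Since $H$ is finite, the functor of $H$-invariants is exact on $\bbC$-vector spaces, so applying it to the surjection $\bbC[\frt^*]\to\bbC[W]$ with kernel $I_{W,\nu}$ gives an exact sequence $0\to I_{W,\nu}^H\to\bbC[\frt^*]^H\to\bbC[W]^H\to 0$, and $\bbC[W]^H=\bbC[W/H]$ by the identification in the setup. This is exactly the statement $\ker\Ev_\nu^H=I_{W,\nu}^H$, once one checks that the ideal $I_{W,\nu}^H$ as defined (the ideal of $\bbC[\frt^*]^H$ generated by the $W$-invariants vanishing at $\nu$) agrees with $(I_{W,\nu})^H$, i.e.\ with the $H$-fixed points of the ideal $I_{W,\nu}$ of $\bbC[\frt^*]$. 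This last point follows by averaging over $H$: if $\sum_i a_i p_i\in I_{W,\nu}$ is $H$-fixed with $a_i\in\bbC[\frt^*]$ and $p_i\in\bbC[\frt^*]^W_\nu$, then applying the averaging projector $e_H=\frac{1}{|H|}\sum_{h\in H}h$ and using that it fixes the left-hand side and commutes with multiplication by the $H$-invariant (indeed $W$-invariant) $p_i$ gives $\sum_i e_H(a_i)\,p_i$, which lies in the ideal of $\bbC[\frt^*]^H$ generated by the $p_i$.

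The main obstacle I anticipate is being careful about the two a priori different meanings of $I_{W,\nu}^H$ — the ideal generated in $\bbC[\frt^*]^H$ versus the fixed points of the ideal generated in $\bbC[\frt^*]$ — and verifying they coincide; this is the averaging argument above and is routine but must be spelled out. Everything else reduces to the surjectivity already proved in Lemma \ref{ev surj}, the identification $\gr I_{W,\nu}=I_{W,+}$ from Lemma \ref{l::grI}, and Chevalley's theorem that $\bbC[\frt^*]$ is free of rank $|W|$ over its $W$-invariants, which pins down $\dim\bbC[\frt^*]/I_{W,+}=|W|$ and forces the inclusion $I_{W,\nu}\subseteq\ker\Ev_\nu$ to be an equality.
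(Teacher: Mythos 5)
Your proposal is correct and follows essentially the same route as the paper: the easy inclusion $I_{W,\nu}\subseteq\ker\Ev_\nu$ from $W$-invariance, then a codimension count matching $\dim\bbC[\frt^*]/I_{W,\nu}=|W|$ (via Lemma \ref{l::grI} and the coinvariant algebra) against $\dim\bbC[\frt^*]/\ker\Ev_\nu=|W|$ from surjectivity, and finally taking $H$-invariants for the second claim. Your extra care in distinguishing $(I_{W,\nu})^H$ from the ideal of $\bbC[\frt^*]^H$ generated by $\bbC[\frt^*]^W_\nu$, settled by averaging over $H$, is a point the paper's one-line "take $H$-invariants of both sides" glosses over but which is genuinely needed when the lemma is applied later (e.g.\ in Corollary \ref{cor alpha surj}).
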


\begin{proof}
    The kernel of $\Ev_\nu$ is precisely polynomials that evaluate to zero on the full $W$-orbit of $\nu$. The kernel $I_{W,\nu}$ is generated by $W$-invariant polynomials that evaluate to zero on $\nu$. Since they are $W$-invariant they also evaluate to zero on the full $W$-orbit. Hence $\Ker \Ev_\nu \supset I_{W,\nu}$. The quotient of $\bbC[\frt^*]$ by $I_{W,+}$ is the coinvariant algebra, which, in particular, is of dimension $|W|$. Lemma \ref{l::grI} then shows that the codimension of $I_{W,\nu}$ is $|W|$. Since $\Ev_\nu$ is surjective onto $\bbC[W]$ then $\ker \Ev_\nu$ is also of codimension $|W|$ and hence $\Ker \Ev_\nu = I_{W,\nu}$. The second statement follows by taking $H$ invariants of both sides. 
\end{proof}

The polynomials $\bbC[\frt^*]$ (resp. $\bbC[\frt^*]^H$) are graded, therefore the map $\Ev_\nu$  gives $\bbC[W]$ (resp. $\mathbb{C}[W/H]$) a filtration.

\begin{thm}
\label{thm coinv}
Let $W \subset \GL(\ft)$ be such that $\bbC[\frt^*]$ is a free $\bbC[\frt^*]^W$ module of rank $|W|$ and let $H$ be any subgroup of $W$. With the filtration on $\bbC[W]$ endowed by $\Ev_\nu$, the associated graded algebra of $\mathbb{C}[W]$ is the coinvariant algebra $\bbC[\frt^*] / \langle \bbC[\frt^*]^W_+\rangle$, similarly 
$\gr (\mathbb{C}[W/H] )\cong \bbC[\frt^*]^H / \langle \bbC[\frt^*]^W_+\rangle_{\bbC[\frt^*]^H} .$
\end{thm}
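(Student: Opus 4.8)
The plan is to deduce Theorem \ref{thm coinv} from the three preceding lemmas, essentially by a dimension count together with the description of the kernel of $\Ev_\nu$. First I would set up the standard fact about filtered/graded algebras: if $A$ is a filtered vector space and $\phi\colon\bbC[\frt^*]\to A$ is a surjective filtered algebra homomorphism (here $\phi=\Ev_\nu$), then $\gr A$ is naturally a quotient of $\gr\bbC[\frt^*]=\bbC[\frt^*]$ by the graded ideal $\gr(\ker\phi)$. Concretely, $\gr\Ev_\nu\colon\bbC[\frt^*]\to\gr\bbC[W]$ is a surjective graded algebra homomorphism with kernel exactly $\gr(\ker\Ev_\nu)$; this is the elementary ``associated graded of a quotient is the quotient by the associated graded of the ideal'' statement, valid because the filtration on $\bbC[W]$ is by definition the image filtration. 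So $\gr\bbC[W]\cong\bbC[\frt^*]/\gr(\ker\Ev_\nu)$.

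Next I would identify $\gr(\ker\Ev_\nu)$. By Lemma \ref{l::kerev}, $\ker\Ev_\nu=I_{W,\nu}$, and by Lemma \ref{l::grI}, $\gr I_{W,\nu}=I_{W,+}=\langle\bbC[\frt^*]^W_+\rangle$. Combining, $\gr\bbC[W]\cong\bbC[\frt^*]/\langle\bbC[\frt^*]^W_+\rangle$, which is the coinvariant algebra; the hypothesis that $\bbC[\frt^*]$ is free of rank $|W|$ over $\bbC[\frt^*]^W$ guarantees this quotient has dimension $|W|$, matching $\dim\bbC[W]$, so the filtration is exhaustive and the graded-vs-filtered dimensions agree (this is implicitly what makes the argument clean, and it is already used inside Lemma \ref{l::kerev}). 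For the second statement, I would run the same argument with $\Ev_\nu^H\colon\bbC[\frt^*]^H\to\bbC[W/H]$ in place of $\Ev_\nu$: it is a surjective filtered algebra homomorphism by Lemma \ref{ev surj}, its kernel is $I_{W,\nu}^H$ by Lemma \ref{l::kerev}, and so $\gr\bbC[W/H]\cong\bbC[\frt^*]^H/\gr(I_{W,\nu}^H)$. It remains to check $\gr(I_{W,\nu}^H)=\langle\bbC[\frt^*]^W_+\rangle_{\bbC[\frt^*]^H}$.

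The main obstacle, and the only place needing genuine care, is precisely this last identification: passing to $H$-invariants does not obviously commute with taking associated graded of an ideal, because $(-)^H$ is an exact functor on $\bbC$-vector spaces (as $H$ is finite) but one must verify that $\gr$ of the $H$-invariant part of the filtered ideal equals the $H$-invariant part of $\gr$ of the ideal, and that this in turn is generated over $\bbC[\frt^*]^H$ by $\bbC[\frt^*]^W_+$. I would argue as follows: since $H$ is finite, taking $H$-invariants is exact and commutes with $\gr$ of any $H$-stable filtered vector space, so $\gr(I_{W,\nu}^H)=\gr(I_{W,\nu})^H=I_{W,+}^H=\langle\bbC[\frt^*]^W_+\rangle^H$. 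Then I would show $\langle\bbC[\frt^*]^W_+\rangle^H=\langle\bbC[\frt^*]^W_+\rangle_{\bbC[\frt^*]^H}$: the inclusion $\supseteq$ is clear since $\bbC[\frt^*]^W_+\subseteq\bbC[\frt^*]^H$ and the ideal it generates in $\bbC[\frt^*]^H$ lands in the $H$-invariants of the ideal it generates in $\bbC[\frt^*]$; for $\subseteq$, apply the averaging (Reynolds) operator $e_H=\frac{1}{|H|}\sum_{h\in H}h$ to an element $\sum_i a_i b_i$ with $a_i\in\bbC[\frt^*]$, $b_i\in\bbC[\frt^*]^W_+$, using that $e_H$ is $\bbC[\frt^*]^H$-linear and fixes the $b_i$, to write it as $\sum_i e_H(a_i)b_i\in\langle\bbC[\frt^*]^W_+\rangle_{\bbC[\frt^*]^H}$. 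A dimension check via freeness (the coinvariant algebra of $W$ is a free module over the coinvariant algebra structure so that its $H$-invariants have the expected dimension) then confirms the filtration on $\bbC[W/H]$ is exhaustive, completing the proof.
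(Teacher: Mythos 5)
Your proof is correct and follows essentially the same route as the paper's: identify $\gr(\ker\Ev_\nu)$ via Lemmas \ref{l::grI} and \ref{l::kerev}, then pass to the associated graded of the quotient. The only difference is that you spell out two steps the paper leaves implicit --- that $\gr$ commutes with taking $H$-invariants (by exactness of $(-)^H$ for finite $H$ in characteristic zero), and that $I_{W,+}^H$ coincides with the ideal generated by $\bbC[\frt^*]^W_+$ inside $\bbC[\frt^*]^H$ (via the Reynolds operator) --- both of which are genuinely needed to match the statement as written and are correctly handled.
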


\begin{proof}
    Lemmas \ref{l::grI} and \ref{l::kerev} prove that $\gr\ker \Ev_\nu = I_{W,+}$ and $\gr(\ker \Ev_\nu^H) = I_{W,+}^H$. Since $\bbC[W] = \bbC[\frt^*] /\ker(\Ev_\nu)$ then $\gr \bbC[W] = \bbC[\frt^*] / \gr(\ker(\Ev_\nu)) = \bbC[\frt^*] / I_{W,+}$ which is the definition of the coinvariant algebra of $W$ acting on $\bbC[\frt^*]$. An identical statement holds for $\Ev_\nu^H$.
\end{proof}

Hence for any finite group $W$ acting by complex reflections on $\ft$ with  any subgroup $H$ we can define a filtration on $\mathbb{C}[W/H]$ such that the associated graded algebra is isomorphic to the relative coinvariant algebra of $W$ and $H$.

\begin{cor}
\label{cor alpha surj} With notation of Subsections \ref{K dec general} and \ref{cpk eq rk}, the map $\alpha_\frk$ is surjective onto $C(\frp)^\frk$ and the kernel of $\alpha_\frk$ is generated by the $W_\frg$ invariant polynomials in $\bbC[\frt^*]^{W_\frk}$ which evaluate to zero at $\rho$, $(I_{W,\rho})^{W_\frk}$. Furthermore, the map $\gr \alpha_\frk$ is surjective onto $(\twedge\frp)^\frk$ and the kernel of $\gr \alpha_\frk$ is $I_{W_\frg,+}^{W_\frk}$.
\end{cor}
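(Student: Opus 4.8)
The plan is to observe that, in the equal rank cases, the map $\alpha_\frk$ of \eqref{descr alpha k} is — after identifying its target correctly — literally the evaluation homomorphism $\Ev_\rho^{W_\frk}$ of Subsection~\ref{rel coinv} for $W=W_\frg$, $H=W_\frk$, $\nu=\rho$; the first two assertions then become Lemmas~\ref{ev surj} and~\ref{l::kerev}, and the graded assertions follow from Theorem~\ref{thm coinv} together with one point about filtrations discussed at the end.

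First I would pin down the target. By Subsection~\ref{cpk eq rk}, in each equal rank case $\dim\frp$ is even, $C(\frp)=\End S$, and $S$ is multiplicity free over $\frk$, so $C(\frp)^\frk=\proj(S)$ is spanned by the projections $\pr_\sigma$, $\sigma\in W^1_{\frg,\frk}$; by Schur's lemma and multiplicity-freeness these are mutually orthogonal idempotents summing to $1$, so $\pr_\sigma\mapsto f_\sigma$ is an algebra isomorphism of $C(\frp)^\frk$ onto the function algebra of the finite set $W^1_{\frg,\frk}$. Since we are in the equal rank case $\frt$ is a Cartan subalgebra of $\frg$, so $\rho$ is regular and $\mathrm{Stab}_{W_\frg}(\rho)=\{\Id\}$; moreover $W^1_{\frg,\frk}=\{\sigma\in W_\frg\mid \sigma\rho\ \text{is}\ \frk\text{-dominant}\}$ is a complete set of representatives for the cosets $W_\frk\backslash W_\frg$, and $\sigma\mapsto\sigma\rho$ realises it as a set of representatives for the $W_\frk$-orbits on the $W_\frg$-orbit of $\rho$. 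Under these identifications $C(\frp)^\frk\cong\bbC[W_\frg]^{W_\frk}=\bbC[W_\frg/W_\frk]$, and comparing \eqref{descr alpha k} with the definition of $\Ev_\rho$ shows that $\alpha_\frk$ becomes $\Ev_\rho^{W_\frk}$ on the nose. The hypotheses of Subsection~\ref{rel coinv} are met: $W_\frg$ is a finite Coxeter (hence complex reflection) group on $\frt$, so $\bbC[\frt^*]$ is free of rank $|W_\frg|$ over $\bbC[\frt^*]^{W_\frg}$; and $\mathrm{Stab}_{W_\frg}(\rho)=\{\Id\}$. Lemma~\ref{ev surj} then gives surjectivity of $\alpha_\frk$ onto $C(\frp)^\frk$, Lemma~\ref{l::kerev} gives $\ker\alpha_\frk=I_{W_\frg,\rho}^{W_\frk}$, and a one-line averaging over $W_\frk$ identifies this with the ideal of $\bbC[\frt^*]^{W_\frk}$ generated by the $W_\frg$-invariants vanishing at $\rho$.

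For the graded statements I would first note that taking $\frk$-invariants commutes with passing to the associated graded of the Clifford filtration, since $\frk$ is reductive and each filtration step of $C(\frp)$ is a finite-dimensional $\frk$-module; hence $\gr(C(\frp)^\frk)=(\gr C(\frp))^\frk=(\twedge\frp)^\frk$, with the grading of Proposition~\ref{prop al k}. Because $\ker\alpha_\frk=I_{W_\frg,\rho}^{W_\frk}$ and $\gr I_{W_\frg,\rho}=I_{W_\frg,+}$ by Lemma~\ref{l::grI}, taking $W_\frk$-invariants gives $\gr(\ker\alpha_\frk)=I_{W_\frg,+}^{W_\frk}$, so $\ker(\gr\alpha_\frk)\supseteq I_{W_\frg,+}^{W_\frk}$ and $\gr\alpha_\frk$ factors through the relative coinvariant algebra $R:=\bbC[\frt^*]^{W_\frk}/I_{W_\frg,+}^{W_\frk}$ as a graded algebra map $\phi\colon R\to(\twedge\frp)^\frk$. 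Both $R$ and $(\twedge\frp)^\frk$ have dimension $|W^1_{\frg,\frk}|=[W_\frg:W_\frk]$ (the latter since $\gr$ preserves dimension and $C(\frp)^\frk=\proj(S)$ has that dimension), so it is enough to show $\phi$ is injective; as $R$ is a Poincaré duality algebra (it is the cohomology ring of a flag variety $G_\bbC/P_\bbC$, or simply a graded complete intersection), this holds once $\phi$ is nonzero in the top degree $d=\dim\frp$. For the latter: $I_{W_\frg,+}^{W_\frk}$ has top degree $|\Delta^+(\frg,\frt)|-|\Delta^+(\frk,\frt)|=\half\dim\frp$, so $R_{>d/2}=0$ and therefore already $\alpha_\frk\big((\bbC[\frt^*]^{W_\frk})_{\le d/2}\big)=C(\frp)^\frk$; but $(\twedge^{d}\frp)^\frk=\bbC T\neq 0$ by Proposition~\ref{hodge *}(3), so $C(\frp)^\frk\not\subseteq F_{d-1}C(\frp)$, whence $\gr\alpha_\frk$ does not vanish in degree $d$. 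Thus $\phi$ is an isomorphism, giving both the surjectivity of $\gr\alpha_\frk$ onto $(\twedge\frp)^\frk$ and $\ker(\gr\alpha_\frk)=I_{W_\frg,+}^{W_\frk}$.

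The step I expect to be the real obstacle is exactly this last one: surjectivity of $\alpha_\frk$ does not formally imply surjectivity of $\gr\alpha_\frk$ (equivalently, it is not automatic that the Clifford-inherited filtration on $C(\frp)^\frk$ coincides with the filtration transported from $\bbC[\frt^*]^{W_\frk}$ along $\alpha_\frk$), and closing that gap is what forces one to bring in the top element $T$ of Proposition~\ref{hodge *}, the degree bound $\half\dim\frp$ for the relative coinvariant algebra, and Poincaré duality. Everything else is the formalism of Subsection~\ref{rel coinv} plus bookkeeping — in particular keeping $W_\frk$ (the Weyl group of $\Delta(\frk,\frt)$) distinct from the possibly larger $W_K$, and matching the left/right coset conventions so that \eqref{descr alpha k} is $\Ev_\rho^{W_\frk}$.
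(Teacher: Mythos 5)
Your proof is correct, and its first half --- identifying $\alpha_\frk$ with $\Ev_\rho^{W_\frk}$ and then invoking Lemma \ref{ev surj} for surjectivity and Lemma \ref{l::kerev} for the kernel --- is exactly the paper's argument. Where you genuinely diverge is the graded statement. The paper disposes of it in one line by citing Theorem \ref{thm coinv}, which computes the associated graded of $\bbC[W_\frg/W_\frk]$ with respect to the filtration \emph{endowed by} $\Ev_\rho$; using this for $\gr\alpha_\frk$ implicitly identifies that filtration with the (halved) Clifford filtration on $C(\frp)^\frk$, whereas Proposition \ref{prop al k} only gives one inclusion between the two. You are right that surjectivity of $\alpha_\frk$ does not formally yield surjectivity of $\gr\alpha_\frk$, and your way of closing the gap is sound: $R=\bbC[\frt^*]^{W_\frk}/I_{W_\frg,+}^{W_\frk}$ is a graded complete intersection, hence a Poincar\'e duality algebra, with top polynomial degree $\half\dim\frp$; the induced map $R\to(\twedge\frp)^\frk$ is nonzero in top degree because $(\twedge^{\dim\frp}\frp)^\frk=\bbC T\neq 0$ by Proposition \ref{hodge *}, hence injective, hence bijective by the dimension count $\dim R=|W^1_{\frg,\frk}|=\dim(\twedge\frp)^\frk$. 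The paper recovers this strictness only a posteriori: the introduction to Section \ref{coho symm} announces that just the inclusions \eqref{ker incl} will be used and that the reverse inclusions are reproved case by case via explicit monomial bases. So your argument buys a uniform, a priori proof of the graded half of the corollary, at the price of invoking Poincar\'e duality of the relative coinvariant algebra. Minor quibbles only: it is the quotient $R$, not the ideal $I_{W_\frg,+}^{W_\frk}$, that ``has top degree $\half\dim\frp$'', and the left/right coset conventions you flag at the end are indeed slightly loose in the paper itself and harmless.
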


\begin{proof}
    The map $\alpha_\frk$ defined by \eqref{descr alpha k} is given by evaluation of polynomials in $\bbC[\frt^*]^{W_\frk}$ at $\sigma\rho$ for $\sigma \in W^1_{\frg,\frk}$, 
    defining an isomorphism between $C(\frp)^\frk=\proj(S)$ and $\bbC[W_G/W_K]$
   
    where $\pr_{\sigma}$ maps to $\sum_{w \in \sigma W_\frk} f_{w}$. So the map $\alpha_\frk$ is equal to the restriction of $\Ev_{\rho}:\bbC[\frt^*] \to \bbC[W_\frg]$ to $W_\frk$ invariants
    \[ 
    \Ev_{\rho}^{W_\frk}: \bbC[\frt^*]^{W_\frk} \to \bbC[W_\frg/W_\frk] = \bbC[W^1_{\frg,\frk}].
    \]
    Lemma \ref{ev surj} then states that $\alpha_\frk$ is surjective onto $C(\frp)^\frk=\proj(S)$ and Lemma \ref{l::kerev} describes the kernel. Theorem \ref{thm coinv} provides the statement for $\gr \alpha_\frk$.
    \end{proof}

\subsection{$C(\frp)^\frk$ and $(\twedge\frp)^\frk$ in the almost equal rank case: $G/K=\SO(2p+2q+2)/S(\OO(2p+1)\times\OO(2q+1))$}
\label{subsec so odd}
We call this case ``almost equal rank", because $\dim\fra=1$ for all $p$ and $q$. To see that indeed $\dim\fra=1$, and also for later purposes, we first describe a Cartan subalgebra $\frh=\frt\oplus\fra$ of $\frg$.

For the Cartan subalgebra $\frt$ of $\frk$ we choose
block diagonal matrices with diagonal blocks
\eq
\label{t so odd}
t_1J,\dots,t_pJ,0,t_{p+1}J,\dots,t_{p+q}J,0, 
\eeq
where $J=J_1=\smat 0&1\cr -1&0\esmat$, and  $t_1,\dots,t_{p+q}$ are (complex) scalars. The centralizer $\fra$ of $\frt$ in $\frp$ is one-dimensional, spanned by $E_{k\,k+m}-E_{k+m\,k}$. We identify $\frt$ with $\bbR^{p+q}\times 0\subset\bbR^{p+q+1}$ by sending the matrix \eqref{t so odd} to $(t_1,\dots,t_{p+q},0)$, and we identify $\fra$ with $0\times \bbR\subset\bbR^{p+q+1}$ by sending $E_{k\,k+m}-E_{k+m\,k}$ to $(0,\dots,0,1)$. 

Since $\dim\frp$ is odd, $C(\frp)=\End S_1\oplus \End S_2$, where $S_1$ and $S_2$ are the two spin modules. These spin modules are not isomorphic as $C(\frp)$-modules, but they are isomorphic as modules over $C(\frp)_{\even}$, in particular they are isomorphic as 
$\frk$-modules. Moreover, the $\frk$-module $S=S_1=S_2$ is multiplicity free (since the multiplicity $m=2^{[\half\tiny{\dim}\fra]}=1$).

To understand the decomposition $C(\frp)=\End S_1\oplus \End S_2$ more explicitly, we first note that the top element $T$ of $C(\frp)$, which is central in $C(\frp)$ since $\dim\frp$ is odd, acts as 1 on $S_1$ and as $-1$ on $S_2$. Therefore the central idempotents 
\[
\pr_1=\half (1+T),\qquad \pr_2=\half (1-T)
\]
satisfy the following: $\pr_1$ is 1 on $S_1$ and 0 on $S_2$, while $\pr_2$ is 0 on $S_1$ and 1 on $S_2$. It follows that 
\[
\End S_1=C(\frp)\pr_1\qquad\text{ and }\qquad \End S_2=C(\frp)\pr_2.
\]
By Proposition \ref{hodge *}, multiplication by $T$ is an isomorphism between  $C(\frp)_{\even}$ and $C(\frp)_{\odd}$, and moreover
\eq
\label{dec almost eq}
C(\frp)\cong C(\bbC T)\otimes C(\frp)_{\even},
\eeq
with the isomorphism implemented by the multiplication. It follows that 
\[
\End S_1=C(\frp)_{\even}\pr_1\qquad\text{ and }\qquad \End S_2=C(\frp)_{\even}\pr_2.
\]
Namely, $\End S_i$ corresponds to $\pr_i\otimes C(\frp)_{\even}$ under the decomposition \eqref{dec almost eq}.

Since the $\frk$-action on $C(\bbC T)$ is trivial, and since $\End S_1=\End S_2=\End S$ as $\frk$-modules, we see that for any $c\in C(\bbC T)$, $c\otimes C(\frp)_{\even}$ is a copy of $\End S$. In particular, $1\otimes C(\frp)_{\even}=C(\frp)_{\even}$ is isomorphic to $\End S$, and in the following when we write $\End S$ we mean this particular copy. It follows that $\End_\frk S=C(\frp)_{\even}^\frk$; 
this is also the image of the map 
$\alpha_\frk$ of \eqref{descr alpha k}, which now sends $\bbC[\frt^*]^{W_\frk}$ onto $\proj(S)=\End_\frk S\subset C(\frp)^\frk$. ($\End_\frk S$ is equal to the algebra $\proj(S)$ of projections onto isotypic components since $S$ is multiplicity free.)
Furthermore, an analogue of Proposition \ref{prop al k} holds, with $C(\frp)^\frk$ replaced by $\proj(S)$.
Finally, the above discussion shows that
\[
C(\frp)^\frk=C(\bbC T)\otimes \proj(S).
\]

We now go back to our Cartan subalgebra $\frh=\frt\oplus\fra$. 
Since $(\frg,\frt)$-roots are the restrictions of $(\frg,\frh)$-roots to $\frt$, we see that $\Delta(\frg,\frt)$ is of type $B_{p+q}$, while $\Delta(\frk,\frt)$ is of type $B_p\times B_q$. (On the other hand, $\Delta(\frg,\frh)$ is of type $D_{p+q+1}$.)

\begin{lem} 
\label{g' k'}
The filtered algebra $\proj(S)$ is isomorphic to 
the filtered algebra $C(\frp')^{\frk'}=\proj(S')$ for the equal rank symmetric space
\[
G'/K'=\Sp(p+q)/\Sp(p)\times \Sp(q).
\]
This algebra is isomorphic to the algebra 
$\bbC[\frt^*]^{W_\frk'}$ modulo the ideal generated by $\bbC[\frt^*]^{W_\frg'}$. (We identify the isomorphic spaces $\frt$ and $\frt'$.) It can be identified with the space 
$\bbC[r_1,\dots,r_p]_{\leq q}$,  spanned by monomials of degree at most $q$ in the elementary symmetric functions $r_1,\dots,r_p$ of the squares of the variables $x_1,\dots,x_p$, as in Subsection \ref{subsec B/BxB} below. The degrees of these monomials as functions of the $x_i$ range from 0 to $4pq$ and are divisible by 4.
\end{lem}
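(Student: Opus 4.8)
The plan is to reduce the whole statement to the equal-rank computation for $G'/K'=\Sp(p+q)/\Sp(p)\times\Sp(q)$ carried out in Subsection~\ref{subsec B/BxB}, the point being that $\proj(S)$ and $C(\frp')^{\frk'}$ are governed by literally the same Weyl-group data. First I would record the description of $\proj(S)$ for the case at hand. As established above, $S$ is $\frk$-multiplicity free, $\proj(S)=\End_\frk S$, and $\alpha_\frk$ of \eqref{descr alpha k} maps $\bbC[\frt^*]^{W_\frk}$ onto $\proj(S)$. Since $\proj(S)\cong\bbC[W_\frg/W_\frk]$ via $\pr_\sigma\mapsto\sum_{w\in\sigma W_\frk}f_w$ and $\alpha_\frk=\Ev_\rho^{W_\frk}$, the argument proving Corollary~\ref{cor alpha surj} (namely Lemmas~\ref{ev surj} and \ref{l::kerev}, Theorem~\ref{thm coinv}, and the analogue of Proposition~\ref{prop al k} noted in this subsection) goes through unchanged and identifies $\proj(S)$, as a filtered algebra---with the filtration inherited from $C(\frp)$ and $\alpha_\frk$ doubling polynomial degrees---with $\bbC[\frt^*]^{W_\frk}$ modulo the ideal generated by the $W_\frg$-invariants vanishing at $\rho$. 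Here $W_\frg=W(B_{p+q})$, $W_\frk=W(B_p)\times W(B_q)$, and $\rho\in\frt^*\cong\bbC^{p+q}$ is the restriction to $\frt$ of $\rho_\frg=\rho_{D_{p+q+1}}=(p+q,p+q-1,\dots,1,0)$ with the vanishing coordinate carried by $\fra$; thus $\rho=(p+q,p+q-1,\dots,1)$.

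Next I would invoke the equal-rank case. For $G'/K'$ the module $S'$ is again multiplicity free and $\dim\frp'$ is even, so $C(\frp')^{\frk'}=\proj(S')$, and Corollary~\ref{cor alpha surj} identifies it, as a filtered algebra (with $\alpha_{\frk'}$ again doubling degrees), with $\bbC[\frt'^*]^{W_{\frk'}}$ modulo the ideal generated by the $W_{\frg'}$-invariants vanishing at $\rho'=\rho_{C_{p+q}}=(p+q,p+q-1,\dots,1)$, where $W_{\frg'}=W(C_{p+q})$ and $W_{\frk'}=W(C_p)\times W(C_q)$. Identifying $\frt$ with $\frt'$ via the coordinates $x_1,\dots,x_{p+q}$, the Weyl groups of $B_m$ and $C_m$ coincide as reflection groups on $\bbC^m$ (both are the hyperoctahedral group of signed permutations), so $W_\frg=W_{\frg'}$, $W_\frk=W_{\frk'}$, and $\rho=\rho'$; hence the two filtered algebras are equal. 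This gives the first assertion of the lemma.

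Then I would make this concrete. Since $W_\frk=W(B_p)\times W(B_q)$, one has $\bbC[\frt^*]^{W_\frk}=\bbC[r_1,\dots,r_p,s_1,\dots,s_q]$, where $r_i$ is the $i$-th elementary symmetric function in $x_1^2,\dots,x_p^2$ and $s_j$ the $j$-th in $x_{p+1}^2,\dots,x_{p+q}^2$, while $\bbC[\frt^*]^{W_\frg}=\bbC[t_1,\dots,t_{p+q}]$ with $t_k$ the $k$-th elementary symmetric function in $x_1^2,\dots,x_{p+q}^2$, so that $t_k=\sum_{i+j=k}r_is_j$ (with $r_0=s_0=1$). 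The defining ideal is therefore generated by $t_k-t_k(\rho)$, $k=1,\dots,p+q$, so $\proj(S)\cong\frH(p,q;c)$ with $c_k=t_k\big((p+q)^2,(p+q-1)^2,\dots,1^2\big)$ in the notation of Definition~\ref{def alg h}; by Subsection~\ref{subsec B/BxB} this algebra is the space $\bbC[r_1,\dots,r_p]_{\le q}$ with basis the monomials $r_1^{a_1}\cdots r_p^{a_p}$ ($a_i\ge 0$, $\sum_i a_i\le q$). Since each $r_i$ is homogeneous of degree $2i$ in $x_1,\dots,x_p$, it has filtration degree $4i$ in $\proj(S)\subseteq C(\frp)^\frk$ (as $\alpha_\frk$ doubles degrees), so the basis monomial $r_1^{a_1}\cdots r_p^{a_p}$ has filtration degree $4\sum_{i=1}^p i\,a_i$; this is divisible by $4$, and as $(a_i)$ runs over the allowed set the quantity $\sum_i i\,a_i$ runs over $\{0,1,\dots,pq\}$ (the maximum $pq$ being attained at $a_p=q$, all other $a_i=0$), so the degrees range from $0$ to $4pq$. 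This yields the remaining assertions.

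The step I expect to require the most care is the identification of $\rho$ in the first paragraph: one must be certain that the $\rho$ occurring in the spin-module decomposition \eqref{decomp S k} for this pair is, up to $W_\frg$, the \emph{regular} element $(p+q,\dots,1)=\rho_{C_{p+q}}$---equivalently, the restriction of $\rho_\frg$ with the vanishing coordinate absorbed into $\fra$---and not $\rho_{B_{p+q}}=(p+q-\tfrac{1}{2},\dots,\tfrac{1}{2})$. It is precisely this coincidence $\rho=\rho'$ that makes the comparison with the symplectic case an equality of filtered algebras rather than merely an isomorphism of associated graded rings (which would already follow from Theorem~\ref{thm coinv}). Once this is settled, everything else is routine manipulation of elementary symmetric functions.
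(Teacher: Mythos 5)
Your proposal is correct and follows essentially the same route as the paper: reduce to the quotient description $\proj(S)\cong\bbC[\frt^*]^{W_\frk}/\langle\bbC[\frt^*]^{W_\frg}_\rho\rangle$, observe that the type $B$ and type $C$ Weyl groups coincide and that $\rho=\rho'=(p+q,p+q-1,\dots,1)$, and then quote the explicit computation of Subsection~\ref{subsec B/BxB}. Your identification of $\rho$ as the restriction of $\rho_{D_{p+q+1}}$ with the zero coordinate absorbed into $\fra$ is exactly the point the paper relies on, so the two arguments agree; you have merely supplied more of the routine detail.
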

\pf
It will be shown in Subsection \ref{subsec B/BxB} that $C(\frp')^{\frk'}$ for $G'/K'$ is 
 $\bbC[r_1,\dots,r_p]_{\leq q}$ as in the statement of the lemma. Since types $B$ and $C$ have the same Weyl group, $W_\frg$ and $W_\frk$ are the same for $G/K$ and $G'/K'$. Moreover, $\rho=\rho'=(p+q,p+q-1,\dots,1)\in\frt^*$. Since the algebra $\proj(S)$ is isomorphic to the algebra $\bbC[\frt^*]^{W_\frk}$ modulo the ideal generated by $\bbC[\frt^*]^{W_\frg}_\rho$, 
this implies the lemma.
\epf

The degree of the top element $T$ of $C(\frp)$ is $d=\dim\frp=4pq+2p+2q+1$.
The algebra $\proj(S)$ contains a unique element $t$ of degree $4pq$. Let $e=Tt$ be the corresponding odd element. Then $e$ is the unique element of lowest odd degree; this degree is 
\[
\deg e=d-4pq=2p+2q+1.
\]

\begin{lem}
    \label{squares}
    The elements $t$ and $e$ square to nonzero constants in $C(\frp)$. Therefore, we can rescale these two elements and assume that $t^2=e^2=1$.
\end{lem}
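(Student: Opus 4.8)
The plan is to deduce both statements from Lemma \ref{g' k'} together with Proposition \ref{hodge *}, by transporting the top element of the Clifford algebra of the even-dimensional companion space over to $\proj(S)$.

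First I would use Lemma \ref{g' k'} to fix a filtered algebra isomorphism $\phi\colon C(\frp')^{\frk'}\xrightarrow{\ \sim\ }\proj(S)$, where $\frp',\frk'$ come from $G'/K'=\Sp(p+q)/\Sp(p)\times\Sp(q)$, so that $\dim\frp'=4pq$ is \emph{even}. Let $T'$ be the top element of $C(\frp')\cong\twedge\frp'$. Proposition \ref{hodge *}(3) shows $T'$ is $\frk'$-invariant, hence $T'\in C(\frp')^{\frk'}$; Proposition \ref{hodge *}(1) shows $(T')^{2}$ is a nonzero scalar; and $T'$ has filtration degree $4pq$ and spans the one-dimensional top graded piece of $C(\frp')^{\frk'}$. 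Consequently $\phi(T')$ is an element of $\proj(S)$ of filtration degree $4pq$, and since the degree-$4pq$ piece of $\proj(S)$ is one-dimensional it is, up to a scalar (and lower-filtration terms), the element $t$. The clean way to phrase things is to take $t:=\phi(T')$; then, $\phi$ being an algebra isomorphism,
\[
t^2=\phi(T')^2=\phi\big((T')^{2}\big),
\]
which is a nonzero scalar multiple of $1$. After rescaling $t$ (and correspondingly $e=Tt$) we may assume $t^2=1$.

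Next I would treat $e=Tt$. Since $d=\dim\frp=4pq+2p+2q+1$ is odd, Proposition \ref{hodge *}(1)--(2) tell us that the top element $T$ of $C(\frp)$ lies in the center of $C(\frp)$ and, after the standard rescaling, satisfies $T^2=1$. As $T$ commutes with $t$,
\[
e^2=(Tt)^2=TtTt=T^2t^2=1,
\]
so $e^2$ is a nonzero constant as well.

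I do not expect a genuine obstacle here: the content is essentially that an algebra isomorphism preserves squares, combined with the fact that a Clifford algebra of \emph{even} rank has its top element squaring to a scalar (Proposition \ref{hodge *}(1)), plus centrality of the top element in the odd-rank case. The only point requiring a little care is the bookkeeping about $t$: a filtered isomorphism a priori only gives $\phi(T')=ct+(\text{lower-filtration terms})$, so rather than starting from a pre-chosen homogeneous representative it is cleanest to \emph{define} $t:=\phi(T')$ — legitimate, since $t$ is pinned down only up to scalar as a generator of the top piece of $\proj(S)$ — and accordingly $e:=Tt$.
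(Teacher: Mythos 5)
Your proof is correct and follows essentially the same route as the paper: transport the top element of $C(\frp')^{\frk'}$ for $G'/K'=\Sp(p+q)/\Sp(p)\times\Sp(q)$ through the filtered isomorphism of Lemma \ref{g' k'}, invoke Proposition \ref{hodge *} to see that it squares to a nonzero constant, and then use $T^2=1$ together with centrality of $T$ to conclude for $e=Tt$. Your extra remark about defining $t:=\phi(T')$ to handle the filtered-versus-graded bookkeeping is a reasonable clarification of a point the paper leaves implicit.
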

\pf
By Lemma \ref{g' k'}, the filtered algebra $\proj(S)$ is isomorphic to the filtered algebra $C(\frp')^{\frk'}$ for $(\frg',\frk')=(\frsp(p,q),\frsp(p)\times\frsp(q))$. By Proposition \ref{hodge *}, the top degree element of $C(\frp')^{\frk'}$ squares to a nonzero constant. So $t^2$ is a nonzero constant.

Since $T^2=1$, it follows that $e=Tt$ squares to the same constant.
\epf

\begin{cor}
    \label{t T cliff}
    The element $e\in C(\frp)^\frk$ satisfies $B(e,e)=1$ (here $B$ is the extended Killing form on $C(\frp)\cong\twedge\frp$). 
    Consequently the elements $1$ and $e$ span a subalgebra of $C(\frp)^\frk$ isomorphic to the Clifford algebra on the one-dimensional space $\bbC e$. The same is true if we replace $e$ by $t$.
\end{cor}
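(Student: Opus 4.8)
The plan is to deduce this immediately from the two preceding lemmas, Lemma \ref{squares} and Lemma \ref{T cliff}, with essentially no new computation. First I would invoke Lemma \ref{squares}: after the rescaling performed there, the elements $t$ and $e=Tt$ of $C(\frp)^\frk$ satisfy $t^2=1$ and $e^2=1$ with respect to Clifford multiplication.

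Next I would apply Lemma \ref{T cliff}, which is stated for an arbitrary $x\in C(\frp)$ with $x^2=1$, to the special choices $x=e$ and $x=t$. That lemma then yields directly $B(e,e)=1$ and $B(t,t)=1$, where $B$ is the extended Killing form on $C(\frp)\cong\twedge\frp$, and moreover it tells us that $\vspan_\bbC(1,e)$, respectively $\vspan_\bbC(1,t)$, is a subalgebra of $C(\frp)$ isomorphic to the Clifford algebra $C(\bbC e)$, respectively $C(\bbC t)$, on the corresponding one-dimensional space.

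The only point to add beyond a verbatim citation of Lemma \ref{T cliff} is that these subalgebras actually lie inside $C(\frp)^\frk$ rather than merely inside $C(\frp)$: this is automatic, since $1$ is $\frk$-invariant and $e$ (hence also $t=Te$, using Proposition \ref{hodge *}(3) for the $\frk$-invariance of $T$) is $\frk$-invariant by construction, so the span of $1$ and $e$ (resp. $1$ and $t$) is contained in the $\frk$-invariants. I do not expect any genuine obstacle here; the statement is a formal consequence of the earlier results, and the proof amounts to the single line ``apply Lemma \ref{T cliff} to $x=e$ and to $x=t$, which square to $1$ by Lemma \ref{squares}.''
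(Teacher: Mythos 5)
Your proof is correct and is essentially identical to the paper's, which likewise deduces the corollary directly from Lemma \ref{squares} and Lemma \ref{T cliff}. The extra remark on $\frk$-invariance of $e$ and $t$ (hence that the spans lie in $C(\frp)^\frk$) is a harmless and accurate addition.
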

\pf This follows from Lemma \ref{squares}  and Lemma \ref{T cliff}. 
\epf

\begin{thm}
    There are tensor product decompositions
    \begin{eqnarray*}
    &C(\frp)^\frk\cong C(\bbC e)\otimes \proj(S);\\   
    &(\twedge\frp)^\frk\cong\twedge \bbC e\otimes \gr\proj(S),
    \end{eqnarray*}
    with the isomorphisms implemented by the multiplication.
\end{thm}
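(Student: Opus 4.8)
The plan is to prove the statement about $C(\frp)^\frk$ first and to deduce the statement about $(\twedge\frp)^\frk$ from it by passing to associated graded algebras.

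\textbf{The Clifford statement.} I would start from what has already been established: $C(\frp)^\frk=C(\bbC T)\otimes\proj(S)=\proj(S)\oplus T\proj(S)$, with $\proj(S)$ the abelian semisimple subalgebra spanned by the projections onto the $\frk$-isotypic components of $S$, sitting in the even part of $C(\frp)^\frk$ and $T\proj(S)$ in the odd part; $T$ central in $C(\frp)$ with $T^2=1$; and $t\in\proj(S)$, $e=Tt$ with $t^2=e^2=1$, $B(e,e)=1$, so that $\vspan_\bbC(1,e)$ is a subalgebra of $C(\frp)^\frk$ isomorphic to $C(\bbC e)$ (Lemma~\ref{squares}, Corollary~\ref{t T cliff}). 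The key observation is that $e$ commutes with $\proj(S)$: since $T$ is central in $C(\frp)$ and $t$ lies in the commutative algebra $\proj(S)$, we get $ex=Ttx=Txt=xTt=xe$ for every $x\in\proj(S)$. Hence the two commuting subalgebras $\vspan_\bbC(1,e)\cong C(\bbC e)$ and $\proj(S)$ define an algebra homomorphism
\[
\mu\colon C(\bbC e)\otimes\proj(S)\to C(\frp)^\frk,\qquad \mu(a\otimes x)=ax .
\]
It is surjective because $t$ is invertible in $\proj(S)$ (as $t^2=1$), so $e\cdot\proj(S)=Tt\cdot\proj(S)=T\proj(S)$, whence the image contains $\proj(S)\oplus T\proj(S)=C(\frp)^\frk$; it is injective because $\mu(1\otimes x+e\otimes y)=x+T(ty)$ lies in $C(\frp)_{\even}^\frk\oplus C(\frp)_{\odd}^\frk$, hence vanishes only for $x=0$ and $ty=0$, and $ty=0$ forces $y=0$. (A dimension count also works: both sides have dimension $2\dim\proj(S)$.)

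\textbf{The exterior statement.} Because $\frk$ acts semisimply, taking $\frk$-invariants commutes with passing to the associated graded for the Clifford filtration, so $\gr C(\frp)^\frk=(\twedge\frp)^\frk$, $\gr\proj(S)=(\twedge^{\even}\frp)^\frk$ (with wedge product), and $\gr(T\proj(S))=(\twedge^{\odd}\frp)^\frk$. Moreover $e$ is $\twedge$-homogeneous of degree $2p+2q+1$: $t$ spans the one-dimensional space $(\twedge^{4pq}\frp)^\frk\subseteq\proj(S)$, and $e=Tt$ is its image under the Hodge star $*$ of Proposition~\ref{hodge *}, which sends $\twedge^{j}\frp$ onto $\twedge^{d-j}\frp$, $d=\dim\frp=4pq+2p+2q+1$. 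Thus $\gr C(\bbC e)=\twedge(\bbC e)$ (with $\bar e^2=0$, $\bar e$ being of odd degree), $\gr(C(\bbC e)\otimes\proj(S))=\twedge(\bbC e)\otimes\gr\proj(S)$, and $\gr\mu$ is the wedge-multiplication map $\twedge(\bbC e)\otimes(\twedge^{\even}\frp)^\frk\to(\twedge\frp)^\frk$. It is an isomorphism: the summand $\bar 1\otimes(\twedge^{\even}\frp)^\frk$ maps identically onto $(\twedge^{\even}\frp)^\frk$, and on $\bar e\otimes(\twedge^{\even}\frp)^\frk$ one checks that $e\wedge-$ is a bijection onto $(\twedge^{\odd}\frp)^\frk$ — indeed, once one knows that the Clifford product $ey$ is already $\twedge$-homogeneous for every $y\in(\twedge^{\even}\frp)^\frk$, so that $e\wedge y=e\cdot y$, the map $e\wedge-$ coincides with the Clifford multiplication $e\cdot-$, which is bijective ($e^2=1$) and carries $\proj(S)=(\twedge^{\even}\frp)^\frk$ onto $e\cdot\proj(S)=T\proj(S)=(\twedge^{\odd}\frp)^\frk$. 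This gives both decompositions, with the isomorphisms implemented by multiplication.

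\textbf{The main obstacle.} Everything in the first two paragraphs is formal (centrality of $T$, commutativity and invertibility inside $\proj(S)$, parity bookkeeping). The one step requiring genuine input, and the one I expect to be delicate, is the $\twedge$-homogeneity of $ey$ — equivalently, that $e$ sits in filtration degree exactly $2p+2q+1$, so that $\gr\mu$ loses no filtration jump. For this I would use Lemma~\ref{g' k'}: it identifies $\proj(S)$ with $C(\frp')^{\frk'}$ for $G'/K'=\Sp(p+q)/\Sp(p)\times\Sp(q)$, where $\dim\frp'=4pq$, so the top element of $\proj(S)$ is literally the top element of $C(\frp')$ and corresponds to $t$; then Proposition~\ref{hodge *} says multiplication by it is the Hodge star of $C(\frp')$, sending $\twedge^{2j}\frp'$ onto $\twedge^{4pq-2j}\frp'$. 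Translating back, $ty$ is $\twedge$-homogeneous of degree $4pq-2j$ whenever $y$ has degree $2j$, so $ey=T(ty)=*(ty)$ is homogeneous of degree $d-(4pq-2j)=2p+2q+1+2j$, the top degree of the Clifford product, whence $ey=e\wedge y$.
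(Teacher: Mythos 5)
Your argument for the Clifford decomposition is correct and is essentially the paper's: one exhibits the two commuting subalgebras $\vspan_\bbC(1,e)\cong C(\bbC e)$ and $\proj(S)=C(\frp)^\frk_{\even}$, and checks that Clifford multiplication by $e$ is injective (from $e^2=1$) and carries $\proj(S)$ onto $C(\frp)^\frk_{\odd}=T\proj(S)$.

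The exterior half has a genuine gap, and it sits exactly where you flagged ``the main obstacle.'' You reduce everything to the claim that the Clifford product $ey$ is $\twedge$-homogeneous for homogeneous $y$, i.e.\ that $ey=e\wedge y$, and you try to prove this by transporting the statement ``multiplication by the top element is the Hodge star'' from $C(\frp')$ through the isomorphism of Lemma \ref{g' k'}. But that isomorphism is only an isomorphism of \emph{filtered} algebras: it matches filtration degrees, not the decompositions into $\twedge$-homogeneous components inside the two different ambient exterior algebras $\twedge\frp$ and $\twedge\frp'$. An element of $\proj(S)$ whose image in $C(\frp')^{\frk'}$ is homogeneous need not be homogeneous in $\twedge\frp$, and ``translating back'' a homogeneous element only controls its filtration degree; it cannot rule out lower-degree components of $ty$, hence extra components of $ey=T(ty)$ besides $e\wedge y$. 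The $\bbZ/4$ bookkeeping does not exclude them either, since $(\twedge\frp)^\frk$ is nonzero in every odd degree $2p+2q+1+4i$, $0\le i\le pq$. So the homogeneity of $ey$ remains unproved --- and it is more than is needed. The paper establishes only the weaker sufficient statement that \emph{wedging} by $e$ is injective on $\gr\proj(S)$: it uses the graded-algebra isomorphism $\gr\proj(S)\cong(\twedge\frp')^{\frk'}$, which \emph{is} a legitimate consequence of the filtered isomorphism, together with Poincar\'e duality in that graded algebra (Proposition \ref{hodge *}(6)) to produce, for each nonzero homogeneous $\bar y$, an element $\bar y'$ with $\bar y\wedge\bar y'=\bar t$; it then checks $e\wedge t\neq 0$ from $B(e\wedge t,T)=-B(e,e)\neq 0$, whence $e\wedge\bar y\neq 0$. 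Replacing your homogeneity claim by this duality argument repairs the proof; as written, the key step does not go through.
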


\pf
To show that $C(\frp)^\frk\cong C(\bbC e)\otimes \proj(S)$, we first note that by \eqref{dec almost eq}, $\proj(S)=C(\frp)_{\even}$ is a subalgebra of $C(\frp)^\frk$ of half the dimension.
Moreover, by Corollary \ref{t T cliff}, $\vspan \{1,e\}$ is a subalgebra of $C(\frp)^\frk$ isomorphic to the Clifford algebra $C(\bbC e)$ of the space $\bbC e$.
It is thus enough to show that (Clifford) multiplication by $e$ is injective on $\proj(S)$. This follows immediately from $e^2=1$. Since $\proj(S)$ commutes with $C(\frp)^\frk$, this concludes the proof of $C(\frp)^\frk\cong C(\bbC e)\otimes \proj(S)$.

To prove $(\twedge\frp)^\frk\cong\twedge \bbC e\otimes \gr\proj(S)$, we again start from the fact that 
$\gr\proj(S)$ is a subalgebra of $(\twedge\frp)^\frk$ of half the dimension. Moreover, $e\wedge e=0$, since the degree of $e\wedge e$ is $4p+4q+2$, which is even but not divisible by 4. It thus suffices to see that wedging by $e$ is injective on $\gr\proj(S)$. 

We first note that $e\wedge t=T$ up to (nonzero) scalar. Indeed, since the degrees match, it is enough to see that $e\wedge t\neq 0$. But
\[
B(e\wedge t,T)=-B(e,\iota_tT)=-B(e,e)\neq 0.
\]
(We have already seen that $B(e,e)=1$. Alternatively, since $e$ is the only $\frk$-invariant in its degree, and since $B$ is nondegenerate on $\frk$-invariants, $B(e,e)\neq 0$.)

Assume now that $p\in\gr\proj(S)$ is nonzero; we want to show that $e\wedge p\neq 0$. We can assume $p$ is homogeneous. We will be done if we can show that there is $p'\in\gr\proj(S)$ such that $p\wedge p'=t$; then
\[
(e\wedge p)\wedge p'=e\wedge t\neq 0,
\]
so also $e\wedge p\neq 0$.

By Lemma \ref{g' k'}, 
the algebra $\proj(S)$ is isomorphic (as a filtered algebra) to the algebra $C(\frp')^{\frk'}$, where $(\frg',\frk')=(\frsp(p+q),\frsp(p)\times\frsp(q))$. The corresponding graded algebras are thus also isomorphic. The claim now follows from Proposition \ref{hodge *}.(6).
\epf

\subsection{$(\twedge\frp)^K$ in the primary case and almost primary case} 
\label{subsec primary and aprim}

In this section we cite results from \cite{CCCvol3} that describe the structure of $(\twedge\frp)^\frk$, extend this description to $(\twedge \frp)^K$ and explicitly give a generating subspace when $G/K$ is primary or almost primary.

\begin{Def}
    Let $ \lambda_\frk =\mathrm{gr}\alpha_\frk:U(\frk)^\frk \to (\twedge \frp)^\frk$ and let $\lambda_K$ be the restriction of $\lambda_\frk$ to  $U(\frk)^K$.
\end{Def}

\begin{thm}\cite[X.4 Th VII]{CCCvol3} \label{th: cccstruct}
    Let $(\frg,\frk)$ be a symmetric pair of Lie algebras, with Cartan involution $\theta$. Let $\Cal P_\frg$ be a graded subspace that generates $\twedge(\frg)^\frg$ and define the Samelson subspace $\Cal P_\fra = \cal P _\frg^{-\theta}$ then there is an isomorphism of graded algebras 
    \[ (\twedge \frp)^\frk \cong \twedge \cal P _ \fra \otimes \im \lambda_\frk.\]
\end{thm}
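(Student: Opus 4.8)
The plan is to reduce the statement to the Hopf--Koszul--Samelson structure of the group invariants $\twedge(\frg)^\frg$ together with the $\theta$-eigenspace decomposition, which is the route of the cited source. First I would invoke Hopf's theorem: $\twedge(\frg)^\frg$ is a free exterior algebra, so a minimal graded generating subspace $\Cal P_\frg$ consists of indecomposables lying in odd degrees $2d_i-1$, with $\dim\Cal P_\frg=\rank\frg$ and the $d_i$ the degrees of the fundamental $\frg$-invariants. Since $\theta$ is an involutive automorphism of $\frg$, hence of $\twedge\frg$, and satisfies $\theta\circ\ad(X)=\ad(\theta X)\circ\theta$, it preserves $\twedge(\frg)^\frg$; being semisimple, it allows me to choose $\Cal P_\frg$ to be $\theta$-stable and to split it as $\Cal P_\frg=\Cal P_\frg^{\theta}\oplus\Cal P_\fra$ with $\Cal P_\fra=\Cal P_\frg^{-\theta}$. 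A rank count then gives $\dim\Cal P_\fra=\rank\frg-\rank\frk$, the expected number of exterior generators of $(\twedge\frp)^\frk$.

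Next I would identify the two tensor factors, working with $(\twedge\frp)^\frk$ as the relative Lie algebra cohomology $H(\frg,\frk)$ (whose differential vanishes in the symmetric case). The map $\lambda_\frk=\gr\alpha_\frk$ is precisely the algebraic Chern--Weil characteristic homomorphism: $\alpha$ of \eqref{alpha def} is the curvature-type map $\frk\to\twedge^2\frp$, so $\im\lambda_\frk$ is a subalgebra concentrated in even degrees which, by the analysis behind Corollary \ref{cor alpha surj}, is the relative coinvariant algebra $\bbC[\frt^*]^{W_\frk}/\langle\bbC[\frt^*]^{W_\frg}_+\rangle$. For the other factor I would use the $\frk$-equivariant algebra projection $p\colon\twedge\frg=\twedge\frk\otimes\twedge\frp\to\twedge\frp$ (augmentation on the $\twedge\frk$ factor), which restricts to $p\colon\twedge(\frg)^\frg\to(\twedge\frp)^\frk$, and show that $\theta$-antiinvariance forces each generator in $\Cal P_\fra$ to have a nonzero odd pure-$\frp$ component, so that their images are linearly independent and generate a free exterior subalgebra of $(\twedge\frp)^\frk$.

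The crux is to prove that the multiplication map
\[
\twedge\Cal P_\fra\otimes\im\lambda_\frk\longrightarrow(\twedge\frp)^\frk
\]
is an isomorphism of graded algebras. I would establish this inside the Cartan-pair (Koszul) formalism: for a symmetric pair the characteristic homomorphism is ``regular,'' which makes $(\frg,\frk)$ a Cartan pair and forces $H(\frg,\frk)$ to split as exactly such a tensor product of the characteristic subalgebra with an exterior algebra on transgressive generators. I would pin injectivity and surjectivity down by the Poincar\'e-series identity
\[
P_{(\twedge\frp)^\frk}(t)=\Bigl(\prod_{i=1}^{\dim\Cal P_\fra}(1+t^{2d_i-1})\Bigr)\,P_{\im\lambda_\frk}(t),
\]
checking both sides against the degrees of the fundamental $\frg$- and $\frk$-invariants, i.e.\ against the known Poincar\'e polynomial of the compact symmetric space.

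I expect the main obstacle to be precisely this factorization: showing there are no further relations between the Samelson generators and the characteristic subalgebra, and that the two together exhaust $(\twedge\frp)^\frk$. Equivalently one must prove the regularity (freeness) underlying the Cartan-pair splitting, for which the cleanest tool is the Koszul-complex, or Eilenberg--Moore, computation of the cited reference, in which the transgression of the antiinvariant primitives supplies the free exterior generators. The equal-rank cases provide a sanity check: there $\Cal P_\fra=0$, so $(\twedge\frp)^\frk=\im\lambda_\frk$, in agreement with the surjectivity of $\gr\alpha_\frk$ in Corollary \ref{cor alpha surj}.
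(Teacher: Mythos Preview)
The paper does not give its own proof of this theorem: it is simply quoted from \cite[X.4 Th VII]{CCCvol3} and used as a black box, with the subsequent proposition extending the result from $\frk$-invariants to $K$-invariants. So there is no proof in the paper to compare against.

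That said, your outline is a faithful sketch of the classical argument in the cited source. You correctly identify the ingredients: the Hopf--Koszul--Samelson theorem giving $\twedge(\frg)^\frg$ as a free exterior algebra on odd-degree primitives, the $\theta$-stable choice of $\Cal P_\frg$ and the resulting splitting $\Cal P_\frg=\Cal P_\frg^\theta\oplus\Cal P_\fra$, the interpretation of $\lambda_\frk$ as the characteristic (Chern--Weil) homomorphism, and the Cartan-pair/Koszul formalism that yields the tensor factorization. You are also right that the nontrivial step is the regularity/freeness underlying the splitting, and your equal-rank sanity check is exactly the special case the paper exploits elsewhere (Corollary \ref{cor alpha surj}). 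One small correction: the projection you want is not the augmentation $\twedge\frg\to\twedge\frp$ but rather the $\frk$-equivariant projection of $\twedge\frg$ onto $(\twedge\frg)^\frg$ restricted to $(\twedge\frp)^\frk$, as the paper notes in the proof of the next proposition; this is what identifies the image with $\twedge\Cal P_\fra$.
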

We extend the graded algebra description of $(\twedge \frp)^\frk$ from \cite{CCCvol3} to $(\twedge \frp)^K$ in the below proposition.

\begin{prop}
Let $G/K$ be a symmetric space with $G$ connected ($K$ may be disconnected). Then, with notation as in Theorem \ref{th: cccstruct}, there is an isomorphism of graded algebras 
\[ (\twedge \frp)^K \cong \twedge \cal P _ \fra \otimes \im \lambda_K.\] 
\end{prop}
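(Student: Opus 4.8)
The plan is to deduce the disconnected-$K$ statement from the connected case (Theorem \ref{th: cccstruct}) by carefully tracking how the component group $K/K_e$ acts on everything in sight. Write $\Gamma=K/K_e\cong\Kt/\Kt_e$, a finite group. Since $G$ is connected, $(\twedge\frp)^\frk$ is precisely the invariants $(\twedge\frp)^{K_e}$, and taking a further $\Gamma$-invariant gives $(\twedge\frp)^K=\big((\twedge\frp)^\frk\big)^\Gamma$. The whole point is that the isomorphism of Theorem \ref{th: cccstruct},
\[
(\twedge\frp)^\frk\cong \twedge\cal P_\fra\otimes\im\lambda_\frk,
\]
is $\Gamma$-equivariant for a suitable $\Gamma$-action on the right-hand side; taking $\Gamma$-invariants of both sides then yields the claim, provided one also checks that $\Gamma$-invariants distribute over the tensor product here, i.e.\ that $\big(\twedge\cal P_\fra\otimes\im\lambda_\frk\big)^\Gamma\cong(\twedge\cal P_\fra)^\Gamma\otimes(\im\lambda_\frk)^\Gamma$, and that these two factors are $\twedge\cal P_\fra$ (unchanged) and $\im\lambda_K$ respectively.

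First I would pin down the $\Gamma$-actions. The adjoint action of $K$ on $\frp$ (and hence on $\twedge\frp$) factors an action of $\Gamma$ on $(\twedge\frp)^\frk$. On the factor $\im\lambda_\frk$: recall $\lambda_\frk=\gr\alpha_\frk$ is built from $\alpha\colon U(\frk)\to C(\frp)$, which is $K$-equivariant (the adjoint action on $U(\frk)$ matches the adjoint action on $C(\frp)$); hence $\Gamma$ acts on $U(\frk)^\frk$ with $U(\frk)^K=(U(\frk)^\frk)^\Gamma$, and $\lambda_\frk$ intertwines these, so $\im\lambda_K=(\im\lambda_\frk)^\Gamma$ by surjectivity of $\lambda_\frk$ onto $\im\lambda_\frk$ together with surjectivity of the restriction $U(\frk)^\frk\to\im\lambda_\frk$ and the exactness of taking invariants under the finite group $\Gamma$ (over $\bbC$, invariants are exact). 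On the factor $\twedge\cal P_\fra$: here $\cal P_\fra=\cal P_\frg^{-\theta}$ sits inside $(\twedge\frg)^\frg\subseteq\twedge\frg$, and $\Gamma$ acts via $\Ad$; the key point to verify is that $\Gamma$ acts \emph{trivially} on $\cal P_\fra$, so that $(\twedge\cal P_\fra)^\Gamma=\twedge\cal P_\fra$. This should follow because $\cal P_\fra\subseteq(\twedge\frg)^\frg$ consists of $\Ad(G)$-invariants and $\Gamma$ is realized by elements of $K\subseteq G$ (more precisely, by $\Ad(k)$ for $k\in K$ representing classes in $\Gamma$), which therefore act trivially on all of $(\twedge\frg)^\frg$, hence on its subspace $\cal P_\fra$.

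The remaining point is $\Gamma$-equivariance of the isomorphism in Theorem \ref{th: cccstruct} itself. Here I would go back to how that isomorphism is constructed in \cite{CCCvol3}: it is the multiplication map $\twedge\cal P_\fra\otimes\im\lambda_\frk\to(\twedge\frp)^\frk$, $\omega\otimes\xi\mapsto\omega\wedge\xi$ (after identifying $\cal P_\fra$ with its image in $\twedge\frp$ under the relevant projection, which is also $\Ad K$-equivariant since it comes from the $\theta$-eigenspace decomposition, and $\theta$ commutes with $\Ad K$). Multiplication in $\twedge\frp$ is manifestly $\Ad K$-equivariant, so the map is $\Gamma$-equivariant, and being an isomorphism of vector spaces it restricts to an isomorphism on $\Gamma$-invariants:
\[
(\twedge\frp)^K=\big((\twedge\frp)^\frk\big)^\Gamma\cong\big(\twedge\cal P_\fra\otimes\im\lambda_\frk\big)^\Gamma=(\twedge\cal P_\fra)^\Gamma\otimes(\im\lambda_\frk)^\Gamma=\twedge\cal P_\fra\otimes\im\lambda_K,
\]
where the middle equality uses that $\Gamma$ acts trivially on the first factor, so $\Gamma$-invariants of the tensor product is the tensor of $\Gamma$-invariants. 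This also respects the grading since all maps involved are graded. The main obstacle I anticipate is the last structural claim about $\cal P_\fra$: one must be sure that the subspace $\cal P_\fra$ can be (and in \cite{CCCvol3} is) chosen inside the \emph{$\frg$-invariants} $(\twedge\frg)^\frg$ — not merely $\frk$-invariants — so that the full connected group $G$, and a fortiori all of $K$, acts trivially on it; if the construction only gave $\frk$-invariance one would have to argue separately that $\Gamma$ still acts trivially (e.g.\ because $\cal P_\fra$ is spanned by primitive elements whose degrees are the exponents, and $\Gamma$ preserves degree and the one-dimensional graded pieces). Everything else is a routine invocation of exactness of finite-group invariants in characteristic zero and equivariance of the multiplication map.
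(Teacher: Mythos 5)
Your argument is correct and follows essentially the same route as the paper: both deduce the statement from the connected-case isomorphism of Theorem \ref{th: cccstruct}, observe that the $\twedge\cal P_\fra$ factor is $K$-fixed because it is detected inside $(\twedge\frg)^\frg$ (on which the connected group $G$, hence $K$, acts trivially) through $K$-equivariant maps, and then take $K$-invariants and use $(\im\lambda_\frk)^K=\im\lambda_K$. The paper words the key equivariance via the cohomology map $H(G/K_e)\to H(G)$, i.e.\ the inclusion of $(\twedge\frp)^\frk$ into $\twedge\frg$ followed by the projection onto $(\twedge\frg)^\frg$, rather than via equivariance of the multiplication map, but this is the same idea.
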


\begin{proof}
When one identifies the de Rham cohomology $H(G/K_e)$ of the space $G/K_e$ with $\twedge\fp^\fk$ and the de Rham cohomology $H(G)$ of $G$ with $\twedge(\fg)^\fg$, then the map on cohomology from $H(G/K_e)$ to $H(G)$ is given by the inclusion of $(\twedge\frp)^\frk$ into $\twedge\frg$ followed by the projection of $\twedge\frg$ onto $(\twedge\frg)^\frg$ along $\frg\cdot\twedge\frg$ (see \cite[X.4]{CCCvol3} for extra details). Both of these maps are $K$-module homomophisms and the image of the composition is $\twedge\cal P_\fra \subset \twedge\cal P_ \frg$  \cite[X.4 Th VII (2)]{CCCvol3}. Since $(\twedge\frg)^\frg$ is $G$-fixed (and hence $K$-fixed) and the above map is a $K$-module homomorphism, we can conclude that the subspace of $(\twedge\fp)^\fk$ congruent to $\twedge\cal P_\fra$ is $K$-fixed. 
 The space $\twedge(\fp)^K$ is equal to
 \[ (\twedge(\fp)^\fk)^K \cong (\twedge(\cal P_\fa) \otimes \im \lambda_\frk)^K = \twedge(\cal P_\fa) \otimes (\im \lambda_\frk)^K,  \] the second equality following from the fact that the first tensorand is entirely $K$-fixed. To finish, note that $(\im \lambda_\frk)^K = \im \lambda_K$,  hence 
 \[ (\twedge \fp)^K \cong \twedge \cal P_\fa \otimes \im \lambda_K.\]
\end{proof}

\begin{Def}
    The symmetric space $G/K$ is primary if $W_{\frg,\frt} = W_\frk$ and almost primary if $W_{\frg,\frt}  = W_K \neq W_\frk$.
\end{Def}

\begin{Def}\label{def samspace}
  Define $\cal P_\wedge(\frp)$ to be the subspace of $(\twedge\frp)^K$ orthogonal to square of the augmentation ideal $((\twedge\frp)^K_+)^2$.  
\end{Def}

\begin{prop}\cite[proposition 4 p 105]{On}\label{prop: graded gens} Suppose that the algebra $A$ is isomorphic to an exterior algebra, and let $S$ be a subset of $A$. Then the following are equivalent: 
\begin{enumerate}
\item the algebra $A$ is generated by $S$ and $1$; 
\item the augmentation ideal $A_+$ is equal to  the $A$-submodule generated by $S$; 
\item the augmentation ideal $A_+$ is equal to $\mathrm{span}_\bbC(S) \oplus A_+^2$.
\end{enumerate}
\end{prop}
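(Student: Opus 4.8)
The plan is to prove the three equivalences by a graded Nakayama-type argument, using the structure of an exterior algebra. Write $A=\bbC\cdot 1\oplus A_+$ with $A_+$ the augmentation ideal (the part of positive degree), and normalise as follows: I would assume $S\subseteq A_+$ — this holds in all our applications, where the elements of $S$ span $\cal P_\wedge(\frp)\subseteq(\twedge\frp)^K_+$, and in any case it is forced by each of conditions $(b)$ and $(c)$ — and I would further replace $S$ by the set of its homogeneous components, which changes none of $(a)$, $(b)$, $(c)$ because $\mathrm{span}_\bbC(S)$, $A_+$ and $A_+^2$ are all graded subspaces. Two structural facts about $A\cong\twedge V$ will be used throughout: first, $A_+^2$ is precisely the span of the decomposable elements, so $A_+/A_+^2$ is identified with a space $V$ of generators; second, since $A$ is nonnegatively graded with $A_0=\bbC$, the power $A_+^k$ is concentrated in degrees $\geq k$, so in any fixed degree only finitely many powers of $A_+$ contribute.

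The implication we actually need, $(c)\Rightarrow(a)$, I would prove by induction on degree. Let $B$ be the subalgebra of $A$ generated by $S$ and $1$; I claim $A_n\subseteq B$ for every $n$. This is clear for $n=0$, and for $n\geq 1$ we have $A_n\subseteq A_+$, so $(c)$ gives $A_n=(\mathrm{span}_\bbC S)_n\oplus(A_+^2)_n$, where $(\mathrm{span}_\bbC S)_n\subseteq B$ and $(A_+^2)_n=\sum_{i+j=n,\ i,j\geq 1}A_iA_j\subseteq B$ by the inductive hypothesis. Hence $A=B$. The converse direction is the easy one, via $(a)\Rightarrow(b)$: if $A$ is generated by $S$ and $1$ then $A_+$ is spanned by monomials $s_1\cdots s_k$ with $k\geq 1$ and $s_i\in S$, each of which lies in the $A$-submodule $\langle S\rangle$ generated by $S$; and $\langle S\rangle\subseteq A_+$ since $S\subseteq A_+$, so $A_+=\langle S\rangle$.

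It remains to close the loop with $(b)\Leftrightarrow(c)$. As $A$ is graded-commutative, two-sided ideals are $A$-submodules, and $\langle S\rangle=A\cdot\mathrm{span}_\bbC(S)=\mathrm{span}_\bbC(S)+A_+\mathrm{span}_\bbC(S)$ with $A_+\mathrm{span}_\bbC(S)\subseteq A_+^2$. If $(c)$ holds, then $A_+^2=A_+(\mathrm{span}_\bbC S+A_+^2)=A_+\mathrm{span}_\bbC(S)+A_+^3\subseteq\langle S\rangle+A_+^3$, and iterating — using that $A_+^k$ vanishes in any fixed degree once $k$ is large — yields $A_+^2\subseteq\langle S\rangle$, whence $A_+=\mathrm{span}_\bbC(S)+A_+^2\subseteq\langle S\rangle\subseteq A_+$, i.e.\ $(b)$. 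Conversely $(b)$ gives $A_+=\langle S\rangle=\mathrm{span}_\bbC(S)+A_+\mathrm{span}_\bbC(S)\subseteq\mathrm{span}_\bbC(S)+A_+^2\subseteq A_+$, so $A_+=\mathrm{span}_\bbC(S)+A_+^2$.

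I do not expect a genuine obstacle — the statement is classical — but the step needing the most care is the \emph{directness} of the decomposition in $(c)$: the computation above only yields $A_+=\mathrm{span}_\bbC(S)+A_+^2$, while $(c)$ asserts a direct sum. To settle this I would use that $\dim A_+/A_+^2=\dim V$ is an invariant of $A$, so a subspace of $A_+$ surjecting onto $A_+/A_+^2$ is a complement of $A_+^2$ exactly when it has this minimal dimension; in every case where we apply the proposition $\mathrm{span}_\bbC(S)=\cal P_\wedge(\frp)$ is by construction a complement of $A_+^2$ (Definition \ref{def samspace}), so directness is automatic there and the three conditions are equivalent as stated. For an arbitrary subset $S$ one reads $(c)$ as carrying this minimality; the two implications the paper actually uses, $(a)\Rightarrow(b)$ and the graded Nakayama step $(c)\Rightarrow(a)$, hold regardless. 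The whole argument is just the interplay between the filtration of $A$ by powers of $A_+$ and its grading.
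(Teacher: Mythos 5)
The paper does not actually prove this proposition; it is quoted from Onishchik \cite[proposition 4 p 105]{On}, so there is no in-house argument to compare yours against. Judged on its own merits, your proof is the standard graded Nakayama argument, and the implications the paper actually relies on --- $(a)\Rightarrow(b)$, $(b)\Leftrightarrow$ the ``sum'' version of $(c)$, and $(c)\Rightarrow(a)$ --- all come out. Your remark about directness is also on target and worth keeping: with $S=A_+$, conditions $(a)$ and $(b)$ hold while $(c)$ fails whenever $A_+^2\neq 0$, so as transcribed the statement needs either the $\oplus$ read as $+$ or a minimality/independence hypothesis on $S$; the paper only ever applies it with $\mathrm{span}_\bbC(S)$ a complement of $A_+^2$, where this is moot.

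The one genuine error is the normalization step: replacing $S$ by the set of homogeneous components of its elements does \emph{not} preserve conditions $(a)$--$(c)$, because $\mathrm{span}_\bbC(S)$ need not be a graded subspace. Concretely, take $A=\twedge(\bbC v_1\oplus\bbC v_2)$ graded so that $\deg v_1\neq\deg v_2$ (exactly the situation for $\cal P_\wedge(\frp)$, whose generators sit in distinct degrees), and $S=\{v_1+v_2\}$: then $(v_1+v_2)^2=0$, so $S$ generates only a two-dimensional subalgebra and all three conditions fail, whereas for the homogenized set $\{v_1,v_2\}$ all three hold. This matters because your induction-on-degree proof of $(c)\Rightarrow(a)$ uses the graded decomposition $A_n=(\mathrm{span}_\bbC S)_n\oplus(A_+^2)_n$, which is available only when $\mathrm{span}_\bbC(S)$ is graded; homogenizing first would prove that the \emph{homogenized} set generates, not that $S$ does. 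The repair is already contained in your own text: run the $A_+$-adic iteration you use for $(c)\Rightarrow(b)$. Writing $B$ for the subalgebra generated by $S$ and $1$ and $B_+=B\cap A_+$, condition $(c)$ gives $A_+\subseteq B_++A_+^2$, hence $A_+^2\subseteq(B_++A_+^2)^2\subseteq B_++A_+^3$ and inductively $A_+\subseteq B_++A_+^k$ for all $k$; since $V$ is finite dimensional, $A_+$ is nilpotent, so $A_+=B_+$ and $B=A$. The same explicit appeal to nilpotency of $A_+$ (rather than to ``$A_+^k$ vanishes in any fixed degree,'' which by itself does not suffice when $\langle S\rangle$ is not graded) should be made in the $(c)\Rightarrow(b)$ step. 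With those substitutions the proof is complete.
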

In particular, Proposition \ref{prop: graded gens} shows that modifying a generating set $S$ by any elements from $A_+^2$ retains the property of generating $A$; we will use this to show that $P_\wedge(\frp)$ generates $(\twedge\frp)^K$ in the (almost) primary case.

\begin{cor}\label{cor: gen orthog}
    Suppose that $A$ is a graded algebra with non-degenerate bilinear form such that $A$ is isomorphic to an exterior algebra, and different graded components are orthogonal to each other.  Let $R$ be the subspace of $A_+$ orthogonal to $A_+^2$ then $A$ is generated by $R$.
\end{cor}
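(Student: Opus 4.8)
The plan is to reduce to Proposition~\ref{prop: graded gens}: since $A$ is isomorphic to an exterior algebra, it is enough, by the equivalence of conditions (1) and (3) there, to produce a direct sum decomposition $A_+ = R \oplus A_+^2$, for this immediately gives that $A$ is generated by $R$ together with $1$.

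First I would record that $A_+^2$ is a graded subspace of $A_+$, being spanned by products of homogeneous elements of positive degree, and that consequently its orthogonal complement $R = \{a \in A_+ : B(a, A_+^2) = 0\}$ is graded as well: writing $a = \sum_n a_n$ with $a_n \in A_n$, the orthogonality of distinct graded components gives $B(a, b) = B(a_m, b)$ for homogeneous $b \in (A_+^2)_m$, so each $a_m$ already lies in $R$. Setting $R_n = R \cap A_n$ and $(A_+^2)_n = A_+^2 \cap A_n$, the decomposition $A_+ = R \oplus A_+^2$ reduces, degree by degree, to $A_n = R_n \oplus (A_+^2)_n$. Here $R_n$ is exactly the orthogonal complement of $(A_+^2)_n$ inside the finite-dimensional space $A_n$, and since $B$ is non-degenerate on $A$ with distinct graded components mutually orthogonal, $B$ restricts non-degenerately to each $A_n$; in particular $\dim R_n = \dim A_n - \dim (A_+^2)_n$. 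Thus $A_n = R_n \oplus (A_+^2)_n$ is equivalent to $R_n \cap (A_+^2)_n = 0$, i.e.\ to the non-degeneracy of the restriction of $B$ to $(A_+^2)_n$.

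The hard part is exactly this: verifying that $B$ restricts non-degenerately to $A_+^2$ (equivalently, to each $(A_+^2)_n$). This is where the specific form matters. In the situations in which the corollary is applied --- $A = (\twedge\frp)^K$, with $B$ the pairing on $\twedge\frp$ described in Proposition~\ref{hodge *}(6) --- one uses that under this Hodge pairing the decomposable part of $(\twedge\frp)^K$ in each degree is non-degenerately paired with itself (in these examples one can moreover check case by case that $(A_+^2)_n$ is either $0$ or all of $A_n$, so the issue does not arise at all); hence $R_n$ genuinely complements $(A_+^2)_n$. I expect establishing this non-degeneracy to be the main obstacle. Once it is in hand, summing $A_n = R_n \oplus (A_+^2)_n$ over $n \ge 1$ gives $A_+ = R \oplus A_+^2$, and Proposition~\ref{prop: graded gens} then shows that $A$ is generated by $R$ (and $1$), which completes the proof.
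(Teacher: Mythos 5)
Your reduction is sound as far as it goes, and it is essentially the paper's own argument read in the opposite direction: the paper starts from a graded generating subspace $P$ with $A=\twedge P$ and orthogonalizes it against $A_+^2$ by a Gram--Schmidt procedure, obtaining $A_+=P'\oplus A_+^2$ with $P'\perp A_+^2$, and then identifies $P'$ with $R$ using nondegeneracy; you instead start from $R=(A_+^2)^\perp\cap A_+$ and try to prove $A_n=R_n\oplus(A_+^2)_n$ directly by a dimension count. Both versions hinge on exactly the point you have isolated: whether $B$ restricts nondegenerately to each $(A_+^2)_n$. In the paper's version this is hidden in the Gram--Schmidt step --- one can find, for every $p\in P_n$, an element $x\in(A_+^2)_n$ with $p-x\perp(A_+^2)_n$ if and only if the map $(A_+^2)_n\to((A_+^2)_n)^*$, $x\mapsto B(x,\cdot)$, is onto, i.e.\ if and only if $B|_{(A_+^2)_n}$ is nondegenerate --- so you have in fact exposed a step that the paper's proof passes over silently.

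The gap is real, because this nondegeneracy does not follow from the stated hypotheses. Take $A=\twedge(\bbC a\oplus\bbC b\oplus\bbC c)$ with $\deg a=\deg b=1$, $\deg c=2$, a form vanishing between distinct graded components, nondegenerate on $A_0,A_1,A_3,A_4$, and on $A_2=\vspan(ab,c)$ given by $B(ab,ab)=B(c,c)=0$, $B(ab,c)=1$: then all hypotheses of the corollary hold, but $R_2=(\bbC ab)^\perp\cap A_2=\bbC ab=(A_+^2)_2$, so $R=\vspan(a,b,ab)$ fails to generate $A$ (it misses $c$). Your proposed way out in the applications --- that $(A_+^2)_n$ is always $0$ or all of $A_n$ --- is also not correct in general: for an exterior algebra on generators of degrees $1,3,5,\dots$ (e.g.\ $H^*(\UU(n))$ with $n\ge 5$) one has $(A_+^2)_9=\bbC\, a_1a_3a_5\subsetneq A_9=\bbC a_9\oplus\bbC\, a_1a_3a_5$, and similar coincidences occur for the degree lists in Table \ref{tab degrees prim} once the rank is large enough. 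What actually rescues every case in which the corollary is used is that there $B$ is the complexification of a \emph{definite} form on the compact real form $(\twedge\frp_0)^K$ (it comes from the Killing form of a compact group), and a definite form is nondegenerate on every real subspace, in particular on each $(A_+^2)_n$. With that one observation added, your degree-by-degree argument closes and is, if anything, cleaner than the Gram--Schmidt formulation; without it, neither your proof nor the paper's is complete as stated.
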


\begin{proof} Since $A$ is isomorphic to an exterior algebra, suppose $P$ is any graded subspace of $A$ such that $A = \twedge P$. The proof follows by performing a Gram-Schmidt algorithm on $P$ modifying by elements in $A_+^2$ until the generating subspace is orthogonal to $A_+^2$. By Proposition \ref{prop: graded gens}, at each step of the Gram-Schmidt process the new subspace still generates $A$ and is graded since the different graded components are orthogonal. The end result is a graded subspace $P'$ orthogonal to $A_+^2$ that generates $A$. Hence we have a direct sum of orthogonal components 
    \[ A = \bbC \oplus P' \oplus A_+^2,\]
    and $P'$ is contained in $R$. Any element in $R \setminus P'$ would be orthogonal to $P'$, $\bbC$ and $A_+^2$ thus contradicting the fact that the form on $A$ is non-degenerate, hence $R = P'$ and $R$ generates $A$.
\end{proof}

\begin{thm}\label{thm prim and aprim alg}
Let $G/K$ be primary or almost primary,  then the inclusion $\cal P_\wedge(\frp) \hookrightarrow (\twedge\frp)^K$ extends to an isomorphism of graded algebra
\[ (\twedge \frp)^K = \twedge\cal P_\wedge(\frp).\]
\end{thm}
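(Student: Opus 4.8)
The plan is to deduce the theorem from the ring-theoretic Corollary \ref{cor: gen orthog}: its one substantive hypothesis — that $(\twedge\frp)^K$ be, abstractly, an exterior algebra — is precisely what the (almost) primary assumption provides through Theorem \ref{th: cccstruct}. \emph{Step 1 (the crux)} is to show that $(\twedge\frp)^K$ is an exterior algebra. By the proposition immediately following Theorem \ref{th: cccstruct}, $(\twedge\frp)^K\cong\twedge\cal P_\fra\otimes\im\lambda_K$ as graded algebras, so it suffices to prove $\im\lambda_K=\bbC$. Since $\lambda_K$ is the associated graded map of $\alpha_\frk$ restricted to $U(\frk)^K$, and a central element of $U(\frk)$ acts on the $\frk$-isotypic summand $E_\sigma$ of $S$ by the scalar $P(\sigma\rho)$, with $P$ its Harish--Chandra image, this reduces to showing that an element $z\in U(\frk)^K$ acts on $S$ by a single scalar. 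For such $z$ the polynomial $P$ is $W_K$-invariant, so $P(\sigma\rho)$ depends only on the $W_K$-orbit — equivalently the $K/K_e$-orbit — of $\sigma\in W^1_{\frg,\frk}$. In the primary case $W^1_{\frg,\frk}$ is a single point, and in the almost primary case $W^1_{\frg,\frk}=W_K/W_\frk$ is a single $K/K_e$-orbit (this is the content of $W_{\frg,\frt}=W_K$, verified case by case in Subsections \ref{subsec so/soo} and \ref{subsec u/o}); either way $z$ acts by one scalar, $\im\lambda_K=\bbC$, and $(\twedge\frp)^K\cong\twedge\cal P_\fra$. Since $\cal P_\fra=\cal P_\frg^{-\theta}$ may be taken to consist of primitive, hence odd-degree, generators of $(\twedge\frg)^\frg$, the algebra $(\twedge\frp)^K$ is an exterior algebra on odd-degree generators; in particular $\dim(\twedge\frp)^K=2^{\dim\cal P_\fra}$ and the even positive-degree part of $(\twedge\frp)^K$ is contained in $((\twedge\frp)^K_+)^2$.

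\emph{Step 2} is to invoke Corollary \ref{cor: gen orthog} with $A=(\twedge\frp)^K$, graded by wedge degree and carrying the restriction of the form $B$ from Proposition \ref{hodge *}. Because $B$ is $K$-invariant and the trivial $K$-type is self-dual, the $K$-isotypic decomposition of each $\twedge^j\frp$ is $B$-orthogonal and pairs $(\twedge^j\frp)^K$ with itself, so $B$ is nondegenerate on $A$; and distinct graded components of $A$ are $B$-orthogonal since $B$ preserves wedge degree. By Step 1 the last hypothesis of the corollary holds, so the subspace of $A_+$ orthogonal to $A_+^2$ — which is $\cal P_\wedge(\frp)$ by Definition \ref{def samspace} — generates $A$; moreover, running the Gram--Schmidt argument of that corollary's proof starting from the image of $\cal P_\fra$ under the isomorphism $A\cong\twedge\cal P_\fra$ yields $\dim\cal P_\wedge(\frp)=\dim\cal P_\fra$.

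\emph{Step 3} is to assemble the isomorphism. Being orthogonal to $A_+^2$, hence to the even positive-degree part of $A$ on which (by Step 1 and the orthogonality of graded pieces) $B$ is nondegenerate, the subspace $\cal P_\wedge(\frp)$ lies in odd degrees; therefore $p\wedge p=0$ in $\twedge\frp$ for every $p\in\cal P_\wedge(\frp)$, and the inclusion $\cal P_\wedge(\frp)\hookrightarrow(\twedge\frp)^K$ extends, by the universal property of the exterior algebra, to a homomorphism of graded algebras $\Phi\colon\twedge\cal P_\wedge(\frp)\to(\twedge\frp)^K$. It is surjective by Step 2, and $\dim\twedge\cal P_\wedge(\frp)=2^{\dim\cal P_\wedge(\frp)}=2^{\dim\cal P_\fra}=\dim(\twedge\frp)^K$, so $\Phi$ is an isomorphism of graded algebras.

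The main obstacle is Step 1, specifically the almost primary case: one must check, case by case using the explicit disconnected group $K$, that $K/K_e$ acts transitively on $W^1_{\frg,\frk}$, so that the algebra $\im\lambda_K$ of $K$-invariant projections collapses to $\bbC$. Once $(\twedge\frp)^K$ is known to be an exterior algebra, Step 2 is a direct appeal to Corollary \ref{cor: gen orthog} and Step 3 is just a dimension count.
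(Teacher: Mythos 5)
Your proof is correct and follows essentially the same route as the paper's: use the (almost) primary hypothesis to force $\im\lambda_K=\bbC$, conclude via Theorem \ref{th: cccstruct} and its $K$-invariant extension that $(\twedge\frp)^K\cong\twedge\cal P_\fra$ is an exterior algebra, and then apply Corollary \ref{cor: gen orthog} to identify $\cal P_\wedge(\frp)$ as a generating subspace. Your Steps 1 and 3 merely make explicit some details (why a $W_K$-invariant polynomial is constant on $W^1_{\frg,\frk}\rho$, and the final dimension count) that the paper leaves implicit.
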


\begin{proof}
    If $G/K$ is primary then $\im \lambda_\frk$ is $\bbC$ and $(\twedge\frp)^\frk \cong \twedge\cal P_\fra$, if $G/K$ is almost primary then $\im \lambda_K$ is $\bbC$ and $(\twedge \frp)^K \cong \twedge\cal P_\fra$. Hence, in both cases $(\twedge\frp)^K$ is isomorphic to an exterior algebra,  denote this isomorphism by $f: \twedge \cal P_\fra \cong (\twedge\frp)^K$. Then $(\twedge \frp)^K$ is generated by the graded subspace $f(\cal P_\fra)$ and the form on $(\twedge \frp)^K$ induced by the Killing form on $\frp$ is non degenerate on $(\twedge \frp)^K$ with differing graded components orthogonal.  Hence Corollary \ref{cor: gen orthog} proves that $P_\wedge(\frp)$ (Definition \ref{def samspace}) generates $(\twedge \frp)^K$; $(\twedge \frp)^K = \twedge\cal P_\wedge (\frp)$.
\end{proof}

The degrees of $\cal P_\wedge(\frp)$ are the same as the degrees of $\cal P_\fa = \cal P_\frg ^{-\theta}$ which are given in \cite[Table I,II, III p. 492-496]{CCCvol3} and repeated below for reference. There is paper in preparation \cite{CGKP} that will prove a transgression theorem when $G/K$ is primary or almost primary and will directly give the degrees of $\cal P_\wedge(\frp)$.

\begin{table}[htbp]
\caption{Degrees of $\cal P_\wedge(\fp)$}\label{table:degPa}
%\begin{center}
    \begin{tabular}{lll} \label{tab degrees prim}
    $G$ & $K$ & degrees of $\cal P_\wedge(\fp)$ \\
  % \hline  
   Group & &\\
   \hline
   $\UU_{2n+1}(\mathbb{R})^2$ & $\UU_{2n+1}(\mathbb{R})$ & $ 4p-1 : 1 \leq p \leq n$ \\
   $\UU_{2n}(\mathbb{R})^2$ & $\UU_{2n}(\mathbb{R})$ & $ 4p-1 : 1 \leq p \leq n -1 $ and $ 2n-1$ \\
   $\UU_n(\mathbb{C})^2$ & $\UU_n(\mathbb{C})$ & $ 2p-1 : 1 \leq p \leq n $\\
   $\UU_n(\mathbb{H})^2$ & $\UU_n(\mathbb{H})$ &  $ 4p-1 : 1 \leq p \leq n $ \vspace{5pt}\\
   
   Primary \\ 
   \hline 
   $\UU_{2n+1}(\mathbb{C})$ & $\UU_{2n+1}(\mathbb{R})$ &  $ 4p-3 : 1 \leq p \leq n+1 $ \vspace{5pt}\\
   $\UU_{2n}(\bbC)$ & $\UU_n(\bbH)$ & $4p-3 : 1 \leq p \leq n$\\
   Almost Primary \\
   \hline
    $\UU_{2n}(\mathbb{C})$ & $\UU_{2n}(\mathbb{R})$ & $ 4p-3 : 1 \leq p \leq n$  \vspace{5pt}\\
   
    \end{tabular} 
%\end{center}

(Recall $\UU_n(\bbR) = \OO(n)$, $\UU_n(\bbC) = \UU(n)$, and $\UU_n(\bbH) = \Sp(n)$.)
\end{table}

\subsection{$C(\frp)^K$ and $(\twedge\frp)^K$ for disconnected $K$: the case $G/K=\SO(k+m)/S(\OO(k)\times\OO(m))$}\label{subsec so/soo}
To understand what happens with the decomposition \eqref{decomp S k} when $K$ is disconnected, we consider the group theoretic Weyl group $W_K$ defined as
\eq
\label{gp W gp}
W_{K}=N_{K}(\frt)/Z_{K}(\frt),
\eeq
where $N_{K}(\frt)$ denotes the normalizer in $K$ of the Cartan subalgebra $\frt$ of $\frk$, while $Z_K(\frt)$ denotes the centralizer of $\frt$ in $K$. 
It is well known (see \cite[Theorem~4.54]{beyond}) that for connected $K$, $W_{K}=W_\frk$, the Weyl group of the root system of $(\frk,\frt)$. Analogously, we define $W_G$, which is equal to $W_\frg=W_{\frg,\frt}$ since $G$ is assumed connected.

We note that it looks like we should consider the groups $W_{\Kt}$ and $W_{\Kt_e}$, but these are in fact the same as $W_K$ respectively $W_{K_e}$. This follows from the fact that the adjoint action of $k\in\Kt$ is the same as the adjoint action of $\pi(k)$ where $\pi$ denotes the covering map from $\Kt$ to $K$.

Let now $G/K=\SO(k+m)/S(\OO(k)\times \OO(m))$. 
In these cases $K$ is disconnected and the group theoretic Weyl group $W_K$ may be different from $W_\frk$. In the following we describe $W_K$ explicitly.

The group $K=S(\OO(k)\times \OO(m))$ has two connected components, and we choose the following explicit representative $s$ of the disconnected component:
\begin{eqnarray}
\label{def s}
s&=&\diag(\underbrace{1,\dots,1,-1}_k,\underbrace{1,\dots,1,-1}_m)\quad\text{if }k\text{ and }m\text{ are both even};\\
\nonumber
s&=&\diag(\underbrace{1,\dots,1,-1}_k,\underbrace{-1,\dots,-1}_m)\quad\text{if }k\text{ is even and }m\text { is odd};\\
\nonumber
s&=&\diag(\underbrace{-1,\dots,-1}_k,\underbrace{-1,\dots,-1}_m)\quad\text{if }k\text{ and }m\text{ are both odd}.
\end{eqnarray}
For the Cartan subalgebra $\frt$ of $\frk$ we choose
block diagonal matrices with diagonal blocks
\begin{eqnarray*}
&t_1J,\dots,t_pJ,t_{p+1}J,\dots,t_{p+q}J & \qquad  \text{if }(k,m)=(2p,2q); \\
&t_1J,\dots,t_pJ,t_{p+1}J,\dots,t_{p+q}J,0 & \qquad  \text{if }(k,m)=(2p,2q+1); \\
&t_1J,\dots,t_pJ,0,t_{p+1}J,\dots,t_{p+q}J,0 & \qquad  \text{if }(k,m)=(2p+1,2q+1);, 
\end{eqnarray*}
where $J=J_1=\smat 0&1\cr -1&0\esmat$, and  $t_1,\dots,t_{p+q}$ are (complex) scalars. 

In the equal rank cases $(k,m)=(2p,2q)$ and $(k,m)=(2p,2q+1)$, $\frt$ is also a Cartan subalgebra of $\frg$, while for $(k,m)=(2p+1,2q+1)$, as noted in Subsection \ref{subsec so odd}, a Cartan subalgebra for $\frg$ is $\frh=\frt\oplus\fra$ where  $\fra$ is one-dimensional, spanned by $E_{k\,k+m}-E_{k+m\,k}$. For $(k,m)=(2p,2q)$ and $(k,m)=(2p,2q+1)$, we identify $\frt$ with $\bbR^{p+q}$ by sending the above described matrix to $(t_1,\dots,t_{p+q})$. For $(k,m)=(2p+1,2q+1)$ we identify $\frt$ with $\bbR^{p+q}\times 0\subset\bbR^{p+q+1}$ by sending the above described matrix to $(t_1,\dots,t_{p+q},0)$, and we identify $\fra$ with $0\times \bbR\subset\bbR^{p+q+1}$ by sending $E_{k\,k+m}-E_{k+m\,k}$ to $(0,\dots,0,1)$. In this last case we see that $\Delta(\frg,\frt)$ is of type $B_{p+q}$, while $\Delta(\frk,\frt)$ is of type $B_p\times B_q$. (On the other hand, $\Delta(\frg,\frh)$ is of type $D_{p+q+1}$.)

Since
\eq
\label{compute s}
\begin{pmatrix} 1 & 0 \cr 0 & -1\end{pmatrix}
\begin{pmatrix} 0 & t \cr -t & 0\end{pmatrix}
\begin{pmatrix} 1 & 0 \cr 0 & -1\end{pmatrix}=
\begin{pmatrix} 0 & -t \cr t & 0\end{pmatrix},
\eeq
we see that in each of the cases $s$ given in \eqref{def s} normalizes $\frt$ and acts on it as follows:

\begin{lem}
\label{s on t}
For $(k,m)=(2p,2q)$, 
\[
s(t_1,\dots,t_{p-1},t_p,t_{p+1},\dots,t_{p+q-1},t_{p+q})=
(t_1,\dots,t_{p-1},-t_p,t_{p+1},\dots,t_{p+q-1},-t_{p+q}).
\]
For $(k,m)=(2p,2q+1)$, 
\[
s(t_1,\dots,t_{p-1},t_p,t_{p+1},\dots,t_{p+q})=
(t_1,\dots,t_{p-1},-t_p,t_{p+1},\dots,t_{p+q})
\]
For $(k,m)=(2p+1,2q+1)$, $s$ centralizes $\frt$ (i.e., $s$ acts trivially on $\frt$).

In particular, $s$ normalizes $\frt$ in all cases.
\end{lem}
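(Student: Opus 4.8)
The plan is to compute the conjugation action $X \mapsto sXs$ of $s$ on $\frt$ one diagonal block at a time; note that $s^2 = I$ in each of the three cases of \eqref{def s}, so $s^{-1} = s$ and $\Ad(s)X = sXs$. Since $s$ and the generic element of $\frt$ are both block diagonal with compatible block sizes, everything reduces to understanding how a $2\times 2$ block $\smat \epsilon_1 & 0 \cr 0 & \epsilon_2 \esmat$ of $s$, with $\epsilon_i \in \{\pm 1\}$, conjugates a block $tJ$, and how a $1\times 1$ entry $\pm 1$ of $s$ conjugates a $1\times 1$ zero block (the latter blocks occur precisely in the cases with an odd factor).

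First I would record the elementary facts that drive the computation: conjugation of $tJ$ by $\diag(1,1)=I_2$ or by $\diag(-1,-1)=-I_2$ is the identity, since $(\pm I_2)A(\pm I_2)=A$; conjugation of $tJ$ by $\diag(1,-1)$ sends $tJ$ to $-tJ$, which is exactly \eqref{compute s}; and conjugation of a $1\times 1$ zero block by $\pm 1$ is still $0$. Then I would treat the three cases. For $(k,m)=(2p,2q)$, grouping the entries of $s$ into the $2\times 2$ blocks matching those of $\frt$, the first $2p$ entries $1,\dots,1,-1$ give $p-1$ blocks $\diag(1,1)$ and one block $\diag(1,-1)$ sitting over $t_pJ$, and the last $2q$ entries similarly put a $\diag(1,-1)$ over $t_{p+q}J$; the facts above then yield the first displayed formula. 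For $(k,m)=(2p,2q+1)$, the first $2p$ entries again produce a single sign change at $t_p$, while the last $2q+1$ entries are all $-1$, splitting into $q$ blocks $\diag(-1,-1)$ acting trivially and a trailing $-1$ over the $1\times 1$ zero block, so there are no further coordinate changes — this is the second formula. For $(k,m)=(2p+1,2q+1)$, one has $s=-I_{k+m}$, whence $sXs=X$ for every $X$ and $s$ centralizes $\frt$. In every case $\Ad(s)$ maps $\frt$ into $\frt$ (by permuting and negating the $t_i$), so $s\in N_K(\frt)$, which is the final assertion.

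I do not expect a genuine obstacle here: the argument is pure bookkeeping. The one place where care is needed is aligning the indexing — pairing each $\pm 1$ (or pair of $\pm 1$'s) appearing in the explicit description \eqref{def s} of $s$ with the correct $2\times 2$ block $t_iJ$ of $\frt$, and confirming that the trailing $1\times 1$ zero blocks present in the cases $(k,m)=(2p,2q+1)$ and $(k,m)=(2p+1,2q+1)$ contribute nothing.
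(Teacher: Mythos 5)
Your proof is correct and follows essentially the same route as the paper, which derives the lemma directly from the conjugation identity \eqref{compute s} applied block by block to the explicit form of $s$ in \eqref{def s}; your write-up just makes the bookkeeping (including the trivial action of $\pm I_2$ and the trailing zero blocks, and $s=-I_{k+m}$ in the doubly odd case) fully explicit.
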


Since in the cases $(k,m)=(2p,2q)$ and $(k,m)=(2p,2q+1)$ we have $K=K_e\rtimes\{1,s\}$, and we know by Lemma \ref{s on t} that $s$ normalizes $\frt$, we conclude that
\[
W_K=W_{K_e}\rtimes\{1,s\}=W_\frk\rtimes\{1,s\}.
\]
For $(k,m)=(2p+1,2q+1)$, $s$ centralizes $\frt$ and thus $W_K=W_\frk$.
To conclude:

\begin{prop}
\label{w sok+m}
For $G/K=\SO(k+m)/S(\OO(k)\times \OO(m))$,  
\[
W_K=\left\{ \begin{matrix} S(B_p\times B_q) & \qquad \text{if}\ (k,m)=(2p,2q);\cr B_p\times B_q & \qquad \text{if}\  (k,m)=(2p,2q+1);\cr
B_p\times B_q & \qquad \text{if}\  (k,m)=(2p+1,2q+1).
\end{matrix}\right.
\]
Here $B_p\times B_q$ is the group consisting of permutations and sign changes of the first $p$ and the last $q$ coordinates, while 
$S(B_p\times B_q)$ is the group consisting of permutations and sign changes of the first $p$ and the last $q$ coordinates, so that the total number of sign changes is even. 

The group $W_G$ is equal to $D_{p+q}$ if $(k,m)=(2p,2q)$, and to $B_{p+q}$ if $(k,m)=(2p,2q+1)$ or if $(k,m)=(2p+1,2q+1)$.
\end{prop}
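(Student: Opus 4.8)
The plan is to derive the description of $W_K$ from the semidirect product structure already established in the paragraph preceding the proposition, and to read off $W_G$ from the root system $\Delta(\frg,\frt)$. Recall that the text has already shown $W_K=W_\frk\rtimes\{1,s\}$ when $(k,m)=(2p,2q)$ or $(2p,2q+1)$, and $W_K=W_\frk$ when $(k,m)=(2p+1,2q+1)$; so everything reduces to (i) identifying $W_\frk=W_{K_e}$ explicitly as a subgroup of $\GL(\frt)$, (ii) adjoining the image $\sigma_s$ of $s$ given by Lemma \ref{s on t}, and (iii) identifying $W_G$.

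First I would identify $W_\frk$. Since $K_e=\SO(k)\times\SO(m)$ is connected, \cite[Theorem~4.54]{beyond} gives $W_{K_e}=W_\frk$, the Weyl group of $\Delta(\frk,\frt)$. With $\frt$ chosen as in Lemma \ref{s on t}, the first $p$ of the blocks $t_1J,\dots,t_pJ$ span the standard Cartan subalgebra of $\frso(k,\bbC)$ and the remaining blocks span that of $\frso(m,\bbC)$, so $\Delta(\frk,\frt)$ is of type $D_p\times D_q$, $D_p\times B_q$, or $B_p\times B_q$ in the three cases. Hence $W_\frk$ is $W(D_p)\times W(D_q)$, $W(D_p)\times W(B_q)$, or $W(B_p)\times W(B_q)$, acting on $\frt\cong\bbC^{p+q}$ by signed permutations of $t_1,\dots,t_p$ and of $t_{p+1},\dots,t_{p+q}$ separately, the $D$-factors being constrained to an even number of sign changes within their block.

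Next I would adjoin $\sigma_s$. For $(k,m)=(2p+1,2q+1)$, Lemma \ref{s on t} gives $\sigma_s=\mathrm{id}$, so $W_K=W_\frk=B_p\times B_q$ and there is nothing further to do. For $(k,m)=(2p,2q+1)$, $\sigma_s$ is the single sign change $t_p\mapsto-t_p$; adjoining it to $W(D_p)$ (index two in $W(B_p)$) produces all of $W(B_p)$, the second factor is untouched, so $W_K=W(B_p)\times W(B_q)=B_p\times B_q$. For $(k,m)=(2p,2q)$, $\sigma_s$ simultaneously flips the signs of $t_p$ and $t_{p+q}$; conjugating $\sigma_s$ by permutations within each block (already present in $W(D_p)\times W(D_q)$) produces all elements ``flip one index of the first block and one index of the second block'', and combining these with the even sign changes already available within each $D$-factor yields exactly the signed permutations preserving both blocks whose total number of sign changes is even, i.e.\ $S(B_p\times B_q)$. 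Conversely every generator lies in $S(B_p\times B_q)$, so the group generated is precisely $S(B_p\times B_q)$.

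Finally, for $W_G$: since $G$ is connected, $W_G=W_{\frg,\frt}$. When $(k,m)=(2p,2q)$ or $(2p,2q+1)$, $\frt$ is a Cartan subalgebra of $\frg=\frso(2p+2q,\bbC)$ respectively $\frso(2p+2q+1,\bbC)$, so $W_G$ is $W(D_{p+q})=D_{p+q}$ respectively $W(B_{p+q})=B_{p+q}$. When $(k,m)=(2p+1,2q+1)$, as recorded in Subsection \ref{subsec so odd}, $\Delta(\frg,\frt)$ is the restriction to $\frt$ of the type $D_{p+q+1}$ system $\Delta(\frg,\frh)$ and is of type $B_{p+q}$, so $W_G=B_{p+q}$. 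The only mildly delicate point I anticipate is the $(2p,2q)$ computation of $W_K$, where one must verify that adjoining $\sigma_s$ produces the index-two subgroup $S(B_p\times B_q)$ and not all of $B_p\times B_q$; this is exactly the parity bookkeeping described above, and it is essentially the only non-routine step.
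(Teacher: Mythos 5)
Your proposal is correct and follows essentially the same route as the paper: the paper's proof is exactly the discussion preceding the proposition (the explicit representative $s$, Lemma \ref{s on t}, and the conclusion $W_K=W_\frk\rtimes\{1,s\}$ in the two equal rank cases and $W_K=W_\frk$ in the third), combined with reading off $W_G=W_{\frg,\frt}$ from the root systems. The only difference is that you spell out the parity bookkeeping identifying $\langle W(D_p)\times W(D_q),\sigma_s\rangle$ with $S(B_p\times B_q)$ (and the analogous step in the $(2p,2q+1)$ case), which the paper asserts without detail; this is a correct and welcome elaboration rather than a different method.
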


In Lemma \ref{s on t} we have described the adjoint action of $s$ (and hence also of $\st$) on $\frt$. It follows from that and from passing to the dual $\frt^*$ that 

\begin{lem}
\label{s on roots}
Let $s\in K$ be defined by \eqref{def s}, and let $\st$ be a lift of $s$ in $\Kt$. Then the coadjoint action of $s$ and $\st$ permutes the positive roots of $(\frk,\frt)$.
\end{lem}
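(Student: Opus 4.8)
The plan is to reduce the statement to the explicit description of the action of $s$ on $\frt$ already obtained in Lemma \ref{s on t}, together with the observation made earlier in this subsection that the adjoint action of an element of $\Kt$ coincides with the adjoint action of its image in $K$. Since $\st$ is by definition a lift of $s$, we have $\pi(\st)=s$, hence $\Ad(\st)=\Ad(s)$ on $\frk$ and therefore the coadjoint actions of $\st$ and of $s$ on $\frt^*$ agree. Thus it suffices to prove the claim for $s$ itself, i.e.\ that the dual of the linear automorphism of $\frt$ recorded in Lemma \ref{s on t} permutes the positive roots of $(\frk,\frt)$.

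Next I would fix notation for the root system. Writing $(x_1,\dots,x_{p+q})$ for the coordinates on $\frt^*$ dual to $(t_1,\dots,t_{p+q})$, the factor $\frso(k)$ of $\frk$ contributes a root subsystem in $x_1,\dots,x_p$ of type $D_p$ if $k=2p$ and of type $B_p$ if $k=2p+1$, and $\frso(m)$ contributes a root subsystem in $x_{p+1},\dots,x_{p+q}$ of type $D_q$ if $m=2q$ and of type $B_q$ if $m=2q+1$; I would use the standard positive systems, so that within each block the positive roots are $x_i-x_j$ and $x_i+x_j$ for $i<j$, together with the short roots $x_i$ precisely when that block is of type $B$. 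The structural point is that a short root $x_i$ occurs only in a block coming from an \emph{odd} orthogonal factor. Now I would check the three cases of Lemma \ref{s on t}. For $(k,m)=(2p+1,2q+1)$, $s$ acts trivially on $\frt$ and there is nothing to do. For $(k,m)=(2p,2q)$ and $(k,m)=(2p,2q+1)$, Lemma \ref{s on t} shows that $s$ acts on $\frt^*$ by the sign change $x_\ell\mapsto-x_\ell$ on the last coordinate $x_\ell$ of one (or, in the first case, both) of the \emph{even} orthogonal blocks, i.e.\ of a block of type $D$. Such a sign change fixes every root not involving $x_\ell$, interchanges $x_i+x_\ell$ with $x_i-x_\ell$ for each $i<\ell$ in the same block, and cannot produce $-x_\ell$ from a positive root precisely because a $D$-type block has no short root $x_\ell$; hence it sends positive roots to positive roots. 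As the distinct blocks involve disjoint coordinate sets, applying such sign changes in one or both blocks simultaneously still permutes $\Delta^+(\frk,\frt)$, which proves the lemma.

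I do not expect a genuine obstacle here: the argument is essentially bookkeeping. The one point requiring care is keeping straight which coordinates belong to $D$-type versus $B$-type factors of $\frk$, so as to confirm that $s$ only ever flips the last coordinate of an \emph{even} orthogonal block, where no short root is present; once this is verified, the permutation of positive roots is immediate.
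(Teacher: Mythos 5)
Your proof is correct and follows essentially the same route as the paper: reduce to $s$ via $\Ad(\st)=\Ad(s)$, then use the explicit formulas of Lemma \ref{s on t} to check case by case that the induced action on $\frt^*$ permutes $\Delta^+(\frk,\frt)$. Your added remark that the flipped coordinate always lies in a $D$-type (even orthogonal) block, where no short root $x_\ell$ occurs, just makes explicit what the paper's case-by-case listing of interchanged roots leaves implicit.
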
 
\pf
As already noted, $\Ad(\st)=\Ad(s)$ so the coadjoint actions of $s$ and $\st$ are also the same. To compute the action of $s$, we note that the formulas in Lemma \ref{s on t} imply:

(1) For $G/K=SO(2p+2q)/S(O(2p)\times O(2q))$, $s$ interchanges the roots $\eps_i-\eps_p$ and $\eps_i+\eps_p$ ($1\leq i<p$), and the roots $\eps_{p+j}-\eps_{p+q}$ and $\eps_{p+j}+\eps_{p+q}$ ($1\leq j<q$), while fixing all the other positive $(\frk,\frt)$-roots. 

(2) For $G/K=SO(2p+2q+1)/S(O(2p)\times O(2q+1))$, $s$ interchanges the roots $\eps_i-\eps_p$ and $\eps_i+\eps_p$ ($1\leq i<p$), while fixing all the other positive $(\frk,\frt)$-roots. 

(3) For $G/K=SO(2p+2q+2)/S(O(2p+1)\times O(2q+1))$, $s$ fixes all the positive $(\frk,\frt)$-roots.
\epf

The formulas in Lemma \ref{s on t} also describe the action of $s$ (and hence of $\st$) on weights $\la\in\frt^*$. In coordinates, the action is exactly the same as on coordinates of elements of $\frt$: 

(1) For $G/K=SO(2p+2q)/S(O(2p)\times O(2q))$, $s$ acts by changing the sign of the $p$-th and the $(p+q)$-th coordinate; 

(2) For $G/K=SO(2p+2q+1)/S(O(2p)\times O(2q+1))$, $s$ acts by changing the sign of the $p$-th coordinate;

(3) For $G/K=SO(2p+2q+2)/S(O(2p+1)\times O(2q+1))$, $s$ acts trivially.
\smallskip

We are now ready to describe the $\Kt$-decomposition of the spin module $S$ in each of the cases. Recall that the $\frk$-decomposition of $S$, or equivalently the $\Kt_e$-decomposition where $\Kt_e$ is the spin double cover of the connected component $K_e$ of the identity in $K$, is multiplicity free (since $\dim\fra\leq 1$), and given by

(1) For $G/K=SO(2p+2q)/S(O(2p)\times O(2q))$, the infinitesimal characters of the irreducible $\frk$-submodules of $S$ are the $\frk$-dominant $W_G$-conjugates of $\rho=(p+q-1,p+q-2,\dots,1,0)$. These are the $(p,q)$-shuffles of $\rho$, and the $(p,q)$-shuffles of $\rho$ with the sign of the $p$-th and the $(p+q)$-th coordinates changed to negative (note that one of these coordinates is zero, so the sign change does not affect it). The highest weights are obtained from these infinitesimal characters by subtracting $\rho_\frk$.

(2) For $G/K=SO(2p+2q+1)/S(O(2p)\times O(2q+1))$, the infinitesimal characters of the irreducible $\frk$-submodules of $S$ are the $\frk$-dominant $W_G$-conjugates of $\rho=(p+q-\half,p+q-\frac{3}{2},\dots,\frac{3}{2},\half)$. These are the $(p,q)$-shuffles of $\rho$, and the $(p,q)$-shuffles of $\rho$ with the sign of the $p$-th coordinate changed to negative. The highest weights are obtained from these infinitesimal characters by subtracting $\rho_\frk$.

(3) For $G/K=SO(2p+2q+2)/S(O(2p+1)\times O(2q+1))$, the infinitesimal characters of the irreducible $\frk$-submodules of $S$ are the $\frk$-dominant $W_G$-conjugates of $\rho=\rho_{(\frg,\frt)}=\rho_{(\frg,\frh)}\big|_\frt=(p+q,p+q-1,\dots,2,1)$. These are the $(p,q)$-shuffles of $\rho$. The highest weights are obtained from these infinitesimal characters by subtracting $\rho_\frk$.

It is now clear that in Cases (1) and (2) the action of $s$ (or $\st$) interchanges the infinitesimal characters that differ only by the sign of one of the coordinates. Since in each of the cases $s$ permutes $\Delta^+(\frk,\frt)$, it fixes $\rho_\frk$ and thus also interchanges the highest weights corresponding to the above infinitesimal characters.

In Case (3), $s$ (and $\st$) fix all the $\frk$-infinitesimal characters and hence also all the highest weights in $S$. 

\begin{prop}
\label{K dec spin}
For $G/K=\SO(2p+2q)/S(\OO(2p)\times \OO(2q))$ or $G/K=\SO(2p+2q+1)/S(\OO(2p)\times \OO(2q+1))$, each irreducible $\Kt$-module in $S$, when viewed as a $\frk$-module, decomposes into two irreducible $\frk$-modules. The highest weights of these two modules are interchanged by $\st$ and $s$, and differ from each other by one sign change.

For $G/K=\SO(2p+2q+2)/S(\OO(2p+1)\times \OO(2q+1))$, the $\Kt$-decomposition of the spin module $S$ is the same as the $\frk$-decomposition.
\end{prop}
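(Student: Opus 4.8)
The plan is to combine the explicit action of $s$ and $\st$ on weights established above with the general mechanism by which the component group $\Kt/\Kt_e$ acts on the constituents of $S$. Recall that $K/K_e\cong\bbZ_2$ is generated by the class of $s$, that $\Kt/\Kt_e\cong K/K_e$ is generated by the class of $\st$, so that $\Kt=\Kt_e\cup\st\,\Kt_e$, and that $S|_\frk$ (equivalently $S|_{\Kt_e}$) is multiplicity free with constituents the modules $E_\sigma$ described in items (1)--(3) preceding the statement. Since $S|_\frk$ is multiplicity free, for each $\sigma$ the $\Kt$-submodule of $S$ generated by $E_\sigma$ equals $E_\sigma+\st E_\sigma$, and every irreducible $\Kt$-submodule of $S$ arises in this way. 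Moreover $\st E_\sigma$ is the $\frk$-submodule obtained from $E_\sigma$ by twisting with $\Ad(\st)$; since $\Ad(\st)$ acts on $\frt^*$ as $s$, and $s$ permutes $\Delta^+(\frk,\frt)$ and fixes $\rho_\frk$ (Lemma \ref{s on roots}), the highest weight of $\st E_\sigma$ is $s$ applied to that of $E_\sigma$, and by multiplicity-freeness $\st E_\sigma$ is exactly the $E_{s\sigma}$-isotypic subspace of $S$.

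First I would treat $G/K=\SO(2p+2q)/S(\OO(2p)\times\OO(2q))$ and $G/K=\SO(2p+2q+1)/S(\OO(2p)\times\OO(2q+1))$. By the list preceding the statement, $s$ (hence $\st$) acts on the set of $\frk$-infinitesimal characters by negating the $p$-th and $(p+q)$-th coordinates in the first case, and the $p$-th coordinate in the second. The key point is that this involution is fixed-point free: a $\frk$-dominant $(p,q)$-shuffle of $\rho$ lists the entries of $\rho$ in decreasing order within each of the two blocks, so the $p$-th and $(p+q)$-th coordinates are the minima of those blocks; for $\rho=(p+q-1,\dots,1,0)$ the unique zero entry is the minimum of whichever block contains it, while the minimum of the other block is a positive integer, so negating both coordinates changes exactly one, nonzero, coordinate; for $\rho=(p+q-\half,\dots,\frac32,\half)$ there is no zero entry, so the $p$-th coordinate is a nonzero half-integer and is genuinely negated. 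Hence every $\langle\st\rangle$-orbit on $\{E_\sigma\}$ has size $2$, so by the previous paragraph each irreducible $\Kt$-submodule of $S$ is $E_\sigma\oplus E_{s\sigma}$ with $E_\sigma\not\cong E_{s\sigma}$; this sum is $\Kt$-irreducible because any $\Kt$-submodule is $\Kt_e$-stable, hence a sum of some of $E_\sigma$, $E_{s\sigma}$, and is preserved by $\st$, which interchanges them. The two highest weights are therefore interchanged by $\st$ and $s$ and differ by one sign change.

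Next I would treat $G/K=\SO(2p+2q+2)/S(\OO(2p+1)\times\OO(2q+1))$. Here Lemma \ref{s on t} gives that $s$ and $\st$ act trivially on $\frt$, hence fix every $\frk$-infinitesimal character, so $\st E_\sigma\cong E_\sigma$ as $\frk$-modules for every $\sigma$. By multiplicity-freeness of $S|_\frk$ this forces $\st E_\sigma=E_\sigma$ as subspaces; thus each $E_\sigma$ is stable under $\Kt_e$ and under $\st$, hence under all of $\Kt=\Kt_e\cup\st\,\Kt_e$. Being already $\frk$-irreducible, each $E_\sigma$ is a fortiori $\Kt$-irreducible, so $S=\bigoplus_\sigma E_\sigma$ is simultaneously the $\frk$- and the $\Kt$-decomposition of $S$.

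The only genuinely delicate point is the fixed-point-freeness of the coordinate-sign involution in the first two cases, i.e. the elementary observation that in a $\frk$-dominant $(p,q)$-shuffle of $\rho$ the unique zero entry of $\rho$, when present, always lands in position $p$ or $p+q$, so the relevant sign change affects a nonzero coordinate. Everything else is a formal consequence of the multiplicity-freeness of $S|_\frk$ together with Lemma \ref{s on t}, Lemma \ref{s on roots} and the explicit list of infinitesimal characters, and I anticipate no further obstacle.
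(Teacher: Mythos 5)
Your proof is correct and follows essentially the same route as the paper: both rest on the equivariance relation $X(\st v)=\st(\Ad(s)X)v$ together with Lemmas \ref{s on t} and \ref{s on roots}, so that $\st$ carries the $\frk$-component $E_\sigma$ to $E_{s\sigma}$, and then conclude using the facts that $\Kt$ is generated by $\Kt_e$ and $\st$ and that $S|_\frk$ is multiplicity free. The only difference is one of presentation: you phrase the key step as a twist of modules rather than a computation on a highest weight vector, and you spell out explicitly the fixed-point-freeness of $\sigma\mapsto s\sigma$ and the $\Kt$-irreducibility of $E_\sigma\oplus E_{s\sigma}$, points the paper leaves implicit in the discussion preceding the proposition.
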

\pf 
Let $v$ be any highest weight vector for $\frk$ in $S$, and let its weight be $\la$ (it is one of the weights described above). We claim that $\st v$ is a highest weight vector of weight $s\la$.

To see this, we first note that $\Kt$-equivariance of the $\frk$-action on $S$ implies that for any $X\in\frk$, 
\eq
\label{equi}
X(\st v)=\st (\Ad(\st)^{-1} X) v=\st (\Ad(s)^{-1} X) v=\st (\Ad(s)X) v.
\eeq
By Lemma \ref{s on roots}, if $X$ is a positive root vector, then $\Ad(s)X$ is also a positive root vector. It follows that
\[
X(\st v)=\st (\Ad(s) X) v=0,
\]
so $\st v$ is a highest weight vector. 
To see the weight of $\st v$, we apply \eqref{equi} for $X\in\frt$. Recall from Lemma \ref{s on t} that then also $\Ad(s) X\in\frt$, so we have
\[
X(\st v)=\st (\Ad(s) X) v=\la(\Ad(s) X)\st v= s\la(X)\st v,
\]
so $\st v$ is of weight $s\la$.

Let now $\Kt_e$ be the spin double cover of the identity component $K_e$ of $K$. Then by Proposition \ref{K-dec of S conn} the $\Kt_e$-decomposition of $S$ is the same as the $\frk$-decomposition. Since 
$\Kt_e$ and $\st$ generate $\Kt$, 
and since we have described the action of $\st$, the proposition follows. 
\epf

We now describe a version of the Harish-Chandra isomorphism for the disconnected case. Let 
\[
\gamma_0:Z(\frk)=U(\frk)^\frk=U(\frk)^{K_e}\to S(\frt)^{W_\frk}=S(\frt)^{W_{K_e}}
\]
be the usual Harish-Chandra isomorphism. 

\begin{prop}
\label{HC K}
For $K=S(\OO(k)\times \OO(m))$, the Harish-Chandra map $\gamma_0:U(\frk)^{K_e}\to S(\frt)^{W_{K_e}}$ restricts to an isomorphism
\[
\gamma:U(\frk)^K\to S(\frt)^{W_K}.
\]
\end{prop}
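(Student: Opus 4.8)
The plan is to identify both $U(\frk)^K$ and $S(\frt)^{W_K}$ as the fixed points of a single element $s$ acting on the connected-group objects $U(\frk)^{K_e}=Z(\frk)$ and $S(\frt)^{W_{K_e}}=S(\frt)^{W_\frk}$, and then to show that the (connected) Harish-Chandra isomorphism $\gamma_0$ intertwines these two $s$-actions. For the first identification: since $K$ has exactly two connected components and $s$ represents the nontrivial one, $K$ is generated by $K_e$ and $s$, so an element of $Z(\frk)$ lies in $U(\frk)^K$ precisely when it is fixed by $\Ad(s)$; thus $U(\frk)^K=Z(\frk)^{\Ad(s)}$. For the second: by Proposition \ref{w sok+m} and the discussion preceding it, $W_K$ is generated by $W_\frk$ together with the image of $s$, which acts on $\frt$ (hence on $S(\frt)$) by $\Ad(s)|_\frt$ as spelled out in Lemma \ref{s on t}; hence $S(\frt)^{W_K}=\bigl(S(\frt)^{W_\frk}\bigr)^{s}$. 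It therefore suffices to prove $\gamma_0\circ\Ad(s)=s\circ\gamma_0$ as maps $Z(\frk)\to S(\frt)^{W_\frk}$, where on the right $s$ denotes the algebra automorphism of $S(\frt)$ induced by $\Ad(s)|_\frt$.

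To prove this equivariance I would use the explicit construction $\gamma_0=\tau\circ p$, where $p\colon U(\frk)\to S(\frt)$ is the Harish-Chandra projection onto $S(\frt)=U(\frt)$ along $\frn^-U(\frk)+U(\frk)\frn^+$ attached to the triangular decomposition $\frk=\frn^-\oplus\frt\oplus\frn^+$ for $\Delta^+(\frk,\frt)$, and $\tau$ is the $\rho_\frk$-shift automorphism of $S(\frt)$. Lemma \ref{s on t} gives that $\Ad(s)$ normalizes $\frt$, and Lemma \ref{s on roots} gives that $\Ad(s)$ permutes $\Delta^+(\frk,\frt)$; together these say that $\Ad(s)$ permutes the positive root spaces among themselves and the negative root spaces among themselves, so it preserves the whole triangular decomposition and hence commutes with $p$ (carrying $\Ad(s)$ on $U(\frk)$ to the automorphism of $S(\frt)$ induced by $\Ad(s)|_\frt$). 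Because $\Ad(s)$ permutes $\Delta^+(\frk,\frt)$ it fixes $\rho_\frk$, and a one-line check then shows $\Ad(s)$ commutes with the shift $\tau$. Combining, $\gamma_0\circ\Ad(s)=s\circ\gamma_0$, and restricting to fixed points yields the desired isomorphism $\gamma\colon U(\frk)^K\to S(\frt)^{W_K}$.

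The only step that genuinely needs the geometry of this particular $K$ is the claim that $\Ad(s)$ preserves the Borel used to define $\gamma_0$; this is exactly the content of Lemma \ref{s on roots}, and without it the projection $p$ would fail to be $\Ad(s)$-equivariant. Everything else — the two fixed-point descriptions, $s$ fixing $\rho_\frk$, and the commutation of $\tau$ with $\Ad(s)|_\frt$ — is routine. In the case $(k,m)=(2p+1,2q+1)$ the argument collapses to the connected statement: there $s$ centralizes $\frt$, so $s$ acts trivially on $S(\frt)$, equivariance forces $\Ad(s)$ to act trivially on $Z(\frk)$ as well, and both sides reduce to $U(\frk)^{K_e}\cong S(\frt)^{W_{K_e}}$, in accordance with $W_K=W_\frk$ in that case.
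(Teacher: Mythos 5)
Your proof is correct and follows essentially the same route as the paper: identify $U(\frk)^K$ and $S(\frt)^{W_K}$ as the $s$-fixed points in $U(\frk)^{K_e}$ and $S(\frt)^{W_{K_e}}$, note that $\gamma_0$ intertwines the two $s$-actions, and take invariants. The paper simply asserts the intertwining (since $s$ normalizes $K_e$, $\frk$, $\frt$ and $W_{K_e}$), whereas you verify it explicitly via the triangular decomposition and the $\rho_\frk$-shift using Lemmas \ref{s on t} and \ref{s on roots}; this is a harmless elaboration of the same argument.
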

\pf
We have seen that $K=K_e\rtimes\{1,s\}$, where $s$ is defined as above (the product is direct if $k,m$ are both odd). Since $s$ normalizes $K_e$, $\frk$, $\frt$  and $W_{K_e}$, it acts on $U(\frk)^{K_e}$ and on  $S(\frt)^{W_{K_e}}$, and the map $\gamma_0$ intertwines these actions. The claim now follows by taking $s$-invariants.
\epf

\begin{rem}
    It is possible to generalize the above proposition to the case when $K$ is an arbitrary compact Lie group. 
\end{rem}

We now define 
\[
\alpha_K:U(\frk)^K\cong \bbC[\frt^*]^{W_K}\to \proj(S)\subseteq C(\frp)^K,
\]
as the restriction of the map $\alpha_\frk$ of \eqref{descr alpha k} to the $K$-invariants. Here $\proj(S)$ denotes the algebra of $\Kt$-equivariant projections of the $K$-module $S$ to its isotypic components $E_\sigma$, where $\sigma\in W_{G,K}^1$, the set of minimal length representatives of $W_K$-cosets in $W_G$. Since the $E_\sigma$ are of multiplicity 1, $\proj(S)=\End_{\Kt} S$. The map $\alpha_K$ is given by the analogue of \eqref{descr alpha k}, i.e.,
\[
\alpha_K(P)=\sum_{\sigma\in  W_{G,K}^1} P(\sigma\rho)\pr_\sigma,\qquad P\in \bbC[\frt^*]^{W_K},
\]
where $\pr_\sigma:S\to E_\sigma$ is the $K$-equivariant projection. From the above considerations we conclude

\begin{cor}
\label{cor o/oo}
{\rm (1)} For $G/K=\SO(2p+2q)/S(\OO(2p)\times\OO(2q))$, 
the algebra $C(\frp)^K$ is isomorphic to $\proj(S)$, which is isomorphic to $\bbC[\frt^*]^{S(B_p\times B_q)}$ modulo the ideal generated by  $D_{p+q}$-invariants in $\bbC[\frt^*]$ evaluating to $0$ at $\rho=(p+q-1,\dots,1,0)$. The algebra  $(\twedge\frp)^K$ is isomorphic to $\gr\proj(S)$, which is isomorphic to $\bbC[\frt^*]^{S(B_p\times B_q)}$ modulo the ideal generated by  $D_{p+q}$-invariants in $\bbC[\frt^*]$ evaluating to $0$ at $0$.

{\rm (2)} For $G/K=\SO(2p+2q+1)/S(\OO(2p)\times\OO(2q+1))$, the algebra $C(\frp)^K$ is isomorphic to $\proj(S)$, which is isomorphic to $\bbC[\frt^*]^{B_p\times B_q}$ modulo the ideal generated by $B_{p+q}$-invariants in $\bbC[\frt^*]$ evaluating to $0$ at $\rho=(p+q-\half,\dots,\frac{3}{2},\half)$. The algebra  $(\twedge\frp)^K$ is isomorphic to $\gr\proj(S)$, which is isomorphic to $\bbC[\frt^*]^{B_p\times B_q}$ modulo the ideal generated by  $B_{p+q}$-invariants in $\bbC[\frt^*]$ evaluating to $0$ at $0$.

{\rm (3)} For $G/K=SO(2p+2q+2)/S(\OO(2p+1)\times\OO(2q+1))$, the algebra $C(\frp)^K$ is isomorphic to $C(\bbC e)\otimes\proj(S)$, 
where $e$ is a generator of degree $2p+2q+1$ squaring to $1$, and $\proj(S)$ is isomorphic to $\bbC[\frt^*]^{B_p\times B_q}$ modulo the ideal generated by  $B_{p+q}$-invariants in $\bbC[\frt^*]$ evaluating to $0$ at $\rho=(p+q,\dots,2,1)$. The algebra  $(\twedge\frp)^K$ is isomorphic to $\twedge\bbC e\otimes \gr\proj(S)$, where $e$ is a generator of degree $2p+2q+1$ squaring to $0$, and where $\gr\proj(S)$ 
is isomorphic to $\bbC[\frt^*]^{B_p\times B_q}$ modulo the ideal generated by  $B_{p+q}$-invariants in $\bbC[\frt^*]$ evaluating to $0$ at $0$.
\end{cor}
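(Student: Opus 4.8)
The plan is to deduce all three parts from the relative coinvariant machinery of Subsection~\ref{rel coinv}, applied with $W=W_G$, $H=W_K$ and $\nu=\rho$; here $W_G$, $W_K$ and the point $\rho\in\frt^*$ are precisely those recorded in Proposition~\ref{w sok+m} and in the list of $\frk$-infinitesimal characters preceding Proposition~\ref{K dec spin}. Parts (1) and (2) are the two equal rank cases, where $\dim\frp$ is even; part (3) is the almost equal rank case whose structure was already unwound in Subsection~\ref{subsec so odd}.

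For parts (1) and (2) I would argue as follows. Since $\dim\frp$ is even, \eqref{end k s} gives $C(\frp)^K=\End_{\Kt}S$, and by Proposition~\ref{K dec spin} the $\Kt$-module $S$ is multiplicity free with isotypic components $E_\sigma$, $\sigma\in W^1_{G,K}$; hence, by Schur's lemma, $C(\frp)^K=\proj(S)=\bigoplus_{\sigma\in W^1_{G,K}}\bbC\,\pr_\sigma$. Next I would identify $\proj(S)$ with $\bbC[W_G/W_K]$ via $\pr_\sigma\mapsto\sum_{w\in\sigma W_K}f_w$, exactly as in the proof of Corollary~\ref{cor alpha surj}. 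The only point requiring care is that, for $P\in\bbC[\frt^*]^{W_K}$, the scalar $P(\sigma\rho)$ entering the formula $\alpha_K(P)=\sum_\sigma P(\sigma\rho)\pr_\sigma$ is independent of the chosen representative of the coset $\sigma W_K$; this holds because by Proposition~\ref{K dec spin} (and the coordinate computation following it) the two $\frk$-constituents of $E_\sigma$ have infinitesimal characters exchanged by the element $\bar s\in W_K$ that realises the adjoint action of $s$ on $\frt$, and $P$ is fixed by $\bar s$. Under this identification $\alpha_K$ becomes the restriction $\Ev_\rho^{W_K}$ of the evaluation map. I would then verify the two hypotheses of Subsection~\ref{rel coinv}: $W_G$ ($=D_{p+q}$ when $(k,m)=(2p,2q)$, $=B_{p+q}$ when $(k,m)=(2p,2q+1)$) is a finite reflection group, so $\bbC[\frt^*]$ is free of rank $|W_G|$ over $\bbC[\frt^*]^{W_G}$; and $\mathrm{Stab}_{W_G}(\rho)=\{\Id\}$, which is a one-line check since the coordinates of $\rho$ (namely $(p+q-1,\dots,1,0)$, resp. $(p+q-\half,\dots,\half)$) are pairwise distinct and the only available sign changes would flip a nonzero coordinate. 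Lemma~\ref{ev surj}, Lemma~\ref{l::kerev} and Theorem~\ref{thm coinv} then give, in turn, surjectivity of $\alpha_K$, the identification of $\ker\alpha_K$ with the ideal of $\bbC[\frt^*]^{W_K}$ generated by the $W_G$-invariants vanishing at $\rho$, and the description of $\mathrm{gr}\,\proj(S)$ as $\bbC[\frt^*]^{W_K}$ modulo the ideal generated by the $W_G$-invariants vanishing at $0$. Finally, since $\alpha_K$ is a restriction of $\alpha_\frk$ it is filtered and doubles degrees (Proposition~\ref{prop al k}), and since $\Kt$ is reductive the functor of invariants is exact, so $(\twedge\frp)^K=\mathrm{gr}\,C(\frp)^K=\mathrm{gr}\,\proj(S)$; this yields both assertions of (1) and of (2).

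For part (3) I would first observe that the representative $s$ from \eqref{def s} for $(k,m)=(2p+1,2q+1)$ equals $-I_{2p+2q+2}$, which acts trivially on $\frg$; hence $\st$ acts trivially on $C(\frp)$ by conjugation, $C(\frp)^K=C(\frp)^\frk$ and $(\twedge\frp)^K=(\twedge\frp)^\frk$. The Theorem at the end of Subsection~\ref{subsec so odd} then already supplies the tensor decompositions $C(\frp)^K=C(\frp)^\frk\cong C(\bbC e)\otimes\proj(S)$ and $(\twedge\frp)^K=(\twedge\frp)^\frk\cong\twedge\bbC e\otimes\mathrm{gr}\,\proj(S)$, with $e$ of degree $2p+2q+1$, $e^2=1$ in $C(\frp)$ (so $e\wedge e=0$ in $\twedge\frp$). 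It remains to describe $\proj(S)=\End_\frk S$, which I would do by repeating the argument above in the connected setting $W_K=W_\frk$: here $S$ is $\frk$-multiplicity free with constituents indexed by $W^1_{\frg,\frk}$, the map $\alpha_\frk$ is the restriction of $\Ev_\rho$ with $W_G=B_{p+q}$, $W_\frk=B_p\times B_q$ and $\rho=(p+q,\dots,2,1)$ (which has trivial $B_{p+q}$-stabiliser, its coordinates being distinct and nonzero), so Lemma~\ref{ev surj}, Lemma~\ref{l::kerev} and Theorem~\ref{thm coinv} apply verbatim.

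I expect the main obstacle to be organisational rather than conceptual: in the disconnected cases (1) and (2) one must carefully justify that the $\Kt$-isotypic components of $S$ are genuinely indexed by the cosets $W_G/W_K$ — so that $\alpha_K$ really factors through, and is identified with, $\bbC[W_G/W_K]$ — and that evaluating a $W_K$-invariant polynomial at $\sigma\rho$ is coset-well-defined. Both rest on Propositions~\ref{K dec spin} and~\ref{HC K} together with the case-by-case identification of the adjoint action of $s$ on $\frt^*$ with an explicit element of $W_G$ (two sign changes, lying in $D_{p+q}$, in the first case; one sign change, lying in $B_{p+q}$, in the second).
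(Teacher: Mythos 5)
Your proposal is correct and follows essentially the same route as the paper, which derives this corollary directly from the preceding material: the $\Kt$-decomposition of $S$ (Proposition \ref{K dec spin}), the identification of $W_K$ and $W_G$ (Proposition \ref{w sok+m}), the evaluation/relative-coinvariant machinery of Subsection \ref{rel coinv} applied to $\alpha_K$, and, for case (3), the tensor decomposition of Subsection \ref{subsec so odd}. The extra verifications you supply (triviality of $\mathrm{Stab}_{W_G}(\rho)$, coset-well-definedness of $P(\sigma\rho)$, and $C(\frp)^K=C(\frp)^\frk$ when $s=-I$) are exactly the details the paper leaves implicit in its ``from the above considerations we conclude.''
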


\subsection{$(\twedge \frp)^K$ for disconnected $K$: the case $G/K=\UU(n)/\OO(n)$}
\label{subsec u/o}

    We use the standard matrix realizations of $G=\UU(n)$ and $K=\OO(n)$. 
Since $K$ is disconnected, the group $W_K$ may be different from $W_\frk$ and we want to describe it explicitly. We prove that $G/K = \UU(n) /\OO(n)$ is primary (resp. almost primary) when $n$ is even (resp. odd), we then apply the results of Section \ref{subsec primary and aprim}.

The group $K=\OO(n)$ has two connected components and we choose the following representative for the disconnected component:
\begin{eqnarray*}
s=\diag(1,\dots,1,-1)&\qquad&\text{if}\ n=2k;\\
s=\diag(-1,\dots,-1)&\qquad&\text{if}\ n=2k+1.   
\end{eqnarray*}

For the Cartan subalgebra $\frt$ of $\frk$ we take the space of block diagonal matrices with diagonal blocks
\begin{eqnarray*}
t_1J,\dots, t_kJ&\qquad&\text{if  }\ n=2k;\\
t_1J,\dots, t_kJ,0&\qquad&\text{if  }\ n=2k+1,
\end{eqnarray*}
where as before $J=J_1=\smat 0&1\cr -1&0\esmat$ and $t_1,\dots,t_k$ are complex scalars. We extend $\frt$ to a Cartan subalgebra $\frh=\frt\oplus\fra$ of $\frg$, where $\fra$ is the space of block diagonal matrices with diagonal blocks
\begin{eqnarray*}
a_1I,\dots, a_kI&\qquad&\text{if  }\ n=2k;\\
a_1I,\dots, a_kI,a_{k+1}&\qquad&\text{if  }\ n=2k+1,
\end{eqnarray*}
where $I=I_2=\smat 1&0\cr 0&1\esmat$ and $a_1,\dots,a_{k+1}$ are complex scalars.

We can identify $\frt\subset\frh\cong\bbC^n$ with
\begin{eqnarray*}
\{(t_1,\dots,t_k,-t_k,\dots,-t_1)\bbar t_1,\dots,t_k\in\bbC\}&\qquad&\text{if  }\ n=2k;\\
\{(t_1,\dots,t_k,0,-t_k,\dots,-t_1)\bbar t_1,\dots,t_k\in\bbC\}&\qquad&\text{if  }\ n=2k+1,
\end{eqnarray*}
and 
$\fra\subset\frh\cong\bbC^n$ with
\begin{eqnarray*}
\{(a_1,\dots,a_k,a_k,\dots,a_1)\bbar a_1,\dots,a_k\in\bbC\}&\qquad&\text{if  }\ n=2k;\\
\{(a_1,\dots,a_k,a_{k+1},a_k,\dots,a_1)\bbar a_1,\dots,a_{k+1}\in\bbC\}&\qquad&\text{if  }\ n=2k+1,
\end{eqnarray*}
This corresponds to the action of the involution $\sigma$ on $\frh$ being
\[
\sigma(h_1,\dots,h_n)=(-h_n,\dots,-h_1).
\]
Here $\sigma$ is the restriction to $G=\UU(n)$ of the involution in Subsection \ref{real lagr grass}; in particular, $\UU(n)^\sigma=\OO(n)$.

The above identification enables us to identify the $(\frg,\frt)$ roots easily: $\Delta(\frg,\frt)$ is of type $C_k$ if $n=2k$ and of type $BC_k$ if $n=2k+1$. Thus 
\[
W_G=W_\frg=B_k;
\]
recall that the Weyl group $B_k$, which is the same as the Weyl group of type $C_k$ or $BC_k$, consists of permutations and sign changes of coordinates of $\frt\cong \bbC^k$.

Of course, $W_\frk$ is $D_k$ if $n=2k$ and $B_k$ if $n=2k+1$; here $D_k$ is the Weyl group of type $D_k$, consisting of permutations of the coordinates and sign changes of an even number of coordinates of $\frt\cong \bbC^k$. We however want to identify the group theoretic Weyl group $W_K$.

To do this, we identify $\frt$ with $\bbC^k$ by sending $(t_1,\dots,t_k,-t_k,\dots,-t_1)$ to $(t_1,\dots,t_k)$.
Using \eqref{compute s} we see that
for $n=2k$ the element $s$ normalizes $\frt$ and sends $(t_1,\dots,t_{k-1},t_k)\in\frt$ to $(t_1,\dots,t_{k-1},-t_k)$. Thus $W_K=W_\frk\rtimes\{1,s\}=B_k$. For $n=2k+1$, $s$ centralizes $\frt$, hence $W_K=W_\frk=B_k$. 

We have proved that $G/K = U(n)/O(n)$ is primary for odd $n$ and almost primary for even $n$, therefore Theorem \ref{thm prim and aprim alg} gives the graded algebra structure of $(\twedge \frp)^K$.

We conclude

\begin{cor}
\label{u/o even and odd}
For $G/K=\UU(n)/\OO(n)$, either $G/K$ is primary or almost primary. Hence, by Theorem \ref{thm prim and aprim alg},
\[
(\twedge \frp)^K\cong \twedge(\cal P_\wedge(\frp)),
\]
where $\cal P_\wedge(\frp)$ is the subspace defined in Definition \ref{def samspace} and the degrees are given in Table \ref{tab degrees prim}. 
\end{cor}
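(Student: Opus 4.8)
The plan is to read off the conclusion from the Weyl-group data already assembled in this subsection, feeding it into Theorem \ref{thm prim and aprim alg}. The one genuinely computational ingredient, the identification of the group-theoretic Weyl group $W_K$, has been carried out above using the explicit representative $s$ of the non-identity component of $\OO(n)$ together with \eqref{compute s}; everything else is bookkeeping against the definitions.

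First I would record the outcome of those computations: $W_G = W_\frg = B_k$ regardless of the parity of $n$; $W_\frk = D_k$ when $n = 2k$ and $W_\frk = B_k$ when $n = 2k+1$; and $W_K = B_k$ in both cases. Next I would match this against the definition of primary and almost primary. For $n = 2k+1$ one has $W_{\frg,\frt} = B_k = W_\frk$, so $G/K$ is primary. For $n = 2k$ one has $W_{\frg,\frt} = B_k = W_K$ while $W_\frk = D_k \neq B_k$, so $W_{\frg,\frt} = W_K \neq W_\frk$ and $G/K$ is almost primary. Since $G = \UU(n)$ is connected, the hypotheses of Theorem \ref{thm prim and aprim alg} are met, and in either case that theorem yields $(\twedge\frp)^K = \twedge\cal P_\wedge(\frp)$.

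It remains to pin down the degrees. Here I would invoke the statement recalled after Theorem \ref{thm prim and aprim alg} that the degrees of $\cal P_\wedge(\frp)$ agree with those of the Samelson subspace $\cal P_\fa = \cal P_\frg^{-\theta}$; for $\UU(2n+1)/\OO(2n+1)$ and $\UU(2n)/\OO(2n)$ these are listed in \cite[Tables I--III, pp.\ 492--496]{CCCvol3} and reproduced in the ``Primary'' and ``Almost Primary'' rows of Table \ref{tab degrees prim}. I do not anticipate a real obstacle: the only place where some care is needed is making sure that in the even case $W_K$ really is the full hyperoctahedral group $B_k$ rather than $D_k$ --- i.e.\ that conjugation by $s$ contributes the missing odd sign change --- and that is precisely what the computation with $s$ above establishes.
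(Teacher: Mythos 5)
Your proposal is correct and follows essentially the same route as the paper: it uses the Weyl-group identifications $W_G=B_k$, $W_\frk=D_k$ or $B_k$, and $W_K=B_k$ obtained from the explicit representative $s$ and \eqref{compute s}, concludes that $\UU(n)/\OO(n)$ is primary for odd $n$ and almost primary for even $n$, and then applies Theorem \ref{thm prim and aprim alg} together with Table \ref{tab degrees prim} for the degrees. No gaps to report.
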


\section{Cohomology rings of compact symmetric spaces}
\label{coho symm}

\subsection{Some general facts}
\label{subsec coho symm general}

Let $G/K$ be a compact symmetric space, with $G$ a compact connected Lie group and $K$ a closed symmetric subgroup. Then the de Rham cohomology (with real coefficients) of $G/K$ can be identified, as an algebra, with $(\twedge\frp_0^*)^K$, where $\frp_0$ stands for the tangent space $\frg_0/\frk_0$ to $G/K$ at $eK$ ($\frg_0$ and $\frk_0$ are the Lie algebras of $G$ respectively $K$).  In the examples we are interested in, $\frp_0$ can be $K$-equivariantly embedded into $\frg_0$, and also $\frp_0^*\cong\frp_0$.

As mentioned in the introduction, this fact is well known, but it is difficult to find an appropriate reference. We present here a proof we learned from Sebastian Goette \cite{goette}.

Any $g\in G$ acts on $G/K$ by a map that
is homotopic to the identity. Indeed, since $G$ is connected, there is a smooth path $g(t)$, $t\in [0,1]$, from $g$ to the unit element $e\in G$. Then $H:G/K\times [0,1]\to G/K$, $H(x,t)=g(t)x$, is a smooth homotopy from $g:G/K\to G/K$ to the identity map on $G/K$. 

It now follows that if $\omega$ is a closed form, then it represents the same cohomology class as $g^*\omega$, for any $g\in G$. Namely, \cite[Proposition 15.5]{Lee} says that $g^*$ and $e^*=\id$ induce the same map on cohomology, so the class of $\omega$ is the same as the class of $g^*\omega$.
Since $G$ is compact, we can average
over $g$ and get a $G$-invariant differential form that represents the same cohomology class as $\omega$.

Next, assume that $\omega$ is $G$-invariant and $\omega=d\mu$. Even if $\mu$
is not $G$-invariant itself, we know that $d\mu=g^*d\mu=dg^*\mu$. So we
can average over $g\in G$ again to get a $G$-invariant differential form $\bar\mu$ such that $\omega=d\bar\mu$. So we see that the de Rham cohomology is captured by the subcomplex of $G$-invariant differential forms.

Now we recall that the differential forms on $G/K$ are sections of the homogeneous vector bundle 
\[
G\times_K \twedge\frp_0^* \to G/K.
\]
The bundle $G\times_K \twedge\frp_0^*$ is defined as
$(G\times\twedge\frp_0^*)/\sim$, where $\sim$ is the equivalence relation defined by
\[
(gk,\nu)\sim(g,\Ad^*(k)\nu),\qquad g\in G,k\in K, \nu\in\twedge\frp_0^*.
\]
The differential forms, or sections of the above bundle, are maps
\[
\omega:G\to\twedge\frp_0^*\quad \text{such that }\quad \omega(gk) = \Ad^*(k^{-1})\omega(g),\quad g\in G,\ k\in K.
\]
The group $G$ acts on such $\omega$ by left translation, i.e.,
\[
(g\omega)(g')=\omega(g^{-1}g'),\qquad g,g'\in G.
\]
Thus $\omega$ is $G$-invariant if and only if it is constant as a function on $G$, i.e., $\omega(g)=\omega(e)$ for any $g\in G$.

For such an invariant form $\omega$, set $\bar\omega=\omega(e)\in\twedge\frp_0^*$. We claim that $\bar\omega\in(\twedge\frp_0^*)^K$. Indeed, for any $k\in K$ we have
\begin{multline*}
\Ad^*(k)\bar\omega=\Ad^*(k)\omega(e)=(\text{since $\omega$ is a section})=\\
\omega (ek^{-1})=\omega(k^{-1})=(\text{since $\omega$ is $G$-invariant})=\omega(e)=\bar\omega.
\end{multline*}
Conversely, if $\bar\omega\in(\twedge\frp_0^*)^K$, then $\omega(g)=\bar\omega$ defines a $G$-invariant form $\omega$ on $G/K$.

So we see that for any compact homogeneous space $G/K$, with $G$ a compact connected Lie group and $K$ a closed subgroup of $G$, the de Rham cohomology $H(G/K)$ is the cohomology of the complex $((\twedge\frp_0^*)^K,d)$, where the differential $d$ is induced by the de Rham differential. Let us describe the differential $d$ more explicitly.
We first recall a coordinate free formula for the de Rham differential $d$ on a manifold $M$. Any differential $q$-form is determined if we know how to evaluate it on any $q$-tuple of (smooth) vector fields. In this interpretation, the de Rham differential of a $q$-form $\omega$ is the $(q+1)$-form given by
\begin{eqnarray}
\label{de rham}
& \qquad d\omega(X_1\wedge ... \wedge X_{q+1})=
\sum_i(-1)^{i-1} X_i(\omega(X_1\wedge\dots\wedge\widehat X_i\wedge\dots\wedge X_{q+1})) + \\ \nonumber
& \qquad \sum_{i<j} (-1)^{i+j}\omega ([X_i,X_j] \wedge X_1 \wedge \dots\wedge \widehat X_i\wedge\dots\wedge\widehat X_j\wedge\dots\wedge X_{q+1}),
\end{eqnarray}
where $X_1,\dots,X_{q+1}$ are vector fields on $M$, the bracket denotes the Lie bracket of vector fields, and the hat over a variable means this variable is omitted. See e.g. \cite[Proposition 12.19]{Lee}.

If $M=G/K$ and if the form $\omega$ is $G$-invariant (as we saw we may assume), then we know $\omega$ at any point $gK$ if we know it at the base point $eK$. More precisely, if $Y_1,\dots,Y_q$ is any $q$-tuple of tangent vectors at a point $gK$ in $G/K$, then 
\[
\omega(gK)(Y_1,\dots,Y_q)=\omega(eK)(g^{-1}_*Y_1,\dots,g^{-1}_*Y_q).
\]
It follows that it is enough to know the value of $\omega$ at $q$-tuples of $G$-invariant vector fields, which correspond to the tangent space $\frg_0/\frk_0\cong\frp_0$ to $G/K$ at $eK$. The $G$-invariant vector fields can in turn be obtained as push-forwards of left invariant vector fields on $G$ under the projection $\Pi:G\to G/K$. Since $\Pi_*$ is compatible with Lie brackets, and since the left invariant vector fields on $G$ can be identified with the Lie algebra $\frg_0$ of $G$, we see that if $\tilde X,\tilde Y$ are invariant vector fields on $G/K$ corresponding to tangent vectors $X,Y\in\frp_0$, then the vector field $[\tilde X,\tilde Y]$ corresponds to the tangent vector $\pi([X,Y])$, where the bracket $[X,Y]$ is taken in $\frg_0$ and $\pi=d\Pi:\frg_0\to \frp_0$ is the canonical projection. 
Thus the formula \eqref{de rham} can be rewritten with $X_1,\dots,X_{q+1}$ in $\frp_0$ and with $[X_i,X_j]$ replaced by $\pi([X_i,X_j])$. Moreover, the first sum in the formula vanishes, since the action of a vector field involves the differentiation, and $\omega$ is constant. So the de Rham 
differential becomes
\eq
\label{de rham bis}
d\omega(X_1\wedge ... \wedge X_{q+1})=
\sum_{i<j} (-1)^{i+j}\omega (\pi([X_i,X_j]) \wedge X_1 \wedge \dots\wedge \widehat X_i\wedge\dots\wedge\widehat X_j\wedge\dots\wedge X_{q+1}),
\eeq
with $X_i$ in $\frp_0$. 

Now let us assume in addition that $K$ is a symmetric subgroup of $G$, i.e., that $(\frg_0,\frk_0)$ is a symmetric pair. Then  $[\frp_0,\frp_0]\subseteq\frk_0$, so $\pi([X_i,X_j])=0$ for any $X_i,X_j\in\frp_0$, and we see that $d\omega=0$. 
Hence 
\eq
\label{coho sym sp}
H(G/K)=(\twedge\frp_0^*)^K\cong(\twedge\frp_0)^K.
\eeq

In the following we will pass to the cohomology of $G/K$ with complex coefficients:
\[
H(G/K;\bbC)=H(G/K)\otimes_\bbR\bbC=(\twedge\frp_0)^K\otimes_\bbR\bbC= (\twedge\frp)^K,
\]
where $\frp$ stands for the complexification of $\frp_0$. We will also denote by $\frg$ respectively $\frk$ the complexifications of $\frg_0$ respectively $\frk_0$.

Recall from Section \ref{sec spin}, 
the paragraph above Corollary \ref{cor o/oo}, that there is a surjection $\alpha=\alpha_K: \bbC[\frt^*]^{W_K}\to \Proj(S)$,  given by
 \[
\alpha_K(P)=\sum_{\tilde\sigma\in W^1_{G,K}}P(\tilde\sigma\rho)\pr_{\tilde\sigma},
    \]
with kernel equal to the ideal $\langle\bbC[\frt^*]^{W_G}_\rho\rangle$ in 
$\bbC[\frt^*]^{W_K}$ generated by the $W_G$-invariants that vanish at $\rho$. 
Likewise, the kernel of the surjection $\gr\alpha:\bbC[\frt^*]^{W_K}\to\gr \Proj(S)\subseteq(\twedge\frp)^K$ is the ideal $\langle\bbC[\frt^*]^{W_G}_+\rangle$ generated by the $W_G$-invariants that vanish at $0$. In this section we will only use the obvious inclusions
\eq 
\label{ker incl}
\langle \bbC[\frt^*]^{W_G,}_\rho\rangle\subseteq\ker\alpha
\quad\text{ and }\quad \langle\bbC[\frt^*]^{W_G}_+\rangle\subseteq\ker\gr\alpha.
\eeq 
The opposite inclusions will get reproved as a byproduct of our analysis. Namely, in each of the cases we will have a candidate set for a basis of the quotient $\bbC[\frt^*]^{W_K}/\ker\alpha$ respectively $\bbC[\frt^*]^{W_K}/\ker\gr\alpha$ consisting of certain monomials, of the cardinality equal to
\[
\dim \Proj(S)=\dim\gr \Proj(S)=|W^1_{G,K}|.
\]
We will use the relations coming from $\bbC[\frt^*]^{W_G}_\rho$ respectively $\bbC[\frt^*]^{W_G}_+$ to show that the images of these candidate monomials span the respective quotients. It will follow that they form a basis, and also that there can be no additional relations outside of  $\langle \bbC[\frt^*]^{W_G}_\rho\rangle$ respectively $\langle\bbC[\frt^*]^{W_G}_+\rangle$.

\subsection{The case $G/K=\UU(p+q)/\UU(p)\times \UU(q)$}
\label{subsec A/AxA}

This is an equal rank case, so $\Proj(S)=C(\frp)^K$ and $\gr \Proj(S)=(\twedge\frp)^K$. 
Since $G$ and $K$ are both connected, the Weyl groups are $W_G=W_\frg=S_{p+q}$ and $W_K=W_\frk=S_p\times S_q$. Let $x_1,\dots,x_{p+q}$ be coordinates for the Cartan subalgebra $\frt$. 
Let
\begin{itemize}
\item $r_1,\dots,r_p$ be the elementary symmetric functions in variables $x_1,\dots,x_p$;
\item $s_1,\dots,s_q$ be the elementary symmetric functions in variables $x_{p+1},\dots,x_{p+q}$;
\item $t_1,\dots,t_{p+q}$ be the elementary symmetric functions in variables $x_1,\dots,x_{p+q}$.
\end{itemize}
Then the space of $W_K$-invariants in $S(\frt)$ is generated by $r_1,\dots,r_p$  and $s_1,\dots,s_q$.

\begin{thm}
\label{gen rel basis} 
For $G/K=\UU(p+q)/\UU(p)\times \UU(q)$, 
$ 1 \leq p \leq q $, 
the algebra $C(\frp)^K$ is isomorphic to the algebra $\frH(p,q;c)$ of Definition \ref{def alg h}, where $c=(t_1(\rho),\dots,t_{p+q}(\rho))$. The algebra $(\twedge\frp)^K$ is isomorphic to $\frH(p,q;0)$. 

In other words, the algebras $C(\frp)^K$ and $(\twedge\frp)^K$ are 
both generated by $r_1,\dots,r_p$ (or more precisely, their images in the respective quotients), with relations generated by 
\eq
\label{rel}
\sum_{i,j\geq 0;\ i+j=k}r_is_j=t_k= \left\{\begin{matrix} t_k(\rho) & \text{ for the case of } C(\frp)^K  \cr
0 & \text{ for the case of } (\twedge\frp)^K \end{matrix}
\right.
\eeq
for $k=1,\dots,p+q$, where we set $r_0=s_0=1$ and $r_i=0$ if $i>p$, $s_j=0$ if $j>q$.

To summarize,
\[ 
\begin{aligned}
C(\frp)^K &=
\frH(p,q;c) = \frac{\C[r_1,\dots,r_p; s_1,\dots,s_q]}{\Bigl(\sum_{i,j\geq 0;\ i+j=k}r_is_j=t_k(\rho)\Bigr)} \\
(\twedge\frp)^K &=
\frH(p,q;0)
= \frac{\C[r_1,\dots,r_p; s_1,\dots,s_q]}{\Bigl(\sum_{i,j\geq 0;\ i+j=k}r_is_j=0\Bigr)} 
\end{aligned}
\]
The latter algebra is isomorphic to $H^*(\Gr_p(\bbC^{p+q}), \bbC) $. 

A basis for each of these algebras is given by the monomials $r^\alpha=r_1^{\alpha_1}\dots r_p^{\alpha_p}$ of degree $|\alpha|\leq q$, so that our algebras can be identified with the space
\[
\bbC[r_1,\dots,r_p]_{\leq q}.
\]
In particular, each monomial in $r_1,\dots,r_p$ of degree $q+1$ can be expressed as a linear combination of lower degree monomials in $r_1,\dots,r_p$.  Such expressions follow from \eqref{rel explic 1} and \eqref{rel explic other} below, and they provide another set of defining relations for each of our algebras.

The filtration degree of $C(\frp)^K$ inherited from $C(\frp)$, and the gradation degree of $(\twedge\frp)^K$ inherited from $\twedge\frp$, are obtained by setting $\deg r_i=2i$ for $i=1,\dots,p$.
\end{thm}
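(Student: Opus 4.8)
The plan is to identify $C(\frp)^K$ with the relative coinvariant algebra $\bbC[\frt^*]^{W_\frk}/\langle\bbC[\frt^*]^{W_\frg}_\rho\rangle$ using the setup of Subsections \ref{cpk eq rk} and \ref{rel coinv}, and then to compute this quotient by hand using the particular form of the invariants in type $A$. By Corollary \ref{cor alpha surj}, the map $\alpha_\frk$ identifies $C(\frp)^K$ with $\bbC[\frt^*]^{W_\frk}/(I_{W_\frg,\rho})^{W_\frk}$, where $W_\frg=S_{p+q}$, $W_\frk=S_p\times S_q$, and $\bbC[\frt^*]^{W_\frk}=\bbC[r_1,\dots,r_p,s_1,\dots,s_q]$ (this is a polynomial ring since $S_p\times S_q$ is a complex reflection group). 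The key algebraic identity is that in $\bbC[x_1,\dots,x_{p+q}]$ we have the factorization of generating functions $\sum_k t_k u^k = \bigl(\sum_i r_i u^i\bigr)\bigl(\sum_j s_j u^j\bigr)$; comparing coefficients gives exactly $\sum_{i+j=k} r_i s_j = t_k$. Since the $W_\frg$-invariants are freely generated by $t_1,\dots,t_{p+q}$, the ideal $\langle\bbC[\frt^*]^{W_\frg}_\rho\rangle$ inside $\bbC[\frt^*]^{W_\frk}$ is generated by $t_k - t_k(\rho)$ for $k=1,\dots,p+q$, and these are precisely the relations $\sum_{i+j=k} r_i s_j = t_k(\rho)$. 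This proves $C(\frp)^K\cong\frH(p,q;c)$ with $c=(t_1(\rho),\dots,t_{p+q}(\rho))$; the case of $(\twedge\frp)^K$ is identical with $\rho$ replaced by $0$, via $\gr\alpha_\frk$ and $I_{W_\frg,+}$.

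Next I would verify the basis statement. Using the first $q$ relations ($k=1,\dots,q$) one solves recursively for $s_1,\dots,s_q$ as polynomials in $r_1,\dots,r_p$ (with $s_k = t_k(\rho) - \sum_{i\geq 1} r_i s_{k-i}$, or $s_k = -\sum_{i\geq 1} r_i s_{k-i}$ in the graded case), so the algebra is generated by $r_1,\dots,r_p$ alone. Substituting these expressions into the remaining relations ($k=q+1,\dots,p+q$) yields relations $P_{q+1},\dots,P_{p+q}$ purely among the $r_i$; these are the relations \eqref{rel explic 1} and \eqref{rel explic other} referred to in the statement. Each $P_{q+\ell}$ has leading term a monomial of degree $q+\ell$ (in the filtration where $\deg r_i = i$, so $\deg r_i = 2i$ after the doubling), so modulo these relations every monomial of degree $\geq q+1$ reduces to a combination of lower-degree ones; hence the monomials $r^\alpha$ with $|\alpha|\leq q$ span. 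To see they are linearly independent — equivalently that there are no further relations — I would count: by Corollary \ref{cor alpha surj} the dimension of $C(\frp)^K$ equals $|W^1_{\frg,\frk}| = \binom{p+q}{p}$, and the number of monomials $r^\alpha$ in $p$ variables of total degree $\leq q$ is also $\binom{p+q}{p}$. So a spanning set of the right cardinality must be a basis. The identification with $\bbC[r_1,\dots,r_p]_{\leq q}$ and the statement about the filtration/gradation degree $\deg r_i = 2i$ then follow, the latter because $r_i$ has polynomial degree $i$ in the $x_j$ and $\alpha_\frk$ (resp.\ $\gr\alpha_\frk$) doubles degrees by Proposition \ref{prop al k}. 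Finally, $(\twedge\frp)^K \cong H^*(\Gr_p(\bbC^{p+q});\bbC)$ is immediate from \eqref{coho sym sp} together with Proposition \ref{prop real grass}.

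The main obstacle is the explicit elimination step: showing that after eliminating $s_1,\dots,s_q$, the surviving relations $P_{q+1},\dots,P_{p+q}$ among $r_1,\dots,r_p$ genuinely have the triangular leading-term structure needed to reduce all high-degree monomials, and writing them in closed form (the content of \eqref{rel explic 1}, \eqref{rel explic other}). This is essentially a computation with Schur/Jacobi--Trudi-type identities — one expects $P_{q+\ell}$ to be (up to sign) the Schur polynomial $s_{(q+\ell-p, 1^{?})}$ or a determinant in the $r_i$ — but it must be carried out carefully to confirm the leading monomial in each degree is present with nonzero coefficient. The dimension count via $|W^1_{\frg,\frk}|$ is the clean way to sidestep proving linear independence directly; everything else is bookkeeping with generating functions.
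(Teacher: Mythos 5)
Your first half is fine, and in fact takes a legitimate shortcut relative to the paper: you invoke the full strength of Corollary \ref{cor alpha surj} (surjectivity of $\alpha_\frk$ together with the description of its kernel), plus the observation that $\bbC[\frt^*]^{W_\frg}=\bbC[t_1,\dots,t_{p+q}]$ so that the ideal generated by $\bbC[\frt^*]^{W_\frg}_\rho$ in $\bbC[\frt^*]^{W_\frk}=\bbC[r_1,\dots,r_p,s_1,\dots,s_q]$ is generated by $t_k-t_k(\rho)$. This gives the presentations $\frH(p,q;c)$ and $\frH(p,q;0)$ directly, whereas the paper deliberately uses only the easy inclusions \eqref{ker incl} and recovers the kernel description as a byproduct of the monomial analysis; your route is cleaner for that part, and the degree claim, the elimination of $s_1,\dots,s_q$, the dimension count via $|W^1_{\frg,\frk}|=\binom{p+q}{p}$, and the identification with $H^*(\Gr_p(\bbC^{p+q}),\bbC)$ all match the paper.

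The genuine gap is the spanning step for the basis statement, which you yourself flag as the ``main obstacle'' but do not resolve, and the mechanism you sketch for it is insufficient. After eliminating the $s_j$ you have only $p$ relations $P_{q+1},\dots,P_{p+q}$, while there are $\binom{p+q}{p-1}$ monomials of degree $q+1$ in $r_1,\dots,r_p$; knowing that each $P_{q+\ell}$ has some leading monomial only lets you reduce monomial multiples of those $p$ leading terms. For example, with $p=2$, $q=3$ the eliminated relations (graded case) generate the ideal $(h_4,h_5)$ with $h_4=r_1^4-3r_1^2r_2+r_2^2$, $h_5=r_1^5-4r_1^3r_2+3r_1r_2^2$, whose leading monomials are $r_1^4$ and $r_1^3r_2$; but the needed reductions of $r_1^2r_2^2$, $r_1r_2^3$, $r_2^4$ are not monomial multiples of these --- e.g.\ $r_1^2r_2^2-r_2^3=(r_1^2-r_2)h_4-r_1h_5$. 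What must be shown is that \emph{every} monomial \eqref{mono} of degree $q+1$ occurs as the leading term (for the order by degree and then reverse lexicographic order) of some element of the ideal generated by \eqref{rel}, and this is exactly the content of the paper's argument: for each such monomial one selects a tailored subsystem of the relations, proves that its determinant $D$ has the prescribed degree-$q$ monomial \eqref{smaller mono} as leading term, and applies Cramer's rule to produce \eqref{rel explic 1} and \eqref{rel explic other}, finishing by induction over the monomial order. Alternatively the gap could be closed via the Jacobi--Trudi triangularity of Remark \ref{rem schur} (a unitriangular change of basis between your monomials and the Schur polynomials $s_\lambda$ with $\lambda$ in the $p\times q$ box), but that argument also has to be carried out; as written, the reduction claim is asserted rather than proved, and it is the heart of the proof. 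Once spanning is in place, your dimension count does correctly yield linear independence and the absence of further relations, exactly as in the paper.
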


\pf
Let us first note that it is clear that the algebras $C(\frp)^K$ and  $(\twedge\frp)^K$ are generated by the $r_i$ and the $s_j$. Also, the inclusions \eqref{ker incl} imply that the relations for these algebras include the relations \eqref{rel}. We are going to see that these relations in fact generate all the relations.

Using the first $q$ of the relations \eqref{rel}, we can express all $s_j$ as polynomials in the $r_i$. Indeed, the first relation is
\[
r_1+s_1=t_1
\]
and we see that $s_1=t_1-r_1$. Now the second relation is
\[
r_2+r_1s_1+s_2=t_2, 
\]
so $s_2$ can be expressed as a polynomial in the $r_i$ since we have already expressed $s_1$. We continue inductively.
So each of our algebras is generated by the (images of the) polynomials in the $r_i$.

We now prove that every monomial in the $r_i$ of degree $q+1$ can be expressed as a linear combination of lower degree monomials. This will finish the proof. Namely, this will show that the monomials in the $r_i$ of degree at most $q$ span each of our algebras, and their number is $\binom{p+q}{p}$, the same as the dimension of both algebras. So they have to form a basis. Since we only use the relations \eqref{rel}, it follows that these relations generate all the relations, otherwise the dimension would be lower which is impossible.

We order the monomials in the $r_i$ first by degree, and then inside each degree by the reverse lexicographical order. We will show by induction on this order that all degree $q+1$ monomials can be expressed by lower degree monomials.

We start with the first monomial, $r_1^{q+1}$. We express the $s_j$ in terms of the $r_i$ from relations $2,3,\dots,q+1$. These relations are linear in the $s_j$, with coefficients that are either constant or the $r_i$. In matrix form, this system of equations is
\eq
\label{matrix}
\begin{pmatrix}
r_1 & 1 & 0 & 0 & 0 & \dots & 0 \cr
r_2 & r_1 & 1& 0 & 0 & \dots & 0 \cr
\vdots & \vdots & \vdots & \vdots & \vdots & \vdots & \vdots \cr
r_{p-1} &  \dots & r_1 & 1 & 0 & \dots & 0\cr
r_{p} & r_{p-1} & \dots & r_1 & 1 & 0 & \dots \cr
0 & r_p & r_{p-1} & \dots & r_1 & 1 &  \dots \cr
\vdots & \vdots & \vdots & \vdots & \vdots & \vdots & \vdots \cr
0 & \dots & 0 & r_p & \dots & r_2 & r_1
\end{pmatrix}
\begin{pmatrix}
s_1 \cr
s_2 \cr
\vdots \cr
s_q
\end{pmatrix}
=
\begin{pmatrix}
t_2-r_2 \cr
t_3-r_3 \cr
\vdots \cr
t_p-r_p \cr
t_{p+1} \cr
\vdots \cr
t_{q+1}
\end{pmatrix}
\eeq

The determinant $D$ of this system contains a unique monomial $r_1^q$ of degree $q$, since upon expanding successively along the first row we can always pick either $r_1$, or $1$ which leads to lower degree. In particular $D\neq 0$, so we can solve the system by Cramer's rule and obtain 
\eq
\label{sol}
D s_i=D_i,\quad i=1,\dots,q,
\eeq
where $D_i$ is obtained from $D$ by replacing the $i$-th column by the right hand side of \eqref{matrix}.

Now we multiply the first equation, $r_1+s_1=t_1$, by $D$, and use \eqref{sol} to get
\eq
\label{rel explic 1}
r_1D+D_1=t_1 D.
\eeq
Since all monomials in $D_1$ and in $t_1D$ are of degree $\leq q$, and since all the monomials of $r_1D$ are of degree $\leq q$ except for the monomial $r_1^{q+1}$, we have expressed $r_1^{q+1}$ as a linear combination of lower degree monomials.

We now do the induction step. Let $1\leq i_1<i_2<\dots< i_a\leq p$ be integers and let
\eq
\label{mono}
r_{i_1}^{m_1} r_{i_2}^{m_2}\dots r_{i_a}^{m_a}
\eeq 
be a monomial of degree $q+1$ different from $r_1^{q+1}$. Suppose that we have already expressed all the degree $q+1$ monomials that are before \eqref{mono} in reverse lexicographical order.

Let us consider the degree $q$ monomial 
\eq 
\label{smaller mono}
r_{i_1}^{m_1-1} r_{i_2}^{m_2}\dots r_{i_a}^{m_a}.
\eeq 
Notice that for each $i$ and $j$, $r_is_j$ appears in exactly one of the equations (the $(i+j)$-th one). We first assume that $m_1>1$ and pick the equations that contain respectively
\begin{multline}
\label{picked}
r_{i_1}s_1,r_{i_1}s_2,\dots r_{i_1}s_{m_1-1};\ r_{i_2}s_{m_1},\dots,r_{i_2}s_{m_1+m_2-1};\ \dots \\
\dots; \ r_{i_a}s_{m_1+\dots + m_{a-1}},\dots,r_{i_a}s_{m_1+ \dots + m_a-1} (=r_{i_a}s_q).
\end{multline}
We view these equations as a linear system for $s_1,\dots,s_q$ and note that the diagonal coefficients are exactly the coefficients of the terms \eqref{picked}, i.e., 
\[
r_{i_1},\dots,r_{i_1},r_{i_2},\dots,r_{i_2},\dots,r_{i_a},\dots,r_{i_a},
\]
with $r_{i_1}$ repeating $m_1-1$ times, $r_{i_2}$ repeating $m_2$ times, ..., $r_{i_a}$ repeating $m_a$ times. Thus the determinant of the system, denoted again by $D$, contains the monomial \eqref{smaller mono}, and we claim this is the leading term of the expanded determinant $D$. (We warn the reader not to confuse the present $D$ with the one in \eqref{sol}.)

The first of the picked equations is
\[
r_{i_1}s_1+r_{i_1-1}s_2+\dots +r_1s_{i_1}+s_{i_1+1}=t_{i_1+1}-r_{i_1+1}.
\]
(This covers all the cases since we defined $r_i=0$ for $i>p$ and $s_j=0$ for $j>q$.)
The first row of $D$ is thus 
\[
r_{i_1}\ r_{i_1-1}\ \dots \ r_1\ 1\ 0\ \dots \ 0
\]
with 1 and/or zeros possibly missing.
When we expand $D$ along the first row and then write out all the lower order determinants as combinations of monomials, the terms containing the first row elements $r_{i_1-1},r_{i_1-2},\dots$ are all either of lower degree or before the term \eqref{smaller mono} in our ordering. To get the leading term we thus have to pick $r_{i_1}$ and cross the first row and column. The remaining determinant (if $m_1>2$) has the first row equal to
\[
r_{i_1}\ r_{i_1-1}\ r_{i_1-2}\ \dots,
\]
and we use the same argument to conclude that we should pick $r_{i_1}$ to obtain the leading term.

After we go over all the rows containing $r_{i_1}$, we continue with the next row
\[
r_{i_2}\ r_{i_2-1}\ r_{i_2-2}\ \dots
\]
We again see that to obtain the leading term we have to pick $r_{i_2}$ from this row.

The conclusion is that the leading term of $D$ is indeed the monomial \eqref{smaller mono}; in particular, $D\neq 0$. We now again write the Cramer's rule 
\eq
\label{soli}
Ds_i=D_i, \quad i=1,\dots,q.
\eeq
We multiply the equation containing $r_{i_1}$ with coefficient 1, i.e., the equation
\[
r_{i_1}+r_{i_1-1}s_1+\dots +r_1 s_{i_1-1}+ s_{i_1}=t_{i_1},
\]
by $D$, and use \eqref{soli} to get
\eq
\label{rel explic other}
r_{i_1}D+r_{i_1-1}D_1+\dots +r_1 D_{i_1-1}+ D_{i_1}=t_{i_1}D.
\eeq
The leading term of $r_{i_1}D$ is \eqref{mono}, and the other terms in the above equation are either of lower degree, or of the same degree but of lower order with respect to the reverse lexicographical order. Expressing these last terms by lower degree terms using the inductive assumption, we see that we have expressed \eqref{mono} as a linear combination of lower degree terms.

This finishes the proof if $m_1>1$. If $m_1=1$, we proceed analogously, starting by picking the equation containing $r_{i_2}s_1$. The argument is entirely similar.

The statement about degrees follows from the fact that $r_i$ is of degree $i$ as a polynomial in the variables $x_j$, and that the map $\alpha:U(\frk)\to C(\frp)$ doubles the degree. 
\epf

\begin{rem}
In the course of the proof of Theorem \ref{gen rel basis} we 
have obtained explicit relations \eqref{rel explic 1} and \eqref{rel explic other} for the generators $r_1,\dots,r_p$. It is clear that $C(\frp)^K$ is the algebra generated by the $r_i$ with these relations if we set $t_i=t_i(\rho)$ and $(\twedge\frp)^K$ is the algebra generated by the $r_i$ with the same relations if we set $t_i=0$.
\end{rem}

\begin{rem}
\label{rem schur}
The monomials in Theorem \ref{gen rel basis} span the same space as the Schur polynomials $s_\lambda$ for $\lambda$ in the $p\times q$ box. In particular, these Schur polynomials also form a basis of our algebra(s), since their number is equal to the dimension of each of the two algebras.

To pursue this relationship in more detail, we first recall the well known Jacobi-Trudi formulas that express Schur polynomials as polynomials in the elementary symmetric functions: if $\la$ is a partition with Young diagram inside the $p\times q$ box, let $\la^t=(\la^t_1,\dots,\la^t_l)$ be the transpose of $\la$ ($l\leq q$ is the length of $\la^t$). Then
\eq
\label{jacobi}
s_\la=\det\pmat r_{\la^t_1} & r_{\la^t_1+1}&\dots& r_{\la^t_1+l-1}\cr
r_{\la^t_2-1}&r_{\la^t_2}&\dots& r_{\la^t_2+l-2}\cr  \vdots&\vdots & \vdots &\vdots\cr
 r_{\la^t_l-l+1}& r_{\la^t_l-l+2}&\dots& r_{\la^t_l}
\epmat
\eeq
Here $r_j$ is the $j$th elementary symmetric function on $x_1,\dots,x_p$ if $1\leq j\leq p$, $r_0=1$, and $r_j=0$ if $j<0$ or $j>p$.

This is an expression of $s_\la$ as a linear combination of monomials in $r_1,\dots,r_p$  of degree at most $q$. We claim that in this way we obtain a triangular change of basis between the $s_\la$ and the monomials in $r_1,\dots,r_p$  of degree at most $q$. To see this, we order the monomials first be degree (in the $r_j$), and then by reverse lexicographical order inside each degree. We claim that upon expanding the determinant \eqref{jacobi} the leading term is the diagonal monomial $r_{\la^t_1}r_{\la^t_2}\dots r_{\la^t_l}$. Indeed, let us expand the determinant along the first row. Since $\la^t_1\geq\la^t_2\geq\dots\geq\la^t_l$, the diagonal monomial has no $r_j$ with $j>\la^t_1$, but if we pick any element of the first row other than $r_{\la^t_1}$, all monomials in the corresponding piece of the expansion will contain $r_j$ with $j>\la^t_1$. So to obtain the leading term we must pick $r_{\la^t_1}$. We now repeat this argument inductively, always expanding along the first row.

The main advantage of the Schur polynomials is the fact that their multiplication table is well understood, using Littlewood-Richardson coefficients. While the computation of the LR coefficients is only algorithmic, computer programs for computing them are widely known and available; for example, there is an online calculator available from the web page of Joel Gibson \cite{Gibson}. 
Our approach using monomials in the elementary functions and the relations between them that we obtained can also lead to a multiplication table, as illustrated by the following example. In this way, we get an alternative way of computing the LR coefficients.
\end{rem}

\begin{ex} 
{\rm 
Let $p=2$ and $q=3$. The expressions of the Schur polynomials for $\la$ in the $2\times 3$ box in terms of monomials in the elementary symmetric functions $r_1=x_1+x_2$, $r_2=x_1x_2$ are
\begin{equation*}
\begin{array}{lllll}
    s_{(0,0)}=1; \qquad
    &s_{(1,0)}=r_1; \qquad
    &s_{(2,0)}=r_1^2-r_2; \qquad
    &s_{(3,0)}=r_1^3-2r_1r_2;\\
    &s_{(1,1)}=r_2; \qquad
    &s_{(2,1)}=r_1r_2; \qquad
    &s_{(3,1)}=r_1^2r_2-r_2^2; \\
    &s_{(2,2)}=r_2^2; \qquad
    &s_{(3,2)}=r_1r_2^2; \qquad
    &s_{(3,3)}=r_2^3.
\end{array}
\end{equation*}
Our relations expressing monomials of degree four in terms of monomials of degree at most three are
\begin{equation*}
     r_1^4 = 3r_1^2r_2-r_2^2;  \quad \
     r_1^3r_2= 2r_1r_2^2;  \quad \
     r_1^2r_2^2= r_2^3;  \quad \
     r_1r_2^3= 0;  \quad \
     r_2^4 = 0.
\end{equation*}
The multiplication table for the monomials is
\bigskip

\begin{center}
    \begin{tabular}{|c||c|c|c|c|c|c|c|c|c|}
   \hline  &$r_1$&$r_2$&$r_1^2$&$r_1r_2$&$r_2^2$&$r_1^3$&$r_1^2r_2$&$r_1r_2^2$&$r_2^3$\\
    \hline
    \hline
    $r_1$ & $r_1^2$ & $r_1r_2$ & $r_1^3$ & $r_1^2r_2$ & $r_1r_2^2$ & $3r_1^2r_2-r_2^2$ & $2r_1r_2^2$ & $r_2^3$ & 0 \\
    \hline
    $r_2$ && $r_2^2$ & $r_1^2r_2$ & $r_1r_2^2$ & $r_2^3$ & $2r_1r_2^2$ & $0$ & 0 & 0\\
    \hline
    $r_1^2$ & && $3r_1^2r_2-r_2^2$ & $2r_1r_2^2$ & $r_2^3$ & $5r_1r_2^2$ & $2r_2^3$ & 0 & 0\\
    \hline
    $r_1r_2$ &&&& $r_2^3$ & 0 & $2r_2^3$ & 0&0&0\\
    \hline
    $r_2^2$ &&&&&0&0&0&0&0\\
    \hline
    $r_1^3$ & &&&&& $5r_2^3$ & 0&0&0\\
    \hline
    $r_1^2r_2$ & &&&&&& 0&0&0\\
    \hline
    $r_1r_2^2$ & &&&&&&& 0&0\\
    \hline
    $r_2^3$ & &&&&&&&& 0\\
    \hline
    \end{tabular}
\end{center}

\bigskip

The reader is invited to compare this with the multiplication table for the Schur polynomials obtained from \cite{Gibson}; to use the online calculator one has to remember that the Schur polynomials for $\la$ outside of the $2\times 3$ box have to be replaced by zeros.
}
\end{ex}

\subsection{The cases $G/K=\Sp(p+q)/\Sp(p)\times \Sp(q)$}
\label{subsec B/BxB}

Since $G$ and $K$ have equal rank, $\Proj(S)=C(\frp)^K$ and $\gr \Proj(S)=(\twedge\frp)^K$.
Since $G$ and $K$ are both connected, the Weyl group $W_G$ is equal to $W_\frg$ which is isomorphic to $B_{p+q}$, while the Weyl group $W_K=W_\frk$ is $B_p\times B_q$. (Recall that type $B$ and type $C$ have the same Weyl group. It consists of permutations and sign changes of the variables.)

As in type A, the set $W^1_{G,K}$ consists of $(p,q)$-shuffles. In particular,
\[
|W^1_{G,K}|=\dim C(\frp)^K=\dim(\twedge\frp)^K =\binom{p+q}{p}.
\]
It is well known (see \cite[p.{67}]{HumphreysRefBook}) that the algebra of $B_k$-invariants is a polynomial algebra generated by symmetric functions of the squares of the variables. 
Thus $S(\frt)^{W_K}$ is generated by
\[
r_1=x_1^2+\dots+x_p^2,\quad r_2=x_1^2x_2^2+\dots+x_{p-1}^2x_p^2,\quad \dots,\quad r_p=x_1^2x_2^2\dots x_p^2
\]
\[
s_1=x_{p+1}^2+\dots+x_{p+q}^2,\ s_2=x_{p+1}^2x_{p+2}^2+\dots+x_{p+q-1}^2x_{p+q}^2,\ \dots,\ s_q=x_{p+1}^2\dots x_{p+q}^2
\]
and $S(\frt)^{W_G}$ is generated by
\[
t_1=x_1^2+\dots+x_{p+q}^2,\quad t_2=x_1^2x_2^2+\dots+x_{p+q-1}^2x_{p+q}^2,\quad \dots,\quad t_{p+q}=x_1^2x_2^2\dots x_{p+q}^2
\]

As in the type A case, the relations for $C(\frp)^K$ respectively $(\twedge\frp)^K$ include the relations \eqref{rel}.
Moreover, we have 

\begin{thm}
\label{gen rel basis B/BxB} 
For $G/K=\Sp(p+q)/\Sp(p)\times \Sp(q)$, $ 1 \leq p \leq q $, 
the algebra $C(\frp)^K$ is isomorphic to the algebra $\frH(p,q;c)$ of Definition \ref{def alg h}, with generators the above $r_i$, and with $c=(t_1(\rho),\dots,t_{p+q}(\rho))$. The algebra $(\twedge\frp)^K$ is isomorphic to $\frH(p,q;0)$.

In other words,
\[
\begin{aligned}
C(\frp)^K &=
\frH(p,q;c) = \frac{\bbC[r_1,\dots,r_p; s_1,\dots,s_q]}{\Bigl(\sum_{i,j\geq 0;\ i+j=k}r_is_j=t_k(\rho)\Bigr)} \\
(\twedge\frp)^K &=
\frH(p,q;0)
= \frac{\bbC[r_1,\dots,r_p; s_1,\dots,s_q]}{\Bigl(\sum_{i,j\geq 0;\ i+j=k}r_is_j=0\Bigr)} 
\end{aligned}
\]
The latter algebra is isomorphic to $H^*(\Gr_p(\bbH^{p+q}), \bbC) $.

Both algebras can be identified with the space
\[
\bbC[r_1,\dots,r_p]_{\leq q}.
\]
The filtration degree of $C(\frp)^K$ inherited from $C(\frp)$, and the gradation degree of $(\twedge\frp)^K$ inherited from $\twedge\frp$, are obtained by setting $\deg r_i=4i$ for $i=1,\dots,p$.
\end{thm}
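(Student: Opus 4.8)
The plan is to follow the proof of Theorem \ref{gen rel basis} essentially verbatim; the only genuine differences are that ``elementary symmetric functions'' now means elementary symmetric functions in the \emph{squares} of the coordinates, and that the filtration and gradation degrees double accordingly. Since $G$ and $K$ are connected and of equal rank, $\Proj(S)=C(\frp)^K$ and $\gr\Proj(S)=(\twedge\frp)^K$, and by Corollary \ref{cor alpha surj} there are surjective filtered algebra homomorphisms $\alpha_K\colon\bbC[\frt^*]^{W_K}\to C(\frp)^K$ and $\gr\alpha_K\colon\bbC[\frt^*]^{W_K}\to(\twedge\frp)^K$ whose kernels contain $\langle\bbC[\frt^*]^{W_G}_\rho\rangle$ respectively $\langle\bbC[\frt^*]^{W_G}_+\rangle$ by the inclusions \eqref{ker incl}.

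First I would record the relevant invariant theory: by \cite[p.{67}]{HumphreysRefBook} the algebra $\bbC[\frt^*]^{W_K}$ is freely generated by the $r_i$ and $s_j$ displayed before the theorem, while $\bbC[\frt^*]^{W_G}$ is freely generated by $t_1,\dots,t_{p+q}$. The generating-function identity $\prod_{a=1}^{p}(1+x_a^2y)\cdot\prod_{b=p+1}^{p+q}(1+x_b^2y)=\prod_{c=1}^{p+q}(1+x_c^2y)$ yields the polynomial identities $\sum_{i+j=k}r_is_j=t_k$ in $\bbC[\frt^*]$. Applying $\alpha_K$, which sends $t_k$ to $t_k(\rho)$, and $\gr\alpha_K$, which sends $t_k$ to $0$, therefore produces surjective algebra homomorphisms $\frH(p,q;c)\twoheadrightarrow C(\frp)^K$ with $c=(t_1(\rho),\dots,t_{p+q}(\rho))$ and $\frH(p,q;0)\twoheadrightarrow(\twedge\frp)^K$.

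Next I would invoke the combinatorial core of the proof of Theorem \ref{gen rel basis}. That argument -- expressing $s_1,\dots,s_q$ in terms of the $r_i$ via the first $q$ relations, then using Cramer's rule on the remaining relations together with the reverse-lexicographic leading-term induction to rewrite each degree-$(q+1)$ monomial in the $r_i$ as a combination of lower-degree ones -- uses only the shape of the relations \eqref{rel} and nowhere refers to the meaning of the $x_j$, so it applies here word for word. It shows that $\frH(p,q;c)$ and $\frH(p,q;0)$ are each spanned by the monomials $r^\alpha$ with $|\alpha|\leq q$, whence $\dim\frH(p,q;c),\ \dim\frH(p,q;0)\leq\binom{p+q}{p}$. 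On the other hand, as in type A the set $W^1_{G,K}$ consists of $(p,q)$-shuffles, so $\dim C(\frp)^K=\dim(\twedge\frp)^K=|W^1_{G,K}|=\binom{p+q}{p}$. A surjection from a space of dimension at most $\binom{p+q}{p}$ onto a space of dimension $\binom{p+q}{p}$ is an isomorphism; this simultaneously gives $\frH(p,q;c)\cong C(\frp)^K$ and $\frH(p,q;0)\cong(\twedge\frp)^K$, shows that the monomials $r^\alpha$, $|\alpha|\leq q$, form a basis of each, and shows that the relations \eqref{rel} (equivalently the explicit relations among the $r_i$ obtained as in \eqref{rel explic 1} and \eqref{rel explic other}) generate all relations.

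Finally, for the degree assertion: each $r_i$ is elementary symmetric of degree $i$ in the $x_j^2$, hence of degree $2i$ as a polynomial in the $x_j$, and $\alpha\colon U(\frk)\to C(\frp)$ doubles degrees by Proposition \ref{prop al k}, so the inherited filtration on $C(\frp)^K$ and the gradation on $(\twedge\frp)^K$ assign $\deg r_i=4i$. The identification $(\twedge\frp)^K\cong H^*(\Gr_p(\bbH^{p+q}),\bbC)$ then follows from \eqref{coho sym sp} and Proposition \ref{prop real grass}. I do not expect any serious obstacle, as the whole argument is a transcription of the type A case; the one point demanding care is to confirm that the substitution $x_j\mapsto x_j^2$ does not disturb the leading-term analysis in the Cramer's-rule induction, which it does not, since that analysis is carried out degree-by-degree in the $r_i$ themselves.
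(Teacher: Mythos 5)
Your proposal is correct and follows essentially the same route as the paper: the paper's proof of this theorem is literally ``the same as the proof of Theorem \ref{gen rel basis}'', with the degree statement obtained exactly as you say (each $r_i$ has degree $2i$ in the $x_j$ and $\alpha$ doubles degrees, giving $\deg r_i=4i$). Your packaging via surjections $\frH(p,q;c)\twoheadrightarrow C(\frp)^K$ and $\frH(p,q;0)\twoheadrightarrow(\twedge\frp)^K$ plus the $\binom{p+q}{p}$ dimension count is just a mild reorganization of the same spanning-plus-counting argument, so there is nothing to add.
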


\pf
The same as the proof of Theorem \ref{gen rel basis}. (The statement about degrees follows from the fact that $r_i$ is of degree $2i$ as a polynomial in the variables $x_j$, and from the fact that the map $\alpha:U(\frk)\to C(\frp)$ doubles the degree.) 
\epf

\begin{rem}
\label{rmk schur B/BxB}
    As in Remark \ref{rem schur}, we can replace the monomials in the $r_i$ by the Schur polynomials $s_\lambda$ for $\lambda$ in the $p\times q$ box. This allows us to write the multiplication table in the usual way.
\end{rem}

\subsection{The cases $G/K=\SO(k+m)/S(\OO(k)\times \OO(m))$}
\label{subsec SOk+m}

If $(k,m)=(2p,2q)$ or $(k,m)=(2p,2q+1)$ then $G$ and $K$ have equal rank, so $C(\frp)^K=\Proj(S)$ and $(\twedge\frp)^K=\gr \Proj(S)$. 

If $(k,m)=(2p,2q+1)$, then by Proposition \ref{w sok+m}, $W_G=B_{p+q}$ and $W_K=B_p\times B_q$, so $C(\frp)^K$ and $(\twedge\frp)^K$ are described by Theorem \ref{gen rel basis B/BxB}. 

If $(k,m)=(2p,2q)$, then by Proposition \ref{w sok+m}, $W_G=D_{p+q}$ and $W_K=S(B_p\times B_q)$. The invariants in $S(\frt)$ under $B_p\times B_q\supset W_K$ are generated by the symmetric functions $r_1,\dots,r_p$ of $x_1^2,\dots,x_p^2$ and  the symmetric functions $s_1,\dots,s_q$ of $x_{p+1}^2,\dots,x_{p+q}^2$, while the invariants under $D_p\times D_q\subset W_K$ are generated by $r_1,\dots,r_{p-1}$, $s_1,\dots,s_{q-1}$, and the Pfaffians $\bar r_p=x_1\dots x_p$, $\bar s_q=x_{p+1}\dots x_{p+q}$ (see \cite[p.{68}]{HumphreysRefBook}).
It follows that the invariants under $W_K$ are generated by 
\[
r_1,\dots,r_p;\ s_1,\dots,s_q;\ \bar r_p\bar s_q.
\]
Of course, these generators are not independent, as $(\bar r_p\bar s_q)^2=r_ps_q$. 

Since our algebras $C(\frp)^K$ and $(\twedge\frp)^K$ are quotients of $\bbC[\frt^*]^{W_K}$ by the ideal generated by $\bbC[\frt^*]^{W_G}_\rho$ respectively $\bbC[\frt^*]^{W_G}_+$, and since $\bar r_p\bar s_q=x_1\dots x_{p+q}$ is $W_G$-invariant, we can remove $\bar r_p\bar s_q$ from the list of generators. (Note that the value of $\bar r_p\bar s_q$ at $\rho$ is 0, since 0 is a coordinate of $\rho$.) It follows that the algebras $C(\frp)^K$ and $(\twedge\frp)^K$ are again described by Theorem \ref{gen rel basis B/BxB}.

Finally, suppose that $(k,m)=(2p+1,2q+1)$. 
This is an unequal (but almost equal) rank case and as we saw in Subsection \ref{subsec so odd}, 
the fundamental Cartan subalgebra is $\frh=\frt\oplus\fra\cong\bbC^{p+q+1}$ with 
\[
\frt=\{(t_1,\dots t_{p+q},0)\bbar t_i\in\bbC\};\qquad \fra=\{(0,\dots,0,a)\bbar a\in\bbC\}.
\]
The root system $\Delta(\frg,\frt)$ is $B_{p+q}$ while  $\Delta(\frk,\frt)=B_p\times B_q$. 
(Note that $\Delta(\frg,\frh)$ is $D_{p+q+1}$.) 

By Corollary \ref{cor o/oo} (3), $C(\frp)^K=C(\Cal P_\fra)\otimes \Proj(S)$; since $\fra$ is one-dimensional, $C(\Cal P_\fra)$ is two-dimensional, spanned by 1 and a generator $e$ squaring to $1$. Likewise, $(\twedge\frp)^K=\twedge\Cal P_\fra\otimes \Proj(S)$, with $\twedge\Cal P_\fra$ spanned by 1 and by a generator $e$ squaring to 0. 

We know from Proposition \ref{w sok+m} that $W_G=B_{p+q}$ and $W_K=B_p\times B_q$. It follows that the algebras $\Proj(S)$ and $\gr\Proj(S)$ are described by Theorem \ref{gen rel basis B/BxB}, with notation given by that theorem and the text above it. 

\begin{thm}
\label{gen rel basis SOk+m} 
Let $G/K=\SO(k+m)/S(\OO(k)\times \OO(m))$.

{\rm (a)} If $(k,m)=(2p,2q)$ or $(k,m)=(2p,2q+1)$, then the algebra $C(\frp)^K$ is isomorphic to the algebra $\frH(p,q;c)$ of Definition \ref{def alg h}, with generators $r_1,\dots,r_p$ as above, and with $c=(t_1(\rho),\dots,t_{p+q}(\rho))$. The algebra $(\twedge\frp)^K$ is isomorphic to $\frH(p,q;0)$. 
In other words,
\[ 
\begin{aligned}
C(\frp)^K &=
\frH(p,q;c) = \frac{\bbC[r_1,\dots,r_p; s_1,\dots,s_q]}{\Bigl(\sum_{i,j\geq 0;\ i+j=k}r_is_j=t_k(\rho)\Bigr)} \\
(\twedge\frp)^K &=
\frH(p,q;0) 
= \frac{\bbC[r_1,\dots,r_p; s_1,\dots,s_q]}{\Bigl(\sum_{i,j\geq 0;\ i+j=k}r_is_j=0\Bigr)} 
\end{aligned}
\]
The latter algebra is isomorphic to $H^*(\Gr_k(\bbR^{k + m}), \bbC)$.

Both algebras can be identified with the space 
\[
\bbC[r_1,\dots,r_p]_{\leq q}.
\]
The filtration degree of $C(\frp)^K$ inherited from $C(\frp)$, and the gradation degree of $(\twedge\frp)^K$ inherited from $\twedge\frp$, are obtained by setting $\deg r_i=4i$ for $i=1,\dots,p$.

{\rm (b)} If $(k,m)=(2p+1,2q+1)$, then the algebra $C(\frp)^K$ contains the algebra $\frH(p,q;c)$ as in (a), and an additional generator $e$ squaring to $1$. 
The algebra $(\twedge\frp)^K$ contains the algebra $\frH(p,q;0)$ as in (a), and an additional generator $e$ squaring to 0. 
$$ 
\begin{aligned}
C(\frp)^K &= \frac{\bbC[r_1,\dots,r_p; s_1,\dots,s_q; e]}{\Bigl(\sum_{i,j\geq 0;\ i+j=k}r_is_j=t_k(\rho),\, e^2 = 1 \Bigr)} \\
(\twedge\frp)^K & 
= \frac{\bbC[r_1,\dots,r_p; s_1,\dots,s_q; e]}{\Bigl(\sum_{i,j\geq 0;\ i+j=k}r_is_j=0 ,\, e^2 = 0 \Bigr)} 
\end{aligned}
$$
The latter algebra is isomorphic to
$H^*(\Gr_k(\bbR^{k+m}), \bbC))$.

Each of the algebras can be identified with 
\[
\bbC[r_1,\dots,r_p]_{\leq q} \oplus \bbC[r_1,\dots,r_p]_{\leq q}\, e.
\]
The filtration degree of $C(\frp)^K$ inherited from $C(\frp)$, and the gradation degree of $(\twedge\frp)^K$ inherited from $\twedge\frp$, are obtained by setting $\deg r_i=4i$ for $i=1,\dots,p$ and $\deg e=2p+2q+1$.
\end{thm}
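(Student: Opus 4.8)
The plan is to assemble Theorem \ref{gen rel basis SOk+m} from the case analysis of Subsections \ref{subsec so/soo} and \ref{subsec SOk+m} together with Theorem \ref{gen rel basis B/BxB}. The three cases are distinguished by the parities of $k$ and $m$, and in each of them Proposition \ref{w sok+m} already determines the relevant group-theoretic Weyl groups $W_G$ and $W_K$, while Corollary \ref{cor o/oo} presents $C(\frp)^K$ and $(\twedge\frp)^K$ as quotients (up to the Clifford/exterior factor on $\bbC e$ in the almost equal rank case) of $\bbC[\frt^*]^{W_K}$ by the ideals generated by the $W_G$-invariants vanishing at $\rho$, respectively at $0$.

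First I would treat part (a). For $(k,m)=(2p,2q+1)$ one has $W_G=B_{p+q}$ and $W_K=B_p\times B_q$, which is exactly the situation of Theorem \ref{gen rel basis B/BxB}, so nothing further is needed. For $(k,m)=(2p,2q)$ one has $W_G=D_{p+q}$ and $W_K=S(B_p\times B_q)$; here $\bbC[\frt^*]^{W_K}$ carries the extra generator $\bar r_p\bar s_q=x_1\cdots x_{p+q}$, but this element is $D_{p+q}$-invariant (only even sign changes occur in $D_{p+q}$, so the full product of coordinates is fixed) and vanishes both at $\rho=(p+q-1,\dots,1,0)$ and at $0$. Hence it lies in the ideal we quotient by, so its image is zero in both $C(\frp)^K$ and $(\twedge\frp)^K$; removing it leaves the generators $r_1,\dots,r_p,s_1,\dots,s_q$ with the relations \eqref{rel}, and Theorem \ref{gen rel basis B/BxB} applies verbatim, including the identification with $\bbC[r_1,\dots,r_p]_{\leq q}$. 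The degree claim $\deg r_i=4i$ follows since $r_i$ has degree $2i$ in the $x_j$ and $\alpha:U(\frk)\to C(\frp)$ doubles degrees.

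For part (b), the almost equal rank case $(k,m)=(2p+1,2q+1)$, I would invoke Corollary \ref{cor o/oo}(3): $C(\frp)^K\cong C(\bbC e)\otimes\Proj(S)$ and $(\twedge\frp)^K\cong\twedge\bbC e\otimes\gr\Proj(S)$, with $e$ of degree $2p+2q+1$ squaring to $1$ (respectively $0$) and commuting with the commutative factor $\Proj(S)$. Since $W_G=B_{p+q}$ and $W_K=B_p\times B_q$ here too, Theorem \ref{gen rel basis B/BxB} identifies $\Proj(S)$ with $\frH(p,q;c)$ and $\gr\Proj(S)$ with $\frH(p,q;0)$, where $c=(t_1(\rho),\dots,t_{p+q}(\rho))$ and $\rho=(p+q,\dots,2,1)$. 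Tensoring with $C(\bbC e)\cong\bbC[e]/(e^2-1)$ respectively $\twedge\bbC e\cong\bbC[e]/(e^2)$ then yields the displayed presentations, the basis $\bbC[r_1,\dots,r_p]_{\leq q}\oplus\bbC[r_1,\dots,r_p]_{\leq q}\,e$, and the stated degrees. In all cases the graded algebra $(\twedge\frp)^K$ is identified with $H^*(\Gr_k(\bbR^{k+m}),\bbC)$ by combining the general identification of the de Rham cohomology of $G/K$ with $(\twedge\frp)^K$ from Subsection \ref{subsec coho symm general} with the diffeomorphism $\Gr_k(\bbR^{k+m})\cong\SO(k+m)/S(\OO(k)\times\OO(m))$ of Proposition \ref{prop real grass} and the discussion around it; in the almost equal rank case this also yields the conjecture of Casian--Kodama \cite{Casian.Kodama.2013}.

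I do not expect a serious obstacle, since the theorem is essentially a repackaging of earlier results; the one step needing genuine care is the $(2p,2q)$ subcase of part (a). There I must verify that $\bbC[\frt^*]^{S(B_p\times B_q)}$ is indeed generated by $r_1,\dots,r_p$, $s_1,\dots,s_q$ and $\bar r_p\bar s_q$ (using the classical description of invariants of reflection groups, \cite[p.~68]{HumphreysRefBook}), and that deleting $\bar r_p\bar s_q$ does not enlarge the quotient, so that the dimension count $|W^1_{G,K}|=\binom{p+q}{p}$ underpinning the monomial basis is preserved.
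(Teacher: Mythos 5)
Your proposal is correct and follows essentially the same route as the paper: the paper's proof of this theorem is precisely the assembly you describe, namely the Weyl group identifications of Proposition \ref{w sok+m}, the observation that in the $(2p,2q)$ case the extra $W_K$-invariant $\bar r_p\bar s_q$ is $W_G$-invariant and vanishes at $\rho$ and at $0$ so it can be dropped, reduction of all three cases to Theorem \ref{gen rel basis B/BxB}, and Corollary \ref{cor o/oo}(3) supplying the tensor factor $C(\bbC e)$ (respectively $\twedge\bbC e$) in the almost equal rank case. The point you flag at the end (that removing $\bar r_p\bar s_q$ does not change the quotient, with the dimension count $|W^1_{G,K}|=\binom{p+q}{p}$ guaranteeing no further relations) is exactly the justification used in the paper.
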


\pf
This follows from the discussion above and from Corollary \ref{cor o/oo}.
\epf

\begin{rem}
\label{rmk schur SOk+m}
    As in Remark \ref{rem schur}, we can replace the monomials in the $r_i$ by the Schur polynomials $s_\lambda$ for $\lambda$ in the $p\times q$ box. This allows us to write the multiplication table in the usual way.
\end{rem}

\subsection{The case $G/K=\Sp(n)/\UU(n)$}
\label{subsec C/A}

Since $G$ and $K$ have equal rank, $\Proj(S)=C(\frp)^K$ and $\gr \Proj(S)=(\twedge\frp)^K$.

Since $G$ and $K$ are both connected, the Weyl groups are $W_G=W_\frg=B_n$ and $W_K=W_\frk=A_{n - 1}$. In other words, $W_K$ consists of the permutations of the variables $x_1,\dots,x_n$, while $W_G$
consists of permutations and sign changes of $x_1,\dots,x_n$. 
The set $W^1_{G,K}$ has $2^n$ elements and can be identified with the sign changes.
The algebra $S(\frt)^{W_K}$ is generated by
\[
r_1=x_1+\dots+x_n,\quad r_2=x_1x_2+x_1x_3+\dots + x_{n-1}x_n,\ \dots,\quad  r_n=x_1x_2\dots x_n, 
\]
and $S(\frt)^{W_G}$ is generated by
\[
t_1=x_1^2+\dots+x_n^2,\quad t_2=x_1^2x_2^2+\dots+x_{n-1}^2x_n^2,\ \dots,\  t_n=x_1^2x_2^2\dots x_n^2.
\]
To write down the relations coming from $S(\frt)^{W_G}$, let $z$ be a formal variable and  note that
\begin{multline*}
\sum_{k=0}^n (-1)^k t_k z^{2k}=\prod_{k=1}^n (1-x_k^2z^2)=\prod_{i=1}^n (1-x_iz)\prod_{i=1}^n (1+x_iz) \\
= \left(\sum_{i=0}^n(-1)^ir_iz^i\right)\left(\sum_{j=0}^n r_jz^j\right)=\sum_{k=0}^n\ \left(\sum_{i+j=2k}(-1)^ir_ir_j\right)z^{2k}.
\end{multline*}
It follows that the relations are
\eq
\label{rel C/A}
\sum_{i+j=2k} (-1)^ir_ir_j=(-1)^kt_k,\quad k=1,\dots,n.
\eeq
Equivalently,
\eq
\label{rel C/A bis}
r_k^2=t_k+2r_{k-1}r_{k+1}-2r_{k-2}r_{k+2}+\dots,\quad k=1,\dots,n,
\eeq
where as usual we set $r_0=1$ and $r_i=0$ for $i>n$.

\begin{thm}
\label{thm basis C/A}
For $G/K=\Sp(n)/\UU(n)$, the algebras $C(\frp)^K$ and $(\twedge\frp)^K$ are both generated by $r_1,\dots,r_p$ (or more precisely, their images in the respective quotients), with relations generated by \eqref{rel C/A bis} 
with $ t_k$ replaced by $t_k(\rho) $ for $C(\frp)^K$, and by $0$ for $(\twedge\frp)^K$.

In other words,
$$ 
\begin{aligned}
C(\frp)^K &=
\frac{\bbC[r_1,\dots,r_n]}{\Bigl(\sum_{i+j=2k} (-1)^ir_ir_j=(-1)^k t_k(\rho)\ (1 \leq k \leq n)\Bigr)} \\
(\twedge\frp)^K & 
= \frac{\bbC[r_1,\dots,r_n]}{\Bigl(\sum_{i+j=2k} (-1)^ir_ir_j= 0\  (1 \leq k \leq n)\Bigr)} \\
\end{aligned}
$$
The latter algebra is isomorphic to $H^*(\LGr(\bbC^{2n}), \bbC)$.

A basis for each of the algebras is represented by the monomials
\eq
\label{basis C/A}
r_1^{\eps_1}r_2^{\eps_2}\dots r_n^{\eps_n},\quad \eps_i\in\{0,1\}.
\eeq
The filtration degree of $C(\frp)^K$ inherited from $C(\frp)$, and the gradation degree of $(\twedge\frp)^K$ inherited from $\twedge\frp$, are obtained by setting $\deg r_i=2i$ for $i=1,\dots,n$.
\end{thm}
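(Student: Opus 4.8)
The plan is to follow the template set by Theorem \ref{gen rel basis}, i.e.\ to work inside the polynomial algebra $\bbC[\frt^*]^{W_K} = \bbC[r_1,\dots,r_n]$ and exploit the inclusions \eqref{ker incl}, so that the only serious content is to show that the displayed relations \eqref{rel C/A bis} already cut down the algebra to the correct dimension $2^n = |W^1_{G,K}|$. Since $G=\Sp(n)$ and $K=\UU(n)$ are both connected and of equal rank, Corollary \ref{cor alpha surj} gives that $\alpha_\frk$ is onto $C(\frp)^K$ with $\ker\alpha_\frk = \langle \bbC[\frt^*]^{W_G}_\rho\rangle$, and $\gr\alpha_\frk$ is onto $(\twedge\frp)^K$ with kernel $\langle \bbC[\frt^*]^{W_G}_+\rangle$; the computation preceding the theorem identifies the generators of these ideals precisely as the relations \eqref{rel C/A} $\Longleftrightarrow$ \eqref{rel C/A bis} (with $t_k$ replaced by $t_k(\rho)$, resp.\ by $0$). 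So $C(\frp)^K$ and $(\twedge\frp)^K$ are certain quotients of $\bbC[r_1,\dots,r_n]$ by ideals \emph{containing} the ideal generated by \eqref{rel C/A bis}; I must prove there is nothing more.

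The key step is therefore: \emph{the monomials \eqref{basis C/A}, $r_1^{\eps_1}\cdots r_n^{\eps_n}$ with $\eps_i\in\{0,1\}$, span $\bbC[r_1,\dots,r_n]$ modulo the ideal generated by \eqref{rel C/A bis}}. I would run an induction on degree, using \eqref{rel C/A bis} as a rewriting rule. Read from \eqref{rel C/A bis}, the relation for index $k$ expresses $r_k^2$ as $t_k$ (a constant, or $0$) plus a $\bbZ$-linear combination of products $r_{k-i}r_{k+i}$ with $i\geq 1$; in particular every such product either has a strictly lower ``weighted degree'' $\sum i\eps_i$ pattern or can be further reduced, because $r_{k-i}r_{k+i}$ with $k+i>n$ vanishes and otherwise can itself be attacked. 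The cleanest way to make this terminate is to introduce the standard degree $\deg r_i = i$ (the ungraded polynomial degree times $2$, which matches the filtration statement) together with a secondary lexicographic order on exponent vectors, observe that in \eqref{rel C/A bis} the leading term $r_k^2$ is strictly larger than every $r_{k-i}r_{k+i}$ appearing on the right, and conclude by Gröbner/standard-monomial reasoning that any monomial containing some $r_k$ to a power $\geq 2$ can be rewritten as a $\bbC$-linear combination of monomials of smaller order. Iterating, the square-free monomials \eqref{basis C/A} span the quotient; since there are exactly $2^n$ of them and $\dim C(\frp)^K = \dim(\twedge\frp)^K = |W^1_{G,K}| = 2^n$, they form a basis, and — exactly as in the proof of Theorem \ref{gen rel basis} — there can be no relations beyond those in $\langle$\eqref{rel C/A bis}$\rangle$, for otherwise the dimension would drop below $2^n$.

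I expect the main obstacle to be verifying that the rewriting procedure genuinely terminates and is confluent enough to guarantee \emph{spanning} by the square-free monomials — i.e.\ setting up an order on monomials in $r_1,\dots,r_n$ for which $r_k^2$ is the leading term of the $k$-th relation in \eqref{rel C/A bis} and every other term is strictly smaller. One has to be a little careful because $r_{k-1}r_{k+1}$ has the same total degree $2k$ as $r_k^2$, so total degree alone does not order the relation; the fix is to break ties with the reverse-lexicographic order on $(\eps_1,\dots,\eps_n)$ (as in Theorem \ref{gen rel basis}), under which $r_k^2 = r_k^2$ beats $r_{k-1}r_{k+1}$ beats $r_{k-2}r_{k+2}$, etc. Once the order is fixed the reduction is routine, and the dimension count closes the argument for both $C(\frp)^K$ and $(\twedge\frp)^K$ simultaneously. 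The final sentence about filtration/gradation degrees follows verbatim from the fact that $r_i$ has polynomial degree $i$ in the $x_j$ and that $\alpha:U(\frk)\to C(\frp)$ doubles degrees, exactly as in Theorems \ref{gen rel basis} and \ref{gen rel basis B/BxB}. The identification of $(\twedge\frp)^K$ with $H^*(\LGr(\bbC^{2n}),\bbC)$ is then immediate from Corollary \ref{coho lagr grass} together with \eqref{coho sym sp}.
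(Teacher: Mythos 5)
Your proposal is correct in substance and reaches the same endgame as the paper (reduce every monomial to the square-free ones \eqref{basis C/A}, then use the count $2^n=|W^1_{G,K}|$ to get a basis and to rule out relations beyond those generated by \eqref{rel C/A bis}), but the key termination step is genuinely different. The paper inducts on the degree in the $r_i$ (the number of factors): the relations for $k=1$ and $k=n$ lower that degree outright, and for $1<k<n$ it introduces the weight $F(d)=\sum_k f(k)d_k$ with $f$ concave (e.g.\ $f(k)=k(n+1-k)$) and notes that the substitution $d\mapsto d+e_{k-i}-2e_k+e_{k+i}$ changes $F$ by $f(k-i)-2f(k)+f(k+i)<0$; since $F$ takes positive integer values, the rewriting stops at square-free monomials. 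You instead choose a monomial order making $r_k^2$ the leading term of the $k$-th relation and run a Gr\"obner/standard-monomial reduction; as you observe, confluence is not needed --- termination plus the dimension count suffices, exactly as in Theorem \ref{gen rel basis}. Your route is uniform in $k$ and plugs into standard machinery; the paper's concave-weight trick avoids having to check that the tie-break is a multiplicative well-order and disposes of the middle relations by a one-line concavity computation. (Your appeal to Corollary \ref{cor alpha surj} for surjectivity and the kernel is fine; the paper itself only uses the inclusions \eqref{ker incl} and recovers equality as a by-product. The degree statement and the identification with $H^*(\LGr(\bbC^{2n}),\bbC)$ via Corollary \ref{coho lagr grass} and \eqref{coho sym sp} are as you say.)

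One detail to repair in your order: if the primary grading is the weighted degree $\deg r_i=i$, then for $2k\le n$ the relation \eqref{rel C/A bis} also contains the term $2r_{2k}$ (coming from $r_0r_{2k}$), which has the same weighted degree as $r_k^2$; under the ``reverse lexicographical'' convention actually used in the proof of Theorem \ref{gen rel basis} (monomials concentrated on small indices come first, the leading term being the last), $r_{2k}$ comes after $r_k^2$, so $r_k^2$ would not be the leading term of that relation. Either take the primary degree to be the degree in the $r$-variables, as the paper does (then $t_k$ and $r_{2k}$ are of strictly lower degree and your tie-break only needs $r_k^2>r_{k-1}r_{k+1}>r_{k-2}r_{k+2}>\cdots$, which it does give), or use the standard graded reverse lexicographic order, under which $r_k^2>r_{k-i}r_{k+i}$ for all $i\ge 1$ including $i=k$. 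With either choice your argument closes.
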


\pf
Note that the cardinality of the set \eqref{basis C/A} is correct, $2^n$, so it is enough to show that every monomial can be written as a linear combination of the monomials in \eqref{basis C/A}.

We proceed by induction on degree. If the degree is 0, the only possible monomial is 1, and it is on the list \eqref{basis C/A}. If an arbitrary monomial contains either $r_1$ or $r_n$ with degree $\geq 2$, then we can use the relations \eqref{rel C/A bis} for $k=1$ ($r_1^2=t_1+r_2$) or for $k=n$ ($r_n^2=t_n$) to write this monomial as a combination of smaller degree monomials.

Assume now that a monomial 
\eq
\label{monos C/A}
x^d = x_1^{d_1}\dots x_n^{d_n},\quad d_i\in\bbZ_+
\eeq
has $d_k\geq 2$, for some $1<k<n$.
We identify monomials \eqref{monos C/A} with the strings of exponents $(d_1,\dots,d_n)\in\bbZ_+^n$. Let $f:[1,n]\to\bbR^+$ be a concave function taking integer values on $[1,n]\cap\bbZ$; for example, we can take $f(x)=x(n+1-x)$. We define $F:\bbZ_+^n\to\bbZ_+$ by 
\[
F(d_1,\dots,d_n)=\sum_{k=1}^n
f(k)d_k.
\]
By relations \eqref{rel C/A bis}, the monomial $ x^d$ is, up to lower degree monomials, equal to a linear combinations of monomials with exponents of the form
\[
(\dots, d_{k-i}+1,\dots d_k-2,\dots,d_{k+i}+1,\dots)=d+e_{k-i}-2e_k+e_{k+i},
\]
with $i$ a positive integer such that $k-i\geq 1$ and $k+i\leq n$. Here $e_1,\dots,e_n$ is the usual standard basis of $\bbR^n$.

We now have
\begin{multline*}
    F(d+e_{k-i}-2e_k+e_{k+i})-F(d)=\\
    f(k-i)[(d_{k-i}+1)-d_{k-i}] +f(k)[(d_k-2)-d_k]+f(k+i)[(d_{k+i)}+1)-d_{k+i}]=\\
    f(k-i)-2f(k)+f(k+i),
\end{multline*}
which is negative since $f$ is concave. So all the monomials in the expression for $d$ using the relations have values of $F$ lower than $F(d)$. 

We can repeat this procedure as long as we have some $d_k\geq 2$, $1<k<n$ (recall that we already handled the cases $d_1\geq 2$ and $d_n\geq 2$). Since the value of $F$ gets strictly smaller each time, and since these values are positive integers, the process has to stop, meaning that there are no $d_k\geq 2$, hence we have arrived at a monomial of the form \eqref{basis C/A}.

The statement about degrees follows from the fact that $r_i$ is of degree $i$ as a polynomial in the variables $x_j$, and that the map $\alpha:U(\frk)\to C(\frp)$ doubles the degree. 
\epf

\subsection{The case $G/K=\SO(2n)/\UU(n)$}
\label{subsec D/A}

Since $G$ and $K$ have equal rank, $\Proj(S)=C(\frp)^K$ and $\gr \Proj(S)=(\twedge\frp)^K$.

Since $G$ and $K$ are both connected, the Weyl groups are $W_G=W_\frg=D_n$ and $W_K=W_\frk=A_{n - 1}$, i.e., $W_K$ consists of the permutations of the variables $x_1,\dots,x_n$, while $W_G$
consists of permutations and sign changes of an even number of variables $x_1,\dots,x_n$. 
The set $W^1_{G,K}$ has $2^{n-1}$ elements and can be identified with the sign changes of an even number of variables.
The algebra $S(\frt)^{W_K}$ is generated by
\[
r_1=x_1+\dots+x_n,\quad r_2=x_1x_2+x_1x_3+\dots + x_{n-1}x_n,\ \dots,\quad  r_n=x_1x_2\dots x_n, 
\]
and $S(\frt)^{W_G}$ is generated by
\[
t_1=x_1^2+\dots+x_n^2,\ \dots,\quad t_{n-1}=x_1^2\dots x_{n-1}^2+\dots+x_2^2\dots x_n^2,\quad \bar t_n=x_1x_2\dots x_n.
\]
We set $t_n=\bar t_n^2$. The relations are the same as \eqref{rel C/A} or \eqref{rel C/A bis} (where as before, $t_k$ is replaced by $t_k(\rho)$ for $C(\frp)^K$ and by $0$ for $(\twedge\frp)^K$), 

except that for $k=n$ we have $r_n=\bar t_n$ instead of $r_n^2=t_n$. This last equation enables us to eliminate $r_n$ from the list of generators. Thus we have

\begin{thm}
\label{thm basis D/A}
For $G/K=\SO(2n)/\UU(n)$, the algebras $C(\frp)^K$ and $(\twedge\frp)^K$ are both generated by $r_1,\dots,r_{n-1}$ (or more precisely, their images in the respective quotients). The relations are generated by \eqref{rel C/A bis}  with $t_k$ replaced by $t_k(\rho)$ for $C(\frp)^K$ and by $0$ for $(\twedge\frp)^K$,
and with the last relation replaced by $r_n=\bar t_n$. 

In other words,
$$ 
\begin{aligned}
C(\frp)^K &=
\frac{\bbC[r_1,\dots,r_n]}{\Bigl(\sum_{i+j=2k} (-1)^ir_ir_j=(-1)^k t_k(\rho)\; (1 \leq k \leq n), r_n = \bar{t}_n(\rho) \Bigr)} \\
(\twedge\frp)^K &
= \frac{\bbC[r_1,\dots,r_n]}{\Bigl(\sum_{i+j=2k} (-1)^ir_ir_j= 0\; (1 \leq k \leq n), r_n = 0 \Bigr)} \\
\end{aligned}
$$
The latter algebra is isomorphic to
$H^*(\OLGr^+(\bbC^{2n}), \bbC)$. 

A basis for each of the algebras is represented by the monomials
\eq
\label{basis D/A}
r_1^{\eps_1}r_2^{\eps_2}\dots r_{n-1}^{\eps_{n-1}},\quad \eps_i\in\{0,1\}.
\eeq
The filtration degree of $C(\frp)^K$ inherited from $C(\frp)$, and the gradation degree of $(\twedge\frp)^K$ inherited from $\twedge\frp$, are obtained by setting $\deg r_i=2i$ for $i=1,\dots,n-1$.
\end{thm}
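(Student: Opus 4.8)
The plan is to reduce the statement to the already‑proved Theorem~\ref{thm basis C/A}, the only genuinely new point being the elimination of $r_n$. Since $G=\SO(2n)$ and $K=\UU(n)$ are connected and of equal rank, $C(\frp)^K=\Proj(S)$ and $(\twedge\frp)^K=\gr\Proj(S)$, and by Corollary~\ref{cor alpha surj} together with the discussion in Subsection~\ref{subsec coho symm general} these are presented as $\bbC[\frt^*]^{W_K}/\langle\bbC[\frt^*]^{W_G}_\rho\rangle$ and $\bbC[\frt^*]^{W_K}/\langle\bbC[\frt^*]^{W_G}_+\rangle$, with $W_K=W_\frk=A_{n-1}$ acting by permutations of $x_1,\dots,x_n$ and $W_G=W_\frg=D_n$ by permutations and even sign changes. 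First I would record that $\bbC[\frt^*]^{W_K}$ is the polynomial ring on the elementary symmetric functions $r_1,\dots,r_n$ of the $x_i$, while $\bbC[\frt^*]^{W_G}$ is the polynomial ring on $t_1,\dots,t_{n-1}$ (elementary symmetric in the $x_i^2$) and the Pfaffian $\bar t_n=x_1\cdots x_n$, with $t_n=\bar t_n^2$ (standard, \cite[p.68]{HumphreysRefBook}). The key observation is that $\bar t_n$ is literally the $n$‑th elementary symmetric function, i.e.\ $\bar t_n=r_n$, so the relation $\bar t_n=\bar t_n(\rho)$ reads $r_n=\bar t_n(\rho)$; and since $\rho=(n-1,n-2,\dots,1,0)$ for $D_n$ has $0$ as a coordinate, $\bar t_n(\rho)=0$ (and likewise $t_n(\rho)=0$). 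Hence in both algebras the Pfaffian relation simply says $r_n=0$, removing $r_n$ from the generators.

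Next I would derive the remaining relations exactly as in the paragraph preceding Theorem~\ref{thm basis C/A}: the generating‑function identity $\sum_k(-1)^k t_k z^{2k}=\bigl(\sum_i(-1)^i r_i z^i\bigr)\bigl(\sum_j r_j z^j\bigr)$ turns the relations $t_k=t_k(\rho)$ (resp.\ $t_k=0$) into \eqref{rel C/A bis} with $t_k$ replaced by $t_k(\rho)$ (resp.\ by $0$), for $k=1,\dots,n$, and for $k=n-1$, $k=n$ these are consistent with and subsumed by $r_n=0$ (the $k=n$ relation becomes $0=t_n(\rho)=0$, and in the $k=n-1$ relation the cross term $2r_{n-2}r_n$ drops, leaving $r_{n-1}^2=t_{n-1}(\rho)$). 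So each algebra is presented on $r_1,\dots,r_{n-1}$ by the relations in the statement. To identify the basis I would then count: $|W^1_{G,K}|=[W_G:W_K]=|D_n|/|A_{n-1}|=2^{n-1}$, which equals both $\dim C(\frp)^K=\dim(\twedge\frp)^K$ and the number of monomials $r_1^{\eps_1}\cdots r_{n-1}^{\eps_{n-1}}$, $\eps_i\in\{0,1\}$; so it suffices to prove these monomials span, and then they automatically form a basis and the listed relations generate all relations.

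The spanning claim is proved by the same argument as for Theorem~\ref{thm basis C/A}, run on $\bbZ_+^{n-1}$ rather than $\bbZ_+^n$: one inducts on the total degree in the $r_i$, reduces a monomial in which $r_1$ or $r_{n-1}$ occurs to a power $\geq 2$ using \eqref{rel C/A bis} for $k=1$ resp.\ $k=n-1$ (note $r_{n-1}^2=t_{n-1}(\rho)$ now has no cross terms), and reduces a monomial with an interior $r_k$ ($1<k<n-1$) occurring to a power $\geq 2$ using \eqref{rel C/A bis}, where a concave weight function $F(d_1,\dots,d_{n-1})=\sum_k f(k)d_k$ (e.g.\ $f(x)=x(n-x)$, concave and positive on $[1,n-1]$) strictly decreases under each such replacement and forces termination. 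Finally the statement about degrees follows since $r_i$ has degree $i$ in the $x_j$ and the map $\alpha:U(\frk)\to C(\frp)$ doubles degrees, so $\deg r_i=2i$ for $i=1,\dots,n-1$; the isomorphism $(\twedge\frp)^K\cong H^*(\OLGr^+(\bbC^{2n}),\bbC)$ is already furnished by Corollary~\ref{coho onn} and Subsection~\ref{subsec coho symm general}. I expect the only step needing real care to be checking that, once $r_n=0$ is imposed, the generator $r_{n-1}$ plays the role of a ``boundary'' generator (its square reducing without cross terms), so that the termination/reduction argument of Theorem~\ref{thm basis C/A} carries over verbatim; everything else is bookkeeping plus the cited facts about $D_n$‑invariants and the value of $\rho$.
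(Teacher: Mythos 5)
Your proposal is correct and follows essentially the same route as the paper: the paper's proof is precisely "same as Theorem \ref{thm basis C/A}," with the elimination of $r_n$ via the Pfaffian relation $r_n=\bar t_n$ and the Weyl-group data ($W_G=D_n$, $W_K=A_{n-1}$, $|W^1_{G,K}|=2^{n-1}$) set up in the text preceding the theorem, and your adaptation of the reduction/concavity argument to $r_1,\dots,r_{n-1}$ is exactly what is intended. Your additional observation that $\bar t_n(\rho)=0$ (so the Pfaffian relation reads $r_n=0$ in both quotients) is correct and consistent with the paper's statement.
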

\pf
The same as the proof of Theorem \ref{thm basis C/A}.
\epf

\subsection{The group cases}
\label{subsec group}

In this subsection we consider the group cases $G\times G/\Delta G\cong G$, where $G$ is $\SO(n)$, or $\UU(n)$, or $\Sp(n)$. 

In each of the three cases, the complexified Lie algebra of $G\times G$ is $\frg\oplus\frg$ where $\frg$ is the complexified Lie algebra of $G$, $\frk$ is the diagonal subalgebra $\Delta\frg\cong\frg$ of $\frg\oplus\frg$, and $\frp$ is the antidiagonal subspace of $\frg\oplus\frg$, which is isomorphic to $\frg$ as a $\frg$-module.

Thus we are looking for the description of $C(\frg)^G=C(\frg)^\frg$ and of $(\twedge\frg)^G=(\twedge\frg)^\frg$. Thus we can use the results of \cite{K97} 
in the Clifford case and the well known Hopf-Koszul-Samelson Theorem in the exterior case \cite{cartan1951,Samelson41, koszul50} see \cite[p.{568}]{CCCvol3} for full bibliographic details.

to conclude
\begin{thm} \cite{K97}, \cite{Samelson41}
    \label{gen rel basis group}
The algebras $C(\frp)^K=C(\frg)^\frg$ 
and  $(\twedge\frp)^K=(\twedge\frg)^\frg$ are isomorphic to $C(\Cal P_\wedge(\frp))$, respectively $\twedge\Cal P_\wedge(\frp)$, where $\Cal P_\wedge(\fp)\cong\frh$ denotes the graded subspace defined in Definition \ref{def samspace}. 
The degrees are given in Table \ref{tab degrees prim}.
\end{thm}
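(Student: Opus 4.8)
The plan is to reduce both isomorphisms to classical structural theorems --- the Hopf--Koszul--Samelson theorem in the exterior case and Kostant's theorem \cite{K97} in the Clifford case --- and then to identify the abstract generating subspaces with $\cal P_\wedge(\frp)$ of Definition \ref{def samspace}.

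First I would treat the exterior algebra. As recalled in Subsection \ref{subsec group}, $\frp$ is the antidiagonal of $\frg\oplus\frg$ and is isomorphic to $\frg$ as a module over $\frk=\Delta\frg\cong\frg$; thus $(\twedge\frp)^K=(\twedge\frg)^\frg$ and $C(\frp)^K=C(\frg)^\frg$ (the extended Killing form on $\frp$ being that of $\frg$, up to the scalar $2$). The group cases are primary in the sense of Subsection \ref{subsec primary and aprim}: since $K=\Delta G$ is connected, $W_K=W_\frk$; and taking the Cartan subalgebra of $\frk=\Delta\frg$ to be $\Delta\frt$ inside the Cartan $\frt\oplus\frt$ of $\frg\oplus\frg$, the restrictions to $\Delta\frt$ of the roots of $\frg\oplus\frg$ form the root system $\Delta(\frg,\frt)$ (each root with multiplicity $2$), so $W_{\frg\oplus\frg,\frt}=W_\frg=W_\frk$. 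Hence Theorem \ref{thm prim and aprim alg} applies and gives $(\twedge\frp)^K=\twedge\cal P_\wedge(\frp)$. (Alternatively, the Hopf--Koszul--Samelson theorem says $(\twedge\frg)^\frg$ is an exterior algebra, and Corollary \ref{cor: gen orthog} then exhibits $\cal P_\wedge(\frp)$ as its generating subspace, using that the extended Killing form restricts non-degenerately to $(\twedge\frg)^\frg$ with distinct graded components orthogonal by Proposition \ref{hodge *}.) The degrees of $\cal P_\wedge(\frp)$ are those of $\cal P_\fra=\cal P_\frg^{-\theta}$ of Theorem \ref{th: cccstruct}, where $\theta$ is now the flip of the two copies of $\frg$: one may take $\cal P_{\frg\oplus\frg}=P\oplus P$ with $P$ a primitive subspace of $(\twedge\frg)^\frg$, so that $\cal P_\fra$ is the antidiagonal copy of $P$ and is graded-isomorphic to the primitive cohomology of $G$. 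These are exactly the classical degrees listed in Table \ref{tab degrees prim} (see \cite[Tables I--III]{CCCvol3}), and in particular $\dim\cal P_\wedge(\frp)=\rank\frg=\dim\frh$.

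Next the Clifford algebra. Kostant's theorem \cite{K97} asserts that $C(\frg)^\frg$ is itself a Clifford algebra. Its natural filtration inherited from $C(\frg)$ satisfies $\gr C(\frg)^\frg=(\twedge\frg)^\frg=\twedge\cal P_\wedge(\frp)$ by the previous paragraph, so the graded dimensions of the generating space of this Clifford algebra, hence its filtration degrees, coincide with those of $\cal P_\wedge(\frp)$. To see that the generating subspace may be taken to be a lift of $\cal P_\wedge(\frp)$ itself, I would rerun the Gram--Schmidt argument of Corollary \ref{cor: gen orthog} at the filtered level: starting from any Clifford-generating subspace, modifying it by elements of the square of the augmentation ideal preserves the generating property (as in Proposition \ref{prop: graded gens}) and, upon passing to $\gr$, the graded structure, so the process terminates at a subspace that is orthogonal (for the extended Killing form) to the square of the augmentation ideal and whose image in $\gr$ is precisely $\cal P_\wedge(\frp)$. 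This yields the filtered isomorphism $C(\frp)^K\cong C(\cal P_\wedge(\frp))$.

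I expect the last identification to be the main obstacle: Corollary \ref{cor: gen orthog} is stated for graded exterior algebras, whereas $C(\frg)^\frg$ is only a filtered Clifford algebra, so one must either carefully transport that Gram--Schmidt argument to the filtered setting, or instead extract from the construction in \cite{K97} that Kostant's generating (``Clifford transgression'') subspace is already orthogonal to the square of the augmentation ideal, together with a uniqueness statement for such a subspace. Either route is routine given the structural results already cited; no genuinely new computation is required, the explicit list of degrees being entirely contained in \cite[Tables I--III]{CCCvol3}.
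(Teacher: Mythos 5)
Your overall route is the same as the paper's: in the group cases $\frk\cong\frp\cong\frg$, so one quotes the Hopf--Koszul--Samelson theorem for $(\twedge\frg)^\frg$ and Kostant \cite{K97} for $C(\frg)^\frg$; your observation that the group case is primary in the sense of Subsection \ref{subsec primary and aprim}, so that Theorem \ref{thm prim and aprim alg} also covers the exterior statement, is a harmless variant of the paper's direct citation. The one place where you diverge is the Clifford case, and there you make the step look harder than it is while proposing a repair that does not quite work as written. Kostant's theorem in \cite{K97} is not merely that $C(\frg)^\frg$ is abstractly a Clifford algebra: its statement is that the primitive subspace $P\subset(\twedge\frg)^\frg$ --- which is exactly the orthocomplement of the decomposables, i.e.\ $\cal P_\wedge(\frp)$ of Definition \ref{def samspace}, viewed inside $C(\frg)^\frg$ via the Chevalley identification --- Clifford-generates $C(\frg)^\frg$, and the induced map $C(P)\to C(\frg)^\frg$ (with the restricted form on $P$) is an isomorphism. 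So the ``main obstacle'' you flag is already the content of the cited theorem, and the paper simply invokes it. By contrast, your proposed filtered Gram--Schmidt argument is not sound as stated: $C(\frg)^\frg$ with a nondegenerate form has no augmentation homomorphism, hence no ``square of the augmentation ideal'' at the Clifford level, and Proposition \ref{prop: graded gens} and Corollary \ref{cor: gen orthog} are statements about graded exterior algebras; moreover, knowing only that $\gr$ of some generating subspace equals $\cal P_\wedge(\frp)$ and that dimensions match does not by itself give an isometry of that subspace with $\cal P_\wedge(\frp)$, which is what $C(\frp)^K\cong C(\cal P_\wedge(\frp))$ requires. Your fallback --- read off from \cite{K97} that the generating subspace is the primitive space itself --- is the correct (and the paper's) route; with that substitution the argument is complete.
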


\subsection{The cases $G/K=\UU(2n)/\Sp(n)$}
\label{subsec A/C}
In these cases $K$ is connected, so $(\twedge\frp)^K=(\twedge\frp)^\frk$. Since these are unequal rank cases, we first describe the fundamental Cartan subalgebra $\frh=\frt\oplus\fra$. The situation is similar to the case $G/K=\UU(2n)/\OO(2n)$. 

The noncompact symmetric space corresponding to $\UU(2n)/\Sp(n)$ is $\GL(2n,\bbH)/\Sp(n)=\UU^*(2n)/\Sp(n)$. The following can be read off from the information about the classification of real forms in \cite{beyond}.

The fundamental Cartan subalgebra $\frh$ can be identified with $\bbC^{2n}$, with the Cartan involution acting by
$\theta(h_1,\dots,h_{2n})=(-h_{2n},\dots,-h_1)$.
 Hence
\begin{eqnarray*}
&\frt&=\{(h_1,\dots,h_n,-h_n,\dots,-h_1)\,\big|\,h_1,\dots,h_n\in\bbC\}\cong\bbC^n;\\ 
&\fra&=\{(h_1,\dots,h_n,h_n,\dots,h_1)\,\big|\,h_1,\dots,h_n\in\bbC\}\cong\bbC^n.
\end{eqnarray*}
If we now restrict the roots $\pm(\eps_i-\eps_j)$, $1\leq i<j\leq 2n$, to $\frt$, we get 
\[
\pm(\eps_i\pm\eps_j),\ 1\leq i<j\leq n;\quad\ 2\eps_i,\ 1\leq i\leq n.
\]
In other words, we have obtained the root system $C_n$. Since $\Delta(\frk,\frt)$ is also of type $C_n$ (but with smaller multiplicities), $W^1_{\frg,\frk}$ consists only of the identity. (This means that the spin module $S$ is primary, as already observed in \cite{han}.)  

So the algebras $\Proj(S)$ and $\gr\Proj(S)$ are both equal to $\bbC\cdot 1$, and
using Theorem \ref{thm prim and aprim alg} we get 
\begin{thm}
    \label{gen rel basis A/C}
    The algebra  $(\twedge\frp)^K$ is isomorphic to $\twedge(\Cal P_\wedge(\frp))$, where $\Cal P_\wedge(\frp)\cong\fra$ is defined in  Definition \ref{def samspace} and the degrees are given in Table \ref{tab degrees prim}.    
\end{thm}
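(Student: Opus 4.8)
The plan is to reduce the statement entirely to results already established in Section~\ref{subsec primary and aprim}, so that the only genuine content is verifying that $G/K=\UU(2n)/\Sp(n)$ lands in the primary case. First I would invoke the root-system computation carried out just above the theorem: restricting the roots $\pm(\eps_i-\eps_j)$ of $\frg$ to the toral part $\frt$ of the fundamental Cartan subalgebra $\frh=\frt\oplus\fra$ produces a root system of type $C_n$, and $\Delta(\frk,\frt)$ is also of type $C_n$ (with smaller multiplicities). Hence $W_{\frg,\frt}=W_\frk$, which is precisely the defining condition for $G/K$ to be primary. Since $K=\Sp(n)$ is connected, we also have $(\twedge\frp)^K=(\twedge\frp)^\frk$ and $W_K=W_\frk$, so no disconnectedness phenomena intervene; equivalently $W^1_{\frg,\frk}=\{e\}$ and $\Proj(S)=\gr\Proj(S)=\bbC$.

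Next I would apply Theorem~\ref{thm prim and aprim alg} in the primary case directly: it yields that the inclusion $\Cal P_\wedge(\frp)\hookrightarrow(\twedge\frp)^K$ extends to an isomorphism of graded algebras $(\twedge\frp)^K=\twedge\Cal P_\wedge(\frp)$. That disposes of the algebra-structure assertion.

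For the identification $\Cal P_\wedge(\frp)\cong\fra$ together with the degrees, I would combine the above with Theorem~\ref{th: cccstruct}: for the symmetric pair $(\frg,\frk)$ attached to $\UU(2n)/\Sp(n)$ it gives $(\twedge\frp)^\frk\cong\twedge\Cal P_\fra\otimes\im\lambda_\frk$ with $\Cal P_\fra=\Cal P_\frg^{-\theta}$ the Samelson subspace. Primality forces $\im\lambda_\frk=\bbC$, so $(\twedge\frp)^K\cong\twedge\Cal P_\fra$. Matching this with the isomorphism of the previous paragraph --- and recalling that $\Cal P_\wedge(\frp)$ is the canonical minimal generating subspace, namely the orthogonal complement of the square of the augmentation ideal as in Corollary~\ref{cor: gen orthog} --- forces $\Cal P_\wedge(\frp)$ and $\Cal P_\fra$ to have the same graded dimension. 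The degrees of $\Cal P_\fra=\Cal P_\frg^{-\theta}$ for this pair are then read off from \cite[Tables I--III, pp.~492--496]{CCCvol3}; they are $4p-3$ for $1\leq p\leq n$, collected in Table~\ref{tab degrees prim}. In particular $\dim\Cal P_\wedge(\frp)=n=\dim\fra$, which is what the notation $\Cal P_\wedge(\frp)\cong\fra$ is meant to record.

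Since almost all of the work is already done earlier, there is no substantial obstacle; the point requiring the most care is simply to confirm that the hypotheses of Theorems~\ref{thm prim and aprim alg} and~\ref{th: cccstruct} are genuinely met here --- that $W_{\frg,\frt}=W_\frk$ (primality) and that $K$ is connected --- after which the proof is a matter of assembling citations. The forthcoming transgression theorem of \cite{CGKP} would upgrade the isomorphism $\Cal P_\wedge(\frp)\cong\fra$ from a bare dimension count to a canonical identification, but that refinement is not needed for the present statement.
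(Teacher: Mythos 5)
Your proposal is correct and follows essentially the same route as the paper: verify via the restricted root system computation (both $\Delta(\frg,\frt)$ and $\Delta(\frk,\frt)$ of type $C_n$, $K=\Sp(n)$ connected, hence $W^1_{\frg,\frk}=\{e\}$ and the case is primary), then apply Theorem \ref{thm prim and aprim alg} together with Theorem \ref{th: cccstruct} and the degree tables of \cite{CCCvol3}. The extra remarks on the dimension count identifying $\Cal P_\wedge(\frp)\cong\fra$ are consistent with the paper's intent and add no new gap.
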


\bibliographystyle{alpha}
\bibliography{references}

\begin{thebibliography}{GHV76}

\bibitem[Bou74]{Bou}
Nicolas Bourbaki.
\newblock {\em Elements of mathematics. Algebra: Algebraic structures. Linear
  algebra}.
\newblock Elements of mathematics, 1. Addison-Wesley, 1974.
\newblock Translated from French.

\bibitem[BW00]{BW}
Armand Borel and Nolan Wallach.
\newblock {\em Continuous Cohomology, Discrete Subgroups, and Representations
  of Reductive Groups: Second Edition}.
\newblock Mathematical Surveys and Monographs. American mathematical society,
  2000.

\bibitem[Car51]{cartan1951}
H~Cartan.
\newblock Notion d’algebre diff{\'e}rentielle; application aux groupes de
  {L}ie et aux vari{\'e}t{\'e}s ou opere un groupe de {L}ie and {L}a
  trangression dans un groupe de {L}ie et dans un espace fibr{\'e} principal.
\newblock In {\em Colloque de topologie (Bruxelles 1950), Paris, Masson}, 1951.

\bibitem[CGKP]{CGKP}
Kieran Calvert, Karmen Grizelj, Andrey Krutov, and Pavle Pand\v{z}i\'{c}.
\newblock A relative version of {K}ostant’s {C}lifford algebra conjecture.
\newblock {\em {I}n preparation}.

\bibitem[Che20]{chen}
Yunxia Chen.
\newblock A grassmannian description of embedding theorems for riemannian
  symmetric spaces.
\newblock {\em Proc. Amer. Math. Soc.}, 148(12):5343--5354, 2020.

\bibitem[CHL20]{CHL}
Yunxia Chen, Yongdong Huang, and Naichung~Conan Leung.
\newblock Embeddings from noncompact symmetric spaces to their compact duals.
\newblock {\em Asian J. Math.}, 24(5):783--801, 2020.

\bibitem[CK13]{Casian.Kodama.2013}
Luis Casian and Yuji Kodama.
\newblock On the cohomology of real {G}rassmann manifolds.
\newblock {\em arXiv 1309.5520}, 2013.

\bibitem[CN06]{CN}
Jean-Louis Clerc and Karl-Hermann Neeb.
\newblock Orbits of triples in the shilov boundary of a bounded symmetric
  domain.
\newblock {\em Transform. Groups}, 11(3):387--426, 2006.

\bibitem[Die55]{Die}
Jean Dieudonn\'e.
\newblock {\em La g\'eom\'etrie des groupes classiques}.
\newblock Springer, 1955.

\bibitem[EH13]{esch}
J.~H. Eschenburg and S.~Hosseini.
\newblock Symmetric spaces as grassmannians.
\newblock {\em Manuscripta Math.}, 141(1-2):51--62, 2013.

\bibitem[FP98]{Fulton.Pragacz.1998}
William Fulton and Piotr Pragacz.
\newblock {\em Schubert varieties and degeneracy loci}, volume 1689 of {\em
  Lecture Notes in Mathematics}.
\newblock Springer-Verlag, Berlin, 1998.
\newblock Appendix J by the authors in collaboration with I. Ciocan-Fontanine.

\bibitem[Ful97]{Fulton.YT.1997}
William Fulton.
\newblock {\em Young tableaux}, volume~35 of {\em London Mathematical Society
  Student Texts}.
\newblock Cambridge University Press, Cambridge, 1997.
\newblock With applications to representation theory and geometry.

\bibitem[GHV76]{CCCvol3}
Werner Greub, Stephen Halperin, and Ray Vanstone.
\newblock {\em Connections, curvature, and cohomology}.
\newblock Pure and Applied Mathematics, Vol. 47-III. Academic Press [Harcourt
  Brace Jovanovich, Publishers], New York-London, 1976.
\newblock Volume III: Cohomology of principal bundles and homogeneous spaces.

\bibitem[Gib]{Gibson}
Joel Gibson.
\newblock Symmetric algebra.
\newblock https://www.jgibson.id.au/articles/symfunc/.

\bibitem[Goe23]{goette}
Sebastian Goette, 2023.
\newblock email correspondence with P. Pand\v{z}i\'{c}.

\bibitem[Han06]{han}
Gang Han.
\newblock A class of primary representations associated with symmetric pairs
  and restricted root systems.
\newblock {\em Pacific journal of mathematics}, 225(1):33--51, 2006.

\bibitem[Hel79]{hel}
Sigurdur Helgason.
\newblock {\em Differential geometry, Lie groups, and symmetric spaces}.
\newblock Academic Press, 1979.

\bibitem[HL11]{HL}
Yongdong Huang and Naichung~Conan Leung.
\newblock A uniform description of compact symmetric spaces as grassmannians
  using the magic square.
\newblock {\em Math. Ann.}, 350(1):79--106, 2011.

\bibitem[Hop41]{hopf1941}
Heinz Hopf.
\newblock Uber die topologie der gruppen-mannigfaltigkeiten und ihre
  verallgemeinerungen.
\newblock {\em Annals of Mathematics}, pages 22--52, 1941.

\bibitem[How95]{howe}
Roger Howe.
\newblock Perspectives on invariant theory: Schur duality, multiplicity-free
  actions and beyond.
\newblock {\em The Schur lectures (1992), Israel Math. Conf. Proc.}, pages
  1--182, 1995.

\bibitem[HP06]{HP}
Jing-Song Huang and Pavle Pand\v{z}i\'c.
\newblock {\em Dirac operators in representation theory}.
\newblock Mathematics: Theory and Applications. Birkh\"{a}user, 2006.

\bibitem[Hum90]{HumphreysRefBook}
James~E. Humphreys.
\newblock {\em Reflection groups and {C}oxeter groups}, volume~29 of {\em
  Cambridge Studies in Advanced Mathematics}.
\newblock Cambridge University Press, Cambridge, 1990.

\bibitem[Kna96]{beyond}
Anthony~W. Knapp.
\newblock {\em Lie groups beyond an introduction}, volume 140 of {\em Progress
  in mathematics}.
\newblock Birkh\:{a}user, 1996.

\bibitem[Kob08]{kobayashi}
Toshiyuki Kobayashi.
\newblock Multiplicity-free theorems of the restrictions of unitary highest
  weight modules with respect to reductive symmetric pairs.
\newblock In {\em Representation theory and automorphic forms}, volume 255 of
  {\em Progr. Math.}, pages 45--109. Birkh\"{a}user Boston, Boston, MA, 2008.

\bibitem[Kos50]{koszul50}
Jean-Louis Koszul.
\newblock Homologie et cohomologie des algebres de lie.
\newblock {\em Bulletin de la Soci{\'e}t{\'e} Math{\'e}matique de France},
  78:65--127, 1950.

\bibitem[Kos97]{K97}
Bertram Kostant.
\newblock Clifford algebra analogue of the {H}opf--{K}oszul--{S}amelson
  theorem, the $\rho$-decomposition {$C (g)= End V\rho\otimes C (P)$}, and the
  {$g$}-module structure of $\bigwedge g$.
\newblock {\em Advances in Mathematics}, 125(2):275--350, 1997.

\bibitem[Kos04]{kostantemail}
Bertram Kostant, 2004.
\newblock email correspondence with P.~Pand\v{z}i\'c.

\bibitem[Lee03]{Lee}
John~M. Lee.
\newblock {\em Introduction to smooth manifolds}, volume 218 of {\em Graduate
  texts in mathematics}.
\newblock Springer, 2003.

\bibitem[Leu16]{leung}
Wing~Hong Leung.
\newblock Cohomology of symmetric space, 2016.
\newblock Ph.D. thesis, The Hong Kong University of Science and Technology,
  https://repository.hkust.edu.hk/ir/Record/1783.1-87105.

\bibitem[LR22]{Lambert.Rabelo.2022}
Jordan Lambert and Lonardo Rabelo.
\newblock Integral homology of real isotropic and odd orthogonal
  {G}rassmannians.
\newblock {\em Osaka J. Math.}, 59(4):853--880, 2022.

\bibitem[Mat21]{Matszangosz.2021}
\'{A}kos~K. Matszangosz.
\newblock On the cohomology rings of real flag manifolds: {S}chubert cycles.
\newblock {\em Math. Ann.}, 381(3-4):1537--1588, 2021.

\bibitem[Oni94]{On}
Arkady~Lvovich Onishchik.
\newblock {\em Topology of Transitive Transformation Groups}.
\newblock Johann Ambrosius Barth, 1994.

\bibitem[Par72]{Par}
Rajagopalan Partahasarathy.
\newblock Dirac operator and the discrete series.
\newblock {\em Ann. of Math.}, 96:1--30, 1972.

\bibitem[PR96]{Pragacz.1996}
Piotr Pragacz and Jan Ratajski.
\newblock A {P}ieri-type theorem for {L}agrangian and odd orthogonal
  {G}rassmannians.
\newblock {\em J. Reine Angew. Math.}, 476:143--189, 1996.

\bibitem[PR03]{Pragacz.2003}
Piotr Pragacz and Jan Ratajski.
\newblock A {P}ieri-type formula for even orthogonal {G}rassmannians.
\newblock {\em Fund. Math.}, 178(1):49--96, 2003.

\bibitem[Pra91]{Pragacz.1991}
Piotr Pragacz.
\newblock Algebro-geometric applications of {S}chur {$S$}- and
  {$Q$}-polynomials.
\newblock In {\em Topics in invariant theory ({P}aris, 1989/1990)}, volume 1478
  of {\em Lecture Notes in Math.}, pages 130--191. Springer, Berlin, 1991.

\bibitem[Rab16]{Rabelo.2016}
Lonardo Rabelo.
\newblock Cellular homology of real maximal isotropic {G}rassmannians.
\newblock {\em Adv. Geom.}, 16(3):361--379, 2016.

\bibitem[Ros02]{rossmann}
Wulf Rossmann.
\newblock {\em Lie groups}, volume~5 of {\em Oxford Graduate Texts in
  Mathematics}.
\newblock Oxford University Press, Oxford, 2002.
\newblock An introduction through linear groups.

\bibitem[RRS92]{RRS}
Roger Richardson, Gerhard R\"ohrle, and Robert Steinberg.
\newblock Parabolic subgroups with abelian unipotent radical.
\newblock {\em Invent. Math.}, 110(3):649–671, 1992.

\bibitem[RSM19]{Rabelo.SMartin.2019}
Lonardo Rabelo and Luiz A.~B. San~Martin.
\newblock Cellular homology of real flag manifolds.
\newblock {\em Indag. Math. (N.S.)}, 30(5):745--772, 2019.

\bibitem[Sam41]{Samelson41}
Hans Samelson.
\newblock Beitrage zur topologie der gruppen-mannigfaltigkeiten.
\newblock {\em Annals of Mathematics}, pages 1091--1137, 1941.

\bibitem[Tam01]{TamvakisArbeitTalk2001}
Harry Tamvakis.
\newblock Quantum cohomology of lagrangian and orthogonal grassmannians, 2001.

\bibitem[Tam05]{Tamvakis2005}
Harry Tamvakis.
\newblock Quantum cohomology of isotropic grassmannians.
\newblock {\em Geometric methods in algebra and number theory}, pages 311--338,
  2005.

\bibitem[Tay18]{taylor}
Michael Taylor.
\newblock De {R}ham cohomology of compact symmetric spaces, 2018.
\newblock
  https://mtaylor.web.unc.edu/wp-content/uploads/sites/16915/2018/04/drc.pdf.

\bibitem[Tho16]{Thor}
Gudlaugur Thorbergsson.
\newblock Classical symmetric r-spaces.
\newblock {\em Rend. Semin. Mat. Univ. Politec. Torino}, 74(1):329--354, 2016.

\bibitem[TK68]{TK1968}
Masaru Takeuchi and Shoshichi Kobayashi.
\newblock Minimal imbeddings of r-spaces.
\newblock {\em J. Differential Geometry}, 2:203--215, 1968.

\bibitem[Wit37]{Witt}
Ernst Witt.
\newblock Theorie der quadratischen formen in beliebigen körpern.
\newblock {\em J. Reine Angew. Math.}, 176:31--44, 1937.

\bibitem[Wol76]{Wolf}
Joseph~A. Wolf.
\newblock Unitary representations of maximal parabolic subgroups of the
  classical groups.
\newblock {\em Mem. Am. Math. Soc.}, 8(180), 1976.

\end{thebibliography}

\end{document}